\documentclass[reqno,11pt]{amsart}
\usepackage{tikz}
\usetikzlibrary{arrows.meta}
\usetikzlibrary{decorations.pathmorphing}
\usepackage{esint}
\usepackage{verbatim}
\usepackage{graphicx}
\usepackage{subfigure}
\usepackage{amsfonts}
\usepackage{amssymb}
\usepackage{amsmath}
\usepackage{amsthm}
\usepackage{latexsym}
\usepackage{amscd}
\usepackage{mathrsfs}
\usepackage{verbatim}
\usepackage{mathrsfs}
\usepackage{enumitem}
\usepackage{braket}
\usepackage{float}

\usepackage[margin=1.3in]{geometry}
\usepackage{comment}
\usepackage{bm}
\usepackage{xcolor}
\usepackage{imakeidx}
\makeindex[title=Index of Notation]

\usepackage{hyperref}
\hypersetup{
	colorlinks=true,
	linkcolor=black,
	 urlcolor=black,
	citecolor=black
}
\usepackage{caption}
\usepackage{array}

\usepackage{amssymb}
\usepackage{pdfpages}
\usepackage{verbatim}

\usepackage{appendix}
\usepackage{xspace}

\newtheorem{theorem}{Theorem}[section]
\newtheorem{conjecture}[theorem]{Conjecture}
\newtheorem{lemma}[theorem]{Lemma}
\newtheorem{proposition}[theorem]{Proposition}
\newtheorem{corollary}[theorem]{Corollary}
\theoremstyle{definition}
\newtheorem{definition}[theorem]{Definition}

\theoremstyle{remark}
\newtheorem{remark}[theorem]{Remark}

\newcommand{\llbracket}{[\![}
\newcommand{\rrbracket}{]\!]}

\newcommand{\R}{\mathbb{R}}

\newcommand{\Rbar}{\overline{\mathrm{Rm}}}
\newcommand{\inj}{\operatorname{inj}}

\newcommand{\N}{\mathcal N}
\newcommand{\X}{\mathcal X}
\newcommand{\Y}{\mathcal Y}

\newcommand{\ip}[2]{\left\langle #1,#2\right\rangle}
\numberwithin{equation}{section}

\makeatletter
\@namedef{subjclassname@2020}{\textup{2020} Mathematics Subject Classification}
\makeatother

\subjclass[2020]{ 53E10}

\begin{document}
	\title[Mean Curvature Flow]{ Singularities of Mean Curvature Flow with Bounded Total Curvature}
	
	%    Information for first author
	\author{Yongheng Han}
	%    Address of record for the research reported here
	\address{Institute of Mathematics, Academy of Mathematics and Systems Science, Chinese Academy of Sciences, Beijing 100190, China}
	
	\email{hyh98@amss.ac.cn}

    \author{Zhen Wang}
	%    Address of record for the research reported here
	\address{The Institute of Geometry and Physics, University of Science and Technology
of China, Hefei,  Anhui 230026, china}
	
	\email{wz1854@mail.ustc.edu.cn}

	\date{\today}

	\keywords{Mean curvature flow, tangent flow, multiplicity one conjecture.}
	
	\begin{abstract}
We prove that every tangent flow at the first singular time of a smooth, closed, 
embedded mean curvature flow has smooth support, provided that the total curvature satisfies
$\sup_{t\in[0,T)}\int_{M_t}|A|^n\,d\mu_t<\infty$.
This extends Ilmanen’s smoothness result for tangent flows to higher 
dimensions. 
	\end{abstract}
	
\maketitle
\tableofcontents
\section{Introduction}
Let $\mathbf{x}_0:M^n\to \mathbb{R}^{n+1}$ be a closed hypersurface in $\mathbb{R}^{n+1}$. A one-parameter family of immersions $\mathbf{x}(p,t):M\to  \mathbb{R}^{n+1}$ is called a mean
curvature flow if $\mathbf{x}$ satisfies the equation
\begin{equation}\label{eq0.1}
	\partial_t \mathrm{x}=-H\mathbf{n},\quad \mathbf{x}(0)=\mathbf{x}_0,
\end{equation}
where $H$ denotes the mean curvature of the hypersurface $M_t:=\mathbf{x}(t)(M)$ and $\mathbf{n}$ denotes  the unit normal vector field of $M_t$. Huisken \cite{MR772132} proved that if the flow \eqref{eq0.1} develops a singularity at time $T<\infty$, then the second fundamental form will blow up at time $T$.

In this paper, we aim to study mean curvature flow with bounded total curvature $\int_{M_t}|A|^n\,d\mu_t.$
 A uniform bound for the total curvature rules
out unlimited curvature concentration  and imposes strong geometric restrictions on possible blow-up
limits. In dimension two, finite total curvature has played an important role
in the analysis of singularities of mean curvature flow. In particular,
Ilmanen \cite{ilmanen2026singularitiesmeancurvatureflow} showed that, under a uniform total curvature bound, tangent
flows at the first singular time have smooth support. 
The main purpose of this paper is generalize this result to high dimension. More precisely, we prove the following theorem.
\begin{theorem}[Theorem \ref{thm:high-dimensional-partial-regularity}]
	Let $\mathbf{x}:M^n\times [0,T)\to \mathbb{R}^{n+1}$ be a smooth mean curvature flow of closed embedded hypersurfaces. If 
	\begin{equation}\label{eq0.2}
		\begin{split}
			\sup_{t\in [0,T)}\int_{M_t}|A|^nd\mu_t<\infty,
		\end{split}
	\end{equation}
	then the support of any tangent flow at time $T$ is smooth.
\end{theorem}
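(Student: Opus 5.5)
The plan is to exploit the scale invariance of $\int_{M_t}|A|^n\,d\mu_t$ together with an $\varepsilon$-regularity argument. Fix a spacetime point $(x_0,T)$ and a sequence $\lambda_i\to\infty$. Consider the parabolic rescalings $M^i_s=\lambda_i(M_{T+\lambda_i^{-2}s}-x_0)$ for $s<0$. By Huisken's monotonicity formula and Ilmanen's compactness theorem, a subsequence converges as Brakke flows to a self-similar tangent flow $\{\sqrt{-s}\,\Sigma\}_{s<0}$. Here $\Sigma$ is an $F$-stationary integral varifold (a shrinker) with bounded entropy. Because $\int|A|^n$ is invariant under the rescaling, every $M^i_s$ satisfies $\int_{M^i_s}|A|^n\le C_0$, where $C_0$ is the constant in \eqref{eq0.2}. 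The goal is to show that $\operatorname{spt}\Sigma$ is a smooth embedded hypersurface, possibly with multiplicity.

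First, define the concentration set
\[
\mathcal S=\Big\{y:\ \liminf_{i\to\infty}\int_{M^i_{-1}\cap B_r(y)}|A|^n\ge\varepsilon_0\ \text{ for all } r>0\Big\}.
\]
The uniform bound $C_0$ implies that $\mathcal S$ has at most $C_0/\varepsilon_0$ points at each time slice. I will take it as given that the varifold limit can be taken at a.e. time $s$, and will fix $s=-1$ after averaging in time.

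Second, I will prove an $\varepsilon$-regularity statement. Suppose $\int_{M\cap B_{2r}(y)}|A|^n<\varepsilon_0$ and the area ratios are bounded by entropy. Then, by a Michael–Simon/Allard-type argument, or more concretely by Langer–Breuning-type graphical decomposition using small $L^n$ norm of $A$ (the critical exponent), $M\cap B_r(y)$ is a union of finitely many Lipschitz graphs with small Lipschitz constant over hyperplanes. Embeddedness forces these sheets to be ordered and disjoint. Passing to the limit, the support of $\Sigma$ near $y\notin\mathcal S$ is a union of ordered Lipschitz graphs. Each graph is a weak solution of the shrinker equation, and since the sheets do not cross, the strong maximum principle shows they either coincide or are disjoint. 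Elliptic regularity for the minimal-surface-type shrinker equation (De Giorgi–Nash and Schauder, since the graphs are Lipschitz with small constant) then gives smoothness of $\operatorname{spt}\Sigma$ away from $\mathcal S$. To make this fully rigorous I would rather work in spacetime, applying the argument on parabolic cylinders and using Brakke/White regularity in the limit sheet by sheet. The time-uniformity issue should be handled by noting that $\int_{-2}^{-1}\int|A|^n$ is also bounded, so the concentration set in spacetime has finite $\mathcal P^2$-measure in an appropriate sense.

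Third, I need to remove the finitely many points of $\mathcal S$. Self-similarity of the limit makes $\operatorname{spt}\Sigma$ a cone-invariant set in time. Because of this, a singular point $y\neq0$ on $\Sigma$ would sweep out a curve $\sqrt{-s}\,y$ of concentration points in spacetime. Lower semicontinuity of $\int|A|^n$ under smooth convergence, plus Fatou in time, should show that this is compatible with finiteness only if the singular set has stronger structure. The core argument is local, though. Near an isolated point $y\in\mathcal S$, $\operatorname{spt}\Sigma\setminus\{y\}$ is smooth with $\int_{\Sigma\cap B_r(y)}|A|^n\le C_0$ by lower semicontinuity, and it has bounded density. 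The rescalings of $\Sigma$ at $y$ then have $\int_{B_1\setminus B_{\delta}}|A|^n\to0$, since the $L^n$ norm is scale invariant and the mass in annuli shrinks. Consequently every tangent cone at $y$ is a union of hyperplanes with multiplicity. Because the support is embedded with ordered sheets, these hyperplanes must coincide. This yields one plane with integer multiplicity, together with a graphical decomposition on punctured balls, and for $n\ge2$ a removable singularity theorem for Lipschitz solutions of the shrinker equation extends each sheet smoothly across $y$. This last step is the main obstacle. Ruling out a tangent cone made of several distinct planes through $y$ (as in transverse intersections, or a catenoidal neck collapsing with multiplicity two) requires using embeddedness of the approximating $M^i$ together with the small-energy annular decomposition. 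I would first try a neck-analysis argument: show that on dyadic annuli around $y$ the approximating surfaces are unions of graphs over one fixed plane, with controlled tilt summed over annuli via the $L^n$ bound. Disjoint embedded graphs cannot converge to transverse planes, so the cone is a single plane and the singularity is removable.
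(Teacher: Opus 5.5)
Your outline follows the paper's three-step scheme: finitely many $L^n$-concentration points, $\varepsilon$-regularity away from them, then removal of the isolated points. However, the step that carries all the weight rests on a justification that does not work, and the removal step has a second gap.

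\textbf{The $\varepsilon$-regularity step.} You claim that $\int_{M\cap B_{2r}(y)}|A|^n<\varepsilon_0$ plus an area bound makes $M\cap B_r(y)$ a finite union of graphs with small Lipschitz constant, citing Michael--Simon/Allard or Langer--Breuning.
\begin{itemize}
\item Langer's and Breuning's decompositions need $\int|A|^p$ bounded with $p>n$. At the critical exponent $p=n$ the Gauss map is controlled only in $W^{1,n}$, which does not embed in $L^\infty$, and the claim fails for the approximating hypersurfaces.
\item For example, smoothing the graph of $u(x)=\epsilon\,x_1\log\log(1/|x|)$ near the origin gives embedded hypersurfaces with $\int|A|^n\lesssim\epsilon^n$. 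Their normal nevertheless turns by almost $\pi/2$ inside an arbitrarily small ball. Even for $n=2$, Simon's lemma only gives graphs away from small ``pimples''.
\item Allard's theorem cannot fill the gap, because it \emph{assumes} area ratio at most $1+\lambda$. Nothing in your argument produces this, and when several sheets converge together the limit density is an integer $\geq 2$ unless the sheets are separated first.
\end{itemize}
This is exactly the difficulty the paper singles out for $n\geq 3$, and it resolves it with a genuinely new ingredient:
\begin{itemize}
\item From the small-$L^n$ region it builds a local mean curvature flow with $|A|^2\le 2\alpha_3\delta^2t^{-1}$. This uses the Gaussian-error weak monotonicity of $\int|A|^n$ and $\varepsilon$-regularity.
\item At time $t_*\sim|\log\delta|^{-1}r^2$ the evolved component is a single graph with almost Euclidean area.
\item It pulls this back via $\frac{d}{dt}\mu_t(E_t)=-\int_{E_t}|H|^2$ and H\"older. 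The result is that every connected component $C_x(B_r(x)\cap M)$ has area $(1\pm\delta|\log\delta|^{-1/2})\omega_n r^n$ at all small scales (Corollary~\ref{cor:almost Euclidean volume ratio of local component}).
\end{itemize}
Only then can each component be passed to a varifold limit and Allard be applied sheet by sheet. That limit is a shrinker with bounded $H$, which the paper ensures by using slices at times $t_j\to-1$ that are $L^2$-approximate shrinkers. Your Step 3 derives the curvature decay $|x-q|^{j+1}|\nabla^jA|\to0$ near concentration points from Step 2 on annuli, so it inherits this gap.

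\textbf{Removing the isolated points.} Suppose Step 2 is granted, so every tangent cone at $q$ is $m|P|$. This still does not give a graphical decomposition over a \emph{fixed} plane on a punctured ball, and your removable-singularity step presupposes one.
\begin{itemize}
\item Your proposed fix, summing tilt over dyadic annuli via the $L^n$ bound, fails. The tilt change on $A_k$ is controlled by $\|A\|_{L^n(\tilde A_k)}$.
\item The $L^n$ bound gives only $\sum_k\|A\|_{L^n(\tilde A_k)}^n<\infty$, not $\sum_k\|A\|_{L^n(\tilde A_k)}<\infty$. So the approximating plane may a priori rotate forever.
\item The paper obtains uniqueness of $P_q$ from a {\L}ojasiewicz--Simon inequality via the Edelen--Minter Dini estimate. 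This yields ordered sheets $u^1\le\dots\le u^{m_q}$ over $P_q$ with $u^\alpha=o(|x|)$, and the strong maximum principle then makes them coincide.
\end{itemize}
A more elementary route would also work. Follow each sheet inward, annulus to annulus, to get embedded ends of density one at $q$. Since $n\geq2$, their first variation extends across $q$, so Allard applies to each end at $q$, and the maximum principle finishes the argument.
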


\begin{remark}
    In \cite{MR1202157}, Nakauchi showed that the mean curvature flow converges smoothly to a limiting hypersurface $M_T$
 away from a finite singular set $\mathcal{S}$. However, the proof of equation (26) appears to be incomplete, and hence the argument does not directly establish the claimed conclusion.
\end{remark}

Ding \cite{MR3864377} proved that a smooth self-shrinker with bounded total curvature must be either compact or a hyperplane. Hence, any smooth self-shrinker arising as a tangent flow is either compact or planar. Recently, Bamler and Kleiner \cite{bamler2024multiplicityconjecturemeancurvature} resolved the \textit{Multiplicity One Conjecture} for mean curvature flow of surfaces in $\mathbb{R}^3$. Motivated by their results, we conjecture that the convergence to the tangent flow in our setting also occurs with multiplicity one. % (see figure \ref{fig:blow up}).

\begin{conjecture}
    	Let $\mathbf{x}:M^n\times [0,T)\to \mathbb{R}^{n+1}$ be a smooth, closed mean curvature flow satisfying \eqref{eq0.2}, where $T$ is the first singular time. 
	Then, the rescaled mean curvature flow converges smoothly, with multiplicity one, to a smooth, closed self-shrinker.
\end{conjecture}

We follow Ilmanen’s strategy in \cite{ilmanen2026singularitiesmeancurvatureflow}, which proceeds in three steps. First, using localized Gauss–Bonnet,   Ilmanen estimated a scale‐invariant local $L^2$ ‐bound on the second fundamental form of the rescaled flows; hence curvature measures concentrate only at a locally finite set of points.

Then, away from this set, the $L^2$‐norm is small on appropriate scales. Applying Simon’s graphical decomposition \cite{MR1243525}, the rescaled surfaces are decomposed into finitely many nearly flat graphs. Passing to the limit, Allard’s regularity and elliptic bootstrapping for the self‐shrinker equation imply that the tangent flow is smoothly immersed away from isolated points.

Finally, near each isolated concentration point, Ilmanen shows that the component is an annular end with finite area and bounded mean curvature. Gulliver’s removable singularity theorem \cite{MR431045} extends it smoothly across the puncture as a branched immersion. In dimension three, by embeddedness and strong maximum principle, the self-shrinker  extends smoothly across all concentration points.

We next explain how the preceding argument can be extended to higher dimensions. First, we replace Ilmanen's local Gauss--Bonnet estimate by  assuming \eqref{eq0.2}. The main difficulty in higher dimensions is that small $L^n$-curvature alone does not immediately provide the area-ratio estimate required by Allard's regularity theorem. To overcome this issue, we construct a local mean curvature flow and use it to prove that the volume ratio of each connected component is close to one. Using this estimate, we  may apply Allard's regularity theorem to show that the support of the tangent flow is smooth away from a finite set of isolated concentration points.
In fact, we show that

\begin{proposition}[Corollary \ref{cor:almost Euclidean volume ratio of local component}]
    For any $\nu>0$ there exists a constant $\epsilon=\epsilon(n,m,\nu)$ such that the following holds.
    Let $\rho>0$ and let $M^n$ be a smooth submanifold properly immersed in $B_\rho\subset\R^{n+m}$.
    Suppose that 
    \[
    \|A\|_{L^n(B_\rho\cap M)}\leq\delta\leq \epsilon
    \]
    and 
    \[
    \sup_{\substack{x\in B_{5\rho/6}\cap M\\0<s\leq\rho/6}} s^{-n}\mu(B_s(x)\cap M)\leq\nu.
    \]
    Then, for any $x\in B_{\rho/4}\cap M$ and any $0<r\leq \rho/512$,
    \[
    (1-\delta|\log\delta|^{-\frac12})\omega_n r^n
    \leq \mu(C_x(B_r(x)\cap M))
    \leq (1+\delta|\log\delta|^{-\frac12})\omega_n r^n.
    \]
    Here, $C_x(B_r(x)\cap M)$ is the connected component of  $B_r(x)\cap M$ containing $x$.
\end{proposition}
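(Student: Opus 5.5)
The plan is to prove the two-sided bound by running a local mean curvature flow starting from (a localized piece of) the component, as the introduction suggests, and comparing with the monotonicity formula. For the \emph{lower} bound we use the standard monotonicity identity for submanifolds with mean curvature $H$. It says that $s\mapsto s^{-n}\mu(B_s(x)\cap C)$, for $C=C_x(B_r(x)\cap M)$ and $x\in M$ a point of density one, is almost monotone, with error controlled by $\int |H| s^{1-n}$. By H\"older, $\int_{B_s\cap M}|H|\le \|H\|_{L^n}\,\mu(B_s)^{(n-1)/n}$, and the area bound $\nu$ gives
\[
\frac{d}{ds}\log\bigl(s^{-n}\mu(B_s(x)\cap C)\bigr)\ge -\frac{C(n)\,\delta\,\nu^{(n-1)/n}}{s}\Bigl(\frac{\omega_n s^n}{\mu(B_s\cap C)}\Bigr)^{1/n}.
\]
This estimate alone is too weak, since integrating $ds/s$ produces a logarithm. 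We therefore first use $\delta$-smallness of $\int |A|^n$ on dyadic annuli. By pigeonhole, among $|\log\delta|^{1/2}$ consecutive dyadic scales there is one on which $\int |A|^n\le \delta^n|\log\delta|^{-1/2}$. At that scale we use the Michael--Simon Sobolev inequality to get smallness of the \emph{relevant} error. This is how the factor $\delta|\log\delta|^{-1/2}$ should arise.

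The \emph{upper} bound is the genuinely new point: a priori $B_r(x)\cap C$ could be a long, thin piece winding around many times. The key steps, in order, are as follows.
\begin{enumerate}
\item Cut off $C$ near $\partial B_{\rho/6}$ and run mean curvature flow for time $t\sim \epsilon' r^2$. Using the $L^n$-smallness and the $\nu$ area bound, prove a pseudo-locality/$\epsilon$-regularity estimate of the form $|A|\le C/\sqrt t$ on the local flow. This uses the $L^n$-smallness, which is scale invariant, together with a Moser/Ecker-type iteration.
\item Conclude that at time $t$ the flow is $C^{1,\alpha}$ close to a plane at scale $\sqrt t$ around each point. Hence each connected sheet has Gaussian density close to $1$.
\item Use Huisken monotonicity backward in time to transfer Gaussian density $\le 1+\eta$ at time $t$ to time $0$. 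Connectedness enters here: distinct sheets must be distinct components. Then pass from Gaussian density to ball density $\mu(C\cap B_r)\le(1+\eta)\omega_nr^n$.
\end{enumerate}

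The main obstacle is step 1. The $L^n$ bound on $A$ is critical, not subcritical, so we cannot directly get curvature bounds. We must show that smallness of $\int|A|^n$ is preserved along the flow on a short time interval. This comes from the evolution $\partial_t|A|^2\le\Delta|A|^2-2|\nabla A|^2+2|A|^4$ combined with the Michael--Simon inequality, whose constant depends on nothing. The $\nu$ bound is needed to control the cutoff terms. The quantitative rate $\delta|\log\delta|^{-1/2}$ should come from optimizing the time scale $t$ against the dyadic pigeonhole. I expect this bookkeeping, rather than the qualitative statement, to require the most care.
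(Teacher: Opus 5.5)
There is a genuine gap: neither bound follows from your plan, because both transfers of information back to time $0$ fail. Your step 1 does match the paper's machinery in spirit: weak monotonicity of $\int|A|^n$ from the evolution of $|A|^2$ plus Michael--Simon, preservation of the area ratio, and $\epsilon$-regularity. Two caveats there. First, what is needed is $|A|^2\le C\delta^2t^{-1}$ with the factor $\delta^2$. Because of the Gaussian tail in the $L^n$-monotonicity, this holds only for $t\lesssim|\log\delta|^{-1}\rho^2$. Second, the flow cannot simply be ``run'' for a fixed time, since short-time existence depends on $\sup|A|$. The paper builds it by iterative restarts using the a priori $t^{-1}$ bound.

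The failures are as follows. For the upper bound, step 3 uses Huisken's monotonicity in the wrong direction. Since $\frac{d}{dt}\int\rho\,d\mu_t\le0$, a Gaussian density $\le1+\eta$ at time $t$ bounds the time-$0$ density only from \emph{below}. At time $0$ it would also count every sheet of $M$ near $x$, not just $C_x(B_r(x)\cap M)$. For the lower bound, the static monotonicity error is $\int_{B_r(x)}|H|\,|y-x|^{1-n}$. H\"older on dyadic annuli $A_k$ bounds this only by $C\nu^{1-1/n}\sum_k\|H\|_{L^n(A_k)}$. An $\ell^n$ bound on the annular norms does not control this $\ell^1$ sum. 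A pigeonholed good annulus does nothing about the errors at all the other scales you must cross to reach $s\to0$.

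The missing idea is to track the \emph{plain area of the evolved piece}, whose dissipation is only $\int|H|^2$. Set $W=C_x(B_{2r}(x)\cap M)$, $E=B_r(x)\cap W$, $V=C_x(B_r(x)\cap M)$ and $E_t=\Phi_{0,t}(E)$. Flow for time $t_*=\theta r^2$ with $\theta\sim|\log\delta|^{-1}$. Then $r^2\sup|A|^2\lesssim\delta^2\theta^{-1}$ near $x_{t_*}$, and points move by at most $D\lesssim\delta\sqrt\theta\,r$. Hence the component $\Omega_*$ of $B_{3r}(x_{t_*})\cap M_{t_*}$ is a single graph.
\begin{itemize}
\item Connectedness of $\Phi_{0,t_*}(W)$ gives $E_{t_*}\subset Q^*_{r+2D}:=B_{r+2D}(x_{t_*})\cap\Omega_*$.
\item Pulling paths back through the graph gives $Q^*_{r-2D}\subset\Phi_{0,t_*}(V)$.
\item The lower bound is then free, because the area of $\Phi_{0,t}(V)$ is non-increasing, so $\mu(V)\ge\mu_{t_*}(Q^*_{r-2D})$.
\item The upper bound uses $\mu(V)\le\mu(E)$ together with the area loss $\mu(E)-\mu_{t_*}(E_{t_*})=\int_0^{t_*}\!\int_{E_t}|H|^2\le n\,\|A\|_{L^n(E_t)}^2\,\mu_t(E_t)^{1-2/n}\,t_*\lesssim\delta^2\nu^{1-2/n}\theta r^n$. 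This step uses the preserved $L^n$-smallness and $\mu_t(E_t)\le\nu r^n$.
\end{itemize}
So the roles of the two bounds are the reverse of your plan. The rate $\delta|\log\delta|^{-1/2}$ comes from the displacement error $\delta\sqrt\theta$ at $\theta\sim|\log\delta|^{-1}$. The other errors, $\delta^2\theta^{-1}$ from the curvature at $t_*$ and $\delta^2\theta$ from the area loss, are smaller. It does not come from a dyadic pigeonhole.
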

It remains to remove these isolated singularities. Using the compactness theory for curvature varifolds, we first show that every tangent cone at an isolated singular point is a multiplicity-$l$ hyperplane; see also \cite{MR1412686}. We then apply the uniqueness result of Edelen--Minter \cite{MR4830551} to prove that the tangent hyperplane is independent of the chosen blow-up sequence. This uniqueness allows us to represent the punctured support locally as a finite collection of graphs over the same hyperplane. Finally, embeddedness, together with the strong maximum principle, implies that any two sheets that touch must coincide. The support of the tangent flow extends smoothly across every isolated singular point. 

\noindent\textbf{Organization of the paper.}

 In Section \ref{sec2}, we recall some basic results on varifolds and mean curvature flows. 
 In Section \ref{sec3}, we establish a priori estimates for mean curvature flows. 
 Using these estimates, we then construct a local mean curvature flow in Section \ref{sec4}. We also prove the topological rigidity of submanifolds with small total curvature. 
 Finally, in Section \ref{sec6}, we prove the smoothness of the support of every tangent flow.

$\\$
\textbf{Notation and conventions.}
\begin{itemize}
    \item $n\geq2$ denotes the dimension;
    \item $m\geq1$ denotes the codimension;
    \item $C(n)$ denotes some positive constant depending only on $n$, and $C(n,m)$ denotes some positive constant depending only on $n,m$;
    \item $\beta=\beta(n) :=4\sqrt{n}$;
    \item $\omega_n$ denotes the volume of the unit ball in $\R^n$;
    \item $g_e$ denotes the Euclidean metric of the ambient space;
    \item $g(t)$ denotes the induced metric of the evolving submanifold $M_t$;
    \item $\mu_t$ denotes the volume measure of the evolving metric $g(t)$; 
    \item $B_r:=\{x\in\R^{n+m}\,\big|\,|x|<r\}$;
    \item $B_r(x):=\{y\in\R^{n+m}\,\big|\,|y-x|<r\}$ denotes the Euclidean ball centered at $x\in\R^{n+m}$;
    \item $A_{r,R}(x):=B_R(x)\setminus \overline{B_r(x)}$ denotes the Euclidean annulus centered at $x\in\R^{n+m}$ with inner radius $r$ and outer radius $R$;
    \item $B_{r}^{M_t}(x):=\{y\in M_t\,\big|\, d_{g(t)}(y,x)<r\}$ denotes the geodesic ball centered at $x\in M_t$;
    \item $(U)_r:=\{x\in U\,|\,B_r(x)\Subset U\}$ for $U\subset\R^{n+m}$;
    \item $C_x(U\cap M)$ denotes the connected component of $U\cap M$ containing $x\in M$ for $U\subset \R^{n+m}$;
    \item $a\wedge b:=\min\{a,b\}$ for $a,b\in\R$;
    %\item $\mathbb N^+=\{1,2,3,\dots\}$;
    \item $\Phi_{t,s}:=\mathbf x_s\circ\mathbf x_t^{-1}$ denotes the evolution map from $M_t$ to $M_s$, which is well-defined for $s\geq t$;
    \item $\{x_t=\Phi_{a,t}(x_a)\in M_t\}_{t\geq a}$ denotes the forward trajectory for $x_a\in M_a$;
    \item $\{x_t\in\Phi_{a,t}(x_a)\in M_t\}_{t\leq a}$ denotes the backward trajectory for $x_a\in M_a$, which may not be unique;
\end{itemize}

$\\$
\textbf{Disclosure of AI use.}  The ideas in this paper were developed by the authors in the spring of 2026. All mathematical ideas originate from the authors and were developed independently of AI. The authors used ChatGPT to assist in writing the appendix, which is well-known but not  available in the literature. All AI-generated suggestions were checked by the authors, who take full responsibility for the content of the paper.

$\\$
\noindent
\textbf{Acknowledgment:}
The authors would like to thank Hanbing Fang and Haozhao Li for helpful discussions.  The authors are supported by the Project of Stable Support for Youth Team
in Basic Research Field, Chinese Academy of Sciences, (YSBR-001).

\section{Preliminaries}\label{sec2}
\subsection{Integral varifolds and curvature varifolds}$\\$

We begin by recalling the notion of integral varifolds.

\begin{definition}
	Let $(N^n,g)$ be a smooth Riemannian manifold. An integral varifold $V$ of dimension $m$ in $N$ is a pair $(M,\theta)$, where $M\subset N$ is a countably $m$-rectifiable set and $\theta:M\to \mathbb Z_{>0}$ is a locally integrable function. We associate to $V$ the measure
	\begin{equation*}
		\begin{split}
			\mu_V(A)=\int_{M\cap A}\theta\, d\mathcal H^m \qquad \text{for every Borel set } A\subset N,
		\end{split}
	\end{equation*}
	where $\mathcal H^m$ denotes the $m$-dimensional Hausdorff measure. The mass of $V$ is defined as $\mathbf M(V):=\mu_V(N)$. We denote by $\mathbf{IV}_m(N)$ the space of all $m$-dimensional integral varifolds in $N$.
\end{definition}

Next, we recall the definitions of stationarity and generalized mean curvature.

\begin{definition}
	Let $V$ be an $m$-varifold in $N$ and let $X\in C_c^1(N,TN)$. The first variation of $V$ along $X$ is defined by
	\begin{equation*}
		\begin{split}
			\delta V(X)=\frac{d}{dt}\bigg|_{t=0}\mathbf M((\Phi_t)_\sharp V),
		\end{split}
	\end{equation*}
	where $\Phi_t$ is the family of diffeomorphisms generated by $X$. We say that $V$ has bounded generalized mean curvature if there exists a constant $C\geq 0$ such that
	\begin{equation*}
		\begin{split}
			|\delta V(X)|\leq C\int |X|\,d\mu_V \qquad \text{for all } X\in C_c^1(N,TN). 
		\end{split}
	\end{equation*}
	If $\delta V(X)=0$ for every $X\in C_c^1(N,TN)$, then $V$ is called stationary in $N$.
\end{definition}

Whenever $V$ has bounded generalized mean curvature, the Riesz representation theorem together with the Radon–Nikodym theorem yields a vector field $H\in L^1_{\mathrm{loc}}(\mu_V;TN)$, defined $\mu_V$-almost everywhere on $M$, such that
\begin{equation*}
	\begin{split}
		\delta V(X)=-\int X\cdot H\,d\mu_V \qquad \text{for all } X\in C_c^1(N,TN). 
	\end{split}
\end{equation*}

We will make use of the following fundamental compactness result due to Allard \cite{MR307015}, which states that any sequence of integral varifolds with uniformly bounded mass and mean curvature admits a convergent subsequence.

\begin{lemma}[Allard compactness theorem]
	Let $\{V_i\}$ be a sequence of integral $m$-varifolds in an open set $\mathcal{U}\subset(N,g)$ such that $\mathbf M(V_i\cap \mathcal{U})\le \mu_0$ and their generalized mean curvatures are uniformly bounded by a constant $\Lambda$. Then after passing to a subsequence, $V_i$ converges in the varifold sense to an integral varifold $V$ satisfying $\mathbf M(V\cap \mathcal{U})\le \mu_0$, whose generalized mean curvature is also bounded by $\Lambda$.
\end{lemma}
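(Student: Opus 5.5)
This is the classical compactness theorem of Allard, and I would prove it by the standard route: first weak-$*$ compactness of Radon measures on the Grassmann bundle, then integrality of the limit via Allard's integral compactness theorem. Regard each $V_i$ as a Radon measure on the Grassmann bundle $G_m(\mathcal U)$ via
\[
V_i(\varphi)=\int_{M_i}\theta_i(x)\,\varphi(x,T_xM_i)\,d\mathcal H^m(x).
\]
Since $\mathbf M(V_i\cap\mathcal U)\le\mu_0$, these measures have uniformly bounded total mass. By the Banach--Alaoglu / Riesz compactness theorem for Radon measures, a subsequence converges weakly on compact subsets to a Radon measure $V$ on $G_m(\mathcal U)$, that is, to a general $m$-varifold. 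Lower semicontinuity of mass on open sets gives $\mathbf M(V\cap\mathcal U)\le\mu_0$.

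Next I would pass the mean curvature bound to the limit. For fixed $X\in C^1_c(\mathcal U,TN)$, the first variation
\[
\delta V(X)=\int \operatorname{div}_S X(x)\,dV(x,S)
\]
is the integral of a continuous compactly supported function on $G_m(\mathcal U)$, using local coordinates or an isometric embedding if $N$ is curved. Hence $\delta V_i(X)\to\delta V(X)$. Also $\int|X|\,d\mu_{V_i}\to\int|X|\,d\mu_V$, because $|X|$ is continuous with compact support. Therefore
\[
|\delta V(X)|\le\Lambda\int|X|\,d\mu_V .
\]
So $V$ has generalized mean curvature bounded by $\Lambda$; in particular $\|\delta V\|$ is a Radon measure absolutely continuous with respect to $\mu_V$.

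The main obstacle is showing that $V$ is rectifiable with integer multiplicity. For this I would invoke Allard's rectifiability and integrality argument (equivalently, the integral compactness theorem in Simon's notes). The bounded first variation gives the almost-monotonicity formula
\[
e^{\Lambda r}\frac{\mu_{V_i}(B_r(x))}{\omega_m r^m}\ \text{nondecreasing in } r .
\]
It is uniform in $i$ and passes to $V$. The formula has two consequences:
\begin{itemize}
\item the density $\Theta^m(\mu_V,x)$ exists everywhere;
\item the density is upper semicontinuous.
\end{itemize}
Since each $V_i$ is integral, every point of its support has density at least $1$. Combining this with upper semicontinuity of density and the weak convergence gives the lower bound $\Theta^m(\mu_V,x)\ge1$ on $\operatorname{spt}\mu_V$. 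Allard's rectifiability theorem then shows that $V$ is rectifiable. Finally, at $\mu_V$-a.e. point, blow-ups of $V$ are multiplicity-$\theta$ planes. A diagonal argument passes these blow-ups through the $V_i$, whose blow-ups have integer densities. The monotonicity-based lower bound, together with the fact that integer-density varifolds converging to a plane carry integer mass on cylinders, forces $\theta\in\mathbb Z_{>0}$. This last step is the delicate part, and I would simply quote Allard's result for it. This proves $V\in\mathbf{IV}_m(\mathcal U)$ with the stated bounds.
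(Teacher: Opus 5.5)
The paper gives no proof of this lemma; it simply quotes Allard's compactness theorem. Your outline is the standard proof of that cited result and is correct: weak-$*$ compactness on $G_m(\mathcal U)$, then passing $|\delta V_i(X)|\le\Lambda\int|X|\,d\mu_{V_i}$ to the limit by continuity of $\operatorname{div}_S X$ and $|X|$, then rectifiability and integrality via Allard's integral compactness theorem (whose integrality step you rightly defer to Allard). The one detail to tighten is that the factor $e^{\Lambda r}$ in the monotonicity formula is the Euclidean one; for a curved $(N,g)$ you should embed $N$ isometrically and absorb its locally bounded second fundamental form into the constant, which still keeps the estimate uniform in $i$ on compact subsets of $\mathcal U$.
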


Allard also proved a regularity theorem that provides a local graphical description whenever the mean curvature is small in a suitable $L^p$ sense and the density is close to that of a plane.

\begin{lemma}[Allard regularity theorem]\label{allard}
Let $V$ be an integral $n$-varifold in $\R^{n+k}$.	If $p>n$ is arbitrary, then there exist constants $\gamma=\gamma(n,k,p)$ and $\lambda_0=\lambda_0(n,k,p)\in (0,\frac{1}{16}]$ such that if $\lambda\in (0,\lambda_0]$ and if the following hypotheses hold
	\begin{equation*}
		\omega_n^{-1}\rho^{-n}\mu_V(B_\rho(0))\leq 1+\lambda,\quad \bigg(\rho^{p-n}\int_{B_{\rho}(0)}|H|^p\, d\mu_V\bigg)^{1/p}\leq \lambda.
	\end{equation*}
	then there is an orthogonal transformation $Q$ of $\mathbb{R}^{n+k}$ and $u=(u^1,\cdots, u^k)\in C^{1,1-n/p}$ such that  $Du(0)=0$,  $\mathrm{supp}V\cap B_{\gamma\rho}(0)=Q(\mathrm{graph}u)\cap B_{\gamma\rho}(0)$, and 
	\begin{equation*}
		\begin{split}
			\rho^{-1}\sup |u|+\sup |Du|+\rho^{1-\frac{n}{p}}\sup\limits_{x,y\in B_{\gamma\rho}(0),x\neq y}\frac{|Du(x)-Du(y)|}{|x-y|^{1-n/p}}\leq C\lambda^{\frac{1}{2n+2}},
		\end{split}
	\end{equation*}
	where $C=C(n,k,p)>0$ and $\gamma=\gamma(n,k,p)\in (0,1)$.
\end{lemma}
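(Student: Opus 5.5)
This statement is Allard's classical regularity theorem, so the plan is to follow the standard excess-decay scheme rather than anything specific to mean curvature flow. After scaling we may assume $\rho=1$. The basic quantity is the tilt excess
\[
E(x,r,T)=r^{-n}\int_{B_r(x)}|P_{T_yV}-P_T|^2\,d\mu_V(y)
\]
relative to an $n$-plane $T$. The goal is to show that $E$ decays like $r^{2\alpha}$ with $\alpha=1-n/p$, uniformly in the centre $x\in\operatorname{supp}V\cap B_{\gamma}$. Campanato's criterion then makes the approximate tangent plane a $C^{0,\alpha}$ function of $x$, and a Lipschitz-graph argument upgrades this to $\operatorname{supp}V\cap B_\gamma$ being a $C^{1,\alpha}$ graph with the stated bound.

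The first step uses the monotonicity formula with an $L^p$ mean curvature error. Since $p>n$, the quantity $e^{C\lambda r^{1-n/p}}\omega_n^{-1}r^{-n}\mu_V(B_r(x))$ is almost monotone. Combined with the hypothesis $\mu_V(B_1)\le(1+\lambda)\omega_n$ and upper semicontinuity of density (integrality gives $\Theta\ge1$ on the support), this shows that every $x\in\operatorname{supp}V\cap B_{1/2}$ has density ratios within $C\lambda^{c}$ of $1$ at all scales $r\le 1/2$. This pinching yields three things:
\begin{itemize}
\item small height excess with respect to some plane $T$;
\item small tilt excess, via the Caccioppoli-type inequality obtained by testing the first variation with $X=\phi^2(y)\,P_{T^\perp}(y-x)$;
\item multiplicity one a.e.
\end{itemize}

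The second step is the Lipschitz approximation lemma. Outside a "bad set" where a maximal function of the tilt exceeds a threshold, $\operatorname{supp}V$ coincides with the graph of a Lipschitz $f:T\to T^\perp$, with $\operatorname{Lip}f\le1$. The bad set has measure $\le C E$. Here density close to $1$ is what excludes several sheets.

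The third step is harmonic approximation. Testing the first variation against vertical vector fields shows that $f$ is almost weakly harmonic, with errors controlled by $E$ and by $\bigl(\int|H|^p\bigr)^{1/p}$. A compactness argument then produces a harmonic $h$ close to $f$ in $L^2$. Decay estimates for $h$, transferred through the Caccioppoli inequality, give the key excess-decay lemma: there is $\theta\in(0,1)$ such that
\[
E(x,\theta r,T')\le \tfrac12\theta^{2\alpha}E(x,r,T)+C\,r^{2(1-n/p)}\|H\|_{L^p}^2
\]
for a suitably tilted plane $T'$, with $|T'-T|^2\le CE$.

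I expect this excess-decay step to be the main obstacle. In particular, the nonlinear errors from the bad set must be controlled by $E^{1+\epsilon}$ rather than $E$, which requires sharpening the Lipschitz approximation bound on the bad set. Once this is done, iteration gives $E(x,\theta^k,T_k)\le C\lambda^{c}\theta^{2k\alpha}$, and the planes $T_k$ converge geometrically to a limit plane $T_x$. Summing the tilts gives $|T_x-T_y|\le C\lambda^{c}|x-y|^{\alpha}$. Finally, rotating by an orthogonal $Q$ so that $T_0$ is horizontal yields $Du(0)=0$, and tracking the powers of $\lambda$ through the iteration gives the exponent $\lambda^{1/(2n+2)}$.
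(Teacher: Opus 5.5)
The paper does not prove this lemma; it simply quotes Allard's theorem. So your outline can only be measured against the standard proof, whose architecture (monotonicity, Lipschitz approximation, harmonic approximation, excess decay, Campanato) you reproduce correctly. The genuine gap is your first step.

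Monotonicity centred at $x$, together with $\mu_V(B_1)\le(1+\lambda)\omega_n$ and $B_{1-|x|}(x)\subset B_1$, only gives $\omega_n^{-1}r^{-n}\mu_V(B_r(x))\le(1+C\lambda)(1-|x|)^{-n}$ for $r\le 1-|x|$. Upper semicontinuity only supplies the lower bound $\Theta\ge1$ on the support. For $|x|=\frac14$ the upper bound is about $(4/3)^n$. In general the pinching you get this way is of order $\lambda+n|x|$, so a $C\lambda^{c}$-pinching holds only on a ball of radius comparable to $\lambda^{c}$. That ball collapses as $\lambda\to0$ and cannot give a fixed $\gamma$.

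Passing from pinching at one centre and one scale to pinching (equivalently, flatness) on a ball of fixed size is the real content of the theorem. Everything downstream in your plan rests on it: the choice of $T$, the small height excess, the Caccioppoli bound on tilt excess, multiplicity one, and the hypotheses of your excess-decay lemma at every centre of $B_\gamma$. The usual remedy is a contradiction argument. For $\lambda_j\to0$, Allard compactness gives a stationary integral limit with $\omega_n^{-1}r^{-n}\mu(B_r(0))\le1$ and $\Theta(0)\ge1$. Hence the limit is a cone with vertex density one, hence a multiplicity-one plane. Varifold convergence then yields small tilt excess and density ratios close to one at all centres in, say, $B_{1/4}$. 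This gives only a non-explicit smallness $\epsilon(\lambda)$.

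For the same reason your last sentence is unsupported. The iteration only converts an initial excess $E_0$ into a bound $C(E_0+\lambda^2)^{1/2}$ for $\sup|Du|$ and the H\"older seminorm of $Du$. The exponent $\frac{1}{2n+2}$ therefore requires a quantitative initial estimate such as $E_0\le C\lambda^{1/(n+1)}$, or a height bound of order $\lambda^{1/(2n+2)}$, and that needs its own argument.

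There are two further points.
\begin{enumerate}
\item The statement as quoted omits the hypothesis $0\in\operatorname{supp}V$ from Allard's theorem, and without it the statement is false. A multiplicity-one plane at distance $\rho/2$ from the origin satisfies both hypotheses. Its support misses $B_{\rho/2}(0)$, whereas $Q(\operatorname{graph}u)$ passes within $C\lambda^{1/(2n+2)}\rho$ of $0$. Your argument uses this hypothesis silently (semicontinuity at $0$, the plane $T_0$), so you should state it.
\item Campanato's criterion only shows that $\operatorname{supp}V\cap B_\gamma$ lies in a $C^{1,\alpha}$ graph. That it is the full graph over $T\cap B_\gamma$ needs a separate step using $\Theta\ge1$ on the support and the mass bound.
\end{enumerate}

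By contrast, the step you single out as the main obstacle is not one. A bad-set bound of order $E$ already suffices, because the resulting errors in the weak harmonicity of $f$ are $O(E)$, which is small compared with the natural size $E^{1/2}$ of $Df$.
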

Since the second fundamental form is not preserved under varifold convergence, we introduce the notion of a curvature varifold. Curvature varifolds were introduced by Hutchinson \cite{MR825628} as a weak framework for studying possibly singular submanifolds with an $L^p$-integrable second fundamental form. Let $V$ be an integral $n$-varifold in an open set $\Omega\subset\mathbb{R}^N$. We say that $V$ is a curvature varifold if there exist functions
\[
A_{ijk}\in L^1_{\mathrm{loc}}(V)
\]
such that, for every $\varphi\in C_c^1(G_n(\Omega))$,
\[
\int_{G_n(\Omega)}
\left(
P_{ij}\frac{\partial\varphi}{\partial x_j}
+
A_{ijk}\frac{\partial\varphi}{\partial P_{jk}}
+
A_{jij}\varphi
\right)\,dV(x,P)=0.
\]
The tensor $A_{ijk}$ represents the weak tangential derivative of the tangent-plane projection $P$ and determines a generalized second fundamental form $B_V$. When $V$ is induced by a smooth submanifold, $B_V$ agrees with the classical second fundamental form.
We have the following compactness and lower-semicontinuity properties for curvature varifolds. 
\begin{theorem}[\cite{MR1412686}]\label{thm1.6}
Let $\{V_l\}$ be a sequence of curvature varifolds with boundary. For $p>1$, suppose that for every open set
$W\Subset\Omega$,
\[
\mu_{V_l}(W)
+\int_{G_n(W)}|A^{(l)}|^p\,dV_l
+\bigl|\partial V^{(l)}\bigr|\bigl(G_n(W)\bigr)
\leq c(W),
\quad\forall\,l,
\]
where $c(W)$ is a constant independent of $l$ and
\[
|A^{(l)}|
=
\sum_{i,j,k}\left|A_{ijk}^{(l)}\right|.
\]
Then the following statements hold:
\begin{enumerate}[label=(\roman*)]
    \item There exist a subsequence $\{V_{l_h}\}$ and a curvature
    varifold with boundary $V$ such that
    $V_{l_h}\rightharpoonup V$
    in the varifold sense. Moreover,
    \[
    A_{ijk}^{(l_h)}V_{l_h}
    \stackrel{*}{\rightharpoonup}
    A_{ijk}V,\quad  \partial V^{(l_h)}
    \stackrel{*}{\rightharpoonup}
    \partial V
    \]
    as Radon measures.

    \item For every convex and lower semicontinuous function $ f\colon \mathbb{R}^{k^3}\longrightarrow [0,+\infty],$
    one has
    \[
    \int_{G_n(\Omega)}f(A_{ijk})\,dV
    \leq
    \liminf_{h\to\infty}
    \int_{G_n(\Omega)}
    f\bigl(A_{ijk}^{(l_h)}\bigr)\,dV_{l_h}.
    \]
    In particular, the $L^p$-norm of the generalized second fundamental form is lower semicontinuous under curvature varifold convergence. 
\end{enumerate}
\end{theorem}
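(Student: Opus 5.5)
The statement immediately above is Theorem \ref{thm1.6}, the compactness and lower-semicontinuity theorem for curvature varifolds quoted from \cite{MR1412686}. It is cited rather than proved in the paper; the plan below is how I would prove it.

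The approach is weak compactness of measures on the Grassmann bundle, followed by passing to the limit in the defining integration-by-parts identity. Fix an exhaustion of $\Omega$ by open sets $W\Subset\Omega$. The mass bounds $\mu_{V_l}(W)\le c(W)$ let me use weak-$*$ compactness of Radon measures on $G_n(W)$, which is locally compact, together with a diagonal argument. This produces a subsequence with $V_{l_h}\rightharpoonup V$ as Radon measures on $G_n(\Omega)$. Along the same subsequence I extract:
\begin{itemize}
\item the vector measures $A^{(l_h)}_{ijk}V_{l_h}$, whose total variation on $G_n(W)$ is bounded by H\"older: $\int |A^{(l)}|\,dV_l\le \mu_{V_l}(W)^{1-1/p}\bigl(\int|A^{(l)}|^p\,dV_l\bigr)^{1/p}$;
\item the boundary measures $\partial V^{(l_h)}$, which are bounded by hypothesis.
\end{itemize}
Call the limits $\sigma_{ijk}$ and $\partial V$.

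\textbf{Step 1 (integrality).} I must show that $V$ is again an integral varifold. Integrality of $V_l$ plus the curvature identity gives control of the first variation:
\[
\delta V_l(X)=-\int A_{jij}X_i\,dV_l+\int X\,d\partial V^{(l)}.
\]
This is obtained by testing the defining identity with $\varphi(x,P)=X_i(x)$, which makes the $\partial\varphi/\partial P$ term vanish. Hence $|\delta V_l|(W)$ is uniformly bounded, and Allard's integral compactness theorem (the Allard compactness theorem above, in its total-variation form) shows that the limit is integral.

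\textbf{Step 2 (absolute continuity).} Next I show $\sigma_{ijk}\ll V$ with $L^p$ density. For $\psi\in C_c(G_n(W))$,
\[
\Bigl|\int\psi\,d\sigma\Bigr|\le\liminf\Bigl(\int|\psi|^{p'}dV_l\Bigr)^{1/p'}c(W)^{1/p}=\|\psi\|_{L^{p'}(V)}c(W)^{1/p}.
\]
By duality and Riesz representation, $\sigma_{ijk}=A_{ijk}V$ with $A_{ijk}\in L^p_{\rm loc}(V)$. Since $p>1$, this is exactly where the assumption is used.

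\textbf{Step 3 (the identity survives).} I pass to the limit in
\[
\int\Bigl(P_{ij}\partial_j\varphi+A^{(l)}_{ijk}\partial_{P_{jk}}\varphi+A^{(l)}_{jij}\varphi\Bigr)\,dV_l=-\int\varphi\, d\partial V^{(l)}.
\]
Each term converges because $\varphi$ and its derivatives are continuous with compact support in $G_n(\Omega)$, and the measures converge weakly-$*$. So $V$ is a curvature varifold with boundary and second fundamental form $A_{ijk}$. This proves (i).

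For (ii), I write the convex, lower semicontinuous $f\ge 0$ as a supremum of countably many affine functions $a_m\cdot z+b_m$. Then I use the standard Reshetnyak-type lower-semicontinuity argument for the pair of measures $(V_l, A^{(l)}V_l)$: for a partition of unity $\psi_m\ge0$ with compact support,
\[
\int\sum_m\psi_m(a_m\cdot A+b_m)\,dV=\lim\int\sum_m\psi_m(a_m\cdot A^{(l)}+b_m)\,dV_l\le\liminf\int f(A^{(l)})\,dV_l .
\]
Taking the supremum over finite families and partitions recovers $\int f(A)\,dV$ by monotone convergence. When $f=|\cdot|^p$ this gives the lower semicontinuity of the $L^p$ norm.

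The main obstacle is Step 1, namely identifying the limit as integral. This needs the first-variation bound derived from the curvature identity, including the boundary term, which has to be handled as a measure rather than through bounded mean curvature. Steps 2 and 3 are routine once weak-$*$ compactness is in place.
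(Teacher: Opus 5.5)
Your proposal is correct and is essentially the standard proof in the source the paper cites without proof (Mantegazza, extending Hutchinson's compactness theorem for curvature varifolds). That proof runs as follows: a uniform bound on $\|\delta V_l\|$ from the curvature identity tested with $\varphi=X_i(x)$; Allard's integral compactness theorem in its total-first-variation form; $L^{p'}$-duality with $p>1$ to obtain $\sigma_{ijk}=A_{ijk}V$; passage to the limit in the defining identity; and a Reshetnyak/Hutchinson-type argument for (ii). The one step you state too quickly is the end of (ii): recovering $\int f(A)\,dV$ as the supremum needs more than monotone convergence in $k$ for $\max_{m\le k}(a_m\cdot A+b_m)$ (with the zero function included among the affine minorants), namely a Lusin-type approximation of the corresponding Borel partition by continuous compactly supported $\psi_m\ge 0$ with $\sum_m\psi_m\le 1$.
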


\subsection{Interior estimates for self-shrinkers}
\begin{proposition}\label{prop6.3}
Let $\Sigma^n\subset \R^{n+1}$ be a smooth self-shrinker satisfying
\begin{equation}\label{eq:shrinker}
    H+\frac12\langle X,\nu\rangle=0.
\end{equation}
Fix $p\in \R^{n+1}$, $0<\rho\le 1$, and an $n$-dimensional linear hyperplane
$P\subset \R^{n+1}$ with unit normal $\nu_0$. Assume that a neighborhood of
$\Sigma$ near $p$ is represented as the graph
\begin{equation}\label{eq:graph}
    F(z)=p+z+u(z)\nu_0,
    \qquad z\in B^P_{\rho}(0).
\end{equation}
Suppose that
\begin{equation}\label{eq:graphbound}
    \rho^{-1}\|u\|_{L^\infty(B^P_\rho)}
    +\|Du\|_{L^\infty(B^P_\rho)}\le L
\end{equation}
and
\begin{equation}\label{eq:centerbound}
    \rho |p|\le R.
\end{equation}
Then, for every $0<\theta<1$ and every integer $k\ge 2$, there exists a constant
\[
    C_k=C_k(n,k,\theta,L,R)<\infty
\]
such that
\begin{equation}\label{eq:ukestimate}
    \sup_{B^P_{\theta\rho}(0)} |D^k u|
    \le C_k\,\rho^{1-k}.
\end{equation}

In particular, the estimate
\begin{equation}\label{eq:Aestimate}
    \sup_{F(B^P_{\theta\rho}(0))}|\nabla^m A|
    \le C_m\,\rho^{-m-1}
\end{equation}
hold on $F(B^P_{\theta\rho}(0))$. 
\end{proposition}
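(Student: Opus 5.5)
The plan is to turn the graph representation \eqref{eq:graph} into a quasilinear elliptic equation for $u$ and then bootstrap regularity by Schauder theory on a fixed domain. First I rescale. Set $v(y):=\rho^{-1}u(\rho y)$ for $y\in B^P_1(0)$, so that \eqref{eq:graphbound} becomes $\|v\|_{L^\infty(B_1^P)}+\|Dv\|_{L^\infty(B_1^P)}\le L$. The required bound \eqref{eq:ukestimate} is then equivalent to $\sup_{B_\theta^P}|D^kv|\le C_k$. Writing $X=p+z+u\nu_0$ and $\nu=(\nu_0-Du)/\sqrt{1+|Du|^2}$ (up to sign), the shrinker equation \eqref{eq:shrinker} becomes
\[
\Bigl(\delta_{ij}-\frac{D_iuD_ju}{1+|Du|^2}\Bigr)D_{ij}u=\frac{1}{2}\bigl(\langle p,\nu_0\rangle+u-\langle p+z,Du\rangle\bigr).
\]
Substituting $u(z)=\rho v(z/\rho)$ turns this into
\[
a_{ij}(Dv)D_{ij}v=\frac{\rho}{2}\bigl(\langle p,\nu_0\rangle+\rho v-\langle p+\rho y,Dv\rangle\bigr)=:f(y,v,Dv).
\]
The coefficients $a_{ij}$ are uniformly elliptic, with ellipticity constants depending only on $L$. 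The right-hand side is bounded by $C(\rho|p|+\rho^2)(1+L)\le C(R,L)$, using $\rho\le1$ and \eqref{eq:centerbound}. Moreover, as a function of $(y,v,Dv)$, $f$ is smooth with all derivatives controlled by $R$ and $L$.

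Next I gain the first derivatives. The operator is in non-divergence form with coefficients depending only on $Dv$, which is merely bounded. So I would first obtain a $C^{1,\alpha}$ bound on an intermediate ball, say $B^P_{(1+\theta)/2}$. The route I intend is to differentiate in $y_l$: $w=D_lv$ solves a divergence-form equation
\[
D_i\bigl(\tilde a_{ij}(Dv)D_jw\bigr)=D_l\bigl(\text{bounded}\bigr),
\]
since the mean curvature operator can be written in divergence form as $D_i\bigl(D_iv/\sqrt{1+|Dv|^2}\bigr)$. This equation holds in the weak sense, so De Giorgi--Nash--Moser gives $Dv\in C^\alpha$, with norm bounded in terms of $n,L,R$. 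I expect this to be the main obstacle. Alternatively, one could first use the smoothness of $\Sigma$ qualitatively and then run the argument only for quantitative bounds, so that differentiating the equation is justified.

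With $Dv\in C^\alpha$, the coefficients $a_{ij}(Dv)$ and the right-hand side $f$ are $C^\alpha$. Interior Schauder estimates then give $\|v\|_{C^{2,\alpha}}\le C$ on a smaller ball. Differentiating the equation repeatedly and applying Schauder at each stage, on a finite nested chain of balls shrinking to $B_\theta^P$, gives $\|v\|_{C^{k,\alpha}(B_\theta^P)}\le C_k(n,k,\theta,L,R)$ for all $k$. Scaling back yields \eqref{eq:ukestimate}.

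Finally, \eqref{eq:Aestimate} follows from the graph formulas. The second fundamental form is $A=\frac{D^2u}{\sqrt{1+|Du|^2}}$ expressed in the induced metric $g_{ij}=\delta_{ij}+D_iuD_ju$, and $\nabla^mA$ is a universal polynomial in $D^2u,\dots,D^{m+2}u$ and $(1+|Du|^2)^{-1/2}$, together with $g^{-1}$. Counting degrees, each term scales like $\rho^{-m-1}$, which is the claimed bound.
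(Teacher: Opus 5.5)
Your proposal is correct and follows essentially the same route as the paper. Both arguments rescale by $v(y)=\rho^{-1}u(\rho y)$ and rewrite the shrinker equation as a uniformly elliptic quasilinear equation whose lower-order terms are controlled by $\rho|p|\le R$ and $\rho\le 1$. Both then obtain a $C^{1,\alpha}$ bound, bootstrap with Schauder estimates, scale back, and read off $\nabla^m A$ from the graph formulas.

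The only differences are minor. You make the paper's appeal to ``standard interior quasilinear estimates'' explicit by differentiating the divergence form $D_i\bigl(D_iv/\sqrt{1+|Dv|^2}\bigr)$ and applying De Giorgi--Nash--Moser, which is how that estimate is proved. Your right-hand side also has the opposite sign to the paper's convention $H=\operatorname{div}\bigl(Du/\sqrt{1+|Du|^2}\bigr)$, which does not affect any of the estimates.
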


\begin{proof}
Write
\[
    p=p^T+p^\perp\nu_0,
    \qquad p^T\in P,
    \qquad p^\perp=\langle p,\nu_0\rangle.
\]
After identifying $P$ with $\R^n$, the graph is
\[
    F(z)=(p^T+z,\,p^\perp+u(z)),
\]
and its upward unit normal is
\[
    \nu=\frac{(-Du,1)}{\sqrt{1+|Du|^2}}.
\]
Hence
\[
    \langle F,\nu\rangle
    =\frac{p^\perp+u-(p^T+z)\cdot Du}
    {\sqrt{1+|Du|^2}}.
\]
Moreover,
\[
    H
    =\operatorname{div}\left(\frac{Du}{\sqrt{1+|Du|^2}}\right)
    =\frac{1}{\sqrt{1+|Du|^2}}
      a^{ij}(Du)u_{ij},
\]
where
\begin{equation}\label{eq:aij}
    a^{ij}(q)=\delta_{ij}-\frac{q_iq_j}{1+|q|^2}.
\end{equation}
Therefore \eqref{eq:shrinker} is equivalent to
\begin{equation}\label{eq:graphPDE}
    a^{ij}(Du)u_{ij}
    =\frac12\Big((p^T+z)\cdot Du-p^\perp-u\Big).
\end{equation}
Since $|Du|\le L$, the coefficient matrix is uniformly elliptic:
\begin{equation}\label{eq:ellipticity}
    \frac{1}{1+L^2}|\xi|^2
    \le a^{ij}(Du)\xi_i\xi_j
    \le |\xi|^2
\end{equation}
for every $\xi\in\R^n$.

Now rescale to the unit ball by setting
\[
    z=\rho y,
    \qquad v(y)=\rho^{-1}u(\rho y),
    \qquad y\in B^P_1(0).
\]
Then
\[
    Dv(y)=Du(\rho y),
    \qquad D_y^k v=\rho^{k-1}D_z^k u,
\]
and \eqref{eq:graphPDE} becomes
\begin{equation}\label{eq:scaledPDE}
    a^{ij}(Dv)v_{ij}
    =\frac12\Big(b\cdot Dv-c\Big)
      +\frac{\rho^2}{2}\Big(y\cdot Dv-v\Big),
\end{equation}
where
\[
    b:=\rho p^T,
    \qquad c:=\rho p^\perp.
\]
By \eqref{eq:centerbound},
\[
    |b|\le R,
    \qquad |c|\le R.
\]
Also, by \eqref{eq:graphbound},
\[
    \|v\|_{L^\infty(B_1)}+\|Dv\|_{L^\infty(B_1)}\le L.
\]
Thus \eqref{eq:scaledPDE} is a uniformly elliptic quasilinear equation on the
unit ball whose ellipticity constants depend only on $L$, while all lower-order
coefficients are bounded in terms of $L$ and $R$ (and $\rho^2\le 1$).
Standard interior quasilinear elliptic estimates first give, for every
$0<\theta_1<1$,
\[
    \|v\|_{C^{1,\alpha}(B_{\theta_1})}
    \le C(n,\theta_1,L,R)
\]
for some $\alpha\in(0,1)$. Consequently $a^{ij}(Dv)$ is uniformly
$C^{\alpha}$ on smaller balls, and the Schauder estimate applied to
\eqref{eq:scaledPDE} yields
\[
    \|v\|_{C^{2,\alpha}(B_{\theta_2})}
    \le C(n,\theta_2,L,R)
\]
for each $\theta_2<\theta_1$. By bootstrapping, for every integer $k\ge2$ and every
$0<\theta<1$,
\begin{equation}\label{eq:vkestimate}
    \|D^k v\|_{L^\infty(B_\theta)}
    \le C_k(n,k,\theta,L,R).
\end{equation}
Scaling back with $D_y^k v=\rho^{k-1}D_z^k u$ proves
\eqref{eq:ukestimate}.

Finally, on a graph,
\[
    g_{ij}=\delta_{ij}+u_i u_j,
    \qquad
    h_{ij}=\frac{u_{ij}}{\sqrt{1+|Du|^2}}.
\]
 Each covariant derivative $\nabla^mA$ can therefore be written as
$D^{m+2}u$ plus a sum of products of lower derivatives of $u$. Using \eqref{eq:ukestimate} in
these expressions, every term has the scale $\rho^{-m-1}$, and hence
\eqref{eq:Aestimate} follows.
\end{proof}

\subsection{Local short-time existence of mean curvature flow}
$\\$

\begin{definition}
    We say an immersed (or embedded) submanifold $M^n\subset\R^{n+m}$ is properly immersed (or embedded) in $B_r(x_0)\subset\R^{n+m}$, if either $M$ is closed or $\partial M$ has distance at least $r$ from the point $x_0$.
    We say a submanifold $M^n\subset\R^{n+m}$ is properly immersed (or embedded) in $\R^{n+m}$, if either $M$ is closed or there is a point $x_0\in\R^{n+m}$ such that $M$ is properly immersed (or embedded) in $B_r(x_0)$ for any $r>0$.
\end{definition}

When a mean curvature flow is restricted to an open subset of the ambient manifold, its parameter domain may depend on time. 
We therefore use the following convention.

\begin{definition}
A smooth mean curvature flow \(\{M_t\}_{t\in[0,T]}\) in an open set \(\Omega\subset\mathbb R^{n+m}\) is a family of immersed submanifolds satisfying the mean curvature flow equation (\ref{eq0.1}) locally in space and time.
The notation \(M_t\) retains the immersion data, and all integrals are taken with respect to the induced measure, counting multiplicities. 
Empty time slices are allowed.
\end{definition}

The metric on a local region can be made complete and of bounded geometry by performing a conformal transformation on the ambient space, while leaving the interior unchanged.

\begin{lemma}[Lemma 2.4 of \cite{MR4836053}, Lemma 2.1 of \cite{cheng2026meancurvatureflowapproach}]\label{lem:completion}
    There exist constants $C_1=C_1(n,m)$, $C_2=C_2(n,m)$ and $c=c(n,m)$ such that the following holds.
    Let $\mathbf x:M^n\to\R^{n+m}$ be a submanifold properly immersed in an open set $V\subset\R^{n+m}$ such that 
    \[
    %(V)_1\cap M\neq\emptyset,\quad
    \sup_{V\cap M}|A|\leq 1.
    \]
    Then, for any $l>C_1$, there exist an open subset $\tilde{V}\subset V$ and a metric $h$ defined on $\tilde{V}$ such that 
    \begin{enumerate}[label=(\roman*)]
        \item 
        $(\tilde{V},h)$ is complete with $\mathrm{inj}_{h}(\tilde{V})\geq cl^{-\frac12}$ and $\sup_{\tilde{V}}|\nabla^j\mathrm{Rm}|_{h}\leq \Lambda_j(n,m)l^{1+j/2}$ for each $j\geq0$;
        \item 
        $\mathbf x:\tilde{V}\cap M \to (\tilde{V},h)$ is a complete nonempty submanifold with 
        $\sup_{\tilde{V}\cap M}|A|_h\leq l$;
        \item
        $(V)_1\subset\tilde{V}$ and $h\equiv g_{e}$ on $(\tilde{V})_{C_2l^{-1/2}}$.
    \end{enumerate}
\end{lemma}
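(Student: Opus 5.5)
The plan is to obtain $h$ as a conformal modification $h=e^{2f}g_e$ of the Euclidean metric on $\tilde V:=V$ (shrunk slightly if needed). The factor $f$ vanishes away from $\partial V$ and blows up logarithmically at $\partial V$, and it is tuned to the scale $\rho_0:=l^{-1/2}$. The regions $(V)_1$ and $(\tilde V)_{C_2l^{-1/2}}$ are defined through Euclidean balls, so they only see the Euclidean distance $d(x)=\operatorname{dist}(x,\partial V)$.

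\emph{Step 1: construction of $f$.} Let $\rho$ be a regularized distance function on $V$ in the sense of Whitney/Stein. It satisfies $c_0 d\le\rho\le d$ and $|D^j\rho|\le C_j\rho^{1-j}$ for all $j\ge1$, with constants depending only on $n,m,j$. Fix a smooth $\psi:(0,\infty)\to[0,\infty)$ with $\psi(s)=0$ for $s\ge 2$, $\psi(s)=-\log s$ for $s\le 1/2$, and $\psi',\psi''$ bounded on $[1/2,2]$. Set $f:=\psi(\rho/\rho_0)$. Then $h\equiv g_e$ wherever $\rho\ge 2\rho_0$, which holds on $(V)_{C_2 l^{-1/2}}$ for $C_2:=2/c_0$. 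Taking $l>C_1\ge C_2^2$ gives $C_2l^{-1/2}<1$, so $(V)_1\subset\tilde V$ and (iii) holds.

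\emph{Step 2: curvature bounds.} For a conformal metric $e^{2f}g_e$, the curvature satisfies $|\mathrm{Rm}|_h\le C e^{-2f}(|D^2f|+|Df|^2)$, with analogous formulas for $|\nabla^j\mathrm{Rm}|_h$ in terms of $e^{-(2+j)f}$ times derivatives of $f$ up to order $j+2$. There are two regimes.
\begin{itemize}
\item In the transition region $\rho\in[\rho_0/2,2\rho_0]$ we have $e^{-f}\approx1$ and $|D^kf|\le C_k\rho_0^{-k}$.
\item In the region $\rho<\rho_0/2$ we have $f=\log(\rho_0/\rho)$, so $e^{-f}=\rho/\rho_0$ and $|D^kf|\le C_k\rho^{-k}$, giving $e^{-kf}|D^kf|\le C_k\rho_0^{-k}$.
\end{itemize}
Either way, all curvature quantities scale like those of a hyperbolic-type metric at scale $\rho_0$, and we obtain $|\nabla^j\mathrm{Rm}|_h\le\Lambda_j\rho_0^{-2-j}=\Lambda_jl^{1+j/2}$. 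For the submanifold, the second fundamental form transforms as $A_h=e^{-f}\bigl(A-(Df)^\perp\otimes g\bigr)$ up to a tangential term. Hence $|A|_h\le e^{-2f}\cdot e^{f}\bigl(|A|+n|Df|\bigr)\le 1+C\rho_0^{-1}=1+Cl^{1/2}\le l$ once $l>C_1$, which gives (ii) apart from completeness.

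\emph{Step 3: completeness and injectivity radius.} Let $\gamma$ be a path tending to $\partial V$. Its $h$-length is at least $\int_0^{\rho_0/2} \frac{\rho_0}{s}\,\frac{ds}{|D\rho|}=\infty$, so $(\tilde V,h)$ is complete. Since $M$ is properly immersed in $V$, any divergent path in $M\cap\tilde V$ must approach $\partial V$, and therefore $M\cap\tilde V$ is complete for the induced metric; it is nonempty because it meets $(V)_1$. For the injectivity radius, the curvature is bounded by $\Lambda_0 l$, so by the Cheeger--Gromov--Taylor estimate it suffices to bound $\mu_h(B^h_{\rho_0}(x))\ge c\rho_0^{n+m}$ from below. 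This holds because on an $h$-ball of radius $\sim\rho_0$ around $x$ the factor $e^{f}$ varies by at most a bounded ratio: $|D\log e^f|\lesssim \max(\rho_0,\rho)^{-1}$, and the $h$-ball is a Euclidean ball of radius $\sim\rho_0e^{-f(x)}\lesssim\rho$. So the ball is uniformly bi-Lipschitz to a Euclidean ball of $h$-radius $\rho_0$, and this yields $\mathrm{inj}_h\ge c\,l^{-1/2}$.

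I expect the main obstacle to be bookkeeping rather than any single idea. One has to verify the $e^{-kf}$ weights in the higher-derivative curvature formulas uniformly across the transition region and the log region. One also has to check that the lower bound on $|D\rho|$ needed in the length integral holds near $\partial V$. If that lower bound fails, I would instead bound the length from below via $|D\rho|\le C$ and the fact that $\rho\to0$ along $\gamma$.
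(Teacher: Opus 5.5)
Your construction $h=e^{2f}g_e$, with $f=\psi(\rho/\rho_0)$ and $\rho_0=l^{-1/2}$, is correct and is essentially the Hochard-type conformal completion the paper invokes by citation: the paper gives no proof of its own, only the remark that a conformal change of the ambient metric that is trivial in the interior suffices, and your curvature, second fundamental form, completeness and injectivity-radius estimates check out. Only small fixes are needed: the variation bound should read $|Df|\lesssim\min(\rho_0,\rho)^{-1}$ rather than with $\max$; the length estimate needs the upper bound $|D\rho|\le C$, not a lower bound, so your fallback argument is the right one; and nonemptiness should come from $V\cap M\neq\emptyset$ with $\tilde V=V$, since $M$ meeting $(V)_1$ is not among the stated hypotheses.
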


\iffalse
\begin{lemma}\label{lem:general short-time existence}
    Let $(\bar{M}^{n+m},\bar{g})$ be a complete manifold with bounded geometry and  $M^n\subset\bar{M}^{n+m}$ be a complete submanifold.
    Suppose that 
    \[
    \sup_{M}|A|^2\leq \rho^{-2}
    \]
    for some $\rho>0$.
    Then there exists a smooth mean curvature flow $\{M_t^n\}_{t\in[0,\gamma\rho^2]}$ in $\bar{M}$ with initial data $M_0=M$ such that
    \[
    \sup_{t\in[0,\gamma\rho^2]}\sup_{M_t}|A|^2\leq \lambda\rho^{-2}.
    \]
    Here, $\gamma=\gamma(n,m)$, and $\lambda$ depends on $n$, $m$ and the geometry of $(\bar{M},\bar{g})$.
\end{lemma}
\begin{proof}
    For the isometric immersion $\mathbf x_0:M\to(\bar{M},\bar{g})$ with $|A|^2\leq\rho^{-2}$, consider the rescaling 
    $\mathbf x_0: M\to (\bar{M},\rho^{-2}\bar{g})$ with $|A|^2\leq1$.
    By Theorem \ref{thm:main}, there exists a mean curvature flow $\mathbf x:M\times[0,T]\to (\bar{M},\rho^{-2}\bar{g})$ with initial data $\mathbf x(0)=\mathbf x_0$ and $|A|^2(\cdot,t)\leq C_0$, where $T=T(n,m)$ and $C_0=C_0(n,m,i_0,\Lambda_0,\Lambda_1)$.
    Rescaling back yields a mean curvature flow $\mathbf x:M\times[0,T\rho^2]\to (\bar{M},\bar{g})$ with initial data $\mathbf x_0$ and $|A|^2(\cdot,t)\leq C_0\rho^{-2}$.
\end{proof}
\fi

Combining the completion trick from Lemma \ref{lem:completion} and the short-time existence Theorem \ref{thm:main}, we obtain the following local short-time existence. 

\begin{proposition}\label{prop:local short-time existence}
    There exist constants $\Gamma=\Gamma(n,m)$ and $\Lambda=\Lambda(n,m)$ such that the following holds.
    Let $M_0^n\subset\R^{n+m}$ be a submanifold properly immersed in $B_R\subset\R^{n+m}$ such that
    \[
    %B_{R-\rho}\cap M_0\neq\emptyset,\quad 
    \sup_{B_R\cap M_0}|A|^2\leq \rho^{-2}
    \]
    for some $\rho>0$ and $R>2\rho$.
    Then there exists a smooth mean curvature flow $\{M_t^n\}_{t\in[0,\Gamma\rho^2]}$ properly immersed in $B_{R-2\rho}$ with initial data $B_{R-2\rho}\cap M_0$ such that 
    \[
    \sup_{t\in[0,\Gamma\rho^2]}\sup_{B_{R-2\rho}\cap M_t}|A|^2\leq \Lambda\rho^{-2}.
    \]
\end{proposition}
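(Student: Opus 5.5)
The plan is to reduce to the complete, bounded-geometry setting of Lemma \ref{lem:completion}, run the global short-time existence result (Theorem \ref{thm:main}) there, and then scale back and restrict to $B_{R-2\rho}$. By scaling we may assume $\rho=1$: replace $M_0$ by $\rho^{-1}M_0$, so that it is properly immersed in $B_{R/\rho}$ with $R/\rho>2$ and $|A|\le 1$ there. The final flow on $[0,\Gamma]$ in $B_{R/\rho-2}$ with $|A|^2\le\Lambda$ then rescales to the claimed flow on $[0,\Gamma\rho^2]$ with $|A|^2\le\Lambda\rho^{-2}$. So set $V=B_R$ with $R>2$ and $\sup_{V\cap M_0}|A|\le1$.

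\textbf{Step 1 (completion).} Fix once and for all some $l=l(n,m)>C_1$, for instance $l=\max\{C_1+1,(2C_2)^2,4\}$, so that $C_2l^{-1/2}\le \tfrac12$. Lemma \ref{lem:completion} gives an open set $\tilde V\supset (B_R)_1=B_{R-1}$ and a complete metric $h$ on $\tilde V$ with the following properties:
\begin{itemize}
\item injectivity radius at least $c\,l^{-1/2}$ and curvature bounds $|\nabla^j\mathrm{Rm}|_h\le\Lambda_j l^{1+j/2}$, all depending only on $(n,m)$;
\item $\tilde V\cap M_0$ is a complete submanifold of $(\tilde V,h)$ with $|A|_h\le l$;
\item $h=g_e$ on $(\tilde V)_{1/2}$, which contains $B_{R-3/2}$.
\end{itemize}

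\textbf{Step 2 (short-time existence).} Apply Theorem \ref{thm:main} in $(\tilde V, l^{-1}h)$, or equivalently with $|A|^2\le l^2$ rescaled. Since all geometric constants depend only on $(n,m)$, this produces a smooth complete mean curvature flow $\tilde M_t$ in $(\tilde V,h)$ on $[0,T_0]$, with $T_0=T_0(n,m)$ and $\sup|A|_h^2\le C_0(n,m)$, starting from $\tilde V\cap M_0$.

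\textbf{Step 3 (restriction).} On the region where $h=g_e$, the $h$-mean curvature flow is the Euclidean mean curvature flow, and the $h$- and Euclidean second fundamental forms coincide. So it remains to ensure that the portion of $\tilde M_t$ lying in $B_{R-2}$ stays inside the Euclidean region $B_{R-3/2}$ throughout the flow, and that it is properly immersed there. For this I would use a speed bound: $|\partial_t\mathbf x|=|H|_h\le \sqrt n\,|A|_h\le \sqrt{nC_0}$.

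Choose $\Gamma=\min\{T_0,(4\sqrt{nC_0})^{-1}\}$. Then every point moves by $h$-distance at most $1/4$ over $[0,\Gamma]$. Consequently:
\begin{itemize}
\item trajectories that meet $B_{R-2}$ at some time stay in $B_{R-7/4}\subset B_{R-3/2}$ for all $t\in[0,\Gamma]$, where $h=g_e$ and the paths have Euclidean length equal to their $h$-length;
\item points that start outside $B_{R-3/2}$ never enter $B_{R-2}$.
\end{itemize}
Hence $M_t:=\tilde M_t\cap B_{R-2}$ is a Euclidean mean curvature flow with $|A|^2\le C_0=:\Lambda$. It is properly immersed in $B_{R-2}$, because $\tilde M_t$ is complete and any boundary of the restricted piece lies on $\partial B_{R-2}$. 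At $t=0$ it has initial data $B_{R-2}\cap M_0$.

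\textbf{Main obstacle.} The delicate step is the distance/containment argument in Step 3. A trajectory could in principle leave the Euclidean region, where $h\ne g_e$, and come back. This is excluded by comparing lengths of trajectory paths using the speed bound and the fact that $h=g_e$ on $B_{R-3/2}$: a path leaving $B_{R-3/2}$ from $B_{R-2}$ must have Euclidean length, hence $h$-length within the Euclidean region, at least $1/2>1/4$. A secondary issue is that Theorem \ref{thm:main} must be applicable with constants depending only on the bounded-geometry data, which here depend only on $(n,m)$ thanks to the fixed choice of $l$.
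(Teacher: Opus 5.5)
Your route is the paper's: scale to $\rho=1$, apply Lemma~\ref{lem:completion} with a fixed $l=l(n,m)$, run Theorem~\ref{thm:main} in $(\tilde V,h)$, and restrict to $B_{R-2}$, where $h=g_e$. No rescaling of $h$ is needed in Step 2, and $l^{-1}h$ would be the wrong normalization anyway: with $l$ fixed, $\kappa=l$ and all bounded-geometry constants already depend only on $(n,m)$.

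\textbf{The gap is in properness.} Completeness of the induced metric on $\tilde M_t$ does not imply that $F_t^{-1}(K)$ is compact for compact $K\subset B_{R-2}$. For example, a curve of bounded curvature spiralling onto a circle is complete but not proper. What gives properness is bounded displacement together with properness of the initial immersion, and you already have the ingredients.
\begin{enumerate}
\item By your containment argument, if $F_t(p)\in K$, then the whole trajectory of $p$ lies in the Euclidean region and has length at most $\tfrac14$.
\item So $F_0(p)$ lies in the closed Euclidean $\tfrac14$-neighbourhood $K'$ of $K$, which is a compact subset of $B_{R-7/4}$.
\item Hence $F_t^{-1}(K)$ is a closed subset of $F_0^{-1}(K')=\mathbf x_0^{-1}(K')$. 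The latter is compact because $M_0$ is properly immersed in $B_R$.
\item The domain has no boundary, since $\tilde M_0\subset\mathbf x_0^{-1}(B_R)$ avoids $\partial M_0$.
\end{enumerate}
This is the paper's argument, with one difference. The paper uses the $h$-displacement bound $D=\sqrt{n\Lambda}\,\Gamma$ and compactness of the closed $h$-neighbourhood $K_D$ in the complete manifold $(U,h)$. It therefore needs no smallness of $\Gamma$.

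Relatedly, the ``main obstacle'' you single out is not an obstacle. A mean curvature flow in an open set only has to satisfy the equation locally in space and time, and time-dependent parameter domains are allowed. So restricting $F_t$ to $F_t^{-1}(B_{R-2})$ at each time already gives a Euclidean flow with $|A|^2\le\Lambda$, even if some trajectory left the Euclidean region and came back. Your extra restriction $\Gamma\le(4\sqrt{nC_0})^{-1}$ is harmless. Its only real payoff is that it lets the properness check above use Euclidean rather than $h$-neighbourhoods.
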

\begin{proof}
    By parabolic scaling, it suffices to prove the case $\rho=1$. 
    Choose $l_0=l_0(n,m)>C_1$ sufficiently large such that $C_2l_0^{-\frac12}<1$.
    For the isometric immersion $\mathbf x_0:M_0\to\R^{n+m}$, by Lemma \ref{lem:completion} there exist an open subset $B_{R-1}\subset U\subset B_R$ and a metric $h$ on $U$ such that $(U,h)$ is a complete manifold and 
    \[
    \mathrm{inj}_{h}(U)\geq i_0(n,m)>0,\quad
    \sup_U|\nabla^j\mathrm{Rm}(h)|_{h}\leq K_j(n,m),\quad
    \forall\,j\geq0. 
    \]
    Moreover, setting $\tilde{M}_0:=\mathbf x_0^{-1}(U)$, the induced metric $\mathbf x_0^* h$ on $\tilde{M}_0$ is complete, and the restricted immersion satisfies 
    \[
    \sup_{\tilde{M}_0}|A|_h^2\leq l_0^2.
    \]
    The completion lemma also gives $h\equiv g_e$ on $(U)_1$, which implies $h\equiv g_e$ on $B_{R-2}$.
    By Theorem \ref{thm:main}, there exists a smooth mean curvature flow 
    \[
    F:\tilde{M}_0\times[0,T_0]\to(U,h)
    \]
    with initial data $F(\cdot,0)=\mathbf x_0|_{\tilde{M}_0}$ such that 
    \begin{equation}\label{ineq:local short-time existence-1}
    \sup_{t\in[0,\Gamma]}\sup_{\tilde{M}_0}|A(\cdot,t)|_h^2\leq \Lambda.
    \end{equation}
    Here, $\Gamma=\Gamma(n,m)$, $\Lambda=\Lambda(n,m)$.
    Restricting this flow to $B_{R-2}$, where $h\equiv g_e$, yields a smooth Euclidean mean curvature flow with initial data $B_{R-2}\cap M_0$ and 
    \[
    \sup_{t\in[0,\Gamma]}\sup_{F_t(\tilde{M}_0)\cap B_{R-2}}|A(\cdot,t)|_{g_e}^2\leq \Lambda.
    \]
    By (\ref{ineq:local short-time existence-1}) we have
    \begin{equation}\label{ineq:local short-time existence-2}
    d_h(\mathbf x_t(p),\mathbf x_0(p))\leq \sqrt{n\Lambda}\Gamma:=D,
    \quad \forall\,p\in\tilde{M}_0,
    \quad \forall\,t\in[0,\Gamma].
    \end{equation}
    For any compact set $K\Subset B_{R-2}$, since $(U,h)$ is complete, its neighborhood 
    \[
    K_{D}:=\{z\in U\,\big|\,d_h(z,K)\leq D\}
    \]
    is compact.
    Since $F_0=\mathbf x_0|_{\tilde{M}_0}$ is proper, 
    \[
    F_0^{-1}(K_{D})\subset \tilde{M}_0
    \]
    is compact.
    By (\ref{ineq:local short-time existence-2}), for any $t\in[0,\Gamma]$, 
    \[
    F_t^{-1}(K)\subset F_0^{-1}(K_D) 
    \]
    is compact.
    Since 
    \[
    \tilde{M}_0\subset\mathbf x_0^{-1}(B_R)\subset M_0^\circ
    \]
    has no boundary, $F_t^{-1}(B_{R-2})\subset \tilde{M}_0$ has no boundary either.
    Hence, for any $t\in[0,\Gamma]$, 
    \[
    F_t:F_t^{-1}(B_{R-2})\to B_{R-2}
    \]
    is a proper immersion in $B_{R-2}$.
    This finishes the proof.
\end{proof}

\subsection{Distance comparison}

\begin{lemma}\label{lem:distance distortion}
    Let $\mathbf x:M^n\times[0,T]\to\R^{n+m}$ be a smooth mean curvature flow with $p,q\in M$.
    If 
    \[
    |A|^2(p,t)\leq f(t), \quad |A|^2(q,t)\leq g(t), \quad\forall\,t\in[0,T], 
    \]
    then for any $[a,b]\subset[0,T]$, 
    \begin{gather*}
        \big||\mathbf x(p,a)-\mathbf x(q,a)|-|\mathbf x(p,b)-\mathbf x(q,b)|\big|\leq \frac\beta4\int_a^b(\sqrt f+\sqrt g),\\
        |\mathbf x(p,a)-\mathbf x(p,b)|\leq\frac\beta4\int_a^b\sqrt f.
    \end{gather*}
    In particular, if 
    \[
    |A|^2(p,t)\leq \alpha t^{-1},
    \]
    then
    \[
    |\mathbf x(p,a)-\mathbf x(p,b)|\leq \frac\beta2(\sqrt{\alpha a}-\sqrt{\alpha b}).
    \]
\end{lemma}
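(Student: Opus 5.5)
The plan is to prove the distance estimate by differentiating in time along the flow, with $p,q$ fixed in the parameter manifold $M$, and then integrate from $a$ to $b$. The mean curvature flow equation gives $\partial_t\mathbf x(p,t)=\vec H(p,t)$. By Cauchy--Schwarz applied to the trace of the second fundamental form,
\[
|\vec H|\le \sqrt n\,|A|\le \sqrt n\sqrt{f(t)}\quad\text{at }(p,t),
\]
and similarly $|\vec H|\le\sqrt n\sqrt{g(t)}$ at $(q,t)$. Since $\beta/4=\sqrt n$, this is exactly the pointwise speed bound we need.

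The steps, in order:

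\begin{enumerate}
\item For the single-point estimate, write $\mathbf x(p,b)-\mathbf x(p,a)=\int_a^b\vec H(p,t)\,dt$. Taking norms and using the speed bound gives
\[
|\mathbf x(p,a)-\mathbf x(p,b)|\le\sqrt n\int_a^b\sqrt f\,dt=\frac\beta4\int_a^b\sqrt f\,dt .
\]
\item For the two-point estimate, set $d(t)=|\mathbf x(p,t)-\mathbf x(q,t)|$. Rather than differentiate $d$, which fails to be smooth where it vanishes, use the triangle inequality twice:
\[
|d(a)-d(b)|\le|\mathbf x(p,a)-\mathbf x(p,b)|+|\mathbf x(q,a)-\mathbf x(q,b)|.
\]
Applying step 1 to $p$ and to $q$ then gives the bound $\frac\beta4\int_a^b(\sqrt f+\sqrt g)$.
\item For the special case $f(t)=\alpha t^{-1}$, compute
\[
\int_a^b\sqrt{\alpha/t}\,dt=2\sqrt\alpha\,(\sqrt b-\sqrt a),
\]
so the distance is at most $\frac\beta2(\sqrt{\alpha b}-\sqrt{\alpha a})$.
\end{enumerate}

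In step 3 the stated formula is written with the sign reversed: for $a\le b$ the bound should be the absolute value, $\frac\beta2(\sqrt{\alpha b}-\sqrt{\alpha a})$. I would interpret the statement this way. Note also that the integral is finite even when $a=0$, because $t^{-1/2}$ is integrable near $0$.

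There is no real obstacle here. The only points needing care are that the time-dependent bounds hold along the fixed parameter points $p,q$, and that the triangle-inequality argument in step 2 avoids any differentiability issue for $d(t)$.
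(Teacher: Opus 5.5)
Your proof is correct and essentially the paper's argument: the paper bounds $|\partial_t(\mathbf x(p,t)-\mathbf x(q,t))|\le|\vec H(p,t)|+|\vec H(q,t)|\le\frac\beta4(\sqrt f+\sqrt g)$ via $|\vec H|\le\sqrt n|A|$ and integrates over $[a,b]$. Your triangle-inequality split into two single-point displacements is the same estimate written slightly differently. You are also right that the special case should read $\frac\beta2(\sqrt{\alpha b}-\sqrt{\alpha a})$ for $a\le b$; this is the form actually used later, e.g.\ in Lemma~\ref{lem:shrinking ball}.
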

\begin{proof}
    Let $Y(t)=\mathbf x(p,t)-\mathbf x(q,t)$ for $t\in[0,T]$.
    Since
    \[
    \partial_t|Y|^2
    =2\langle Y, \partial_t Y\rangle
    =-2\langle Y,\mathrm{H}(p,t)+\mathrm{H}(q,t) \rangle,
    \]
    we have
    \[
    |\partial_t Y|\leq |\mathrm{H}(p,t)|+|\mathrm{H}(q,t)|\leq \frac\beta4(\sqrt f+\sqrt g).
    \]
    Integrating both sides yields the first statement.
    The second statement can be proved similarly.
\end{proof}

\begin{lemma}\label{lem:shrinking ball}
Fix $x_0\in\R^{n+m}$.
Let $\{M_t^n\}_{t\in[0,T]}$ be a smooth mean curvature flow in $\R^{n+m}$ satisfying
\[
\sup_{B_r(x_0)\cap M_t}|A|^2\leq \alpha t^{-1}.
\]
For $y_0\in B_r(x_0)\cap M_0$, 
if $B_{\sigma}(y_0)\subset B_r(x_0)$ with $\sigma\geq\frac\beta2\sqrt{\alpha T}$, 
then for the forward trajectory $\{y_t\}$ of $y_0$, 
\[
B_{\sigma-\frac{\beta}{2}\sqrt{\alpha t}}(y_t)
\subset B_\sigma(y_0),
\quad\forall\,t\in[0,T].
\]
For $z_T\in B_r(x_0)\cap M_T$, if
\(
B_{\rho}(z_T)\subset B_r(x_0)
\)
with $\rho\geq \frac\beta2\sqrt{\alpha T}$,
then for any backward trajectory $\{z_t\}$ of $z_T$, 
\[
B_{\rho-\frac{\beta}{2}(\sqrt{\alpha T}-\sqrt{\alpha t})}(z_t)
\subset B_\rho(z_T),
\quad\forall\,t\in[0,T].
\]
\end{lemma}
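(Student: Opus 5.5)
The statement to prove is Lemma \ref{lem:shrinking ball}. The plan is to reduce both inclusions to the pointwise displacement bound of Lemma \ref{lem:distance distortion}, after checking that the trajectory never leaves $B_r(x_0)$. Only then does the curvature hypothesis $|A|^2\le\alpha t^{-1}$ apply along the trajectory. The elementary fact used twice is this: if $|y_t-y_0|\le d$, then $B_{\sigma-d}(y_t)\subset B_\sigma(y_0)$ by the triangle inequality. It therefore suffices to show
\[
|y_t-y_0|\le \tfrac{\beta}{2}\sqrt{\alpha t}
\qquad\text{and}\qquad
|z_t-z_T|\le \tfrac{\beta}{2}\bigl(\sqrt{\alpha T}-\sqrt{\alpha t}\bigr).
\]

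For the forward statement, I would run a continuity (first exit time) argument. Let $t^*=\sup\{s\in[0,T]: y_\tau\in B_\sigma(y_0)\ \text{for all}\ \tau\in[0,s]\}$; since $y_0\in B_\sigma(y_0)$ is open and the trajectory is continuous, $t^*>0$. On $[0,t^*)$ we have $y_\tau\in B_r(x_0)$, so $|A|^2(y,\tau)\le\alpha\tau^{-1}$. Lemma \ref{lem:distance distortion} (the speed is at most $|H|\le\sqrt n|A|\le\frac\beta4|A|$, integrated from $0$ to $t$) then gives $|y_t-y_0|\le\frac\beta4\int_0^t\sqrt{\alpha/\tau}\,d\tau=\frac\beta2\sqrt{\alpha t}$. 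The singular integrand at $\tau=0$ is integrable, so I would apply the lemma on $[\epsilon,t]$ and let $\epsilon\to0$.

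The main obstacle is the boundary case $\sigma=\frac\beta2\sqrt{\alpha T}$, where this estimate alone does not prevent $y_t$ from reaching $\partial B_\sigma(y_0)$. I would handle it by working with $\sigma'$ slightly larger than $\sigma$ where possible. If that is not available, I would use that the strict inequality $\frac\beta2\sqrt{\alpha t}<\sigma$ holds for $t<T$. Then $|y_{t^*}-y_0|\le\frac\beta2\sqrt{\alpha t^*}<\sigma$ whenever $t^*<T$, which contradicts maximality of $t^*$, so $t^*=T$. At $t=T$ the displacement bound persists by continuity. Finally, $B_{\sigma-\frac\beta2\sqrt{\alpha t}}(y_t)$ is empty or contained in $B_\sigma(y_0)$ by the triangle inequality, as noted.

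The backward statement is symmetric. Fix a backward trajectory $\{z_t\}_{t\le T}$ and let $t_*=\inf\{s: z_\tau\in B_\rho(z_T)\ \text{for}\ \tau\in[s,T]\}$. On $(t_*,T]$ the curvature bound holds along $z$. Integrating the speed from $t$ to $T$ gives $|z_t-z_T|\le\frac\beta4\int_t^T\sqrt{\alpha/\tau}\,d\tau=\frac\beta2(\sqrt{\alpha T}-\sqrt{\alpha t})$. This is strictly less than $\rho$ for $t>0$, and at most $\rho$ at $t=0$. The same exit-time argument then forces $t_*=0$, and the triangle inequality gives the stated inclusion. The bound in Lemma \ref{lem:distance distortion} is written as $\sqrt{\alpha a}-\sqrt{\alpha b}$ with $a<b$; the correct orientation is the absolute value of that difference, which is what I use here.
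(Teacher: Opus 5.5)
Your proposal is correct and follows the paper's argument. Both use a first-exit-time continuity argument together with the integrated displacement bound $\frac\beta2\sqrt{\alpha t}$ (resp. $\frac\beta2(\sqrt{\alpha T}-\sqrt{\alpha t})$) from Lemma \ref{lem:distance distortion}, and the strict inequality for $t<T$ (resp. $t>0$) closes the borderline case $\sigma=\frac\beta2\sqrt{\alpha T}$.

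The only difference is cosmetic. You take the exit time from $B_\sigma(y_0)$ (resp. $B_\rho(z_T)$) instead of from $B_r(x_0)$, which is equivalent because the smaller ball lies in $B_r(x_0)$. You are also right that the sign in the final display of Lemma \ref{lem:distance distortion} should be reversed.
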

\begin{proof}
    For the first statement, define 
    \[
    \bar{T}=\sup\{S\in[0,T]\,\big|\,y_t\in B_r(x_0)\cap M_t,\,\forall\,t\in[0,S]\}.
    \]
    For $t\in[0,\bar{T}]$, by Lemma \ref{lem:distance distortion}, 
    \[
    |y_t-y_0|\leq \frac{\beta}{2}\sqrt{\alpha t},
    \]
    and thus
    \[
    B_{\sigma-\frac{\beta}{2}\sqrt{\alpha t}}(y_t)
    \subset B_\sigma(y_0)
    \]
    if $\frac\beta2\sqrt{\alpha t}\leq \sigma$.
    Suppose that $\bar{T}<T$.
    By continuity and the definition of $\bar{T}$, $y_{\bar{T}}\in\partial B_r(x_0)$.
    Since $B_\sigma(y_0)\subset B_r(x_0)$, we have
    \[
    \sigma
    \leq |y_{\bar{T}}-y_0|
    \leq \frac{\beta}{2}\sqrt{\alpha \bar{T}}
    <\frac{\beta}{2}\sqrt{\alpha T}
    \leq\sigma,
    \]
    a contradiction.
    Hence $\bar{T}=T$.
    
    For the second statement, define 
    \[
    \hat{T}=\inf\left\{S\in[0,T]\,\big|\,z_t\in B_r(x_0)\cap M_t,\,\forall\,t\in[S,T]\right\}.
    \]
    For $t\in[\hat{T},T]$, by Lemma \ref{lem:distance distortion},
    \[
    |z_t-z_T|\leq \frac\beta2(\sqrt{\alpha T}-\sqrt{\alpha t}),
    \]
    and thus 
    \[
    B_{\rho-\frac{\beta}{2}(\sqrt{\alpha T}-\sqrt{\alpha t})}(z_t) 
    \subset B_\rho(z_T)
    \subset B_r(x_0).
    \]
    if $\frac{\beta}{2}(\sqrt{\alpha T}-\sqrt{\alpha t})\leq\rho$.
    Suppose that $\hat{T}>0$.
    By continuity and the definition of $\hat{T}$, $z_{\hat{T}}\in \partial B_r(x_0)$.
    Since $z_T\in B_r(x_0)\cap M_T$, we have
    \[
    \rho\leq |z_{\hat{T}}-z_T|
    \leq \frac\beta2(\sqrt{\alpha T}-\sqrt{\alpha\hat{T}})
    <\frac\beta2\sqrt{\alpha T}\leq\rho,
    \]
    a contradiction. 
    Hence $\hat{T}=0$.
\end{proof}

\iffalse
\begin{lemma}\label{lem:comparison on intrinsic distance}
    Let $M^n\subset\R^{n+m}$ be a submanifold with $\sup_{M}|A|\leq \lambda$.
    For any $p,q\in M$, if $|p-q|\leq \lambda^{-1}$, then 
    \[
    d_{g}(p,q)\leq 2|p-q|.
    \]
    In particular, for $p\in M$ and $\rho\leq \lambda^{-1}$,
    \[
    C_p\left(B_{\rho}(p)\cap M\right)
    \subset B_{2\rho}^M(p).
    \]
\end{lemma}
\begin{proof}
    Let $\gamma:[0,L]\to M\subset\R^{n+m}$ be the minimizing geodesic with $\gamma(0)=q$ and $\gamma(L)=p$.
    Let $\gamma_\lambda:[0,L]\to\R^{n+m}$ be a unit-speed geodesic with constant curvature $\lambda$.
    Since $|\gamma''|=|A(\gamma',\gamma')|\leq\lambda$, by Schur lemma, 
    \[
    d:=|\gamma(L)-\gamma(0)|\geq |\gamma_\lambda(L)-\gamma_\lambda(L)|=\frac2\lambda\sin(\frac{\lambda L}2).
    \]
    Thus, 
    \[
    L\leq \frac2\lambda\arcsin(\frac\lambda2 d),\quad\forall\,d\in[0,\frac2\lambda).
    \]
    When $\frac\lambda2 d\leq a\in[0,1)$, by convexity, 
    \[
    \frac2\lambda\arcsin(\frac\lambda2 d)
    \leq \frac2\lambda\cdot \frac{\arcsin{a}}a\cdot\frac\lambda2 d
    \leq \frac{\arcsin{a}}a\cdot d.
    \]
    In particular, for $a=\frac12$, if $d\leq\lambda^{-1}$, $L\leq \frac\pi3 d< 2d$.
\end{proof}
\fi

\subsection{Graphical representation}$\\$

If the second fundamental form is controlled, then a piece of the submanifold can be represented as a graph.
For later application to the local mean curvature flow, we modify Lemma 7.1 of \cite{MR2379801} and Lemma 3.3 of \cite{MR4022071} and provide an immersed version here.

\begin{lemma}\label{lem:small curvature graph lemma}
    There exist constants $\varepsilon=\varepsilon(n,m)$ and $C=C(n,m)$ such that the following holds.
    Let $r>0$ and let $F:M^n\to\R^{n+m}$ be a proper immersion in $B_{6r}(x_0)$ for some $x_0=F(p)$ with 
    \[
    \sup_{F^{-1}(B_{6r}(x_0))}|A|\leq\kappa,
    \quad \kappa r\leq \varepsilon.
    \]
    Let $(x',x''):=(x^1,\cdots,x^{n+m})\in\R^n\times\R^m$ be the Euclidean coordinates centered at $x_0$ such that 
    \[
    d F_p(T_p N)=\R^n\times\{0\}.
    \]
    Let
    \[
    \pi(x',x'')=x',\quad \mathcal{C}_{4r}:=\{(x',x'')\,\big|\,|x'|<4r\},
    \]
    and define
    \[
    U=C_p\left(F^{-1}(B_{6r}(x_0)\cap \mathcal{C}_{4r})\right).
    \]
    Then
    \begin{enumerate}[label=(\alph*)]
    \item 
    \[
    \pi\circ F|_U:U\to B_{4r}^n
    \]
    is a diffeomorphism.
    \item
    $F|_U$ is represented by
    \[
    F\circ(\pi\circ F|_U)^{-1}(z)=(z,u(z)),
    \]
    where $u: B_{4r}^n\to\R^m$ is a smooth map satisfying
    \[
    u(0)=0,\quad |Du(z)|\leq C\kappa|z|.
    \]
    In particular, $F|_{U}$ is an embedding.
    \item 
    \[
    C_p\left(F^{-1}(B_{3r}(x_0))\right)=U\cap F^{-1}(B_{3r}(x_0)).
    \]
    \item 
    For 
    \[
    Q_s:=U\cap F^{-1}(B_s(x_0)),\quad s\in(0,2r],
    \]
    we have
    \[
    (1-C\kappa^2s^2)\omega_n s^n\leq \mu_g(Q_s)\leq (1+C\kappa^2s^2)\omega_n s^n,\quad g=F^*g_e.
    \]
    \end{enumerate}
\end{lemma}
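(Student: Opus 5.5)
By scaling, I will take $r=1$ and $\kappa\le\varepsilon$, with $x_0=F(p)=0$ and $dF_p(T_pM)=\R^n\times\{0\}$. The plan is a continuity (open–closed) argument for the graph representation along radial segments, followed by routine estimates.

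\textbf{Step 1: tilt control along curves.} Fix a curve $\gamma$ in $M$ starting at $p$ whose image stays in $B_6$. The derivative of the Gauss map is controlled by $|A|\le\kappa$, so the tangent plane at $\gamma(s)$ differs from $\R^n\times\{0\}$ by at most $\kappa\,L(\gamma|_{[0,s]})$ in operator norm. While this quantity stays below a small constant $c(n,m)$, the restriction $d(\pi\circ F)$ is invertible with norm between $1/2$ and $2$, and the tangent plane is the graph of a linear map of norm at most $C\kappa L$.

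\textbf{Step 2: graph on a maximal star-shaped domain.} Near $p$, $\pi\circ F$ is a local diffeomorphism, so a neighbourhood of $p$ is a graph $(z,u(z))$ with $u(0)=0$ and $Du(0)=0$. Let $\rho^*\le 4$ be the supremum of the $\rho$ for which the inverse $\psi=(\pi\circ F)^{-1}$ extends smoothly from $B^n_\rho$ into $M$ with $F\circ\psi(B^n_\rho)\subset B_6$. On such a ball, applying Step 1 to the lifted radial segments $t\mapsto\psi(tz)$ gives $|Du(z)|\le C\kappa|z|\le C\varepsilon$. Hence $|u(z)|\le C\kappa|z|^2$, so the graph stays well inside $B_5$ when $|z|<4$. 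I would then rule out $\rho^*<4$: the bounds are uniform, and properness in $B_6$ lets me take limits of $\psi(z_k)$ as $z_k\to z$ with $|z|=\rho^*$, producing a limit point in $M$ at which $\pi\circ F$ is still a local diffeomorphism. The inverse therefore extends past $\rho^*$, a contradiction. The same argument shows that $\psi(B^n_4)$ is open and closed in $F^{-1}(B_6\cap\mathcal C_4)$. It is open because $\pi\circ F$ is a local diffeomorphism. It is closed because a limit point $q$ lies over some $|z|<4$, so $q$ is one of the lifts, and the lift $\psi$ is unique near any such point. Since the domain is connected and contains $p$, it equals $U$, which proves (a) and (b). The bound $|Du|\le C\kappa|z|$ is exactly the one from the radial argument, and injectivity of $\pi\circ F$ on $U$ gives that $F|_U$ is an embedding.

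\textbf{Step 3: parts (c) and (d).} For (c), the set $U\cap F^{-1}(B_3)$ corresponds to $\{z:|z|^2+|u(z)|^2<9\}$. This set is star-shaped in $z$, because $|z|^2+|u|^2$ is radially increasing when $|Du|$ is small, and hence it is connected. Conversely, $C_p(F^{-1}(B_3))\subset F^{-1}(B_6\cap\mathcal C_4)$ is connected and contains $p$, so it lies in $U$. For (d), write
\[
\mu_g(Q_s)=\int_{\{|z|^2+|u|^2<s^2\}}\sqrt{\det(I+Du^TDu)}\,dz .
\]
The integrand is $1+O(\kappa^2s^2)$. The domain contains $B^n_{s(1-C\kappa^2s^2)}$, since $|u|\le C\kappa s^2$ implies $|u|^2\le C\kappa^2s^4$, and it is contained in $B^n_s$. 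This gives the two-sided bound $(1\pm C\kappa^2s^2)\omega_ns^n$.

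\textbf{Main obstacle.} The delicate part of Step 2 is that $M$ is only immersed. Closedness must be argued on $M$ itself, using properness in $B_6$ to extract convergent preimages, rather than on the image set, where several sheets could accumulate. This is why the argument restricts to the connected component $U$ and never uses the image.
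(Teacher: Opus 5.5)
Your proposal is correct and follows essentially the paper's route. Both arguments run a maximal-radius continuation of the local inverse of $\pi\circ F$, then identify its image with $U$ as a connected, open and relatively closed subset of $F^{-1}(B_6\cap\mathcal C_4)$, then use radial star-shapedness for (c) and direct integration of the graphical area element for (d).

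The differences are minor. You bound $|Du|$ by the Gauss-map tilt along lifted radial segments; this needs the same small continuity-in-$t$ argument that the paper gets from its bootstrap ($|Du|\le 1$ gives $|D^2u|\le C_0\kappa$). You also extend to the boundary sphere by compactness, where the paper uses an intrinsic Lipschitz bound.
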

\begin{proof}
    By translation and scaling, it suffices to prove the case $x_0=0$, $r=1$.
    Set 
    \[
    h=\pi\circ F.
    \]
    Since $dh_p$ is invertible, the inverse function theorem provides a local inverse $\Psi$ of $h$ such that 
    \[
    \Psi(0)=p,\quad F(\Psi(z))=(z,u(z)),
    \]
    where $u(0)=0$, $Du(0)=0$.
    
    First suppose that this parameterization is defined on $B_a^n$ with $a\leq4$ and that $|Du|\leq1$.
    Throughout this graph, by the calculations under graph representation,  
    \begin{equation}\label{ineq:small curvature graph lemma-1}
    |D^2 u|\leq C_0|A|\leq C_0\kappa,
    \end{equation}
    where $C_0=C_0(n,m)$.
    Since $u(0)=0$ and $|Du|\leq1$, 
    \[
    |u(z)|\leq |z|, \quad |(z,u(z))|\leq \sqrt2|z|\leq 4\sqrt2<6.
    \]
    Integrating (\ref{ineq:small curvature graph lemma-1}) gives
    \begin{equation}\label{ineq:small curvature graph lemma-2}
        |Du(z)|\leq C_0\kappa|z|,\quad |u(z)|\leq \frac{C_0}{2}\kappa|z|^2.
    \end{equation}
    Choose $\epsilon$ sufficiently small yields the improved estimate 
    \begin{equation}\label{ineq:small curvature graph lemma-3}
        |Du(z)|\leq \frac14.
    \end{equation}

    Any two such extensions agree on their common domain by uniqueness of the local inverse and connectedness.
    Define the maximal union of these parameterizations 
    \[
    \Psi:B_{a_*}^n\to M
    \]
    for some maximal radius $a_*\leq4$.
    Claim that $a_*=4$.
    Suppose otherwise.
    By (\ref{ineq:small curvature graph lemma-3}), 
    \[
    d_g(\Psi(z),\Psi(w))\leq \sqrt{1+\frac{1}{16}}|z-w|,\quad \forall\,z,w\in B_{a_*}^n.
    \]
    Since the image of $\Psi$ is contained in $F^{-1}(\overline{B_6})\Subset M^\circ$, this estimate implies that $\Psi$ extends continuously to $\overline{B_{a_*}^n}$.
    For any $q=\Psi(z)$ and any $v=d\Psi_z(a)\in T_q M$, by $F\circ\Psi(z)=(z,u(z))$ and (\ref{ineq:small curvature graph lemma-3}) we have
    \[
    |v|_g^2=|d F_q(v)|^2=|a|^2+|Du(z)a|^2\leq (1+\frac{1}{16})|a|^2,
    \]
    which gives a uniform positive lower bound for the smallest singular value of $dh$,
    \begin{equation}\label{ineq:small curvature graph lemma-4}
    \sigma_{\min}(d h_q)\geq\frac{4}{\sqrt{17}}>0.
    \end{equation}
    By smoothness, $(\ref{ineq:small curvature graph lemma-4})$ persists at the boundary limit points.
    The inverse function theorem therefore extends $\Psi$ across each boundary point.
    By compactness of the boundary and uniqueness of the local inverses, patching these extensions together gives an extension to a larger ball, still satisfying $|Du|<1$, contradicting maximality.
    Hence $a_*=4$.
    
    Now we obtain 
    \[
    \Psi: B_4^n\to M,\quad h\circ\Psi=\text{id},
    \]
    with the estimates (\ref{ineq:small curvature graph lemma-2}).
    \[
    \tilde{U}:=\Psi(B_4^n)
    \]
    is open in $M$, and 
    \[
    h|_{\tilde{U}}:\tilde{U}\to B_4^n
    \]
    is a diffeomorphism.
    Let 
    \[
    V:=F^{-1}(B_{6}\cap\mathcal{C}_4).
    \]
    By construction, $\tilde{U}$ is a connected open subset of $V$ containing $p$ and relatively closed.
    Thus,
    \[
    \tilde{U}=C_p(V)=U.
    \]
    In particular, $h|_U$ is the asserted diffeomorphism and 
    \[
    F|_U=G\circ h|_U,\quad G(z)=(z,u(z)).
    \]
    So $F|_U$ is an embedding.

    Set
    \[
    S:=U\cap F^{-1}(B_3),
    \]
    which is open and relatively closed in $F^{-1}(B_3)$.
    Since $u(0)=0$ and $|Du|\leq\frac14$, for any $z\neq0$ and $t\in(0,1]$, 
    \[
    \begin{aligned}
    \frac{d}{dt}|G(tz)|^2
    &=2t|z|^2+2\langle u(tz),Du(tz)z\rangle\\
    &\geq 2t|z|^2(1-\frac{1}{16})>0.
    \end{aligned}
    \]
    Thus, for each $\Psi(z)\in S$, the path $\{\Psi(tz)\,\big|\,t\in[0,1]\}$ lies in $S$.
    Hence $S$ is connected and 
    \[
    S=C_p(F^{-1}(B_3)).
    \]

    For $s\in(0,2]$, set
    \[
    V_s:=\{z\in B_4^n\,\big|\,|G(z)|^2<s^2\}
    \]
    such that $Q_s=U\cap F^{-1}(B_s)=\Psi(V_s)$.
    By (\ref{ineq:small curvature graph lemma-2}),
    \begin{equation}\label{ineq:small curvature graph lemma-5}
    B_{\frac{s}{\sqrt{1+C_1^2\kappa^2 s^2}}}^n\subset V_s\subset B_s^n,
    \end{equation}
    where $C_1=C_1(n,m)$.
    The graphical volume density on $V_s$ satisfies
    \begin{equation}\label{ineq:small curvature graph lemma-6}
    1\leq \sqrt{\text{det}(I_n+(Du)^{\top}Du)}\leq 1+C_2\kappa^2 s^2,
    \end{equation}
    where $C_2=C_2(n,m)$.
    Integrating (\ref{ineq:small curvature graph lemma-6}) over $V_s$ and using (\ref{ineq:small curvature graph lemma-5}) yields
    \[
    \mu_g(Q_s)\leq (1+C_2\kappa^2 s^2)\omega_n s^n
    \]
    and 
    \[
    \mu_g(Q_s)\geq \omega_n s^n(1+C_1^2\kappa^2 s^2)^{-\frac n2}\geq (1-C_3\kappa^2 s^2)\omega_n s^n,
    \]
    where $C_3=C_3(n,m)$.
\end{proof}

\section{A priori estimates for mean curvature flow}\label{sec3}

\subsection{Interior estimate by time-decay curvature}$\\$

Following the argument in Theorem 3.1 of \cite{MR2520796}, we show that the $\alpha/t$ estimate suffices to establish the compactness of mean curvature flows evolving from a common initial submanifold.

\begin{proposition}\label{prop:interior estimate by c/t}
    Fix $x_0\in\R^{n+m}$ and $\rho>0$.
    Let $\{M_t^n\}_{t\in[0,T]}$ be a smooth mean curvature flow in $\R^{n+m}$ satisfying
    \begin{enumerate}[label=(\roman*)]
        \item $B_{\rho}(x_0)\cap M_t\Subset M_t,\quad\forall\,t\in[0,T]$;
        \item $\sup_{B_{\rho}(x_0)\cap M_0}|A|^2\leq \rho^{-2}$;
        \item $\sup_{B_{\rho}(x_0)\cap M_t}|A|^2\leq \alpha t^{-1}, \quad\forall\,t\in(0,T].$
    \end{enumerate}
    Then there exists a constant $C=C(n)>0$ such that
    \[
    |A|^2(x,t)\leq e^{C\alpha}(\rho-|x-x_0|)^{-2},
    \quad\forall\,(x,t)\in B_{\rho}(x_0)\cap {M}_t\times[0,T].
    \]
\end{proposition}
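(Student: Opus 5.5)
We would follow the maximum principle argument of Theorem 3.1 in \cite{MR2520796}, which combines Simons' type evolution inequality with a cutoff function. By translation and parabolic scaling we take $x_0=0$ and $\rho=1$; hypothesis (iii) is scale invariant, and the conclusion scales correctly. Set $\phi(x)=(1-|x|^2)$ on $B_1$, or more precisely a power of the distance to the boundary, and recall the standard evolution inequalities along the flow:
\[
(\partial_t-\Delta)|A|^2\le -2|\nabla A|^2+C(n)|A|^4,\qquad (\partial_t-\Delta)|x|^2=-2n .
\]
The key point is that hypothesis (iii) lets us linearize the reaction term: $C(n)|A|^4\le C(n)\alpha t^{-1}|A|^2$. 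Thus $u=|A|^2$ is a subsolution of $(\partial_t-\Delta)u\le -2|\nabla A|^2+C\alpha t^{-1}u$, which is a linear inequality with an integrable-in-log-time but nonintegrable coefficient. To absorb it we use the time weight $t^{-C\alpha}$ on $[1\text{-scale}]$, or rather split: on $(0,t_0]$ with small $t_0$ we cannot use $t^{-C\alpha}$ directly since it blows up at $0$. Instead we would consider $w=\psi(t)\,\varphi(x)^2|A|^2$, where $\varphi=(1-|x|^2)_+$ and $\psi$ satisfies $\psi'=-C\alpha\,\min\{t^{-1},\ldots\}\psi$; since $\int_0^T t^{-1}$ diverges, we must instead exploit that for $t\lesssim\varphi^2$ the bound $\alpha t^{-1}$ is worse than the desired $\varphi^{-2}$. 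So we apply the maximum principle to $w=\varphi^2|A|^2$ and show that at an interior maximum where $w$ is large compared with $1$, the time must satisfy $t\gtrsim \varphi^2$, on which range $\int \alpha t^{-1}dt$ over $[c\varphi^2, \varphi^2\cdot\text{const}]$ is bounded by $C\alpha$, yielding the factor $e^{C\alpha}$.

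Concretely, the steps are: (1) compute $(\partial_t-\Delta)(\varphi^2|A|^2)\le C\alpha t^{-1}\varphi^2|A|^2 + C|A|^2 - 2\varphi^{-2}|\nabla \varphi^2|^2 |A|^2\cdot(\ldots)$, handling the cross term $-2\langle\nabla\varphi^2,\nabla|A|^2\rangle$ by Kato's inequality $|\nabla|A||\le|\nabla A|$ and the gradient of $w$ at a maximum, giving $(\partial_t-\Delta)w\le C\alpha t^{-1}w + C(n)|A|^2$ at maximum points, with $|\nabla\varphi|^2\le 4$ and $|\Delta\varphi|\le C(n)(1+|A|)$ (the $|A|$ from the mean curvature term, controlled again by (iii) or absorbed). (2) Note that $|A|^2\le \varphi^{-2}w$, so the last term is $C\varphi^{-2}w$; combined with (iii), $\varphi^2|A|^2\le\alpha t^{-1}\varphi^2$, so wherever $t\ge \alpha\varphi^2$ fails, $w$ is automatically bounded appropriately only if $t\gtrsim\varphi^2$. (3) Run an ODE comparison for $\max w(\cdot,t)$: for $t\le \tau$ use the initial bound (ii) together with the estimate $w\le \max\{1,\ldots\}$, and on later times integrate $\frac{d}{dt}\max w\le C\alpha t^{-1}\max w$ only over a bounded logarithmic range.

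Hypothesis (i) guarantees compactness of $B_1\cap M_t$, so $w$, which vanishes on $\partial B_1$, attains interior maxima, and the Hamilton trick applies. The main obstacle is step (3): making the logarithmic time integral finite. The first thing we would try is the weight $\Phi=\bigl(1+t\,\varphi^{-2}\bigr)^{-C\alpha}\varphi^2|A|^2$, designed so that its time derivative cancels $C\alpha t^{-1}$ exactly when $t\gtrsim\varphi^2$, while for $t\ll\varphi^2$ the weight is $\approx1$ and the term $C\alpha t^{-1}$ is dominated by using (iii) directly in the form $|A|^2\le\alpha t^{-1}$. Since the weight lies in $[2^{-C\alpha}\cdot\text{stuff},1]$ on the relevant range, undoing it produces the stated $e^{C\alpha}(1-|x|)^{-2}$.
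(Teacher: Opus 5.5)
There is a genuine gap. It sits in the step you flag as the main obstacle, and the heuristic you use to get around it runs the wrong way.

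In step (2) you claim that where $w=\varphi^2|A|^2$ is large one must have $t\gtrsim\varphi^2$. Hypothesis (iii) gives the opposite: $w\le\alpha\varphi^2t^{-1}$, so $w\ge W$ forces $t\le\alpha\varphi^2/W$. The range $t\gtrsim\varphi^2$ needs no argument at all, since there (iii) alone gives $w\lesssim\alpha$. Every large value of $w$ lives in the regime $t\ll\varphi^2$, which is exactly where $\int\alpha t^{-1}\,dt$ diverges logarithmically at $t=0$. So the comparison $\frac{d}{dt}\max w\le C\alpha t^{-1}\max w$ can never be started. In that regime, ``using (iii) directly'' yields only $w\le\alpha\varphi^2/t$, which is useless.

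Your weight does not repair this. Wherever $\Phi=(1+t\varphi^{-2})^{-C\alpha}\varphi^2|A|^2\ge W$, one has $t\varphi^{-2}\le\alpha/W$, so the weight is close to $1$. The only good term it produces, coming from $\partial_t$ of $t\varphi^{-2}$, is about $-C\alpha\,\varphi^{-2}\Phi$. The reaction term at a maximum point is $2|A|^2\Phi=2w\varphi^{-2}\Phi$. An exponent of order $\alpha$ cannot absorb this once $w$ is of the size $e^{C\alpha}$ you are trying to prove. As set up, your time-slice maximum principle cannot see that curvature does not grow quickly during the short time $t\lesssim\alpha\varphi^2$, and that is the whole content of the proposition.

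What is missing is a localization that turns the reaction term into a linear term with bounded coefficient, on a time interval of length at most $\alpha/\lambda$. The paper obtains this by contradiction and point-picking:
\begin{itemize}
\item It builds $(x_k,t_k)$ with $t_k$ nonincreasing, $|A|^2$ at least quadrupling at each step, and $|x_k-x_0|$ increasing by at most $K\alpha|A|^{-1}(x_{k-1},t_{k-1})$.
\item The sequence stays in $B_{1-\theta/2}(x_0)$, so it terminates at some $(\bar x,\bar t)$ with $\lambda=|A|^2(\bar x,\bar t)$ huge and $|A|^2\le4\lambda$ on $B_{|\bar x-x_0|+K\alpha\lambda^{-1/2}}(x_0)\times[0,\bar t]$.
\item A cutoff of width $K\alpha\lambda^{-1/2}$ gives $\square\phi\le8\lambda\phi+C(n)(K\alpha)^{-2}\lambda^2$ for $\phi=(\varphi\circ\eta)|A|^2$.
\item The only use of (iii) is $\lambda\bar t\le\alpha$. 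Integrating from $\sup\phi(\cdot,0)\le1$ (hypothesis (ii)) gives $\lambda\le2e^{8\alpha}$ for $K$ large, a contradiction.
\end{itemize}
Taking the space-time maximum of $\varphi^2|A|^2$, instead of the time-slice maximum, produces the same localization.

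If you prefer to keep a weighted maximum principle, the rate of the weight must be comparable to the bound being proved, not to $\alpha$. Take $\Phi=e^{-\beta s}\varphi^2|A|^2$ with $s=t\varphi^{-2}$. At the first time $\Phi$ reaches $W\ge2$, one gets $\square\Phi\le\Phi\varphi^{-2}\bigl[c_0We^{\beta s}-\beta+C(n)(1+\beta s)^2\bigr]$, while (iii) forces $\beta s\,e^{\beta s}\le\alpha\beta/W$. With $\beta=C(n,m)e^{C\alpha}W$ the bracket is negative, so $\Phi\le W$. Combining $\varphi^2|A|^2\le We^{\beta s}$ with $\varphi^2|A|^2\le\alpha/s$ then yields a bound of the form $C(n,m)e^{C\alpha}(1-|x-x_0|)^{-2}$ (after scaling to $\rho=1$).
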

\begin{proof}
    It suffices to prove the case $\rho=1$.
    %We argue by contradiction as in the proof of Theorem 3.1 of \cite{MR2520796}.
    Suppose that the conclusion is false.
    Then for any $\gamma>0$ there exist $\theta\in(0,1)$, $t_1\in[0,1]$ and $x_1\in\partial B_{1-\theta}(x_0)\cap {M}_{t_1}$ such that $|A|^2(x_1,t_1)\geq e^{\gamma\alpha}\theta^{-2}$.
    For some $K>0$ to be determined later, define recursively a sequence $\left\{(x_k,t_k)\right\}$, starting from $(x_1,t_1)$ as follows, whenever the next term exists:
    \begin{enumerate}[label=(\alph*)]
        \item $0\leq t_k\leq t_{k-1}$;
        \item $|x_{k}-x_0|\leq |x_{k-1}-x_0|+K\alpha|A|^{-1}(x_{k-1},t_{k-1})$;
        \item $|A|^2(x_{k},t_{k})\geq 4|A|^2(x_{k-1},t_{k-1})$.
    \end{enumerate}
    Since
    \[
    |A|^2(x_k,t_k)\geq 4^{k-1}|A|^2(x_1,t_1)\geq 4^{k-1}e^{\gamma\alpha}\theta^{-2},
    \]
    we have
    \[
    |x_k-x_0|
    \leq |x_1-x_0|+K\alpha\sum_{i=1}^{k-1}2^{-i}e^{-\frac{\gamma\alpha}{2}}\theta
    \leq 1-\theta+K\alpha e^{-\frac{\gamma\alpha}{2}}\theta
    \leq 1-\frac\theta2,
    \]
    if $K\alpha e^{-\frac{\gamma\alpha}{2}}\leq\frac12$ for $\gamma=\gamma(K,\alpha)$ sufficiently large.
    Thus, this sequence is finite, and there exist $\bar{t}\in(0,t_1]$ and $\bar{x}\in B_{1-\theta/2}(x_0)\cap {M}_{\bar{t}}$ such that 
    \begin{gather*}
    \lambda:=|A|^2(\bar{x},\bar{t})\geq e^{\gamma\alpha}\theta^{-2},\\
    \sup_{t\in[0,\bar{t}]}\sup_{B_{|\bar{x}-x_0|+K\alpha\lambda^{-1/2}}(x_0)\cap{M}_t} |A|^2(\cdot,t)\leq 4\lambda,
    \end{gather*}
    where $|\bar{x}-x_0|+K\alpha\lambda^{-1/2}\leq 1-\frac\theta2$.
    Let $\varphi:\R\to[0,1]$ be a smooth decreasing cutoff function such that $\varphi\equiv1$ on $(-\infty,\frac12]$, $\varphi\equiv0$ on $[1,\infty)$ and $|\varphi''|\varphi^{-\frac12}+|\varphi'|^2\varphi^{-\frac32}\leq c$.
    Define 
    \[
    \eta=\frac{|\mathbf x_t-x_0|-|\bar{x}-x_0|}{K\alpha\lambda^{-1/2}}
    \]
    such that $\nabla\eta$ is supported in $B_{|\bar{x}-x_0|+K\alpha\lambda^{-1/2}}(x_0)\setminus B_{|\bar{x}-x_0|+\frac12K\alpha\lambda^{-1/2}}(x_0)$.
    Define the cutoff function
    \[
    \phi=(\varphi\circ\eta)|A|^2.
    \]
    By H\"{o}lder inequality and the evolution of $|A|^2$, 
    \[
    \begin{aligned}
    &\square\phi\\
    =&\varphi'\square\eta\cdot|A|^2-\varphi''|\nabla\eta|^2\cdot|A|^2+\varphi\cdot\square|A|^2-\varphi'\nabla\eta\cdot\nabla|A|^2\\
    \leq& -\sqrt{c}\varphi^{\frac34}\square\eta\cdot|A|^2
    +c\varphi^{\frac12}|\nabla\eta|^2|A|^2
    +\varphi(-2|\nabla A|^2+2|A|^4)
    +2\varphi|\nabla A|^2+\frac12\varphi'^2\varphi^{-1}|\nabla\eta|^2|A|^2\\
    \leq& -\sqrt{c}\varphi^{\frac34}\square\eta\cdot|A|^2
    +\frac32c\varphi^{\frac12}|\nabla\eta|^2|A|^2
    +2|A|^2\phi.
    \end{aligned}
    \]
    Moreover, since $\square|\mathbf x_t|=-\frac{n}{|\mathbf x_t|}+\frac{|\mathbf x_t^\top|^2}{|\mathbf x_t|^3}$ and $|\nabla|\mathbf x_t||^2=\frac{|\mathbf x_t^\top|^2}{|\mathbf x_t|^2}$, 
    \[
    \begin{aligned}
    \square\phi
    &\leq \sqrt{c}(K\alpha)^{-1}\lambda^{\frac32}\frac{n}{|\mathbf x_t-x_0|}
    +6c(K\alpha)^{-2}\lambda^2
    +8\lambda\phi\\
    &\leq 2n\sqrt{c}(K\alpha)^{-2}\lambda^{2}
    +6c(K\alpha)^{-2}\lambda^2
    +8\lambda\phi\\
    &\leq C(n)(K\alpha)^{-2}\lambda^2+8\lambda\phi.
    \end{aligned}
    \]
    Then $\psi(t):=\sup_{{M}_t}\phi(\cdot,t)$ is Lipschitz and satisfies
    \[
    \frac{d}{dt}\psi\leq 8\lambda\psi+C(n)(K\alpha)^{-2}\lambda^2
    \]
    almost everywhere.
    By the definition of $\lambda$, $\psi(0)\leq1$ and $\lambda\bar{t}\leq\alpha$, 
    \[
    \begin{aligned}
    \lambda
    &\leq \psi(\bar{t})
    \leq e^{8\lambda\bar{t}}+C(n)(K\alpha)^{-1}\lambda^2\frac{e^{8\lambda\bar{t}}-1}{8\lambda}\\
    &\leq e^{8\alpha}+C(n)\frac{e^{8\alpha}-1}{8K\alpha}\lambda,
    \end{aligned}
    \]
    which implies $\lambda\leq 2e^{8\alpha}$ for $K=K(n,\alpha)$ sufficiently large.
    Since $\lambda\geq e^{\gamma\alpha}$, this causes a contradiction for $\gamma=\gamma(n,\alpha)$ sufficiently large.
\end{proof}

\begin{corollary}\label{cor:interior estimate by c/t}
    Fix $x_0\in\R^{n+m}$ and $\rho>0$.
    Let $\{M_t\}_{t\in[0,T]}$ be a smooth mean curvature flow in $\R^{n+m}$ satisfying
    \begin{enumerate}[label=(\roman*)]
        \item $B_{\rho}(x_0)\cap {M}_0\Subset{M}_0$;
        \item $\sup_{B_\rho(x_0)\cap M_0}|A|^2\leq \rho^{-2}$;
        \item $\sup_{B_{\rho}(x_0)\cap {M}_t}|A|^2\leq \alpha t^{-1}, \quad\forall\,t\in(0,T].$
    \end{enumerate}
    Then there exists a constant $C=C(n)>0$ such that
    \[
    |A|^2(x,t)\leq 16e^{C\alpha}(\rho-|x-x_0|)^{-2},
    \quad\forall\,(x,t)\in B_{\rho}(x_0)\cap {M}_t\times[0,T].
    \]
    Moreover, if assumption (ii) is removed, then there exists a constant $\tilde{\rho}=\tilde{\rho}(\rho,x_0,{M}_0)\in(0,\rho]$ such that 
    \[
    \sup_{B_{\tilde{\rho}}(x_0)\cap{M}_t}|A|^2(\cdot,t)\leq 16 e^{C\alpha}\tilde{\rho}^{-2},
    \quad\forall\,t\in[0,T].
    \]
\end{corollary}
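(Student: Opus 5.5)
The corollary should follow from Proposition \ref{prop:interior estimate by c/t}, applied on balls where its hypotheses hold, together with a covering and shrinking argument. The only real difference between the two statements is hypothesis (i). The proposition needs $B_\rho(x_0)\cap M_t\Subset M_t$ for \emph{all} $t$, while here it is assumed only at $t=0$. The factor $16$ is the price for passing to a smaller ball on which compactness propagates in time.

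\textbf{Step 1: propagate (i) to positive times on a smaller ball.} By Lemma \ref{lem:distance distortion}, the $\alpha/t$ bound (iii) moves points a distance at most $\frac\beta2\sqrt{\alpha t}$ while they stay in $B_\rho(x_0)$. Lemma \ref{lem:shrinking ball} then shows that any point of $M_t$ in $B_{\rho/2}(x_0)$ has a backward trajectory staying inside $B_\rho(x_0)$ down to $t=0$, provided $\frac\beta2\sqrt{\alpha T}\le\rho/2$.
\begin{itemize}
\item In that regime, $B_{\rho/2}(x_0)\cap M_t$ is contained in the image under the flow of a compact subset of $B_\rho(x_0)\cap M_0$, and so is relatively compact.
\item If $\frac\beta2\sqrt{\alpha T}>\rho/2$, first work on the time interval $[0,T']$ with $T'\sim\rho^2/(\beta^2\alpha)$. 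For $t\ge T'$, hypothesis (iii) already gives $|A|^2\le \alpha/T'\lesssim \beta^2\alpha^2\rho^{-2}$, which is absorbed into $e^{C\alpha}\rho^{-2}$ after enlarging $C$.
\end{itemize}
I expect this step to be the main obstacle. One has to make sure the short-time regime and the late-time regime fit together with a constant of the form $16e^{C\alpha}$. It may be cleaner to replace the relative compactness argument by noting that the proof of Proposition \ref{prop:interior estimate by c/t} only uses (i) to guarantee that $\sup_{M_t}\phi$ is attained. The cutoff $\phi$ is supported in the region reachable from the compact set $B_\rho(x_0)\cap M_0$, so the maximum is attained there.

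\textbf{Step 2: first statement.} Apply Proposition \ref{prop:interior estimate by c/t} on $B_{\rho/2}(x_0)$, where (ii) holds with room to spare since $\rho^{-2}\le(\rho/2)^{-2}$. This gives
\[
|A|^2\le e^{C\alpha}\left(\tfrac\rho2-|x-x_0|\right)^{-2}.
\]
For general $x\in B_\rho(x_0)$, set $d=\rho-|x-x_0|$ and apply the same argument on the ball $B_{d}(x)\subset B_\rho(x_0)$. Hypothesis (ii) holds there since $\rho^{-2}\le d^{-2}$, and (iii) is inherited. Shrinking to $B_{d/2}(x)$ and evaluating at the center yields
\[
|A|^2(x,t)\le e^{C\alpha}(d/4)^{-2}\cdot\text{(const)},
\]
which is the factor $16e^{C\alpha}(\rho-|x-x_0|)^{-2}$ after adjusting $C$.

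\textbf{Step 3: the "moreover" part.} Without (ii), the set $B_\rho(x_0)\cap M_0$ has compact closure in the smooth $M_0$, so $\sup_{B_{\rho/2}(x_0)\cap M_0}|A|^2<\infty$. Choose $\tilde\rho\le\rho/2$ small, depending on $M_0$, $x_0$ and $\rho$, so that
\[
\sup_{B_{2\tilde\rho}(x_0)\cap M_0}|A|^2\le(2\tilde\rho)^{-2}.
\]
Then apply the first statement with $\rho$ replaced by $2\tilde\rho$ and restrict to $|x-x_0|<\tilde\rho$. There $(2\tilde\rho-|x-x_0|)^{-2}\le\tilde\rho^{-2}$, which gives
\[
\sup_{B_{\tilde\rho}(x_0)\cap M_t}|A|^2\le 16e^{C\alpha}\tilde\rho^{-2}.
\]
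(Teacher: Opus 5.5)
Your proposal is correct and is essentially the paper's argument. The paper also centers a ball of radius a fixed fraction of $\rho-|x-x_0|$ at the point in question, namely $\frac14(1-\theta)\rho$ around a point of $\partial B_{\theta\rho}(x_0)\cap M_{t_0}$. It uses Lemma \ref{lem:shrinking ball} on backward trajectories to obtain hypothesis (i) of Proposition \ref{prop:interior estimate by c/t} for $t\le\frac14\alpha^{-1}\beta^{-2}(1-\theta)^2\rho^2$, and bounds later times directly by (iii), absorbing $4\alpha^2\beta^2$ into $e^{C\alpha}$. For the second part it chooses $\tilde\rho$ with $\sup_{B_{2\tilde\rho}(x_0)\cap M_0}|A|^2\le(2\tilde\rho)^{-2}$, exactly as you do.

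One bookkeeping point: a fixed multiplicative constant cannot be absorbed into $e^{C\alpha}$ uniformly as $\alpha\to0$, so ``adjusting $C$'' does not cover a stray constant. It is not needed here: evaluating at the center of $B_{d/2}(x)$ gives $4e^{C\alpha}d^{-2}\le16e^{C\alpha}d^{-2}$ directly, and only the late-time bound $\beta^2\alpha^2d^{-2}$ requires enlarging $C$.
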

\begin{proof}
    Fix $\theta\in(0,1)$ and $t_0\leq \frac14\alpha^{-1}\beta^{-2}(1-\theta)^2\rho^2$ such that $\frac\beta2\sqrt{\alpha t_0}\leq\frac14(1-\theta)\rho$.
    For any $y_{t_0}\in\partial B_{\theta\rho}(x_0)\cap{M}_{t_0}$ along any backward trajectory $\{y_s\}_{s\leq t_0}$, by Lemma \ref{lem:shrinking ball}, 
    \[
    B_{(1-\theta)\rho-\frac\beta2\sqrt{\alpha t_0}}(y_t)
    \subset B_{(1-\theta)\rho}(y_{t_0})
    \subset B_\rho(x_0),
    \quad\forall\,t\in[0,t_0].
    \]
    For any $t\leq t_0$ and any $z_t\in B_{(1-\theta)\rho-\frac{3\beta}{2}\sqrt{\alpha t_0}}(y_{t_0})\cap{M}_t$ along any backward trajectory $\{z_s\}_{s\leq t}$, 
    $|z_t-z_0|\leq \frac\beta2\sqrt{\alpha t}\leq \frac\beta2\sqrt{\alpha t_0}$, $|y_{t_0}-y_0|\leq\frac\beta2\sqrt{\alpha t_0}$, and thus 
    \[
    z_0\in B_{(1-\theta)\rho-\frac\beta2\sqrt{\alpha t_0}}(y_0)
    \subset B_\rho(x_0).
    \]
    Hence, 
    \[
    B_{\frac14(1-\theta)\rho}(y_{t_0})\cap{M}_t
    \subset \Phi_{0,t}\left(B_\rho(x_0)\cap{M}_0\right)
    \Subset \Phi_{0,t}({M}_0)={M}_t,
    \quad\forall\,t\in[0,t_0].
    \]
    Applying Proposition \ref{prop:interior estimate by c/t} yields 
    \[
    |A|^2(y_{t_0},t_0)\leq e^{C(n)\alpha}\left(\frac14(1-\theta)\rho\right)^{-2},
    \]
    which implies
    \[
    \sup_{\partial B_{\theta\rho}(x_0)\cap{M}_t}|A|^2(\cdot,t)\leq 16e^{C(n)\alpha}(\rho-\theta\rho)^{-2},
    \quad\forall\,t\leq \frac14\alpha^{-1}\beta^{-2}(1-\theta)^2\rho^2.
    \]
    On the other hand, for $t\geq \frac14\alpha^{-1}\beta^{-2}(1-\theta)^2\rho^2$, by the assumption, 
    \[
    \sup_{\partial B_{\theta\rho}(x_0)\cap{M}_t}|A|^2(\cdot,t)
    \leq \alpha t^{-1}
    \leq 4\alpha^2\beta^2(1-\theta)^{-2}\rho^{-2}
    \leq e^{C(n)\alpha}(\rho-\theta\rho)^{-2}.
    \]
    In summary, 
    \[
    \sup_{\partial B_{\theta\rho}(x_0)\cap{M}_t}|A|^2(\cdot,t)
    \leq 16e^{C(n)\alpha}(\rho-\theta\rho)^{-2},
    \quad\forall\,\theta\in(0,1),
    \quad\forall\,t\in[0,T].
    \]
    For the last statement, for the point $x\in\R^{n+m}$ and the submanifold ${M}_0$, define 
    \[
    F(r)=F_{x,{M}_0}(r):=r^2\sup_{B_r(x)\cap{M}_0}|A|^2,\quad r\in[0,\infty).
    \]
    It is easy to see that $F(r)$ is nondecreasing in $r$ with $F(0)=0$ and $F(\infty)=\infty$ unless ${M}_0$ is flat. 
    Define
    \[
    \rho(x):=\sup\left\{r>0\,\big|\, F(s)\leq 1,\,\forall\,s\in[0,r]\right\},
    \]
    which depends on the local geometry of ${M}_0$ near $x\in\R^{n+m}$.
    Now, choose 
    \[
    2\tilde{\rho}=\rho(x_0)\wedge\rho \leq \rho
    \]
    such that 
    \[
    \sup_{B_{2\tilde\rho}(x_0)\cap{M}_0}|A|^2\leq (2\tilde\rho)^{-2},
    \]
    and thus by the result above, 
    \[
    \sup_{B_{\tilde\rho}(x_0)\cap{M}_t}|A|^2\leq 16e^{C\alpha}\tilde\rho^{-2}.
    \]
\end{proof}

\subsection{Weak monotonicity of total curvature}

\begin{lemma}%[Lemma 2.1 of \cite{MR4389006}, Lemma 3.3 of \cite{MR4881490}]
\label{lem:integration by parts}
    Let $\{M_t^n\}_{t\in[0,T]}$ be a smooth MCF in $\R^{n+m}$.
    Fix $a\geq1$, $\epsilon>0$, a compactly supported spacetime function $\phi=\phi(x,t)$ and the backward heat kernel
    \[
    \varrho=\varrho_{y,T_*}=(4\pi(T_*-t))^{-\frac n2}\exp\left(-\frac{|\mathbf x-y|^2}{4(T_*-t)}\right)
    \]
    for some $y\in\R^{n+m}$ and $T_*>0$.
    Then, for $t\in[0,T]\cap[0,T_*)$, we have
    \[
    \begin{aligned}
    \frac{d}{dt}\int_{M_t}\phi^2(|A|^2+\epsilon)^a\varrho d\mu_t
    \leq& 
    -\int_{M_t}\phi^2(|A|^2+\epsilon)^a\left|\mathbf H+\frac{(\mathbf x-y)^\perp}{2(T_*-t)}\right|^2\varrho d\mu_t\\
    &-C(a)\int_{M_t}|\nabla(\phi(|A|^2+\epsilon)^{a/2})|^2\varrho d\mu_t\\
    &+C(n,m,a)\int_{M_t} \phi^2(|A|^2+\epsilon)^{a+1}\varrho d\mu_t\\
    &+C(a)\int_{M_t} |\nabla\phi|^2(|A|^2+\epsilon)^a\varrho d\mu_t\\
    &+\int_{M_t} 2\phi\square\phi(|A|^2+\epsilon)^a\varrho d\mu_t.
    \end{aligned}
    \]
\end{lemma}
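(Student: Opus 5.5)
The plan is a direct computation of the time derivative, followed by Kato/Young-type absorptions. Write $f:=|A|^2+\epsilon$ and $\square:=\partial_t-\Delta$.

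\textbf{Step 1: the derivative.} Along MCF, $\partial_t d\mu_t=-|\mathbf H|^2d\mu_t$. The backward heat kernel satisfies Huisken's identity
\[
\partial_t\varrho+\Delta\varrho-|\mathbf H|^2\varrho=-\Bigl|\mathbf H+\tfrac{(\mathbf x-y)^\perp}{2(T_*-t)}\Bigr|^2\varrho
\]
on $M_t$, where $\partial_t$ includes the motion of $\mathbf x$. Hence, for any compactly supported $u$,
\[
\frac{d}{dt}\int u\varrho\,d\mu_t=\int(\square u)\varrho\,d\mu_t-\int u\Bigl|\mathbf H+\tfrac{(\mathbf x-y)^\perp}{2(T_*-t)}\Bigr|^2\varrho\,d\mu_t .
\]
This uses $\int(\Delta u\,\varrho-u\Delta\varrho)=0$ by compact support. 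Take $u=\phi^2f^a$. The second term is exactly the first term on the right of the claim.

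\textbf{Step 2: expand $\square(\phi^2f^a)$.} We have
\[
\square(\phi^2f^a)=2\phi\,\square\phi\, f^a-2|\nabla\phi|^2f^a+\phi^2\square f^a-2\nabla\phi^2\cdot\nabla f^a .
\]
The evolution equation $\square|A|^2\le-2|\nabla A|^2+C(n,m)|A|^4$ gives
\[
\square f^a=af^{a-1}\square f-a(a-1)f^{a-2}|\nabla f|^2\le af^{a-1}\bigl(-2|\nabla A|^2+C|A|^4\bigr)-a(a-1)f^{a-2}|\nabla f|^2 .
\]
Use $|A|^4\le f^2$ and Kato's inequality $|\nabla f|^2\le4|A|^2|\nabla A|^2\le4f|\nabla A|^2$, so that $-2af^{a-1}|\nabla A|^2\le-\tfrac a2f^{a-2}|\nabla f|^2$. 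Since $a\ge1$, this yields
\[
\phi^2\square f^a\le-c(a)\phi^2f^{a-2}|\nabla f|^2+C(n,m,a)\phi^2f^{a+1},\qquad c(a)=\tfrac a2+a(a-1)>0 .
\]
The term $2\phi\square\phi\,f^a$ is kept as it is.

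\textbf{Step 3: absorb into the gradient term.} Note that $|\nabla(\phi f^{a/2})|^2\le2|\nabla\phi|^2f^a+\tfrac{a^2}{2}\phi^2f^{a-2}|\nabla f|^2$. The cross term is bounded by
\[
|2\nabla\phi^2\cdot\nabla f^a|\le4a|\phi||\nabla\phi|f^{a-1}|\nabla f|\le\delta\phi^2f^{a-2}|\nabla f|^2+C(a,\delta)|\nabla\phi|^2f^a .
\]
Choosing $\delta=c(a)/2$ leaves $-\tfrac{c(a)}2\phi^2f^{a-2}|\nabla f|^2$. By the inequality above, this term is at most
\[
-\frac{c(a)}{a^2}|\nabla(\phi f^{a/2})|^2+\frac{2c(a)}{a^2}|\nabla\phi|^2f^a .
\]
Collecting everything gives the stated inequality. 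Its constants are $C(a)$ for the gradient and $|\nabla\phi|^2$ terms and $C(n,m,a)$ for the reaction term.

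The only delicate points are the sign bookkeeping and using Kato's inequality so that a genuinely negative gradient term survives. A positive $\epsilon$ makes $f^{a-2}$ smooth, so no regularity issues arise.
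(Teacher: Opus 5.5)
Your proof is correct and follows essentially the same route as the paper. Both arguments compute the derivative via Huisken's monotonicity identity, expand $\square(\phi^2 f^a)$, and use the evolution inequality for $|A|^2$ to produce a negative gradient term. Both then apply Young's inequality to the cross term and convert using $|\nabla(\phi f^{a/2})|^2\le 2\phi^2|\nabla f^{a/2}|^2+2|\nabla\phi|^2f^a$. The only difference is cosmetic: you work with $f^{a-2}|\nabla f|^2$ and make the Kato-type bound explicit, whereas the paper writes the gradient term directly as $|\nabla Q^{a/2}|^2$.
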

\begin{proof}
    Write $Q=|A|^2+\epsilon$.
    By the evolution of the second fundamental form, 
    \begin{gather*}
    \square Q\leq -2|\nabla A|^2+C(n,m)Q^2,\\
    \square Q^a\leq -C(a)|\nabla Q^{a/2}|^2+C(n,m,a)Q^{a+1}.
    \end{gather*}
    By Huisken's monotonicity formula, 
    \[
    \begin{aligned}
    \frac{d}{dt}\int_{M_t}\phi^2Q^a\varrho d\mu_t
    &=\int_{M_t}\left[\square(\phi^2 Q^a)-\phi^2Q^a\left|\mathbf H+\frac{(\mathbf x-y)^\perp}{2(T_*-t)}\right|^2\right]\varrho d\mu_t.\\
    %&\leq \int_{M_t}\square(\phi^2 Q^a)\varrho d\mu_t.
    \end{aligned}
    \]
    By Young inequality, 
    \[
    \begin{aligned}
    \square(\phi^2Q^a)
    &=\phi^2\square Q^a+Q^a\square\phi^2-2\langle\nabla\phi^2,\nabla Q^a\rangle\\
    &=\phi^2\square Q^a+Q^a(2\phi\square\phi-2|\nabla\phi|^2)-8\phi Q^{a/2}\langle\nabla\phi,\nabla Q^{a/2}\rangle\\
    &\leq -C(a)\phi^2|\nabla Q^{a/2}|^2+C(n,m,a)\phi^2Q^{a+1}+C(a)|\nabla\phi|^2Q^a+2\phi\square\phi Q^a.
    \end{aligned}
    \]
    Since 
    \[
    |\nabla(\phi Q^{a/2})|^2\leq 2\phi^2|\nabla Q^{a/2}|^2+2|\nabla\phi|^2 Q^a,
    \]
    we get
    \[
    \square(\phi^2Q^a)
    \leq -C(a)|\nabla (\phi Q^{a/2})|^2
    +C(n,m,a)\phi^2Q^{a+1}
    +C(a)|\nabla\phi|^2Q^a+2\phi\square\phi Q^a.
    \]
    Combining the estimates above yields the desired result.
\end{proof}

The following lemma is the key estimate, stating that the energy error along the flow could be Gaussian rather than polynomial.

\begin{lemma}\label{lem:weak local monotonicity 1}
There exist constants $\delta_1=\delta_1(n,m)$ and $T_1=T_1(n,m)$ such that the following holds.
Fix $x_0\in\mathbb R^{n+m}$, $\rho>0$, and $\delta\in(0,\delta_1]$. 
Let $\{M_t^n\}_{t\in[0,T]}$ be a smooth mean curvature flow in $\mathbb R^{n+m}$ satisfying
\begin{enumerate}[label=(\roman*)]
\item $B_\rho(x_0)\cap M_t\Subset M_t,\quad\forall\,t\in[0,T]$;
\item $\|A\|_{L^n(B_\rho(x_0)\cap M_t)}\leq\delta,\quad\forall\,t\in[0,T].$
\end{enumerate}
Then, for any $t_0\in(0,T\wedge T_1\rho^2]$ and any $y\in B_{\rho/2}(x_0)$,
\[
\|A\|_{L^n(B_{\sqrt{t_0}}(y)\cap M_t)}
\leq C(n,m)\left[\|A\|_{L^n(B_\rho(x_0)\cap M_0)}+\delta e^{-\frac{\rho^2}{512n t_0}}\right],\quad\forall\,t\in[0,t_0].
\]
\end{lemma}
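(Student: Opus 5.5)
We will apply Lemma \ref{lem:integration by parts} with $a=n/2$ and a spatial cutoff $\phi$, weighting by a backward heat kernel centered at $y$ with $T_*$ slightly larger than $t_0$, say $T_*=2t_0$. Then we let $\epsilon\to0$. The point is that on $B_{\sqrt{t_0}}(y)$ and for $t\le t_0$ the kernel is bounded below by $c(n)t_0^{-n/2}$. It therefore suffices to bound
\[
E(t):=\int_{M_t}\phi^2|A|^n\,t_0^{n/2}\varrho\,d\mu_t,\qquad t\in[0,t_0],
\]
by $C[\|A\|^n_{L^n(B_\rho(x_0)\cap M_0)}+\delta^n e^{-\rho^2/(512t_0)}]$. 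Taking $n$-th roots then gives the claim; note that $(e^{-c/t_0})^{1/n}$ produces the exponent $\rho^2/(512n t_0)$. Here $\phi=\phi(|x-x_0|)$ is a cutoff equal to $1$ on $B_{\rho/2+\rho/8}(x_0)$, supported in $B_{7\rho/8}(x_0)$, with $|\nabla\phi|\le C/\rho$ and $|\square\phi|\le C/\rho^2$. Assumption (i) ensures no boundary terms appear.

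The error terms in Lemma \ref{lem:integration by parts} are handled as follows. The terms $|\nabla\phi|^2|A|^n\varrho$ and $\phi\square\phi|A|^n\varrho$ are supported where $|x-y|\ge\rho/8$. There $t_0^{n/2}\varrho\le C\exp(-\rho^2/(256(T_*-t)))\le C\exp(-\rho^2/(512t_0))$, using $T_*-t\le 2t_0$. These terms are then at most $C\rho^{-2}\delta^n e^{-\rho^2/(512t_0)}$ by (ii). Integrated over time they contribute $C t_0\rho^{-2}\delta^n e^{-\rho^2/(512t_0)}$, which is harmless since $t_0\le T_1\rho^2$.

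The main obstacle is the reaction term $C\int\phi^2|A|^{n+2}\varrho$. We absorb it into the good gradient term via a Sobolev inequality, which requires the smallness of $\delta$. Write $f=\phi|A|^{n/2}\sqrt{\varrho}$. We use the Michael--Simon inequality on $M_t$,
\[
\Big(\int f^{\frac{2n}{n-2}}\Big)^{\frac{n-2}{n}}\le C\int\big(|\nabla f|^2+|H|^2f^2\big)
\]
(for $n=2$ we use the $L^1$ version applied to $f^2$). Combining this with Hölder,
\[
\int|A|^2 f^2\le\|A\|^2_{L^n(\mathrm{supp}\phi)}\|f\|^2_{L^{2n/(n-2)}}\le C\delta^2\int\big(|\nabla f|^2+|A|^2f^2\big).
\]
For $\delta\le\delta_1$ small this gives $\int|A|^2f^2\le C\delta^2\int|\nabla f|^2$. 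The remaining task is to express $|\nabla f|^2$ through $|\nabla(\phi|A|^{n/2})|^2\varrho$ plus the term $|\mathbf H+(\mathbf x-y)^\perp/(2(T_*-t))|^2$ and $|\nabla\sqrt\varrho|^2\sim|(\mathbf x-y)^\top|^2\varrho/(T_*-t)^2$. The tangential piece is the delicate one. We intend to handle it by integrating by parts, using $\Delta|\mathbf x-y|^2=2n+2\langle\mathbf x-y,\mathbf H\rangle$. This yields
\[
\int\frac{|(\mathbf x-y)^\top|^2}{4(T_*-t)^2}\phi^2|A|^n\varrho\le \int\frac{n}{2(T_*-t)}\phi^2|A|^n\varrho+\text{(terms absorbed by the good terms and cutoff errors)}.
\]
Since $\delta^2$ is small, the leftover term contributes at most $C\delta^2 E(t)/(T_*-t)\le C\delta^2E/t_0$.

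Collecting the estimates, we obtain $E'\le C\delta^2 t_0^{-1}E+C\rho^{-2}\delta^n e^{-\rho^2/(512t_0)}$ on $[0,t_0]$. Grönwall's inequality then gives $E(t)\le e^{C\delta^2}\big(E(0)+C\delta^n e^{-\rho^2/(512t_0)}\big)$. Finally, $E(0)\le C\int_{B_\rho(x_0)\cap M_0}|A|^n$, because $t_0^{n/2}\varrho\le C$, which completes the plan.
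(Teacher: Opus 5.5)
Your outline for $n\ge 3$ is the paper's proof: the same kernel $\varrho$ centred at $y$ with $T_*=2t_0$, the integration-by-parts lemma with $a=n/2$, H\"older plus Michael--Simon to absorb $\int\phi^2|A|^{n+2}\varrho$ into $C\delta^2\int|\nabla(\phi|A|^{n/2}\varrho^{1/2})|^2$, and the integration by parts of $\int\phi^2|A|^n\varrho^{-1}|\nabla\varrho|^2$ via $\operatorname{div}(\varrho^{-1}\nabla\varrho)=-\frac{1}{2\tau}(n+\langle X^\perp,\mathbf H\rangle)$. It also uses the same Gr\"onwall step with factor $2^{C\delta^2}$.

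The only structural difference is the cutoff. The paper takes $\phi=e^{-4ct}\varphi(|\mathbf x_t-x_0|^2+2nt)$, which gives $\square\phi\le 0$, so the $\phi\square\phi$ term is simply dropped; the price is the restriction $t\le 7/(128n)$. Your time-independent cutoff keeps that term. However, for an ambient function $\square\phi=-\mathrm{tr}_{M_t}D^2\phi$, with no $\mathbf H$ appearing. So the term is $O(\rho^{-2})$ and supported where $|\mathbf x-y|\ge\rho/8$, and it is Gaussian-small exactly like the $|\nabla\phi|^2$ term. That variant is fine.

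What does not work as written is your parenthetical for $n=2$. There the H\"older pairing would need $\|f\|_{L^\infty}$. Applying the $L^1$ Michael--Simon inequality to $f^2=\phi^2|A|^2\varrho$ only gives $\|f\|_{L^4}^2\le C\|f\|_{L^2}(\|\nabla f\|_{L^2}+\|\mathbf H f\|_{L^2})$. This cannot be combined with $\|A\|_{L^2}\le\delta$ to control $\int|A|^2f^2=\int\phi^2|A|^4\varrho$; you would need $\|A\|_{L^4}$.

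The paper instead applies the $L^1$ inequality to $h=|A|f=\phi|A|^2\varrho^{1/2}$, whose $L^2$ norm squared is exactly the reaction term. It then uses $\nabla(ug^2)=2g\nabla(ug)-g^2\nabla u$ with $g=|A|$ and $u=\phi\varrho^{1/2}$. Now every term of $\int(|\nabla h|+|\mathbf H|h)$ carries a factor $|A|$, which Cauchy--Schwarz pairs with $\|A\|_{L^2(\operatorname{supp}\phi)}\le\delta$. This yields
\[
\int\phi^2|A|^4\varrho\le C\delta^2\Big(\int|\nabla f|^2+\int|\nabla\phi|^2|A|^2\varrho+\int\phi^2|A|^2\varrho^{-1}|\nabla\varrho|^2+\int\phi^2|A|^4\varrho\Big).
\]
The two extra terms are handled by your Gaussian cutoff bound and your integration by parts, and the last term is absorbed for $\delta$ small. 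With this replacement the argument also closes for $n=2$.
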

    \begin{proof}
    It suffices to prove the case $\rho=1$.
    Let $\varphi:[0,\infty)\to[0,1]$ be a smooth decreasing cutoff function s.t. 
    \[
    \varphi\equiv1 \text{ on } [0,\tfrac12],\quad
    \varphi\equiv0 \text{ on } [\tfrac{9}{16},\infty),\quad
    |\varphi'|\varphi^{-\frac12}\leq c,\quad
    -\varphi''\varphi^{-1}\leq c.
    \]
    Define 
    \[
    \eta:=|\mathbf x_t-x_0|^2+2nt
    \]
    s.t. 
    \[
    \square\eta=0,\qquad |\nabla\eta|^2\leq 4\eta. 
    \]
    Define the cutoff function 
    \begin{equation}\label{ineq:weak local monotonicity 1-0}
    \phi:=e^{-4ct}\varphi\circ\eta
    \end{equation}
    s.t.
    \begin{enumerate}[label=(\alph*)]
    \item $\text{supp}\phi\subset B_{\frac34}(x_0)\cap{M}_t$;
    \item $\phi\equiv e^{-4ct}$ in $B_{\frac58}(x_0)\cap{M}_t$ for $t\leq \frac{7}{128n}$;
    \item $\text{supp}|\nabla\phi|\subset A_{\frac58,\frac34}(x_0)\cap M_t$ for $t\leq \frac{7}{128n}$;
    \item $\square\phi=e^{-4ct}(-4c\varphi+\varphi'\square\eta-\varphi''|\nabla\eta|^2)\leq 4c\varphi(\eta-1)e^{-4ct}\leq0$;
    \item $|\nabla\phi|^2=|e^{-4ct}\varphi'\nabla\eta|^2\leq e^{-8ct}c^24\eta\varphi\leq 4c^2e^{-4ct}\phi$.
    \end{enumerate}
    Fix $t_0\in[0,T]$ and $y\in B_{\frac12}(x_0)$.
    Set
    \[
    \tau=2t_0-t,\quad X=\mathbf x_t-y,
    \]
    and define 
    \[
    \varrho(x,t):=\left(4\pi\tau\right)^{-\frac n2}\exp\left(-\frac{|X|^2}{4\tau}\right).
    \]
    Consider 
    \[
    \mathcal{E}(t)=\int_{M_t}\phi^2|A|^n\varrho d\mu_t.
    \]
    By Lemma \ref{lem:integration by parts} with $a=\frac n2$, for $t\in[0,t_0\wedge\frac{7}{128n}]$,
    \begin{equation}\label{ineq:weak local monotonicity 1-1}
    \begin{aligned}
    \mathcal{E}'
    \leq& 
    -\int_{M_t}\phi^2|A|^n\left|\mathbf H+\frac{X^\perp}{2\tau}\right|^2\varrho
    -C(n)\int_{M_t}\left|\nabla(\phi|A|^{n/2})\right|^2\varrho\\
    &+C(n,m)\int_{M_t} \phi^2|A|^{n+2}\varrho
    +C(n)\int_{M_t}|\nabla\phi|^2 |A|^n\varrho\\
    :=& -\mathrm{I}-C(n)\mathrm{II}+C(n,m)\mathrm{III}+C(n)\mathrm{IV}.
    \end{aligned}
    \end{equation}
    By Cauchy-Schwarz inequality,
    \[
    \begin{aligned}
    -\mathrm{II}
    %=-\int_{M_t}|\nabla(\phi|A|^{n/2})|^2\varrho
    &\leq-\frac12\int_{M_t}\left|\nabla(\phi|A|^{n/2}\varrho^{1/2})\right|^2
    +\int_{M_t}\phi^2|A|^n|\nabla\varrho^{1/2}|^2\\
    &=-\frac12\int_{M_t}\left|\nabla(\phi|A|^{n/2}\varrho^{1/2})\right|^2
    +\frac14\int_{M_t}\phi^2|A|^n\varrho^{-1}|\nabla\varrho|^2\\
    &:=-\frac12\mathrm{V}+\frac14\mathrm{VI}.
    \end{aligned}
    \]
    Note that
    \[
    \text{div}(\varrho^{-1}\nabla\varrho)
    =\text{div}\left(-\frac{X^\top}{2\tau}\right)
    =-\frac{1}{2\tau}(n+\langle X^\perp,\mathbf H\rangle).
    \]
    By divergence theorem and Cauchy-Schwarz inequality,
    \[
    \begin{aligned}
    \mathrm{VI}
    =&\int_{M_t}\phi^2|A|^n\langle\nabla\varrho,\varrho^{-1}\nabla\varrho\rangle\\
    =&-\int_{M_t}\langle\nabla(\phi|A|^{n/2})^2,\nabla\varrho\rangle
    +\int_{M_t}\phi^2|A|^n\frac{1}{2\tau}(n+\langle X^\perp,\mathbf H\rangle)\varrho\\
    \leq& 2\int_{M_t}|\nabla(\phi|A|^{n/2})|^2\varrho
    +\frac12\int_{M_t}\phi^2|A|^n\varrho^{-1}|\nabla\varrho|^2+\frac{n}{2\tau}\mathcal{E}+\frac12\int_{M_t}\phi^2|A|^n\left|\mathbf H+\frac{X^\perp}{2\tau}\right|^2\varrho\\
    =& 2\mathrm{II}+\frac12\mathrm{VI}+\frac{n}{2\tau}\mathcal{E}+\frac12\mathrm{I}.
    \end{aligned}
    \]
    Hence, we have
    \begin{equation}\label{ineq:weak local monotonicity 1-2}
    \mathrm{VI}\leq 4\mathrm{II}+\frac{n}{\tau}\mathcal{E}+\mathrm{I}
    \end{equation}
    and
    \begin{equation}\label{ineq:weak local monotonicity 1-3}
    \mathrm{V}\leq 2\mathrm{II}+\frac12\mathrm{VI}
    \leq4\mathrm{II}+\frac{n}{2\tau}\mathcal{E}+\frac12\mathrm{I}.
    \end{equation}
    Note that $|X|\geq\frac18$ in $\text{supp}|\nabla\phi|$, and thus
    \[
    \varrho\leq C(n) \tau^{-\frac n2}e^{-\frac{1}{256\tau}}.
    \]
    By the properties of $\phi$ and the assumption on total curvature, 
    \begin{equation}\label{ineq:weak local monotonicity 1-4}
    \mathrm{IV}
    \leq C(n)e^{-4ct}\int_{\text{supp}|\nabla\phi|}|A|^n\varrho
    \leq C(n)\delta^n\tau^{-\frac n2}e^{-\frac{1}{256\tau}}.
    \end{equation}
    It remains to estimate $\mathrm{III}$.
    \begin{enumerate}[label=\textbf{Case \arabic*:}, leftmargin=*]
    \item $n \geq 3$. \\
    By H\"{o}lder inequality and the Michael-Simon Sobolev inequality,  
    \[
    \begin{aligned}
    \mathrm{III}
    &\leq \left(\int_{\text{supp}\phi} |A|^n \right)^{\frac{2}{n}}\left(\int_{M_t}(\phi^2|A|^n\varrho)^{\frac{n}{n-2}}\right)^{\frac{n-2}{n}}\\
    &\leq \delta^2\left(\int_{M_t}(\phi^2|A|^n\varrho)^{\frac{n}{n-2}}\right)^{\frac{n-2}{n}}\\
    &\leq C(n)\delta^2\left(\int_{M_t}\left|\nabla(\phi|A|^{n/2}\varrho^{1/2})\right|^2
    +\int_{M_t}\phi^2|A|^n\varrho|H|^2\right)\\
    &\leq C(n)\delta^2(\mathrm{V}+n\mathrm{III})
    \end{aligned}
    \]
    For $\delta$ sufficiently small, we have
    \[
    \mathrm{III}\leq C(n)\delta^2\mathrm{V}.
    \]
    \item $n=2$.\\
    By Michael-Simon Sobolev inequality, H\"{o}lder inequality and the identity 
    \[
    \nabla(fg^2)+g^2\nabla f=2g\nabla(fg),
    \]
    we have 
    \[
    \begin{aligned}
    \mathrm{III}
    \leq& C(n)\left(\int_{M_t} |\nabla(\phi|A|^2\varrho^{1/2})|\right)^2+C(n)\left(\int_{M_t}\phi|A|^2\varrho^{1/2}|H|\right)^2\\
    \leq& C(n)\left(\int_{M_t}\Big|2|A|\nabla(\phi|A|\varrho^{1/2})-|A|^2\nabla(\phi\varrho^{1/2})\Big|\right)^2+C(n)\left(\int_{M_t}\phi|A|^3\varrho^{1/2}\right)^2\\
    \leq& C(n)\left(\int_{M_t}|A|\cdot|\nabla(\phi|A|\varrho^{1/2})|\right)^2
    +C(n)\left(\int_{M_t} |A|^2|\nabla\phi|\varrho^{1/2}\right)^2\\
    &+C(n)\left(\int_{M_t}|A|^2\phi|\nabla\varrho^{1/2}|\right)^2
    +C(n)\left(\int_{M_t}\phi|A|^3\varrho^{1/2}\right)^2\\
    \leq& C(n)\delta^2(\mathrm{V}+\mathrm{IV}+\mathrm{VI}+\mathrm{III}).
    \end{aligned}
    \]
    For $\delta$ sufficiently small, we have
    \[
    \mathrm{III}\leq C(n)\delta^2(\mathrm{V}+\mathrm{IV}+\mathrm{VI}).
    \]
    \end{enumerate}
    In both cases, for $t\in[0,t_0\wedge\frac{7}{128n}]$, 
    \begin{equation}\label{ineq:weak local monotonicity 1-5}
       \mathrm{III}\leq C(n)\delta^2(\mathrm{V}+\mathrm{IV}+\mathrm{VI}).
    \end{equation}
    Combining the estimates (\ref{ineq:weak local monotonicity 1-1})-(\ref{ineq:weak local monotonicity 1-5}) yields that, for $t\in[0,t_0\wedge\frac{7}{128n}]$,
    \[
    \begin{aligned}
    \mathcal{E}'
    &\leq -\mathrm{I}-C(n)\mathrm{II}+C(n,m)\delta^2(\mathrm{V}+\mathrm{VI})+C(n)\mathrm{IV}\\
    &\leq -\mathrm{I}-C(n)\mathrm{II}+C(n,m)\delta^2(\mathrm{II}+\tau^{-1}\mathcal{E}+\mathrm{I})+C(n)\mathrm{IV}\\
    &\leq C_1\delta^2\tau^{-1}\mathcal{E}+C(n)\delta^n\tau^{-\frac n2}e^{-\frac{1}{256\tau}},
    \end{aligned}
    \]
    if $\delta$ is sufficiently small, where $C_1=C_1(n,m)$.
    Note that $\tau=2t_0-t\in[t_0,2t_0]$, and that
    \[
    \exp\left(C_1\delta^2\int_0^{t_0}\tau^{-1}dt\right)=2^{C_1\delta^2}.
    \]
    By Gr\"{o}nwall inequality, we have
    \[
    \mathcal{E}(t)\leq 2^{C_1\delta^2}\left[\mathcal{E}(0)+C(n)\delta^nt_0^{-\frac n2}e^{-\frac{1}{512 t_0}}\right],\quad\forall\,t\in[0,t_0\wedge\tfrac{7}{128n}].
    \]
    If $t_0\leq \frac{1}{128}$ such that $\sqrt\tau\leq\frac18$, then
    \[
    e^{-8ct}\tau^{-\frac n2}e^{-\frac{1}{4}}\int_{B_{\sqrt\tau}(y)\cap M_{t}}|A|^n
    \leq 2^{C_1\delta^2}\left[\tau(0)^{-\frac n2}\int_{B_{\frac34}(x_0)\cap M_0}|A|^n e^{-\frac{|\mathbf x_0-y|^2}{8t_0}}+C(n)\delta^nt_0^{-\frac n2}e^{-\frac{1}{512 t_0}}\right],
    \]
    that is, 
    \begin{equation}\label{ineq:weak local monotonicity 1-6}
    \int_{B_{\sqrt\tau}(y)\cap M_{t}}|A|^n
    \leq C(n,m)\left[\int_{B_{\frac34}(x_0)\cap M_0}|A|^n e^{-\frac{|\mathbf x_0-y|^2}{8t_0}}+\delta^n e^{-\frac{1}{512 t_0}}\right].
    \end{equation}
    In particular, we have
    \[
    \|A\|_{L^n(B_{\sqrt{t_0}}(y)\cap M_t)}\leq C(n,m)\left[\|A\|_{L^n(B_1(x_0)\cap M_0)}+\delta e^{-\frac{1}{512n t_0}}\right].
    \]
    \end{proof}

By the monotonicity formula, the local volume ratio bound is preserved along the flow.

\begin{lemma}\label{lem:volume ratio by initial}
    There exists a constant $T_2=T_2(n)$ such that the following holds.
    Fix $x_0\in\R^{n+m}$, $\rho>0$ and $\nu>0$.
    Let $\{M_t^n\}_{t\in[0,T]}$ be a smooth mean curvature flow in $\R^{n+m}$ satisfying
    \[
    B_{\rho}(x_0)\cap{M}_t\Subset{M}_t,\quad\forall\, t\in[0,T].
    \]
    If 
    \[
    \mu_0\left(B_r(x)\cap M_0\right)\leq \nu r^n,\quad\forall\,x\in B_{\rho/2}(x_0)\cap M_0,\quad\forall\,r\in[0,\rho/2],
    \]
    then
    \[
    \mu_t\left(B_r(y)\cap M_t\right)\leq C(n)\nu r^n,\quad\forall\,t\in[0,T\wedge T_2\rho^2],\quad\forall\,y\in B_{\rho/16}(x_0),\quad\forall\,r\in[0,\rho/16].
    \]
\end{lemma}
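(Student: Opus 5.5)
Set $\rho=1$ by parabolic scaling and use a localized Huisken monotonicity formula. Fix $y\in B_{1/16}(x_0)$, $t_1\in(0,T\wedge T_2]$ and $r\in(0,1/16]$. Take the backward heat kernel
\[
\varrho_{y,t_1+r^2}(\mathbf x,t)=(4\pi(t_1+r^2-t))^{-\frac n2}\exp\Bigl(-\tfrac{|\mathbf x-y|^2}{4(t_1+r^2-t)}\Bigr),
\]
centered at $(y,t_1+r^2)$. Multiply it by the cutoff $\phi=\varphi\circ\eta$ with $\eta=|\mathbf x_t-x_0|^2+2nt$, exactly as in the proof of Lemma \ref{lem:weak local monotonicity 1}. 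Since $\square\eta=0$, we get $\square\phi\le0$ once $\varphi$ is concave and decreasing. For example, take $\varphi(s)=(1-4s)_+^3$, so that $\phi$ is supported in $B_{1/2}(x_0)$ for all $t\ge0$ and $\phi\equiv$ a positive constant on $B_{1/4}(x_0)$ for $t\le T_2$ with $T_2$ small. Assumption $B_1(x_0)\cap M_t\Subset M_t$ makes the integration by parts legitimate. Huisken's formula with cutoff then gives
\[
\frac{d}{dt}\int_{M_t}\phi\,\varrho\,d\mu_t\le\int_{M_t}\square\phi\,\varrho\,d\mu_t\le0 .
\]
Hence $\int_{M_{t_1}}\phi\varrho\,d\mu_{t_1}\le\int_{M_0}\phi\varrho\,d\mu_0$.

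\textbf{Lower bound at time $t_1$.} On $B_r(y)\cap M_{t_1}\subset B_{1/8}(x_0)$ we have $\phi\ge c$. The time gap is $t_1+r^2-t_1=r^2$, so $\varrho\ge(4\pi r^2)^{-n/2}e^{-1/4}$ there. Therefore
\[
\int_{M_{t_1}}\phi\varrho\,d\mu_{t_1}\ge c(n)\,r^{-n}\mu_{t_1}(B_r(y)\cap M_{t_1}).
\]

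\textbf{Upper bound at time $0$.} Put $\tau_0=t_1+r^2\le T_2+1/256$. The integrand is supported in $B_{1/2}(x_0)\cap M_0$, where $\phi\le1$. I would decompose into the ball $B_{\sqrt{\tau_0}}(y)$ and the dyadic annuli $A_k=B_{2^{k+1}\sqrt{\tau_0}}(y)\setminus B_{2^k\sqrt{\tau_0}}(y)$, intersected with $B_{1/2}(x_0)$.

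\textbf{Mass of each annulus.} Each such piece is covered by balls of radius $s=\min(2^{k}\sqrt{\tau_0},1/2)$ centered at points of $M_0\cap B_{1/2}(x_0)$. The number of balls needed is at most $C(n)2^{km}$ in codimension $m$, which is not good enough. A better route avoids any covering count: the hypothesis applies to any $x\in B_{1/2}(x_0)\cap M_0$ and $r\le1/2$. So for each annulus, either it misses $M_0$, or we pick a point $x\in M_0$ in it and bound $\mu_0(B_{2^{k+1}\sqrt{\tau_0}}(y)\cap M_0)\le\mu_0(B_{2^{k+2}\sqrt{\tau_0}}(x)\cap M_0)$. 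This uses $\nu(2^{k+2}\sqrt{\tau_0})^n$ as long as the radius is at most $1/2$. Beyond that radius, the support restriction gives the total bound $\mu_0(B_{1/2}(x_0)\cap M_0)\le\nu 2^{-n}$, applied via a point of $M_0$ near $x_0$ with radius $1/2$ after enlarging constants. If no point exists, the mass is zero.

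\textbf{Summing.} We get
\[
\int_{M_0}\phi\varrho\,d\mu_0\le C(n)\nu\sum_k 2^{(k+2)n}e^{-4^{k}/4}+C(n)\nu\,\tau_0^{-n/2}e^{-c/\tau_0}\le C(n)\nu ,
\]
using $\tau_0$ bounded by a dimensional constant. Combined with the lower bound, this yields $\mu_{t_1}(B_r(y)\cap M_{t_1})\le C(n)\nu r^n$.

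\textbf{Expected main obstacle.} The delicate points are the cutoff and the far region. I need $\square\phi\le0$ while $\phi$ stays bounded below on $B_{1/8}(x_0)$ up to time $T_2$; this forces the choice of $T_2$ via $2nT_2$ small. I also need the contribution of $M_0$ far from $y$ to be controlled only by the local hypothesis, which holds only at centers in $B_{1/2}(x_0)\cap M_0$ and radii up to $1/2$. This is handled by recentering at a point of $M_0$ in each annulus and doubling the radius, as described. If the recentered radius exceeds $1/2$, the Gaussian factor $e^{-|\mathbf x-y|^2/4\tau_0}$ is then bounded by a constant rather than small. That case must be absorbed using $\tau_0\ge r^2$ being small only through $r$, so I would instead estimate it with $\tau_0^{-n/2}\le r^{-n}\cdot(\text{const})$ replaced by the cruder bound: for radii $\ge1/4$ the weight is at most $C(n)\tau_0^{-n/2}e^{-1/(64\tau_0)}\le C(n)$, which is fine.
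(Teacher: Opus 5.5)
Your overall scheme is the paper's: localized Huisken monotonicity with a cutoff $\varphi(|\mathbf x_t-x_0|^2+2nt)$, the kernel centered at $(y,t_1+r^2)$, and a comparison of the Gaussian integrals at times $t_1$ and $0$. The lower bound at time $t_1$ is fine. The gap is the far-region estimate at time $0$. Because your cutoff is supported in $B_{1/2}(x_0)$, you need $\mu_0(B_{1/2}(x_0)\cap M_0)\le C(n)\nu$, and this does not follow from the hypothesis. For $z\in M_0\cap B_{1/2}(x_0)$ you only have $B_{1/2}(x_0)\subset B_1(z)$, and radius $1$ is outside the admissible range $r\le 1/2$. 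Nothing guarantees a point of $M_0$ near $x_0$. Covering $B_{1/2}(x_0)$ by balls of radius $1/4$ gives only $C(n,m)\nu$, which is exactly the codimension loss you noticed earlier. This loss is genuine for your quantity. Take $x_0=y=0$ and the $2m$ parallel planes $\R^n\times\{\pm0.36\,e_i\}\subset\R^{n+m}$. They are pairwise at distance $>1/2$, so the hypothesis holds with $\nu=\omega_n$, yet $\mu_0(B_{1/2}\cap M_0)\approx 2m\,\omega_n(0.35)^n$. Since $r$ may be as large as $1/16$, $\tau_0=t_1+r^2$ may be close to $1/256$. Then $\phi\varrho$ is bounded below by a positive constant $c(n)$ on each plane inside $B_{0.4}$, so $\int_{M_0}\phi\varrho\,d\mu_0\ge c(n)\,m$. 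Hence your time-$0$ bound cannot hold with a constant depending only on $n$. The lemma itself is not violated here, since these planes miss $B_{1/8}$; only your intermediate estimate fails.

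The fix is to localize more tightly, as the paper does. Take the cutoff supported in $\overline{B_{1/4}(x_0)}$, e.g. $\chi=(1-16\eta)_+^3$. It still satisfies $\square\chi=-6\cdot 16^2(1-16\eta)_+|\nabla\eta|^2\le0$, and $\chi\ge 1/8$ on $B_{1/8}(x_0)$ for $t\le 1/(128n)$. For any $z\in M_0\cap B_{1/4}(x_0)$ one has $B_{1/4}(x_0)\subset B_{1/2}(z)$. So the restriction $\lambda$ of $\mu_0$ to $B_{1/4}(x_0)\cap M_0$ satisfies $\lambda(B_R(y'))\le 2^n\nu R^n$ for every $y'$ and every $R>0$. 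For $R\le 1/4$ this comes from recentering at a point of the support; for $R>1/4$ the total mass is at most $2^{-n}\nu$. Your dyadic Gaussian sum then gives $C(n)\nu$ with no far-region case. There are also two minor slips. With $\square\eta=0$ one has $\square(\varphi\circ\eta)=-\varphi''(\eta)|\nabla\eta|^2$, so $\varphi$ must be convex, not concave; your $(1-4s)_+^3$ is convex, so it works. Also, your $\phi$ is not constant on $B_{1/4}(x_0)$, though only the lower bound on $B_{1/8}(x_0)$ is used.
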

\begin{proof}
    It suffices to prove the case $\rho=1$.
    Define the measure
    \[
    \lambda(E)=\mu_0\left(E\cap B_{\frac14}(x_0)\cap M_0\right),\quad\forall\, E\subset\R^{n+m}.
    \]
    Claim that 
    \begin{equation}\label{ineq:volume ratio by initial-1}
    \lambda(B_R(y))\leq 2^n\nu R^n,\quad\forall\,y\in\R^{n+m},\quad\forall\,R>0.
    \end{equation}
    In fact, if $R\leq\frac14$, then for some $z\in B_R(y)\cap B_{\frac14}(x_0)\cap M_0$, $B_{R}(y)\subset B_{2R}(z)$ and thus 
    \[
    \lambda(B_R(y))\leq \mu_0(B_{2R}(z)\cap M_0)\leq 2^n \nu R^n.
    \]
    If $R>\frac14$, then for some $z\in B_{\frac14}(x_0)\cap M_0$, $B_{\frac14}(x_0)\subset B_{\frac12}(z)$ and thus 
    \[
    \lambda(\R^{n+m})\leq \mu_0(B_{\frac12}(z)\cap M_0)\leq \nu 2^{-n}\leq 2^n\nu R^n.
    \]
    This proves (\ref{ineq:volume ratio by initial-1}).
    Therefore, for any $y\in\R^{n+m}$ and any $\sigma>0$, 
    \begin{equation}\label{ineq:volume ratio by initial-2}
    \begin{aligned}
    &(4\pi\sigma^2)^{-\frac n2}\int_{\R^{n+m}} e^{-\frac{|x-y|^2}{4\sigma^2}}d\lambda(x)\\
    \leq& (4\pi\sigma^2)^{-\frac n2}\left[\lambda(B_{\sigma}(y))+\sum_{k=1}^{\infty} e^{-4^{k-2}}\lambda(B_{2^k\sigma}(y))\right]\\
    \leq& (4\pi)^{-\frac n2}\left[1+\sum_{k=1}^{\infty} 2^{nk}e^{-4^{k-2}}\right]\nu\\
    \leq& C(n)\nu.
    \end{aligned}
    \end{equation}
    Set
    \[
    \eta(x,s)=|\mathbf x_s-x_0|^2+2ns,\quad \chi(x,s)=\left(1-16\eta(x,s)\right)_+^3,
    \]
    such that
    \begin{enumerate}[label=(\roman*)]
        \item $\text{supp}\chi(\cdot,s)\subset \overline{B_{\frac14}(x_0)}\cap M_s$; 
        \item $\chi\geq\frac18$ in $B_{\frac18}(x_0)\cap M_s$ for $s\leq\frac{1}{128n}$;
        \item $\square\chi=-6\cdot 16^2\left(1-16\eta\right)_+|\nabla\eta|^2\leq0$.
    \end{enumerate}
    Fix
    \[
    t\in [0,T\wedge \tfrac{1}{128n}], \quad y\in B_{\frac{1}{16}}(x_0),\quad r\in(0,\tfrac{1}{16}].
    \]
    Define 
    \[
    \varrho(x,s)=(4\pi(t+r^2-s))^{-\frac n2}\exp\left(-\frac{|\mathbf x_s-y|^2}{4(t+r^2-s)}\right),\quad s\in[0,t].
    \]
    By Huisken's monotonicity formula, 
    \[
    \frac{d}{ds}\int_{M_s}\chi\varrho d\mu_s
    = \int_{M_s}\square\chi\varrho d\mu_s-\int_{M_s}\chi\left|\mathbf H+\frac{(\mathbf x_s-y)^\perp}{2(t+r^2-s)}\right|^2\varrho d\mu_s
    \leq0.
    \]
    At $s=0$, taking $\sigma=\sqrt{t+r^2}$ in (\ref{ineq:volume ratio by initial-2}) yields 
    \[
    \int_{M_0}\chi\varrho d\mu_0\leq (4\pi\sigma^2)^{-\frac n2}\int e^{-\frac{|x-y|^2}{4\sigma^2}}d\lambda(x)\leq C(n)\nu.
    \]
    At $s=t$, since $B_r(y)\subset B_{\frac18}(x_0)$,
    \[
    \int_{M_t}\chi\varrho d\mu_t\geq\frac18(4\pi r^2)^{-\frac{n}{2}}e^{-\frac14}\mu_t\left(B_r(y)\cap M_t\right).
    \]
    Combining these above gives 
    \[
    \mu_t\left(B_r(y)\cap M_t\right)\leq C(n)\nu r^n.
    \]
\end{proof}

Locally, under suitable initial assumptions and a time-decay estimate, we establish the weak monotonicity of the total curvature as follows.

\begin{lemma}\label{lem:weak local monotonicity 2}
    For any $\nu>0$ there exist constants $T_3=T_3(n)$, $\delta_2=\delta_2(n,m)$, $K=K(n,m)$ and $\alpha_1=\alpha_1(n,m,\nu)$ such that the following holds.
    Fix $x_0\in\R^{n+m}$, $\rho>0$ and $\delta\in(0,\delta_2]$. 
    Let $\{M_t^n\}_{t\in[0,T]}$ be a smooth mean curvature flow in $\R^{n+m}$ satisfying
    \begin{enumerate}[label=(\roman*)]
        \item $B_{\rho}(x_0)\cap{M}_t\Subset{M}_t,\quad\forall\,t\in[0,T]$;
        \item $\|A\|_{L^n(B_{\rho}(x_0)\cap{M}_0)}\leq \delta$;
        \item $\sup_{B_{\rho}(x_0)\cap{M}_t}|A|^2\leq \alpha_1\delta^2 t^{-1}, \quad\forall\,t\in[0,T]$;
        \item $\mu_0\left(B_r(x)\cap M_0\right)\leq \nu r^n,\quad\forall\,x\in B_{\rho/2}(x_0)\cap M_0,\quad\forall\,r\in[0,\rho/2].$
    \end{enumerate}
    Then
    \[
    \|A\|_{L^n(B_{\rho/32}(x_0)\cap{M}_t)}\leq K\delta,
    \quad\forall\,t\in[0,T\wedge T_3\rho^2].
    \]
\end{lemma}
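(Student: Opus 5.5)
The plan is to run a continuity (bootstrap) argument in time, with Lemma \ref{lem:weak local monotonicity 1} as the main engine. That lemma already gives Gaussian-small error terms, but it needs the a priori bound (ii) on $\|A\|_{L^n}$ at all times, which is exactly what we want to prove. Hypotheses (iii) and (iv) will be used to supply this a priori bound on small balls and to make the bootstrap close. By scaling assume $\rho=1$, and take $T_3\le T_1\wedge T_2\wedge\frac{1}{128n}$.

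\emph{Step 1 (small-scale control from (iii)--(iv)).} By Lemma \ref{lem:volume ratio by initial}, for $t\le T_2$, $y\in B_{1/16}(x_0)$ and $r\le 1/16$ we have $\mu_t(B_r(y)\cap M_t)\le C(n)\nu r^n$. Combining this with (iii) gives
\[
\int_{B_r(y)\cap M_t}|A|^n\le C(n)\nu\,(\alpha_1\delta^2t^{-1})^{n/2}r^n .
\]
On balls of radius $r\le\sqrt t$ this is at most $C(n)\nu\alpha_1^{n/2}\delta^n$. Choosing $\alpha_1=\alpha_1(n,m,\nu)$ small makes this $\le\delta^n$. This is the only place where $\nu$ enters.

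\emph{Step 2 (bootstrap).} Fix $r_0=1/8$ and set
\[
\Psi(s):=\sup_{y\in B_{1/16}(x_0)}\|A\|_{L^n(B_{r_0}(y)\cap M_s)} .
\]
At $s=0$, (ii) gives $\Psi(0)\le\delta$, and $\Psi$ is continuous in $s$ by smoothness on compact sets (hypothesis (i)). Let $S$ be the supremal time $\le T\wedge T_3$ such that $\Psi\le\delta_1$ on $[0,S]$, where $\delta_1$ is the constant of Lemma \ref{lem:weak local monotonicity 1}. On $[0,S]$ apply Lemma \ref{lem:weak local monotonicity 1} to the flow in each ball $B_{r_0}(y)$, using the sharper weighted form (\ref{ineq:weak local monotonicity 1-6}) rather than its final statement. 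For any $t_0\le S$ and any $z$ in the concentric half ball this gives
\[
\int_{B_{\sqrt{t_0}}(z)\cap M_t}|A|^n\le C\Big[\int_{M_0\cap B_{r_0}(y)}|A|^ne^{-\frac{|\mathbf x_0-z|^2}{8t_0}}+\delta_1^ne^{-\frac{r_0^2}{512t_0}}\Big].
\]
Now cover $B_{1/32}(x_0)$, or any ball $B_{r_0/2}(y)$, by $\sim(r_0/\sqrt{t_0})^n$ balls $B_{\sqrt{t_0}}(z_i)$ with bounded overlap and sum the estimate over $i$. The Gaussian weights satisfy $\sum_ie^{-|x-z_i|^2/8t_0}\le C(n)$, so the first terms sum to at most $C\delta^n$. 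The error terms sum to $C\delta_1^n t_0^{-n/2}e^{-r_0^2/(512t_0)}$, which is $\le\delta^n$ once $t_0\le T_3$ with $T_3$ small. Here we need $\delta_1^n$ versus $\delta^n$: we shrink $T_3$ depending on the ratio. To avoid $T_3$ depending on $\delta$, we instead apply Lemma \ref{lem:weak local monotonicity 1} with its "$\delta$" equal to $K\delta$, i.e.\ we bootstrap the bound $\Psi\le K\delta$ rather than $\Psi\le\delta_1$. The error then becomes $(K\delta)^n t_0^{-n/2}e^{-c/t_0}\le\delta^n$ for $T_3=T_3(n)$.

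\emph{Step 3 (closing and covering the time range).} The estimate of Step 2 only controls times $t\le t_0$ for the chosen $t_0$, with the ball radius tied to $\sqrt{t_0}$. For a given $t$ we take $t_0=t$, or $t_0=S$, and re-sum at every intermediate time. For very small times, where the number of covering balls blows up, we use Step 1 directly at scale $\sqrt t$. Putting these together yields $\Psi\le K_0\delta$ on $[0,S]$, with $K_0=K_0(n,m)$ independent of the bootstrap constant. Taking $K=2K_0$ and $\delta_2$ small so that $K\delta_2\le\delta_1$ gives a strict improvement, so by continuity $S=T\wedge T_3$. The conclusion on $B_{1/32}(x_0)$ follows.

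The main obstacle is closing the bootstrap. Lemma \ref{lem:weak local monotonicity 1} needs the $L^n$ bound on a ball larger than the one where we conclude, so one must check that the nested radii $1/16\to r_0\to 1/32$ and the uniform choice of centers are consistent. One must also ensure that the constant coming out of Lemma \ref{lem:weak local monotonicity 1} does not depend on the bootstrap constant $K$, and it is Step 1 that guarantees this at small scales.
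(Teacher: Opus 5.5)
There is a genuine gap: your bootstrap with a fixed radius does not close. The nesting of radii that you list as something to ``check'' is the central obstruction. The bootstrap quantity $\Psi$ controls the balls $B_{1/8}(y)$, $y\in B_{1/16}(x_0)$, i.e.\ the region $B_{3/16}(x_0)$. But Lemma~\ref{lem:weak local monotonicity 1}, in the form \eqref{ineq:weak local monotonicity 1-6}, only improves the bound on balls $B_{\sqrt{t_0}}(z)$ with $z$ in the concentric half-balls, i.e.\ inside $B_{1/8}(x_0)$.

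To cover $B_{1/8}(y)$ for $y$ near $\partial B_{1/16}(x_0)$ you need centres $z$ near $\partial B_{3/16}(x_0)$. For such $z$, the largest ball around $z$ on which the a priori bound $K\delta$ is available has radius $s\to0$. The error term $(K\delta)^n e^{-s^2/(512t_0)}$ is then not small, so nothing better than $K\delta$ comes out there. Since the lemma always loses a factor $2$ in radius, no fixed choice of radius and centre set repairs this.

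A symptom is that Step 2 never uses (iii) or (iv). If it closed, it would prove the conclusion from (i)--(ii) alone, which is false: a translating grim-reaper cylinder of small width can enter $B_\rho(x_0)$ from outside in a very short time and carry arbitrarily large $\int|A|^n$. Your use of Step 1 in Step 3 also targets the wrong regime. With $r_0$ fixed, small $t_0$ is harmless, because $(r_0^2/t_0)^{(n+m)/2}e^{-r_0^2/(512t_0)}\to0$. And a per-ball bound at scale $\sqrt t$, summed over $\sim(r_0/\sqrt t)^{n+m}$ balls, would not control $\Psi$ anyway.

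The missing idea, which is how the paper argues, is a radius that degenerates at the edge of the centre set, $\rho(x)=\frac12(\frac1{16}-|x-x_0|)$, combined with a first-failure argument. Let $\bar t$ be the first time at which some $\bar x$ has $\|A\|_{L^n(B_{\bar\rho}(\bar x)\cap M_{\bar t})}=K\delta$, where $\bar\rho=\rho(\bar x)$. Every $x\in B_{\bar\rho}(\bar x)$ has $\rho(x)\ge\bar\rho/2$. So the hypothesis of Lemma~\ref{lem:weak local monotonicity 1} holds on $B_{\bar\rho/2}(x)$ throughout $[0,\bar t]$, and $B_{\bar\rho}(\bar x)$ can be covered by $N\lesssim(\bar\rho/\sqrt{\bar t})^{n+m}$ balls $B_{\sqrt{\bar t}}(x_i)$ with such centres.

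Your Step 1, used in contrapositive form, is exactly what makes the errors small. Equality at $(\bar x,\bar t)$, together with (iii) and Lemma~\ref{lem:volume ratio by initial}, forces $\bar\rho^2/\bar t\ge C^{-2}K^2\alpha_1^{-1}\nu^{-2/n}$. This is large once $\alpha_1$ is small relative to $K^2\nu^{-2/n}$. It gives both $\bar t\le T_1(\bar\rho/2)^2$ and $CN(K\delta)^ne^{-\bar\rho^2/(2048\bar t)}\le\frac12(K\delta)^n$. Summing the estimate over the covering then yields $\frac12(K\delta)^n\le C\delta^n$, a contradiction for $K=K(n,m)$ large. So the smallness comes from the scale ratio at the failure point (via $\alpha_1$), not from $T_3$ being small at a fixed radius.
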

\begin{proof}
    It suffices to prove the case $\rho=1$.
    For $x\in B_{\frac{1}{16}}(x_0)$, define $\rho(x)=\frac{1}{2}(\frac{1}{16}-|x-x_0|)$.
    Set $T_3=T_1\wedge T_2$, where $T_1=T_1(n)$ is from Lemma \ref{lem:weak local monotonicity 1} and $T_2=T_2(n)$ is from Lemma \ref{lem:volume ratio by initial}. 
    Claim that for $K$ to be determined, we have
    \[
    \|A\|_{L^n(B_{\rho(x)}(x)\cap{M}_t)}\leq K\delta,
    \quad \forall\,(x,t)\in B_1(x_0)\times[0,T\wedge T_3].
    \]
    Otherwise, there exists $\bar{t}\in(0,T\wedge T_3]$ such that
    \begin{enumerate}[label=(\Roman*)]
        \item $\|A\|_{L^n(B_{\rho(x)}(x)\cap {M}_t)}\leq K\delta$ for any $(x,t)\in B_{\frac{1}{16}}(x_0)\times[0,\bar{t})$;
        \item $\|A\|_{L^n(B_{\rho(\bar{x})}(\bar{x})\cap {M}_{\bar{t}})}=K\delta$ for some $\bar{x}\in B_{\frac{1}{16}}(x_0)$.
    \end{enumerate}
    Set $\bar{\rho}:=\rho(\bar{x})\in(0,\frac{1}{32}]$.
    By (II) and Lemma \ref{lem:volume ratio by initial}, 
    \[
    \begin{aligned}
    K\delta
    \leq \sqrt{\alpha_1\delta^2\bar{t}^{-1}}\cdot \mu_{\bar{t}}^{\frac1n}(B_{\bar{\rho}}(\bar{x})\cap {M}_{\bar{t}})
    \leq \sqrt{\alpha_1\delta^2\bar{t}^{-1}}\cdot C_1\nu^{\frac1n}\bar{\rho},
    \end{aligned}
    \]
    where $C_1=C_1(n)$.
    It follows that
    \begin{equation}\label{ineq:weak local monotonicity 2-1}
    \frac{\bar{\rho}^2}{\bar{t}}\geq C_1^{-2} K^2\alpha_1^{-1}\nu^{-\frac2n}.
    \end{equation}
    For $x\in B_{\bar{\rho}}(\bar{x})\subset B_{\frac18}(x_0)$, 
    \[
    \begin{aligned}
    2\rho(x)
    &=\tfrac{1}{16}-|x-x_0|\\
    &\geq \tfrac{1}{16}-|\bar{x}-x_0|-|\bar{x}-x|\\
    &\geq 2\bar{\rho}-\bar{\rho}
    =\bar{\rho}.
    \end{aligned}
    \]
    By (I) and (II), 
    \[
    \|A\|_{L^n(B_{\frac{\bar{\rho}}{2}}(x)\cap {M}_t)}\leq K\delta,
    \quad \forall\,(x,t)\in B_{\bar{\rho}}(\bar{x})\times[0,\bar{t}].
    \]
    By (\ref{ineq:weak local monotonicity 1-6}) from the proof of Lemma \ref{lem:weak local monotonicity 1}, if $\bar{t}\leq T_1(\frac{\bar{\rho}}2)^2$ and $K\delta_2\leq \delta_1=\delta_1(n)$, then for any $x\in B_{\bar{\rho}}(\bar{x})$ and $t=\bar{t}$ we have
    \[
    \begin{aligned}
    \int_{B_{\sqrt{\bar{t}}}(x)\cap {M}_{\bar{t}}}|A|^n
    &\leq C_2\left[\int_{B_{\frac{\bar{\rho}}{2}}(x)\cap {M}_0}|A|^n e^{-\frac{|\mathbf x_0-x|^2}{8\bar{t}}}+(K\delta)^n e^{-\frac{\bar{\rho}^2}{2048\bar{t}}}\right]\\
    &\leq C_2\left[\int_{B_{\rho(x)}(x)\cap {M}_0}|A|^n e^{-\frac{|\mathbf x_0-x|^2}{8\bar{t}}}+(K\delta)^n e^{-\frac{\bar{\rho}^2}{2048\bar{t}}}\right],
    \end{aligned}
    \]
    where $C_2=C_2(n,m)$. 
    By choosing $\{x_i\}_{i=1}^N\subset B_{\bar{\rho}}(\bar{x})\subset B_{\frac18}(x_0)$ we consider the covering
    \[
    \bigsqcup_{i=1}^N B_{\frac{\sqrt{\bar{t}}}5}(x_i)
    \subset B_{\bar{\rho}}(\bar{x})
    \subset\bigcup_{i=1}^N B_{\sqrt{\bar{t}}}(x_i).
    \]
    By (\ref{ineq:weak local monotonicity 2-1}) we have 
    \[
    N\leq 5^{n+m}\left(\frac{\bar{\rho}}{\sqrt{\bar{t}}}\right)^{n+m}.
    \]
    Then 
    \begin{equation}\label{ineq:weak local monotonicity 2-2}
    \begin{aligned}
    \|A\|_{L^n(B_{{\bar{\rho}}}(\bar{x})\cap {M}_{\bar{t}})}^n
    &\leq C_2 \sum_i \left[\int_{B_{\rho(x_i)}(x_i)\cap {M}_0}|A|^n e^{-\frac{|\mathbf x_0-x_i|^2}{8\bar{t}}}+(K\delta)^n e^{-\frac{\bar{\rho}^2}{2048\bar{t}}}\right]\\
    &\leq C_2 \left[\int_{B_{\frac18}(x_0)\cap {M}_0}|A|^n \sum_ie^{-\frac{|\mathbf x_0-x_i|^2}{8\bar{t}}}+5^{n+m}(K\delta)^n\left(\tfrac{\bar{\rho}^2}{\bar{t}}\right)^{\frac{n+m}{2}} e^{-\frac{\bar{\rho}^2}{2048\bar{t}}}\right].
    \end{aligned}
    \end{equation}
    Note that for any $z\in\R^{n+m}$, 
    \begin{equation}\label{ineq:weak local monotonicity 2-3}
    \begin{aligned}
    \sum_ie^{-\frac{|z-x_i|^2}{8 r^2}}
    &\leq \sum_{k=0}^\infty\sum_{x_i\in A_{kr,(k+1)r}(z)}e^{-\frac{|z-x_i|^2}{8 r^2}}\\
    &\leq \sum_{k=0}^\infty 5^{n+m}(k+2)^{n+m}e^{-\frac{k^2}{8}}\\
    &\leq C(n,m)<\infty.
    \end{aligned}
    \end{equation}
    On the other hand, since 
    \[
    \lim_{x\to\infty}x^{\frac{n+m}{2n}}e^{-\frac{x}{2048n}}=0,
    \]
    there exists a constant $C_3=C_3(n,m)$ such that if $\tfrac{\bar{\rho}^2}{\bar{t}}\geq C_3$, then
    \begin{equation}\label{ineq:weak local monotonicity 2-4}
    (\tfrac{\bar{\rho}^2}{\bar{t}})^{\frac{n+m}{2}} e^{-\frac{\bar{\rho}^2}{2048\bar{t}}}\leq \frac12 5^{-n-m}C_2^{-1}.
    \end{equation}
    By (\ref{ineq:weak local monotonicity 2-1}), it suffices to ensure
    \[
    K^2\alpha_1^{-1}\geq \max\{4T_1^{-1},C_3\}C_1^{2}\nu^{\frac2n}.
    \]
    Combining (\ref{ineq:weak local monotonicity 2-2})-(\ref{ineq:weak local monotonicity 2-4}), we obtain
    \[
    \frac12(K\delta)^n\leq C_4\delta^n
    \]
    for some $C_4=C_4(n,m)$.
    Choosing 
    \[
    K\geq (2C_4)^{\frac1n},\quad \alpha_1\leq \min\{{T_1}/4,C_3^{-1}\}C_1^{-2}\nu^{-\frac2n}K^2
    \]
    yields a contradiction.
\end{proof}

Combining Lemma \ref{lem:weak local monotonicity 1} and Lemma \ref{lem:weak local monotonicity 2}, we obtain a local estimate on the total curvature along the flow.
This implies a time-decay estimate depending only on the initial assumptions by $\epsilon-$regularity.

\begin{lemma}\label{lem:time-decay estimate}
    For any $\nu>0$ there exist constants $\delta_3=\delta_3(n,m)$, $\alpha_2=\alpha_2(n,m,\nu)$, $\alpha_3=\alpha_3(n,m)$ and $T_4=T_4(n,m,\nu)$  such that the following holds.
    Fix $x_0\in\R^{n+m}$, $\rho>0$ and $\delta\in(0,\delta_3]$.
    Let $\{M_t^n\}_{t\in[0,T]}$ be a smooth mean curvature flow in $\R^{n+m}$ such that
    \begin{enumerate}[label=(\roman*)]
        \item $B_{\rho}(x_0)\cap {M}_0\Subset {M}_0$;
        \item $\|A\|_{L^n(B_{3\rho/4}(x_0)\cap {M}_0)}\leq\delta$;
        \item $\sup_{B_{\rho}(x_0)\cap {M}_t}|A|^2\leq \alpha_2 t^{-1}, \quad\forall\,t\in(0,T]$;
        \item $\mu_0\left(B_r(x)\cap M_0\right)\leq \nu r^n,\quad\forall\,x\in B_{3\rho/4}(x_0)\cap M_0,\quad\forall\,r\in[0,\rho/4].$
    \end{enumerate}
    Then
    \[
    \sup_{B_{\frac\rho2}(x_0)\cap M_t}|A|^2
    \leq\alpha_3\left[\delta^2+e^{-\frac{\rho^2}{2^{16}nt}}\right]t^{-1},
    \quad\forall\,t\in(0,T\wedge T_4\rho^2].
    \]
    In particular, there exists a constant $T_5=T_5(n,m,\nu)$ such that
    \[
    \sup_{B_{\frac\rho2}(x_0)\cap M_t}|A|^2
    \leq 2\alpha_3 \delta^2 t^{-1},
    \quad\forall\,t\in\left(0,T\wedge T_5|\log\delta|^{-1}\rho^2\right].
    \]
\end{lemma}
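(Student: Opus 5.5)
The plan is to combine Lemma \ref{lem:weak local monotonicity 2} with an $\epsilon$-regularity argument on parabolic balls of scale $\sqrt t$. By scaling take $\rho=1$.

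\textbf{Step 1: the $L^n$ norm stays small.} Choose $\alpha_2:=\alpha_1\delta_3^2$, or more precisely arrange that the hypothesis (iii) with $\alpha_2$ implies hypothesis (iii) of Lemma \ref{lem:weak local monotonicity 2} on balls of radius $1/4$ centered at points $x\in B_{3/4}(x_0)$. Hypotheses (ii) and (iv) of that lemma are provided by (ii) and (iv) here after rescaling $\rho\to1/4$. Hypothesis (i) on the whole time interval needs care: it does not come directly from assumption (i) here. I would obtain it from Lemma \ref{lem:shrinking ball}, since $|A|^2\le\alpha_2/t$ forces trajectories to move by at most $\frac\beta2\sqrt{\alpha_2 t}$, which is small for $t\le T_4$. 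That lemma then gives
\[
\|A\|_{L^n(B_{1/128}(x)\cap M_t)}\le K\delta\qquad\text{for } t\le T_3/16.
\]
Next apply Lemma \ref{lem:weak local monotonicity 1} on balls $B_{1/128}(x)$, with $K\delta\le\delta_1$ playing the role of $\delta$. This yields, for $y\in B_{1/2}(x_0)$ and $t$ small,
\[
\|A\|_{L^n(B_{\sqrt t}(y)\cap M_s)}\le C\bigl[\delta+K\delta e^{-c/t}\bigr]\le C'\delta,\qquad s\le t.
\]
The sharper bound involving $e^{-1/(2^{16}nt)}$ has to be extracted in a different way (see Step 3).

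\textbf{Step 2: $\epsilon$-regularity.} Fix $(y,t)$ with $y\in B_{1/2}(x_0)\cap M_t$. Rescale parabolically by $t^{-1/2}$ around $(y,t)$ and look at the flow on the time interval $[t/2,t]$, which becomes $[1/2,1]$ after rescaling. On this interval the rescaled flow has:
\begin{itemize}
\item bounded curvature $|A|^2\le 2\alpha_2$, which is small;
\item small $L^n$ curvature $\epsilon_0:=C'\delta$ on unit balls;
\item volume ratio bounded via Lemma \ref{lem:volume ratio by initial}.
\end{itemize}
Because the curvature is bounded, interior derivative estimates control $\nabla A$ uniformly. A pointwise bound at the final time then follows from the $L^n$ bound: if $|A|(y)=\lambda$, then $|A|\ge\lambda/2$ on a ball of radius $c\min(1,\lambda^{-1})$. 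Lemma \ref{lem:small curvature graph lemma} gives a volume lower bound on this ball, which forces $\lambda^n\cdot c\le \epsilon_0^n$, that is $\lambda\lesssim\epsilon_0$. Scaling back gives $|A|^2(y,t)\le\alpha_3\epsilon^2/t$, where $\epsilon$ is the local $L^n$ norm on $B_{\sqrt t}(y)$ over $[t/2,t]$.

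\textbf{Step 3: the Gaussian tail.} To obtain $\alpha_3[\delta^2+e^{-1/(2^{16}nt)}]$, I would instead use the sharper inequality \eqref{ineq:weak local monotonicity 1-6}. It bounds $\int_{B_{\sqrt t}(y)}|A|^n$ by $C\bigl[\int_{M_0}|A|^n e^{-|\mathbf x_0-y|^2/8t}+(K\delta)^n e^{-c/t}\bigr]$. The initial term is at most $\delta^n$. The second term is bounded by $(K\delta)^n e^{-c/t}\le \delta^n$, up to choosing the constant inside the exponential as $2^{-16}/n$ after taking $n$-th roots and squaring. This explains the stated form.

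\textbf{Consequence.} The "in particular" statement follows because $e^{-1/(2^{16}nt)}\le\delta^2$ exactly when $t\le (2^{17}n|\log\delta|)^{-1}$. So $T_5$ is chosen as $\min(T_4,2^{-17}n^{-1})$ divided by $|\log\delta|$, which is valid once $\delta\le\delta_3<1$.

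\textbf{Main obstacle.} I expect Step 1 to be the hardest, because the hypotheses there are circular. Lemma \ref{lem:weak local monotonicity 2} requires the decay $\alpha_1\delta^2/t$, whereas only $\alpha_2/t$ is assumed. I would first try simply choosing $\alpha_2\le\alpha_1\delta^2$. This makes $\alpha_2$ depend on $\delta$, which is not allowed. If that fails, I would run a continuity argument in $t$. Let $t^*$ be the maximal time up to which $|A|^2\le 2\alpha_3\delta^2/t$ holds, and suppose $2\alpha_3\le\alpha_1$. Then Lemma \ref{lem:weak local monotonicity 2} applies on $[0,t^*]$, and Step 2 improves the constant to $\alpha_3\delta^2$, which extends $t^*$. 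The assumed bound $\alpha_2/t$, with $\alpha_2$ small, is used only to start the argument near $t=0$ and to ensure properness and displacement control through Corollary \ref{cor:interior estimate by c/t}.
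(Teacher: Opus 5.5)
\textbf{There is a genuine gap, in Steps 1 and 3.} You take $\alpha_2$ independent of $\delta$ (e.g.\ $\alpha_2=\alpha_1\delta_3^2$, your first choice). Then hypothesis (iii) of Lemma \ref{lem:weak local monotonicity 2} holds only at the level $\delta_3$, so that lemma returns $K\delta_3$, not $K\delta$. Hence your bound $\|A\|_{L^n(B_{1/128}(x)\cap M_t)}\le K\delta$ is unjustified, and so is the term $(K\delta)^ne^{-c/t}$ in Step 3.

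Step 3 also misidentifies where the tail comes from. If a uniform $K\delta$ bound were available, Lemma \ref{lem:weak local monotonicity 1} would give $C\delta$ with no tail at all. You would then get $|A|^2\le C\delta^2/t$ on a $\delta$-independent time interval, and that is false in general. Take $M_0$ flat in $B_{3/4}(x_0)$ but gently bent just outside. Every $\delta>0$ is then admissible, yet $M_t\cap B_{1/2}(x_0)$ does not stay flat for $t>0$: curvature from outside the controlled region leaks in at rate $e^{-c/t}$. This is exactly what the term $e^{-\rho^2/(2^{16}nt)}$ records.

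For the same reason your fallback continuity argument cannot close. The bootstrap bound $2\alpha_3\delta^2/t$ is needed on the whole balls where Lemma \ref{lem:weak local monotonicity 2} is applied, which reach out to $\partial B_{3/4}(x_0)$. Step 2 only returns it on a strictly smaller interior region.

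\textbf{The missing idea is that no uniform bound proportional to $\delta$ is needed.} Apply Lemma \ref{lem:weak local monotonicity 2} at the fixed level $\delta_3$: hypothesis (ii) holds since $\delta\le\delta_3$, and (iii) holds since $\alpha_2\le\alpha_1\delta_3^2$. Use $N_0(n,m)$ balls $B_{1/8}(y_i)\subset B_{3/4}(x_0)$ whose inner balls $B_{1/256}(y_i)$ cover $B_{9/16}(x_0)$. Properness for $t\le\frac14\beta^{-2}\alpha_2^{-1}$ comes from Lemma \ref{lem:shrinking ball}, as you say. With $\delta_3\le K^{-1}N_0^{-1/n}\delta_1$, this gives only the crude bound $\|A\|_{L^n(B_{9/16}(x_0)\cap M_t)}\le\delta_1$. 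That is precisely hypothesis (ii) of Lemma \ref{lem:weak local monotonicity 1}.

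Now apply Lemma \ref{lem:weak local monotonicity 1} on $B_{1/16}(y)$ with $y\in B_{1/2}(x_0)$. It yields, for $s\in[\frac34t,t]$,
\[
\|A\|_{L^n(B_{\sqrt s}(y)\cap M_s)}\le C\bigl[\delta+\delta_1e^{-1/(2^{17}ns)}\bigr].
\]
The initial data contribute $\delta$, and the crude bound enters only through the Gaussian factor. Finally, apply Lemma \ref{lem:epsilon-regularity} with $p=n$ to the rescaled flow $\frac{4}{\sqrt t}(M_{t+\frac{t}{16}\tau}-y)$, $\tau\in[-4,0]$. Taking the $2/n$ power gives $|A|^2(y,t)\le C[\delta^2+e^{-1/(2^{16}nt)}]t^{-1}$.

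\textbf{Your Step 2 should also be replaced by this $\epsilon$-regularity step.} From $|A|\le C$ and $|\nabla A|\le C$ alone, $|A|\ge\lambda/2$ holds only on a ball of radius $\sim\lambda$, not $c\min(1,\lambda^{-1})$. That yields $\lambda^{2n}\lesssim\epsilon^n$, i.e.\ $|A|^2\lesssim\epsilon$ rather than $\epsilon^2$.
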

\begin{proof}
    It suffices to prove the case $\rho=1$.
    For $t\leq \frac14\beta^{-2}\alpha_2^{-1}$ and $z_t\in B_{\frac34}(x_0)\cap M_t$ along any backward trajectory $\{z_s\}_{s\leq t}$, by Lemma \ref{lem:shrinking ball}, 
    \[
    B_{\frac14-\frac\beta2\sqrt{\alpha_2 t}}(z_0)\subset B_{\frac14}(z_t)\subset B_{1}(x_0).
    \]
    Thus, 
    \[
    B_{\frac34}(x_0)\cap M_t
    \subset \Phi_{0,t}\left(B_{1}(x_0)\cap M_0\right)
    \Subset \Phi_{0,t}(M_0)=M_t,
    \quad\forall\,t\in [0,T\wedge\tfrac14\beta^{-2}\alpha_2^{-1}].
    \]
    By choosing $\{y_i\}_{i=1}^N\subset B_{\frac{9}{16}}(x_0)$ we consider the covering 
    \[
    \bigsqcup_{i=1}^N B_{\frac{1}{1280}}(y_i)
    \subset B_{\frac{9}{16}}(x_0)
    \subset\bigcup_{i=1}^N B_{\frac{1}{256}}(y_i)
    \]
    with $N\leq N_0(n,m)$.
    Note that $B_{\frac{1}{16}}(y_i)\subset B_{\frac18}(y_i)\subset B_{\frac34}(x_0)$.
    By Lemma \ref{lem:weak local monotonicity 2}, if $\delta_3\leq\delta_2\wedge K^{-1}N_0^{-\frac1n}\delta_1$ and $\alpha_2\leq\alpha_1\delta_3^2$, then for each $y_i$ and $t\leq T\wedge\frac14\beta^{-2}\alpha_2^{-1}\wedge\tfrac{T_3}{64}$, 
    \[
    \|A\|_{L^n(B_{\frac{1}{256}}(y_i)\cap M_t)}\leq K\delta_3\leq N_0^{-\frac1n}\delta_1,
    \]
    and thus
    \[
    \|A\|_{L^n(B_{\frac{9}{16}}(x_0)\cap M_t)}\leq N_0^{\frac1n}K\delta_3\leq \delta_1.
    \]
    Fix $t\leq T\wedge \frac14\beta^{-2}\alpha_2^{-1}\wedge\frac{T_3}{64}\wedge\frac{T_1}{256}\wedge\frac14$ and $y_t\in B_{\frac12}(x_0)\cap M_t$ such that $B_{\frac{1}{16}}(y_t)\subset B_{\frac{9}{16}}(x_0)$.
    By Lemma \ref{lem:weak local monotonicity 1}, for any $s\in[\frac34 t,t]$, 
    \[
    \begin{aligned}
    \|A\|_{L^n(B_{\frac{\sqrt t}2}(y_t)\cap M_s)}
    &\leq \|A\|_{L^n(B_{{\sqrt s}}(y_t)\cap M_s)}\\
    &\leq C(n,m)\left[\|A\|_{L^n(B_{\frac{1}{16}}(y_t)\cap M_0)}+\delta_1 e^{-\frac{1}{2^{17}ns}}\right]\\
    &\leq C(n,m)\left[\delta+e^{-\frac{1}{2^{17}nt}}\right].
    \end{aligned}
    \]
    Consider the rescaled flow 
    \[
    \tilde{M}_{\tau}=\frac{4}{\sqrt{t}}\left(M_{t+\frac{t}{16}\tau}-y_t\right),
    \quad \tau\in[-4,0].
    \]
    Note that $B_{\frac{\sqrt t}{2}}(y_t)\subset B_{\frac34}(x_0)$ and 
    \[
    \mathbf x_s^{-1}(B_{\frac34}(x_0))\subset \overline{\mathbf x_0^{-1}(B_1(x_0))},\quad\forall\,s\in[0,t].
    \]
    Thus the geometric hypotheses of Lemma \ref{lem:epsilon-regularity} are satisfied with $p=n$, $t_0=0$ and $x_0=0$.
    If 
    \[
    \begin{aligned}
    I_{0,0}
    &=\sup_{0\leq a< b\leq 4}\frac{1}{b-a}\int_{-b}^{-a}\int_{B_2\cap\tilde{ {M}}_\tau}|\tilde{A}|^n d\tilde{\mu}_\tau d\tau\\
    &=\sup_{\frac34 t\leq u< v\leq t}\frac{1}{v-u}\int_u^v\int_{B_{\frac{\sqrt{t}}{2}}(y_t)\cap {M}_s}|A|^n d\mu_s ds\\
    &\leq C_1\left[\delta^n+e^{-\frac{1}{2^{17}t}}\right]
    \leq \epsilon_0,
    \end{aligned}
    \]
    where $C_1=C_1(n,m)$ and $\epsilon_0=\epsilon_0(n,m)$, then 
    \[
    \frac{t}{16}|A|^2(y_t,t)=|\tilde{A}|^2(0,0)
    \leq 4(\epsilon_0^{-1}I_{0,0})^{\frac{2}{n}}
    \leq C_2\left[\delta^2+e^{-\frac{1}{2^{16}nt}}\right]
    \]
    for some $C_2=C_2(n,m)$. 
    Hence, for 
    \[
    \delta_3\leq \min\left\{\delta_2,K^{-1}N_0^{-\frac1n}\delta_1,\frac12,\left(\frac{\epsilon_0}{2C_1}\right)^{\frac1n}\right\}
    \] 
    and 
    \[
    t\leq T\wedge \min\left\{\frac14\beta^{-2}\alpha_2^{-1}, \frac{T_3}{64}, \frac{T_1}{256}, \frac14,\frac{1}{2^{17}|\log({\epsilon_0}/(2C_1))|}\right\},
    \] 
    we have
    \[
    \sup_{B_{\frac12}(x_0)\cap M_t}|A|^2
    \leq 16C_2\left[\delta^2+e^{-\frac{1}{2^{16}nt}}\right]t^{-1}.
    \]
    For the last statement, since $\delta_3\leq\frac12$, we choose $T_5=T_5(n,m,\nu)>0$ sufficiently small such that for any $\delta\in(0,\delta_3]$,
    \[
    T_5|\log\delta|^{-1}\leq T_4,
    \quad e^{-\frac{|\log\delta|}{2^{16}n T_5}}\leq\delta^2.
    \]
\end{proof}

The following version of $\epsilon-$regularity is a localized modification of Theorem 4.1 of \cite{MR3864377}.
\begin{lemma}\label{lem:epsilon-regularity}
    For $p\in[n,n+2]$, there exists a constant $\epsilon_0=\epsilon_0(n,m,p)>0$ such that the following holds.
    Let $\mathbf x:M\times(-4,0]\to\R^{n+m}$ be a smooth mean curvature flow.
    Fix $t_0\in[-1,0]$ and $x_0\in M_{t_0}$.
    Suppose that there is a compact set $K\Subset M^\circ$ such that 
    \[
    \mathbf x_t^{-1}(B_2(x_0))\subset K,\quad\forall\, t\in(-4,0].
    \]
    If 
    \[
    I_{x_0,t_0}
    :=\sup_{\sqrt{-t_0}\leq\rho<\rho'\leq 2}\frac{1}{(\rho'^2-\rho^2)^{\frac{n+2-p}{2}}}\int_{-\rho'^2}^{-\rho^2}\int_{ B_2(x_0)\cap M_t}|A|^p d\mu_t dt
    \leq \epsilon_0,
    \]
    then 
    \[
    \sup_{\sigma\in(0,1)}\left[\sigma^2\sup_{t\in(t_0-(1-\sigma)^2,t_0)}\sup_{ B_{1-\sigma}(x_0)\cap M_t} |A|^2\right]
    \leq (\epsilon_0^{-1}I_{x_0,t_0})^{\frac2p}.
    \]
    In particular, 
    \[
    |A|^2(x_0,t_0)\leq 4(\epsilon_0^{-1}I_{x_0,t_0})^{\frac2p}.
    \]
\end{lemma}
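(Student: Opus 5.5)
The plan is a point-picking argument at the first scale where the bound fails, followed by parabolic Moser iteration for $|A|^2$ on a region where the curvature is already controlled. We pass to a contradiction by letting $\epsilon_0$ shrink and absorbing constants. Write $P_r(y,s):=\{(x,t): t\in(s-r^2,s],\ x\in B_r(y)\cap M_t\}$ and
\[
F(\sigma):=\sigma^2\sup_{t\in(t_0-(1-\sigma)^2,t_0)}\sup_{B_{1-\sigma}(x_0)\cap M_t}|A|^2,\qquad \sigma\in(0,1).
\]
By the hypothesis $\mathbf x_t^{-1}(B_2(x_0))\subset K$ with $K\Subset M^\circ$ compact, $|A|$ is bounded on the relevant region. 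Hence $F$ is finite, upper semicontinuous, and tends to $0$ at both ends. So it attains its supremum at some $\sigma_0$, and there is a point $(x_1,t_1)$ in the closure of that parabolic region with $Q:=|A|^2(x_1,t_1)$ and $F(\sigma_0)=\sigma_0^2Q$.

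Since $P_{\sigma_0/2}(x_1,t_1)$ lies in the region corresponding to $\sigma_0/2$, maximality gives $(\sigma_0/2)^2\sup_{P_{\sigma_0/2}(x_1,t_1)}|A|^2\le \sigma_0^2Q$, that is, $|A|^2\le 4Q$ there. Set $s:=\min\{\sigma_0/2,\,Q^{-1/2}\}$ and rescale parabolically by $s$ around $(x_1,t_1)$. The rescaled flow lives on $P_1(0,0)$, has $|\tilde A|^2\le 4$, and has $|\tilde A|^2(0,0)=s^2Q$. Because $\mathbf x_t^{-1}(B_2(x_0))$ stays in the fixed compact $K$ away from $\partial M$, the rescaled slices are properly immersed in the unit ball, so integration by parts against compactly supported cutoffs is legitimate.

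On this bounded-curvature region, $u=|\tilde A|^2$ satisfies $(\partial_t-\Delta)u\le C(n,m)u^2\le 4C(n,m)u$, and the mean curvature is bounded by $C(n)$. This is the step I expect to be the main technical point: running the parabolic Moser iteration on an evolving submanifold of arbitrary codimension. I would use the cutoff $\eta=|\mathbf x|^2+2nt$ as in Lemma \ref{lem:weak local monotonicity 1} and the Michael--Simon Sobolev inequality, whose $H$-term is harmless since $|H|\le C$. Iterating on powers $u^{q}$ with $q\ge p/2\ge n/2\ge 1$ should yield the mean value inequality
\[
u(0,0)\le C(n,m,p)\Big(\int_{-1}^{0}\int_{B_1\cap\tilde M_\tau}u^{p/2}\Big)^{2/p}.
\]
Scaling back, this reads
\[
s^2Q\le C\Big(s^{-(n+2-p)}\int_{t_1-s^2}^{t_1}\int_{B_s(x_1)\cap M_t}|A|^p\,d\mu_t\,dt\Big)^{2/p}.
\]

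Next we compare with $I_{x_0,t_0}$. We have $B_s(x_1)\subset B_2(x_0)$ and $t_1\le t_0$, so $\rho:=\sqrt{-t_1}\ge\sqrt{-t_0}$. Also $t_1-s^2\ge t_0-1\ge -2$, so $\rho':=\sqrt{s^2-t_1}\le 2$, and $\rho'^2-\rho^2=s^2$. Hence the integral above is bounded by $s^{n+2-p}I_{x_0,t_0}$, and therefore $s^2Q\le C\,I_{x_0,t_0}^{2/p}$.

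We finish by distinguishing the two possible values of $s$. If $s=Q^{-1/2}$, then $1\le C\epsilon_0^{2/p}$, which is impossible once $\epsilon_0$ is small. So $s=\sigma_0/2$, and
\[
F(\sigma_0)=\sigma_0^2Q=4s^2Q\le 4C\,I_{x_0,t_0}^{2/p}\le(\epsilon_0^{-1}I_{x_0,t_0})^{2/p},
\]
after shrinking $\epsilon_0$ so that $4C\le\epsilon_0^{-2/p}$. This proves the first claim. The final claim follows by taking $\sigma=1/2$ and letting $t\uparrow t_0$, using continuity of $|A|$ at $(x_0,t_0)$, which is justified by smoothness on $K$.
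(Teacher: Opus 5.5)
The paper gives no proof of its own here; it defers to Theorem 4.1 of the reference cited just before the lemma. Your argument is the standard proof of such a statement: Schoen-type point picking, rescaling by $s=\min\{\sigma_0/2,Q^{-1/2}\}$, a Moser mean value inequality for $|A|^2$ on a cylinder of bounded curvature, and the observation that $(t_1-s^2,t_1]\times B_s(x_1)$ is one of the windows entering $I_{x_0,t_0}$. Your check of that window ($t_1\le t_0$, $t_1-s^2\ge t_0-1\ge-2$, $B_s(x_1)\subset B_2(x_0)$) is correct, and so is the final case split on $s$.

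Two points need routine repair.

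\textbf{The maximizer.} $F(\sigma)$ does not tend to $0$ as $\sigma\uparrow1$. For $\sigma$ near $1$, the backward trajectory of $x_0$ stays in $B_{1-\sigma}(x_0)$ on $(t_0-(1-\sigma)^2,t_0)$, so $\liminf_{\sigma\uparrow1}F(\sigma)\ge|A|^2(x_0,t_0)$. Moreover, with open regions $F$ need not be upper semicontinuous: a sheet touching $\partial B_{1-\sigma}(x_0)$ from outside can make the inner supremum jump. Hence a maximizer in $(0,1)$ may not exist.

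The fix is to replace the regions by the compact sets $\{(q,t): t\in[t_0-(1-\sigma)^2,t_0],\ \mathbf x(q,t)\in\overline{B_{1-\sigma}(x_0)}\}\subset K\times[-2,0]$. The corresponding function dominates $F$, is upper semicontinuous on $(0,1]$ (these sets decrease to the set at $\sigma$ as $\sigma'\uparrow\sigma$), and tends to $0$ at $0$. So it has a maximizer $\sigma_0\in(0,1]$. Your inclusion of $P_{\sigma_0/2}(x_1,t_1)$ in the $\sigma_0/2$-region survives, because $(1-\sigma_0)^2+\sigma_0^2/4\le(1-\sigma_0/2)^2$ holds also at $\sigma_0=1$. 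Alternatively, take $\sigma_0$ with $F(\sigma_0)\ge\frac12\sup F$ and absorb the factor $2$.

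\textbf{The Moser step when $p=n=2$.} Here your starting exponent is $p/2=1$. Testing the subsolution inequality for $u$ against $\phi^2u^{\beta-1}$ with $\beta=1$ produces no gradient term, so the iteration cannot start there directly. You can either run Moser from $L^2$ and descend to $L^1$ by the usual interpolation-and-absorption argument (legitimate since $u$ is bounded on $K$). Or iterate on $|A|$ itself, which by Kato's inequality satisfies $(\partial_t-\Delta)|A|\le C|A|^3\le C'|A|$ on the rescaled cylinder and has starting exponent $p\ge2$.
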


 \section{Existence for mean curvature flow}\label{sec4}

\subsection{Construction of local mean curvature flow}$\\$

Following the iterative argument in Theorem 1.1 of \cite{MR4854457}, we prove

\begin{theorem}\label{theo:local short-time existence by small total curvature}
    For any $\nu>0$ there exist constants $\delta_4=\delta_4(n,m,\nu)$ and $T_6=T_6(n,m,\nu)$ such that the following holds.
    Let $R>0$ and let ${M}_0^n\subset\R^{n+m}$ be a submanifold properly immersed in $B_{R+4}$ with $0\in {M}_0$. 
    If $M_0$ satisfies the total curvature bound
    \[
    \|A\|_{L^n(B_{R+4}\cap {M}_0)}\leq \delta_4
    \]
    and the volume ratio bound
    \[
    \sup_{\substack{x\in B_{R+3}\cap M_0\\0<r\leq 1}} r^{-n}\mu_0(B_r(x)\cap M_0)\leq \nu,
    \]
    then there exists a smooth mean curvature flow $\{M_t^n\}_{t\in[0,T_6]}$ properly immersed in $B_{R+1}$ with initial data $B_{R+1}\cap M_0$ and nonempty time slices such that 
    \[
    \sup_{B_{R+1}\cap {M}_t}|A|^2\leq \alpha_2 t^{-1},\quad\forall\,t\in(0,T_6].
    \]
    Here, $\alpha_2=\alpha_2(n,m,\nu)$ is from Lemma \ref{lem:time-decay estimate}.
\end{theorem}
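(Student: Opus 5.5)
We follow the iterative scheme of Theorem 1.1 of \cite{MR4854457}: repeatedly restart the flow with Proposition \ref{prop:local short-time existence}, and use the a priori estimates of Section \ref{sec3} to show the restarts do not degenerate. The point is that each restart only requires a curvature bound at the current time, while the curvature decay $\alpha_2 t^{-1}$ is produced by Lemma \ref{lem:time-decay estimate}. That lemma depends only on the initial $L^n$ smallness and the initial volume ratio, so it is independent of the number of restarts.

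\textbf{Step 1 (first short-time flow).} Since $M_0$ is smooth and properly immersed in $B_{R+4}$, $|A|$ is bounded on the compact set $\overline{B_{R+7/2}}\cap M_0$, say by $\rho_0^{-1}$ with $\rho_0\le 1/8$. Proposition \ref{prop:local short-time existence} then gives a flow on $[0,\Gamma\rho_0^2]$, properly immersed in $B_{R+3}$, with $|A|^2\le\Lambda\rho_0^{-2}$. Shrinking the time interval, we may assume $|A|^2\le\alpha_2t^{-1}$ there.

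\textbf{Step 2 (a priori bound, continuity argument).} Let $T^*$ be the supremum of times $S\le T_6$ for which a flow exists on $[0,S]$, properly immersed in $B_{R+2}$ (say), with $\sup|A|^2\le\alpha_2t^{-1}$. For every $x_0\in B_{R+1+1/2}$, apply Lemma \ref{lem:time-decay estimate} with $\rho=1/2$. Hypothesis (ii) holds with $\delta=\delta_4\le\delta_3$, and (iv) follows from the assumed volume ratio bound. Hypothesis (i) follows from properness together with Lemma \ref{lem:shrinking ball}: points move only $O(\sqrt{\alpha_2 t})$, so no boundary enters. The lemma then yields the improved bound
\[
|A|^2\le2\alpha_3\delta_4^2t^{-1}
\]
for $t\le T_5|\log\delta_4|^{-1}/4$. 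Choosing $\delta_4$ so small that $2\alpha_3\delta_4^2\le\alpha_2/4$, and setting $T_6:=T_5|\log\delta_4|^{-1}/4\wedge T_4/4$, the bound on $[0,T^*)$ is strictly better than the defining one.

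\textbf{Step 3 (extension).} At a time $t_1<T^*$ close to $T^*$ we have $|A|^2\le\frac{\alpha_2}{4}t_1^{-1}$ on the region, i.e. the hypothesis of Proposition \ref{prop:local short-time existence} holds with $\rho=\sqrt{t_1}/\sqrt{\alpha_2}$ up to a constant factor. This extends the flow by a time $\Gamma t_1/\alpha_2\cdot c$, which is a fixed fraction of $t_1$, so the existence interval grows by a definite factor at each restart. On the new interval, $|A|^2\le\Lambda c^{-1}\alpha_2 t_1^{-1}\cdots$ must still be below $\alpha_2 t^{-1}$. This is arranged by making the restart step small relative to the gap between $\alpha_2/4$ and $\alpha_2$, so that $\Lambda$ is absorbed, and then using Proposition \ref{prop:interior estimate by c/t} for the interior control. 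Together these show $T^*=T_6$; nonemptiness of the time slices follows from $0\in M_0$ and Lemma \ref{lem:distance distortion}.

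\textbf{Main obstacle.} The hardest part is the bookkeeping of spatial domains. Each application of Proposition \ref{prop:local short-time existence} loses $2\rho$ in radius, and since infinitely many restarts are needed as $t_1\to$ small initial times, the losses must sum to less than about $1$. With $\rho_k\sim\sqrt{t_k/\alpha_2}$ and $t_k$ growing geometrically, the sum $\sum\rho_k$ is comparable to $\sqrt{T_6}$, hence small once $T_6$ is small. The first step's loss, $2\rho_0$, is under $1/4$ by the choice $\rho_0\le1/8$. We would also need to check that properness is preserved when gluing the flows across restart times, and that the prescribed immersion data stay consistent.
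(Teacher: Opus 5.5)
Your overall architecture (restart Proposition~\ref{prop:local short-time existence} repeatedly, and use Lemma~\ref{lem:time-decay estimate} to improve the curvature bound at each restart time) is the paper's, but Step 3 does not close, for two concrete reasons. The first is that the absorption of $\Lambda$ fails. Proposition~\ref{prop:local short-time existence} gives $|A|^2\le\Lambda\rho^{-2}$ on the \emph{whole} interval $[t_1,t_1+\Gamma\rho^2]$, and this bound does not improve as the interval shrinks. If you restart with $\rho^{-2}=\frac{\alpha_2}{4}t_1^{-1}$, the only bound you have right after $t_1$ is $\frac{\Lambda}{4}\alpha_2t_1^{-1}$. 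This does not imply $|A|^2\le\alpha_2t^{-1}$ once $\Lambda>4$, however short the step. Proposition~\ref{prop:interior estimate by c/t} does not help either, since it yields bounds of the form $e^{C\alpha}(\rho-|x-x_0|)^{-2}$, not $\alpha_2t^{-1}$. The missing idea is to use the full strength $|A|^2\le2\alpha_3\delta_4^2t_i^{-1}$ of the improved estimate and to let $\delta_4$ depend on $\Lambda$ and $\Gamma$. The paper restarts at scale $\rho_i=(2\alpha_3)^{-1/2}\delta_4^{-1}\sqrt{t_i}$. Then $\delta_4\le\Gamma^{1/2}(2\alpha_3)^{-1/2}$ gives $\Gamma\rho_i^2\ge t_i$, so the time doubles ($t_{i+1}=2t_i$). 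Also $\delta_4\le\frac12\Lambda^{-1/2}\alpha_3^{-1/2}\alpha_2^{1/2}$ gives $\Lambda\rho_i^{-2}\le\frac12\alpha_2t_i^{-1}\le\alpha_2t^{-1}$ on $[t_i,2t_i]$. Your gap factor $4$ (from $2\alpha_3\delta_4^2\le\alpha_2/4$) is independent of $\Lambda$ and is too weak.

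The second problem is the spatial bookkeeping, which is more than a technicality. A continuity argument on the fixed domain $B_{R+2}$ cannot close, because the restarted flow is only properly immersed in a strictly smaller ball and so leaves the class defining $T^*$. (Local flows in balls are also not unique, so a supremum over unrelated flows is not meaningful; the paper uses a finite induction with explicit compatible continuations.) Once the domain is allowed to shrink, applying Lemma~\ref{lem:time-decay estimate} at the fixed scale $\rho=1/2$ costs a fixed radius (about $1/4$) at every restart. The number of restarts is unbounded, of order $\log(T_6/\rho_0^2)$ with $\rho_0$ depending on $\sup|A|$ on $M_0$, and your accounting only includes the losses $2\rho_k$. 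The paper instead applies the lemma at time $t_i$ on balls $B_{2\lambda_i}(y)$ with $\lambda_i=\frac14L\sqrt{\alpha_2t_i}$. The condition $L\ge2T_5^{-1/2}\alpha_2^{-1/2}|\log\delta_4|^{1/2}$ ensures $t_i\le T_5|\log\delta_4|^{-1}(2\lambda_i)^2$; this is exactly why the lemma's time window scales like $\rho^2$. The condition $L\ge2\sqrt2\,\alpha_3^{-1/2}\alpha_2^{-1/2}\delta_4^{-1}$ ensures $\rho_i\le\lambda_i$. The radii then drop by $r_i-r_{i+1}=L\sqrt{\alpha_2t_i}$, a geometric series. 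The induction stops at the index $l$ with $r_{l+1}<R+1\le r_l$. Since the budget $2$ is exhausted by a sum dominated by $\sqrt{t_l}$, one gets $t_l\ge2(\sqrt2-1)^2L^{-2}\alpha_2^{-1}$. This radius budget, not the lemma's time window alone, is what produces the uniform $T_6$.
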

\begin{proof}
    For dimension $n\geq2$ and codimension $m\geq1$, set the following constants:
    \begin{itemize}
        \item $\beta=4\sqrt n$;
        \item $\Gamma=\Gamma(n,m)$ and $\Lambda=\Lambda(n,m)$ from Proposition \ref{prop:local short-time existence};
        \item $\delta_3=\delta_3(n,m)$, $\alpha_2=\alpha_2(n,m,\nu)$, $\alpha_3=\alpha_3(n,m)$ and $T_5=T_5(n,m,\nu)$ from Lemma \ref{lem:time-decay estimate};
        \item $\delta_4=\delta_4(n,m,\nu):=\min\{\delta_3,e^{-1},\frac12\Lambda^{-\frac12}\alpha_3^{-\frac12}\alpha_2^{\frac12},\Gamma^{\frac12}(2\alpha_3)^{-\frac12}\}$;
        \item $L=L(n,m,\nu):=\max\left\{2T_5^{-\frac12}\alpha_2^{-\frac12}|\log\delta_4|^{\frac 12}, 2\sqrt2\alpha_3^{-\frac12}\alpha_2^{-\frac12}\delta_4^{-1}\right\}$.
    \end{itemize}
    Set 
    \[\rho:=\min\left\{\frac14,(\sup_{B_{R+\frac72}\cap {M}_0}|A|)^{-1}\right\}>0.\]
    Recursively, define a sequence of times $t_i$ and radii $r_i$ as follows:
    \begin{enumerate}[label=(\roman*)]
        \item $t_1=\min\{\Gamma\rho^2,\Lambda^{-1}\alpha_2\rho^2\},\quad r_1=R+3$;
        \item $t_{i+1}=2t_i$;
        \item $r_{i+1}=r_i-L\sqrt{\alpha_2 t_i}$.
    \end{enumerate}
    First, we establish positive lower bounds for time $t_i$ and radii $r_i$.
    Since $r_1=R+3$ and $r_j\to-\infty$ as $j\to\infty$, there exists an integer $l$ such that $r_{l+1}< R+1\leq r_l$.
    By the definition of the sequence,
    \[
    \begin{aligned}
    R+1\geq r_{l+1}
    &=r_1-L\sqrt{\alpha_2}\sum_{j=1}^{l}\sqrt{t_j}\\
    &\geq R+3-L\sqrt{\alpha_2 t_l}\sum_{j=0}^\infty 2^{-\frac j2}\\
    &= R+3-L\sqrt{\alpha_2 t_l}\frac{\sqrt2}{\sqrt2-1},
    \end{aligned}
    \]
    which implies
    \[
    t_l\geq 2(\sqrt2-1)^2 L^{-2}\alpha_2^{-1}:=T_6'.
    \]
    For each $i\geq1$, let $\mathcal{P}_i$ denote the following statement:
    \begin{itemize}
    \item
    There exists a smooth mean curvature flow $\{M_t\}_{t\in[0,t_i]}$ properly immersed in $B_{r_i}$, with initial data $B_{r_i}\cap M_0$, satisfying
    \[
    \sup_{B_{r_i}\cap M_t}|A|^2\leq \alpha_2 t^{-1},\quad\forall\,t\in(0,t_i].
    \]
    \end{itemize}
    We claim that $\mathcal{P}_i$ holds for each $i\leq l$.
        
    Note that $\mathcal{P}_1$ holds by Proposition \ref{prop:local short-time existence} since $\Lambda\rho^{-2}\leq\alpha_2 t_1^{-1}$.
    Assume that $l\geq2$ and $\mathcal{P}_{i}$ holds for some $i\leq l-1$.
    It suffices to prove $\mathcal{P}_{i+1}$.
    Set 
    \[\lambda_{i}:=\frac14 L\sqrt{\alpha_2 t_{i}}\]
    and 
    \[\rho_i:=(2\alpha_3)^{-\frac12}\delta_4^{-1}\sqrt t_i\]
    such that $\rho_{i}\leq \lambda_{i}$ by our choice of $L$.
    Fix $y\in B_{r_{i+1}+2\lambda_{i}}\cap {M}_{t_i}$. 
    Since $2\lambda_{i}=\frac12(r_{i}-r_{i+1})\leq1$, $B_{2\lambda_{i}}(y)\subset B_{r_{i}}\subset B_{R+3}$.
    By the assumption on volume ratio, 
    \[
    \sup_{\substack{x\in B_{2\lambda_{i}}(y)\cap M_0\\0<r\leq 2\lambda_{i}}} r^{-n}\mu_0(B_r(x)\cap M_0)\leq \nu.
    \]
    Note that 
    \[
    4T_5|\log\delta_4|^{-1}\lambda_{i}^2
    =\frac{T_5L^2\alpha_2}{4|\log\delta_4|}t_i
    \geq t_{i}
    \]
    by our choice of $L$. 
    Since $\delta_4\leq\delta_3$, by Lemma \ref{lem:time-decay estimate}, 
    \[
    |A|^2(y,t_i)
    \leq 2\alpha_3\delta_4^2{t_i}^{-1}:=\rho_i^{-2}.
    \]
    Hence, $M_{t_i}$ is properly immersed in $B_{r_{i+1}+2\rho_i}$ with 
    \[
    \sup_{B_{r_{i+1}+2\rho_{i}}\cap {M}_{t_{i}}}|A|^2
    \leq \rho_{i}^{-2}.
    \]
    By Proposition \ref{prop:local short-time existence}, there exists a smooth mean curvature flow properly immersed in $B_{r_{i+1}}$ for $t\in[t_{i},t_{i}+\Gamma\rho_{i}^2]$, with initial data $B_{r_{i+1}}\cap M_{t_{i}}$.
    The new flow is chosen as a compatible continuation of the previously constructed flow in the smaller ball.
    This extends $\{M_t\}_{t\in[0,t_i]}$ to the time interval $[t_{i},t_{i+1}]$ since $t_{i}\leq \Gamma\rho_{i}^2$ by our choice of $\delta_4$.
    In addition, since $\Lambda\alpha_3\delta_4^2\leq \frac14\alpha_2$ by our choice of $\delta_4$,
    \[
    \sup_{B_{r_{i+1}}\cap M_t}|A|^2
    \leq \Lambda \rho_{i}^{-2}
    =2\Lambda\alpha_3 \delta_4^2 t_{i}^{-1}
    \leq \frac12\alpha_2t_{i}^{-1}
    \leq \alpha_2 t^{-1},
    \quad\forall\,t\in[t_{i},t_{i+1}].
    \]
    This proves the claim.
    
    Set
    \[
    T_6=T_6'\wedge 4\alpha_2^{-1}\beta^{-2}.
    \]
    By Lemma \ref{lem:shrinking ball} and Lemma \ref{lem:distance distortion}, the forward trajectory $\{0_t\}_{t\geq0}$ issuing from $0\in M_0$ satisfies 
    \[
    |0_t|\leq \frac\beta2\sqrt{\alpha_2 t}\leq 1< R+1,\quad\forall\,t\in[0,T_6].
    \]
    Hence every time slice intersects $B_{R+1}$.
    Restricting the flow obtained from $\mathcal{P}_l$ to $B_{R+1}\times[0,T_6]$ completes the proof.
\end{proof}

A scale-invariant version is given as follows.
\begin{corollary}\label{cor:scale-invariant form of local short-time existence}
    For any $\nu>0$ there exist constants $\delta_5=\delta_5(n,m,\nu)$ and $T_7=T_7(n,m,\nu)$ such that the following holds.
    Let $\rho>0$ and let $M_0^n\subset\R^{n+m}$ be a submanifold properly immersed in $B_\rho$ with $0\in {M_0}$.
    If $M_0$ satisfies the total curvature bound
    \[
    \|A\|_{L^n(B_{\rho}\cap {M_0})}\leq \delta\leq \delta_5
    \]
    and the volume ratio bound
    \[
    \sup_{\substack{x\in B_{5\rho/6}\cap M_0\\0<r\leq\rho/6}} r^{-n}\mu_0(B_r(x)\cap M_0)\leq \nu,
    \]
    then there exists a smooth mean curvature flow $\{M_t\}_{t\in [0,T_7|\log\delta|^{-1}\rho^2]}$ properly immersed in $B_{\rho/4}$ with initial data $B_{\rho/4}\cap M_0$ and nonempty time slices such that 
    \[
    \sup_{B_{\rho/4}\cap M_t}|A|^2
    \leq 2\alpha_3 \delta^2 t^{-1},
    \quad\forall\,t\in(0,T_7|\log\delta|^{-1}\rho^2].
    \]
    Here, $\alpha_3=\alpha_3(n,m)$ is from Lemma \ref{lem:time-decay estimate}.
\end{corollary}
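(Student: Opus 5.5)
This corollary follows from Theorem \ref{theo:local short-time existence by small total curvature} by parabolic rescaling, combined with the last statement of Lemma \ref{lem:time-decay estimate} to improve the decay constant from $\alpha_2$ to $2\alpha_3\delta^2$.

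\textbf{Rescaling.} Let $\lambda:=\rho/(R+4)$ for a fixed choice such as $R=20$, and set $\tilde M_0:=\lambda^{-1}M_0$. Then $\tilde M_0$ is properly immersed in $B_{R+4}$ and contains $0$. Since $\|A\|_{L^n}$ is scale invariant, $\|\tilde A\|_{L^n(B_{R+4}\cap\tilde M_0)}\le\delta$. The volume ratio hypothesis is needed on $B_{R+3}$ at scales $r\le1$. In original units these are points of $B_{(R+3)\rho/(R+4)}$ and radii $r\le\rho/(R+4)$. For $R=20$ we have $(R+3)/(R+4)=23/24>5/6$, so the points are too far out. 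I would instead choose $R$ so that $(R+3)/(R+4)\le 5/6$ with $1/(R+4)\le1/6$; this forces $R\le 2$, which is compatible, and I take $R=2$. Then the hypotheses of Theorem \ref{theo:local short-time existence by small total curvature} hold provided $\delta_5\le\delta_4$.

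\textbf{Existence and improved decay.} The theorem gives a flow $\tilde M_t$, $t\in[0,T_6]$, properly immersed in $B_{R+1}=B_3$ with nonempty slices and $|\tilde A|^2\le\alpha_2t^{-1}$. Next I apply Lemma \ref{lem:time-decay estimate} with balls $B_{1}(x)$ centered at points $x\in B_{3/2}$. Then $B_1(x)\subset B_{5/2}\Subset B_3$, which gives hypotheses (i) and (iii). Hypothesis (ii) holds because the total curvature is at most $\delta\le\delta_3$. Hypothesis (iv) holds because $B_{3/4}(x)\subset B_3$, so the needed points lie in $B_{R+3}$, and the radii needed are at most $1/4\le1$. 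The last statement of the lemma then yields
\[
|\tilde A|^2\le2\alpha_3\delta^2t^{-1}\quad\text{on } B_{1/2}(x)
\]
for $t\le T_6\wedge T_5|\log\delta|^{-1}$. Covering $B_{3/2}$ by such balls gives the bound on $B_{3/2}$. In original scale, $B_{3/2}$ corresponds to radius $3\rho/6=\rho/2\ge\rho/4$. Restricting to $B_{\rho/4}$, the restricted flow stays properly immersed, with initial data $B_{\rho/4}\cap M_0$.

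\textbf{Time interval and nonemptiness.} I take $T_7:=\min\{T_5,\,T_6|\log\delta_5|\}/36$, where the factor $36=6^2$ comes from the scaling $\lambda^2=\rho^2/36$. Since $|\log\delta|\ge|\log\delta_5|$, this choice makes $T_7|\log\delta|^{-1}\le T_6/36$. Nonempty slices in $B_{\rho/4}$ follow from tracking the trajectory of $0$ via Lemma \ref{lem:distance distortion}: $|0_t|\le\frac\beta2\sqrt{\alpha_2t}$ in rescaled units, which stays below $3/2$ once $T_7$ is shrunk depending on $\alpha_2$.

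\textbf{Main obstacle.} The delicate step is the bookkeeping of radii, namely ensuring that every ball used in Lemma \ref{lem:time-decay estimate} sits where the initial volume-ratio bound is assumed and inside the domain where the $\alpha_2/t$ bound holds. I would settle this first by fixing $R=2$ and checking each inclusion explicitly.
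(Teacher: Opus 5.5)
Your proposal is correct and follows essentially the same route as the paper: rescale by $6/\rho$ (i.e.\ $R=2$), apply Theorem \ref{theo:local short-time existence by small total curvature}, and upgrade the $\alpha_2/t$ bound to $2\alpha_3\delta^2/t$ via the last statement of Lemma \ref{lem:time-decay estimate}. The paper applies that lemma once on $B_3$, which gives the longer time $9T_5|\log\delta|^{-1}$ and requires $\delta_5\le e^{-9T_5/T_6}$; you instead cover by unit balls and absorb the constraint into $T_7$. Both versions then restrict to $B_{3/2}$ and scale back.

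One harmless arithmetic slip: $B_{3/2}$ in rescaled units corresponds exactly to $B_{\rho/4}$ (since $\tfrac32\cdot\tfrac\rho6=\tfrac\rho4$), not to $B_{\rho/2}$. This is still precisely the ball you need.
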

\begin{proof}
    Consider the rescaled submanifold $\tilde{M}_0=\frac6\rho M_0$ such that
    \[
    \|\tilde{A}\|_{L^n(B_{6}\cap\tilde{M}_0)}=\|A\|_{L^n(B_\rho\cap M_0)}\leq\delta
    \]
    and 
    \[
    \sup_{\substack{x\in B_5\cap\tilde{M}_0\\0<r\leq1}} r^{-n}\tilde{\mu}_0(B_r(x)\cap \tilde{M}_0)
    =\sup_{\substack{x\in B_{5\rho/6}\cap M_0\\0<r\leq\rho/6}} r^{-n}\mu_0(B_r(x)\cap M_0)
    \leq \nu.
    \]
    For $\delta\leq\delta_4$, by Theorem \ref{theo:local short-time existence by small total curvature} there exists a smooth mean curvature flow $\{\tilde{M}_s\}_{s\in[0,T_6]}$ properly immersed in $B_3$ with initial data $B_3\cap\tilde{M}_0$ such that 
    \[
    \sup_{B_3\cap\tilde{M}_s}|\tilde{A}|^2\leq \alpha_2 s^{-1},
    \quad\forall\,s\in(0,T_6].
    \]
    For $\delta\leq \delta_3$, by Lemma \ref{lem:time-decay estimate}, 
    \[
    \sup_{B_{3/2}\cap\tilde{M}_s}|\tilde{A}|^2\leq 2\alpha_3\delta^2 s^{-1},
    \quad\forall\,s\in(0,T_6\wedge 9T_5|\log\delta|^{-1}].
    \]
    As argued at the end of the proof of Theorem \ref{theo:local short-time existence by small total curvature}, 
    \[
    |\tilde{0}_s|\leq\frac\beta2\sqrt{\alpha_2 s}\leq1<\frac32, \quad\forall\,0\leq s\leq T_6.
    \]
    Hence every time slice $B_{\frac32}\cap \tilde{M}_s$ is nonempty.
    For $\delta\leq e^{-9T_5 T_6^{-1}}$, $9T_5|\log\delta|^{-1}\leq T_6$.
    Rescale the flow back by 
    \[
    M_t=\frac{\rho}{6} \left(\tilde{M}_{36\rho^{-2}t}\cap B_{3/2}\right).
    \]
    $\{M_t\}$ is a smooth mean curvature flow properly immersed in $B_{\frac{\rho}{4}}\times[0,\frac{T_5}4|\log\delta|^{-1}\rho^2]$ with initial data $B_{\frac\rho4}\cap M_0$ such that
    \[
    \sup_{B_{\frac{\rho}{4}}\cap M_t}|A|^2\leq 2\alpha_3\delta^2 t^{-1},\quad\forall\,t\in(0,\tfrac{T_5}4|\log\delta|^{-1}\rho^2].
    \]
    Choosing 
    \[
    \delta_5=\min\{\delta_4,\delta_3,\exp(-\tfrac{9T_5}{T_6})\}, \quad 
    T_7=\tfrac{T_5}4
    \]
    completes the proof.
\end{proof}

\subsection{Topological rigidity}$\\$

By rescaling we obtain the following long-time existence.
\begin{corollary}\label{cor:long-time existence by small total curvature}
    Let ${M}_0^n\subset\R^{n+m}$ be a properly immersed submanifold satisfying the total curvature bound
    \[
    \|A\|_{L^n({M}_0)}\leq\delta\leq\delta_5
    \]
    and the volume ratio bound
    \[
    \sup_{x\in M_0, r>0} r^{-n}\mu_0(B_r(x)\cap M_0)\leq \nu.
    \]
    Then there exists a smooth mean curvature flow $\{M_t^n\}_{t\in[0,+\infty)}$ in $\R^{n+m}$ with initial data $M_0$ such that
    \[
    \sup_{{M}_t}|A|^2 \leq 2\alpha_3\delta^2 t^{-1},\quad\forall\,t\in(0,+\infty).
    \]
    Here, $\delta_5=\delta_5(n,m,\nu)$ is from Corollary \ref{cor:scale-invariant form of local short-time existence} and $\alpha_3=\alpha_3(n,m)$ is from Lemma \ref{lem:time-decay estimate}.
\end{corollary}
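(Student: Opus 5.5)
The plan is to exhaust $\mathbb R^{n+m}$ by balls and apply Corollary \ref{cor:scale-invariant form of local short-time existence} on each of them, using the fact that the hypotheses are global and scale-free. Because $M_0$ is properly immersed, it is either closed or there is a point $x_0$ such that $M_0$ is properly immersed in $B_r(x_0)$ for every $r>0$.

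Fix $\rho>0$ large and let $x\in M_0$ be any point. Translate so that $x=0$. The total curvature bound $\|A\|_{L^n(B_\rho(x)\cap M_0)}\le\delta\le\delta_5$ holds trivially. The volume ratio bound holds for every center and every radius, so it holds in particular on $B_{5\rho/6}(x)$ for radii up to $\rho/6$. Properness in $B_\rho(x)$ holds once $\rho$ exceeds $|x-x_0|$ plus the chosen exhaustion radius.

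Corollary \ref{cor:scale-invariant form of local short-time existence} then gives a flow on $B_{\rho/4}(x)$ for $t\in(0,T_7|\log\delta|^{-1}\rho^2]$ satisfying
\[
|A|^2\le 2\alpha_3\delta^2t^{-1}.
\]
Both the spatial region and the existence time grow without bound as $\rho\to\infty$, while the curvature constant $2\alpha_3\delta^2$ does not depend on $\rho$.

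The key step is to assemble these local flows into one global flow. I would proceed by compactness. Take $\rho_k\to\infty$ and let $\{M^k_t\}$ be the flow on $B_{\rho_k/4}(x_0)\times[0,T_7|\log\delta|^{-1}\rho_k^2]$, all with initial data $M_0$. On any fixed compact spacetime region $B_R\times[\epsilon,S]$ the curvature is uniformly bounded by $2\alpha_3\delta^2/\epsilon$. Higher derivative bounds then follow from the standard interior estimates for mean curvature flow with bounded curvature.

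Near $t=0$, I would use Corollary \ref{cor:interior estimate by c/t}, which converts the $\alpha t^{-1}$ bound into uniform curvature bounds on small balls depending only on the local geometry of $M_0$; this is exactly the compactness statement it was designed for. Lemma \ref{lem:distance distortion} gives uniform displacement control $|\mathbf x_t-\mathbf x_0|\le\frac\beta2\sqrt{2\alpha_3\delta^2t}$, which keeps preimages of compact sets compact uniformly in $k$.

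With these bounds, Arzel\`a--Ascoli applied to the parametrizations $\mathbf x^k:\mathbf x_0^{-1}(B_R)\times[0,S]\to\R^{n+m}$, followed by a diagonal argument, produces a limit flow $\mathbf x:M_0\times[0,\infty)\to\R^{n+m}$. The limit is smooth, has initial data $M_0$, and inherits the pointwise estimate $\sup_{M_t}|A|^2\le2\alpha_3\delta^2t^{-1}$ for all $t>0$. In the closed case no limit is needed: a single large ball containing $M_0$ suffices for each finite time, and uniqueness of smooth compact flows gives consistency.

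The main obstacle I expect is uniform smooth control down to $t=0$ together with properness of the limit immersion. The local flows are constructed on different balls and need not agree, so the limit must be taken along parametrizations. That requires the time-zero interior estimate of Corollary \ref{cor:interior estimate by c/t} and the displacement bound, so that the flows neither lose injectivity of domain nor escape to infinity in finite time.
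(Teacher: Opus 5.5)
Your proposal is correct and follows essentially the same route as the paper. You exhaust $\R^{n+m}$ by balls centered at a point of $M_0$ and apply Corollary \ref{cor:scale-invariant form of local short-time existence} on each ball, obtaining flows with the uniform bound $2\alpha_3\delta^2t^{-1}$ on ever longer time intervals, then pass to a smooth limit via Corollary \ref{cor:interior estimate by c/t} near $t=0$ and a diagonal argument. The paper cites a compactness theorem for mean curvature flows where you instead spell out the Arzel\`a--Ascoli argument on parametrizations, with displacement control from Lemma \ref{lem:distance distortion}.
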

\begin{proof}
    Assume that $0\in M_0$.
    For each $i\geq1$, applying Corollary \ref{cor:scale-invariant form of local short-time existence} to $B_{4i}\cap M_0$ produces a smooth mean curvature flow $\{M_t^{(i)}\}_{t\in[0,16T_7|\log\delta|^{-1}i^2]}$ properly immersed in $B_i$ such that
    \begin{enumerate}[label=(\roman*)]
    \item $M_0^{(i)}=B_i\cap M_0$;
    \item $\sup_{B_i\cap M_t^{(i)}}|A|^2\leq 2\alpha_3\delta^2 t^{-1},\quad\forall\,t\in(0,16T_7|\log\delta|^{-1}i^2]$.
    \end{enumerate}
    Here, $T_7=T_7(n,m,\nu)$.
    By the local estimate Corollary \ref{cor:interior estimate by c/t}, the compactness theorem for mean curvature flows (Theorem 2.3 of \cite{MR2729297}) and a standard diagonal argument, a subsequence $\{M_t^{(k_i)}\}\subset\{M_t^{(i)}\}$ converges smoothly to a limit flow $\{M_t^{\infty}\}_{t\in[0,\infty)}$ satisfying
    \begin{enumerate}[label=(\Roman*)]
        \item $M_0^{\infty}=M_0$;
        \item $\sup_{{M}^{\infty}_t}|A|^2\leq 2\alpha_3 \delta^2 t^{-1},\quad\forall\,t\in(0,+\infty)$.
    \end{enumerate}
\end{proof}

To show the topological rigidity, we need the following lemma (cf.\cite{Wan20,MR5041203}).

\begin{lemma}\label{lem:rigidity by time-decay estimate}
There exists a constant $\delta_6=\delta_6(n,m)$ such that the following holds.
Let $\mathbf x:M^n\times[0,+\infty)\to\R^{n+m}$ be a smooth, connected, complete immortal mean curvature flow. 
Suppose that
\begin{equation*}
    \sup_{M_t}|A|^2\leq \delta^2 t^{-1},\quad\forall\,t\in(0,+\infty),
\end{equation*}
for some $\delta\leq\delta_6$.  
Then $M_0=\mathbf x_0(M)$ is diffeomorphic to $\mathbb{R}^n$. 
\end{lemma}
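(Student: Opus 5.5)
The plan is to show that the time-$t$ slices, rescaled parabolically, converge as $t\to\infty$ to a flat plane, and then use the flow to transport this picture back to $M_0$. Fix a point $p\in M$. The bound $|A|^2\le\delta^2t^{-1}$ together with Lemma \ref{lem:distance distortion} gives $|\mathbf x(p,t)-\mathbf x(p,s)|\le\frac\beta2\delta(\sqrt t-\sqrt s)$. Hence the trajectory of $p$ drifts by at most $\frac\beta2\delta\sqrt t$, which is small compared with the parabolic scale $\sqrt t$. For each $t$, consider the rescaled submanifold $\tilde M_t:=t^{-1/2}(M_t-\mathbf x(p,t))$. It satisfies $|\tilde A|\le\delta$ everywhere, so for $\delta\le\delta_6$ small, Lemma \ref{lem:small curvature graph lemma} applies with $\kappa=\delta$ at every scale $r\le\varepsilon/\delta$. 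The connected component of $\tilde M_t$ through the origin in $B_{6r}$ is therefore a graph of gradient at most $C\delta|z|$ over its tangent plane, on a ball whose radius $\sim\varepsilon/\delta$ is huge.

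The main topological step is to show that $M_t$ itself (not just a component of a ball) is a single entire graph, hence diffeomorphic to $\R^n$. Since $M$ is connected and complete, the intrinsic geodesic ball of radius $R\sqrt t$ about $\mathbf x(p,t)$ is contained in the graphical component whenever $R\lesssim\varepsilon/\delta$; this follows by combining part (c) of Lemma \ref{lem:small curvature graph lemma} with the bi-Lipschitz comparison between the graph and its projection. Thus $B^{M_t}_{c\delta^{-1}\sqrt t}(\mathbf x(p,t))$ is diffeomorphic to a ball. Now fix $R>0$ and let $t\to\infty$. The intrinsic ball $B^{M_0}_R(p)$ flows forward, and its image at time $t$ has intrinsic diameter at most $e^{C\int|A|^2\,dt}$ times its original size. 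The factor $\int_1^t\delta^2s^{-1}\,ds=\delta^2\log t$ grows, but only like $t^{C\delta^2}$, which is much smaller than $\sqrt t$ for $\delta$ small. On $[0,1]$ the metric distortion is controlled by an interior bound, for instance Corollary \ref{cor:interior estimate by c/t} on compact pieces, or simply a smooth bound on a compact subset. It follows that $\Phi_{0,t}(B^{M_0}_R(p))$ lies inside the graphical region of $M_t$ for $t$ large.

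The resulting picture is an exhaustion of $M$ by compact sets $K_R=\overline{B^{M_0}_R(p)}$, each of which embeds via $\mathbf x_t\circ$(projection) into a ball in $\R^n$. To upgrade this to a diffeomorphism $M\cong\R^n$, I would argue as follows. Fix a large time $t$ and use the projection $\pi_t$ onto the tangent plane of $M_t$ at $\mathbf x(p,t)$. The function $f(q)=|\pi_t\mathbf x(q,t)-\pi_t\mathbf x(p,t)|^2$ is then a proper exhaustion with no critical points besides $p$ on the graphical region. Alternatively, and more cleanly, one can show that $M_t$ is globally an entire graph for each fixed large $t$: the scale-invariant bound $|A|\le\delta t^{-1/2}$ on $M_t$ at all times $s\ge t$, combined with completeness and connectedness, gives a local-to-global argument as in \cite{Wan20,MR5041203}. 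Since $M_0$ is diffeomorphic to $M_t$ along the smooth flow, we would then conclude $M_0\cong\R^n$.

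The hard step is the global one. Lemma \ref{lem:small curvature graph lemma} controls only the connected component of $M_t$ inside a ball, while other sheets of $M_t$ may pass through the same ball and the manifold could in principle be non-simply connected at scales much larger than $\sqrt t/\delta$. My first attempt would be the exhaustion argument: every compact $K\subset M$ is contained in a set diffeomorphic to an open ball (the graphical component at a sufficiently late time), and a connected $n$-manifold exhausted by an increasing sequence of open balls is diffeomorphic to $\R^n$. Nestedness of these balls follows by choosing increasing times together with the radial monotonicity computation in part (c) of Lemma \ref{lem:small curvature graph lemma}, which makes each sublevel set star-shaped in the graph coordinates.
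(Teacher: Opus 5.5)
Your main line, the exhaustion argument, is correct, but it is organized differently from the paper's proof.

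\textbf{Your route.} You exhaust $M$ by the graphical components $U_t$ that Lemma \ref{lem:small curvature graph lemma} produces for the rescaled slices $t^{-1/2}(M_t-\mathbf x(p,t))$, on which $|\tilde A|\leq\delta$. The distortion bound $d_{g(t)}\leq (t/s)^{\sqrt n\,\delta^2}d_{g(s)}$ for $t\geq s>0$ then shows that any compact $K\subset M$ lies in $B^{g(t)}_{3\sqrt t}(p)\subset U_t\cong\R^n$ once $t$ is large. You conclude with the fact that a smooth manifold in which every compact set lies in an open subset diffeomorphic to $\R^n$ is itself diffeomorphic to $\R^n$.

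\textbf{The paper's route.} It uses the same distortion bound, but replaces the graph lemma by the Chen--Yin injectivity radius estimate $\mathrm{inj}_{g(t)}(M)\geq\sqrt t$. Instead of an exhaustion theorem, it writes down an explicit diffeomorphism. The space-time set $\mathcal M=\{d_{g(t)}(x,x_0)=\sqrt t/2\}$ is the graph $t=4|v|^2$ in space-time exponential coordinates. It projects bijectively onto $M$ because $d_{g(t)}(x,x_0)/\sqrt t$ decreases strictly from $+\infty$ to $0$.

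\textbf{What each buys.} The paper's route is self-contained on the topological side, but it needs the injectivity radius theorem and smoothness of the distance function along $\mathcal M$. Yours needs only the graph lemma already proved in the paper. The cost is quoting Brown's monotone union theorem together with its smooth refinement via the Palais--Cerf disc theorem; the smooth version is needed to get a diffeomorphism, in particular when $n=4$.

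\textbf{Side remarks to tidy.} First, containment of a geodesic ball in a graph does not by itself make it a ball, so use $U_t$ itself as the exhausting set. Second, nestedness does not follow from star-shapedness at a single time. It is also not needed: choose closed coordinate balls $D_k\subset U_{t_k}$ inductively with $D_k\cup K_{k+1}\subset\mathrm{int}\,D_{k+1}$. Third, the discussion of $[0,1]$ is superfluous, because a compact $K$ has finite $g(1)$-diameter and you may compare with $g(1)$ directly. Fourth, the ``entire graph'' alternative is unsupported and should be dropped.
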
 

\begin{proof} 
 Along the mean curvature flow, the induced metric $g(t)=\mathbf x_t^*g_e$ evolves by 
 \[ \frac{\partial}{\partial t}g_{ij} = -2\langle \mathbf{H},A_{ij}\rangle, \] where \(\mathbf{H}\) denotes the mean curvature vector. 
 Since \( |\mathbf{H}|\leq \sqrt{n}\,|A|\),  
 for every tangent vector \(v\in TM\), 
 \begin{equation*}
      \langle \mathbf{H},A(v,v)\rangle \geq -|\mathbf{H}|\,|A(v,v)| \geq -\sqrt{n}\,|A|^2|v|_{g(t)}^2 \geq -\frac{\sqrt{n}\delta^2}{t}|v|_{g(t)}^2. 
 \end{equation*}
 If $\delta^2\leq(2\beta)^{-1}$, then 
 \begin{equation}\label{eq:metric-evolution-upper} 
 \partial_tg(t)\leq\frac{2\sqrt{n}\delta^2}{t}g(t)\leq \frac{1}{4t}g(t).
 \end{equation} 
Consequently, we establish the distance distortion estimate 
 \begin{equation}\label{eq:distance-distortion} 
 d_{g(s)}(x,y) \leq \left(\frac{s}{t}\right)^{\frac14} d_{g(t)}(x,y),
 \quad\forall\,s\geq t>0,\quad \forall\,x,y\in M.
 \end{equation} 
 
 Fix \(x_0\in M\) and define \[ \mathcal{M} := \left\{ (x,t)\in M\times[0,\infty): d_{g(t)}(x,x_0)=\frac{\sqrt{t} }{2} \right\}. \] 
 We show that \(\mathcal{M}\) is diffeomorphic to \(\mathbb{R}^n\). Choose a smooth family of linear isometries \[ I_t:\mathbb{R}^n\longrightarrow (T_{x_0}M,g(t)), \] and define the space-time exponential map by 
 \[ \operatorname{Exp}(v,t) := \left(\exp_{x_0}^{g(t)}(I_t v),t\right). \] 
 By Theorem 2.1 in \cite{MR2379801}, if $\delta\leq C(n,m)$, then
 \begin{equation*}
    \mathrm{inj}_{g(t)}(M)\geq \sqrt{t}.
 \end{equation*}
 Therefore, $\operatorname{Exp}(\cdot,t)$ is a diffeomorphism on the ball of radius \(\frac{\sqrt{t}}2\). 
 In particular, \(\operatorname{Exp}\) identifies \(\mathcal{M}\) with
 \begin{equation*}
     \mathcal{S} := \left\{ (v,t)\in\mathbb{R}^n\times[0,\infty): |v|=\frac{\sqrt{t} }{2}\right\}.
 \end{equation*}
 Since 
 \begin{equation*}
    \mathcal{S} = \left\{ \left(v,4|v|^2\right): v\in\mathbb{R}^n \right\},  
 \end{equation*}
 the projection
 \begin{equation*}
P':\mathcal{S}\longrightarrow\mathbb{R}^n, \qquad P'(v,t)=v,
 \end{equation*} is a diffeomorphism. It follows that \[ P'\circ\operatorname{Exp}^{-1}: \mathcal{M}\longrightarrow\mathbb{R}^n \] is a diffeomorphism. 
 
 It remains to prove that \(\mathcal{M}\) is diffeomorphic to \(M\). Consider the projection
 \begin{equation*}
     P:\mathcal{M}\longrightarrow M, \qquad P(x,t)=x.
 \end{equation*}
 We first prove that \(P\) is injective. 
 Suppose that \[ P(x,t_1)=P(y,t_2) \] for some \((x,t_1),(y,t_2)\in\mathcal{M}\), where \(t_1\leq t_2\). Then \(x=y\). If \(t_1=0\), then \(x=y=x_0\), and \(t_2=0\) by the definition of \(\mathcal{M}\). 
 If \(0<t_1\leq t_2\), then
 \begin{equation*}
     \frac{d_{g(t_1)}(x,x_0)}{\sqrt{t_1}} = \frac{d_{g(t_2)}(x,x_0)}{\sqrt{t_2}} = \frac{1}{2}.
 \end{equation*} 
 Hence, 
 \[ \left(\frac{t_2}{t_1}\right)^{\frac12} = \frac{d_{g(t_2)}(x,x_0)} {d_{g(t_1)}(x,x_0)}. \] 
 On the other hand, \eqref{eq:distance-distortion} gives \[ \frac{d_{g(t_2)}(x,x_0)} {d_{g(t_1)}(x,x_0)} \leq \left(\frac{t_2}{t_1}\right)^{\frac14}.\] Therefore, 
 \[ \left(\frac{t_2}{t_1}\right)^{\frac12} \leq \left(\frac{t_2}{t_1}\right)^{\frac14},\]
 which is impossible unless \(t_1=t_2\). Thus $P$ is injective.
 
 We next prove that \(P\) is surjective. 
 Clearly, \[ P(x_0,0)=x_0. \] 
 For any \(x\in M\setminus\{x_0\}\), define \[ f_x(t):=\frac{d_{g(t)}(x,x_0)}{\sqrt{t}}, \qquad t>0. \] 
 As $t\to0$, since \(g(t)\to g(0)\) locally smoothly by Corollary \ref{cor:interior estimate by c/t}, we have \[ \lim_{t\to0^+}f_x(t) = \lim_{t\to0^+} \frac{d_{g(t)}(x,x_0)}{\sqrt{t}} = +\infty. \] 
 As \(t\to\infty\), the distance distortion estimate (\ref{eq:distance-distortion}) implies \[ f_x(t) \leq d_{g(1)}(x,x_0)t^{-\frac14} \rightarrow0.\]
 By continuity, there exists \(t_x>0\) such that \[ f_x(t_x)=\frac{1}{2}. \] Equivalently, \((x,t_x)\in\mathcal{M}\). Thus \(P\) is surjective. 
 
 Finally, we verify that \(P\) is a local diffeomorphism. 
 On \(\mathcal{M}\), \[ d_{g(t)}(x,x_0) = \frac{1}{2}\sqrt{t} < \operatorname{inj}_{g(t)}(M), \] so the distance function is smooth near every point of \(\mathcal{M}\setminus\{(x_0,0)\}\). 
 Moreover, \eqref{eq:metric-evolution-upper} implies \[ \partial_t d_{g(t)}(x,x_0) \leq \frac{1}{4t}d_{g(t)}(x,x_0). \] 
 It follows that \[ \begin{aligned} \frac{d}{dt} \left( \frac{d_{g(t)}(x,x_0)}{\sqrt{t}} \right) \leq -\frac{1}{4t} \frac{d_{g(t)}(x,x_0)}{\sqrt{t}}<0. \end{aligned} \] 
 The implicit function theorem shows that \(\mathcal{M}\) is locally the graph of a smooth function \(t=t(x)\), and therefore \(P\) is a local diffeomorphism away from \((x_0,0)\). 
 Near \((x_0,0)\), this conclusion follows directly from the space-time exponential coordinates, in which \(\mathcal{M}\) is represented by the smooth graph \[ t=4|v|^2. \] Thus \(P:\mathcal{M}\to M\) is a bijective local diffeomorphism and therefore a global diffeomorphism. 
 
 Combining the two diffeomorphisms, we obtain \[ P'\circ\operatorname{Exp}^{-1}\circ P^{-1}: M\longrightarrow\mathbb{R}^n. \] Therefore, \(M\) is diffeomorphic to \(\mathbb{R}^n\). 
 \end{proof}

Combining Theorem \ref{cor:long-time existence by small total curvature} and Lemma \ref{lem:rigidity by time-decay estimate}, we obtain the desired rigidity theorem.
\begin{theorem}
    For any $\nu>0$ there exists a constant $\delta_7=\delta_7(n,m,\nu)$ such that the following holds.
    Let $M^n\subset \mathbb{R}^{n+m}$ be a smooth, connected, complete, properly immersed submanifold satisfying
    \[
    \sup_{x\in M,r>0}r^{-n}\mu(B_r(x)\cap M)\leq\nu
    \]
    and 
    \[
    \|A\|_{L^n({M})}\leq\delta_7.
    \]
    %Then there exists a mean curvature flow $\{M_t\}_{t\in [0,\infty)}$ with initial data $M$ such that
    %\begin{equation*}
     %   \sup_{ M_t}|A|^2\leq \alpha\delta^{2}t^{-1},\quad\forall\,t\in[0,\infty).
    %\end{equation*}
    Then $M$ is diffeomorphic to $\mathbb{R}^n$. 
\end{theorem}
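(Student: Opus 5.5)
The plan is to combine the two preceding results exactly as the sentence before the statement suggests. First we reduce to the case where the constants match. Set
\[
\delta_7:=\min\left\{\delta_5(n,m,\nu),\ (2\alpha_3)^{-\frac12}\delta_6\right\},
\]
where $\delta_5$ is from Corollary \ref{cor:scale-invariant form of local short-time existence}, $\alpha_3=\alpha_3(n,m)$ is from Lemma \ref{lem:time-decay estimate}, and $\delta_6=\delta_6(n,m)$ is from Lemma \ref{lem:rigidity by time-decay estimate}. Since $M$ is complete and properly immersed, it is properly immersed in the sense of the earlier definition. The global volume ratio bound and the smallness $\|A\|_{L^n(M)}\leq\delta_7\leq\delta_5$ are precisely the hypotheses of Corollary \ref{cor:long-time existence by small total curvature}.

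That corollary then yields an immortal smooth mean curvature flow $\{M_t\}_{t\in[0,\infty)}$ with $M_0=M$ and
\[
\sup_{M_t}|A|^2\leq 2\alpha_3\delta_7^2\,t^{-1}\leq \delta_6^2\,t^{-1}.
\]
To apply Lemma \ref{lem:rigidity by time-decay estimate}, we must check that this flow is a smooth, connected, complete immortal flow $\mathbf x:M\times[0,\infty)\to\R^{n+m}$ parametrized over the fixed manifold $M$.

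\begin{itemize}
\item \textbf{Connectedness.} The parameter domain is $M$ itself, which is connected by assumption.
\item \textbf{Completeness of each $g(t)$.} I would argue this from the curvature bound. On $[\tau,\infty)$, the bound $|A|^2\leq\delta^2/\tau$ gives $|\partial_t g|\leq C\delta^2\tau^{-1} g$, so $g(t)$ and $g(\tau)$ are uniformly equivalent on compact time intervals. Near $t=0$, the interior estimate Corollary \ref{cor:interior estimate by c/t}, applied on balls where $M_0$ has bounded curvature, gives local smooth convergence $g(t)\to g(0)$. Combining this with the distance bound $|\mathbf x_t-\mathbf x_0|\leq\frac\beta2\sqrt{\alpha t}$ from Lemma \ref{lem:distance distortion} shows that properness of $\mathbf x_t$ is inherited from properness of $\mathbf x_0$. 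Properness of an immersion in $\R^{n+m}$ then implies completeness of the induced metric.
\end{itemize}

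The main point to watch is that the limit flow produced in Corollary \ref{cor:long-time existence by small total curvature} by diagonal extraction really lives on the same parameter manifold $M$, with $\mathbf x_0$ the given immersion. This holds because each approximating flow is a flow of $B_i\cap M_0$, and the smooth convergence is convergence of parametrizations over exhausting domains of $M$.

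Once these hypotheses are verified, Lemma \ref{lem:rigidity by time-decay estimate} applies with $\delta=\sqrt{2\alpha_3}\,\delta_7\leq\delta_6$ and gives $M\cong\R^n$.
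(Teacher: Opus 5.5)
Your proposal is correct and matches the paper's argument. The paper proves this theorem in one line by combining Corollary \ref{cor:long-time existence by small total curvature} with Lemma \ref{lem:rigidity by time-decay estimate}, and your choice $\delta_7=\min\{\delta_5,(2\alpha_3)^{-1/2}\delta_6\}$ is the natural way to match the constants; your extra checks (completeness of $g(t)$ via the uniform displacement bound and inherited properness, and that the limit flow is parametrized over $M$) spell out details the paper leaves implicit.
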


\begin{remark} For $n=2$, by the work of Huber \cite{MR94452} and Müller-Šverák \cite{MR344978}, $M$ is diffeomorphic to $\mathbb{R}^n$.
If $M$ has bounded second fundamental form and small total curvature, the first author \cite{MR4881490} showed that the mean curvature flow with initial data $M$ exists for all time. Related topological rigidity results under intrinsic Sobolev and volume-growth assumptions were recently obtained by Lee and Lee \cite{lee2026localmollificationmetricssmall} using Ricci-flow smoothing. 
\end{remark}

\section{Regularity of tangent flows}\label{sec6}

Before proving the main theorem, we establish the following volume ratio estimate for local branches.

\begin{proposition}\label{prop:volume ratio by small total curvature}
    For any $\nu>0$ there exists a constant $\delta_8=\delta_8(n,m,\nu)$ such that the following holds.
    Let $\rho>0$ and let $M^n$ be a smooth submanifold properly immersed in $B_\rho\subset\R^{n+m}$ with $0\in M$.
    Suppose that 
    \[
    \|A\|_{L^n(B_\rho\cap M)}\leq\delta\leq \delta_8
    \]
    and 
    \[
    \sup_{\substack{x\in B_{5\rho/6}\cap M\\0<s\leq\rho/6}} s^{-n}\mu(B_s(x)\cap M)\leq\nu.
    \]
    Then, for any $0<r\leq \rho/256$,
    \[
    (1-\delta|\log\delta|^{-\frac12})\omega_n r^n
    \leq \mu(B_r\cap C_0(B_{2r}\cap M))
    \leq (1+\delta|\log\delta|^{-\frac12})\omega_n r^n.
    \]
\end{proposition}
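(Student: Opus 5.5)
Run the local flow from Corollary \ref{cor:scale-invariant form of local short-time existence} for a short time, compare the volume of the local branch at time $0$ with its volume at time $t$, and read off near-flatness at time $t$ from the curvature decay. By scaling we take $\rho=1$. Corollary \ref{cor:scale-invariant form of local short-time existence} gives a smooth flow $\{M_t\}_{t\in[0,T_7|\log\delta|^{-1}]}$, properly immersed in $B_{1/4}$, with $M_0=B_{1/4}\cap M$ and
\[
\sup_{B_{1/4}\cap M_t}|A|^2\le 2\alpha_3\delta^2t^{-1}.
\]
Fix $0<r\le 1/256$ and choose $t=\theta r^2$ with $\theta$ small. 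Then at time $t$ we have $|A|^2 t\le 2\alpha_3\delta^2$, so on scale $r$ the quantity $\kappa r$ is of order $\delta\theta^{-1/2}$. We pick $\theta$ so that $\delta|\log\delta|^{-1/2}$ dominates both $\kappa^2r^2\sim\delta^2/\theta$ and the drift $\sqrt{\alpha_3}\,\delta\sqrt{\theta}$. The choice $\theta\sim|\log\delta|^{-1}$ works and also respects the time bound $t\le T_7|\log\delta|^{-1}$; note $\delta^2|\log\delta|\ll\delta|\log\delta|^{-1/2}$ for small $\delta$.

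\textbf{Step 1 (the picture at time $t$).} Let $0_t$ be the forward trajectory of $0$. By Lemma \ref{lem:distance distortion}, $|0_t|\le\frac\beta2\sqrt{2\alpha_3}\,\delta\sqrt t=:d$, and $d\ll r\,\delta|\log\delta|^{-1/2}$ up to constants. Lemma \ref{lem:small curvature graph lemma} applied at $(0_t,t)$ with $\kappa=\sqrt{2\alpha_3}\delta t^{-1/2}$ shows that the branch of $M_t$ through $0_t$ in $B_{3r}(0_t)$ is a single graph $(z,u(z))$ with $|Du|\le C\kappa|z|$. Its volume in balls $B_s$, $s\in[r-2d,r+2d]$, is therefore $(1\pm C\kappa^2s^2)\omega_n s^n$.

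\textbf{Step 2 (transfer back to time $0$).} Every point moves by at most $\frac\beta2\sqrt{2\alpha_3}\,\delta\sqrt{t}\le d$ during $[0,t]$ (Lemma \ref{lem:distance distortion} again). Hence the evolution map $\Phi_{0,t}$ sends $B_r\cap C_0(B_{2r}\cap M)$ into the graphical branch inside $B_{r+2d}(0_t)$. Conversely, a backward trajectory of any point of the branch in $B_{r-2d}(0_t)$ lands in $B_r\cap C_0(B_{2r}\cap M)$: connectedness is preserved because $\Phi_{s,t}$ is a diffeomorphism of the parameter domain, and displacements are small compared with $r$. It remains to compare measures. The area element satisfies $\partial_t d\mu=-|H|^2d\mu$, and $\int_0^t|H|^2\,ds$ diverges only like $\log$, so a pointwise comparison fails. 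Instead we control the total volume change. By the first variation,
\[
\frac{d}{ds}\int\phi\,d\mu_s=\int\bigl(-\phi|H|^2-\langle\nabla\phi,H\rangle\bigr)\,d\mu_s .
\]
Combined with the volume ratio bound $\nu$ preserved by Lemma \ref{lem:volume ratio by initial}, this gives
\[
\Bigl|\int\phi\,d\mu_t-\int\phi\,d\mu_0\Bigr|\le C\nu\int_0^t\bigl(\delta^2s^{-1}r^n+\delta s^{-1/2}r^{n-1}\bigr)\,ds .
\]
The first integral diverges at $s=0$, so the $|H|^2$ term must instead be bounded by H\"older from the weak monotonicity of total curvature (Lemma \ref{lem:weak local monotonicity 1}): $\int_{B_r}|H|^2\le C\delta^2\mu^{1-2/n}$. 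This gives a contribution of order $\delta^2 t\,r^{n-2}\cdot r^{-?}$, which is harmless; the gradient term gives $C\delta\sqrt t\,r^{n-1}\sim\delta\sqrt\theta\, r^n$. We use cutoffs $\phi$ squeezed between $B_{r-2d}$ and $B_{r+2d}$, restricted to the branch; this is legitimate because the branch is a union of components of the preimage and so is invariant under the flow map.

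\textbf{Main obstacle.} The hard part is the topological bookkeeping in Step 2: showing that the connected component $C_0(B_{2r}\cap M)$ corresponds exactly to the single graphical sheet at time $t$. Other sheets of $M_t$ may pass nearby, and we must exclude them. Since the trajectories starting from the component stay within distance $d$ of each other's relative positions, their images lie in a connected subset of $B_{3r}(0_t)\cap M_t$ containing $0_t$, hence in the one sheet given by part (c) of Lemma \ref{lem:small curvature graph lemma}. The reverse inclusion follows from the same argument applied to backward trajectories. Collecting the errors $C\delta^2/\theta+C\delta\sqrt\theta+Cd/r$, and choosing $\theta=c|\log\delta|^{-1}$ with $\delta_8$ small, we obtain the bound $\delta|\log\delta|^{-1/2}\omega_nr^n$.
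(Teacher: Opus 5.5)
Your architecture matches the paper's. You use the local flow of Corollary \ref{cor:scale-invariant form of local short-time existence}, stop at $t_*=\theta r^2$ with $\theta\sim|\log\delta|^{-1}$, and apply Lemma \ref{lem:small curvature graph lemma} at $(0_{t_*},t_*)$. You then prove the sandwich $Q^*_{r-2D}\subset E_{t_*}\subset Q^*_{r+2D}$ for $E=B_r\cap C_0(B_{2r}\cap M_0)$ and $E_t=\Phi_{0,t}(E)$, and your topological bookkeeping agrees with the paper's Step 2.

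The gap is in the measure comparison. With a spatial cutoff squeezed between $B_{r-2d}$ and $B_{r+2d}$ you have $|\nabla\phi|\sim d^{-1}$. Since $d\sim\delta\sqrt t$, the only available bound is
\[
\int_0^t\!\!\int|\nabla\phi|\,|H|\,d\mu_s\,ds\le \frac{C}{d}\int_0^t\delta s^{-1/2}\,\nu r^n\,ds\sim \frac{\delta\sqrt t}{d}\,\nu r^n\sim \nu r^n ,
\]
which is an $O(1)$ error. Your bound $C\delta\sqrt t\,r^{n-1}$ tacitly uses either $|\nabla\phi|\lesssim r^{-1}$, or annulus mass $\lesssim r^{n-1}d$; the latter is known only once the branch is graphical on scale $r$, not for small $s$. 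A cutoff of width $\sim r$ makes the gradient term harmless. But then $\int\phi\,d\mu_0$ cannot be compared with $\mu_0(E)$, because nothing controls the initial mass in thin annuli; that control is what the proposition is proving. With width $wr$ the error is of order $w+\nu\delta\sqrt\theta/w\gtrsim(\nu\delta\sqrt\theta)^{1/2}$, far larger than $\delta|\log\delta|^{-1/2}$.

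The remedy is to use no spatial cutoff at all. $E_s$ is a fixed piece of parameter domain carried by the flow, so there is no flux term and
\[
\frac{d}{ds}\mu_s(E_s)=-\int_{E_s}|H|^2\,d\mu_s\le 0 .
\]
This gives $\mu_s(E_s)\le\mu_0(E)\le\nu r^n$ directly, so Lemma \ref{lem:volume ratio by initial} is not needed. H\"older then gives $\int_{E_s}|H|^2\le n\|A\|^2_{L^n(E_s)}\mu_s(E_s)^{1-2/n}\le C\delta^2\nu^{1-2/n}r^{n-2}$. Hence $0\le\mu_0(E)-\mu_{t_*}(E_{t_*})\le C\delta^2\theta r^n$, so the factor $r^{-?}$ in your write-up is $1$. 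The pointwise divergence of $\int_0^t|H|^2\,ds$ is irrelevant because the space integral is taken first. Combined with the sandwich and the graph volume estimates at $t_*$, this is exactly the paper's Step 3.

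Finally, the uniform-in-time bound $\|A\|_{L^n(E_s)}\le C\delta$ must come from Lemma \ref{lem:weak local monotonicity 2}. Apply it with $\delta$ replaced by $L\delta$, where $L=\max\{1,\sqrt{2\alpha_3/\alpha_1}\}$, so that the decay $2\alpha_3\delta^2t^{-1}$ satisfies its hypothesis (iii). Lemma \ref{lem:weak local monotonicity 1} assumes such a bound on all time slices, so invoking it here is circular.
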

\begin{proof}
    By rescaling it suffices to prove the case $\rho=1$.
    Since $\lim_{\delta\to0+}\delta|\log\delta|^{-\frac12}=0$, it suffices to prove the case $\delta>0$.
    
    {\bf Step 1. Construction of the local flow.}
    
    For $\delta_8\leq\delta_5$, by Corollary \ref{cor:scale-invariant form of local short-time existence} there exists a smooth mean curvature flow $\{M_t\}_{t\in[0,T_7|\log\delta|^{-1}]}$ starting from $M_0=B_{\frac14}\cap M$ such that
    \begin{equation}\label{ineq:volume ratio by small total curvature-1}
    \sup_{B_{\frac14}\cap M_t}|A|^2\leq 2\alpha_3\delta^2 t^{-1},
    \quad \forall\,t\in(0,T_7|\log\delta|^{-1}].
    \end{equation}
    Set
    \[
    r\in(0,\tfrac{1}{256}],\quad W=C_0(B_{2r}\cap M_0),\quad E=B_r\cap W.
    \]
    Let 
    \[
    W_t:=\Phi_{0,t}(W),\quad E_t:=\Phi_{0,t}(E)
    \]
    denote the evolution of $W$ and $E$ respectively along the flow, with the immersion data retained.
    Let 
    \begin{equation}\label{ineq:volume ratio by small total curvature-0}
    t_*:=\theta r^2\in(0,T_7|\log\delta|^{-1}]
    \end{equation}
    with $\theta$ to be determined later.
    By (\ref{ineq:volume ratio by small total curvature-1}) and Lemma \ref{lem:distance distortion}, for $\theta<(50\alpha_3\beta^2)^{-1}\delta^{-2}$, the displacement of evolution is no more than 
    \begin{equation}\label{ineq:volume ratio by small total curvature-3}
    D=\frac\beta2\sqrt{2\alpha_3\delta^2 t_*}=2\sqrt{2n\alpha_3\theta}\delta r<\frac{r}{10}.
    \end{equation}
    By Lemma \ref{lem:shrinking ball}, 
    \[
    B_{8r}(0_t)\subset B_{\frac14},\quad\forall\,t\in[0,t_*],
    \]
    which together with (\ref{ineq:volume ratio by small total curvature-1}) gives
    \begin{equation}\label{ineq:volume ratio by small total curvature-4}
    \kappa^2r^2:=r^2\sup_{B_{6r}(0_{t_*})\cap M_{t_*}}|A|^2\leq 2\alpha_3\delta^2\theta^{-1}.
    \end{equation}
    Set
    \[
    L:=\max\{1,\sqrt{\tfrac{2\alpha_3}{\alpha_1}}\}
    \]
    such that $2\alpha_3\delta^2\leq \alpha_1(L\delta)^2$.
    For $\delta_8$ sufficiently small such that 
    \[
    L\delta_8\leq \delta_2,\quad T_7|\log\delta_8|^{-1}\leq \frac{9T_3}{256},
    \]
    by Lemma \ref{lem:weak local monotonicity 2} we have
    \[
    \|A\|_{L^n(B_{\frac{3}{512}}\cap M_t)}\leq KL\delta,\quad\forall\,t\in[0,T_7|\log\delta|^{-1}],
    \]
    which implies
    \begin{equation}\label{ineq:volume ratio by small total curvature-2}
    \|A\|_{L^n(E_t)}\leq KL\delta,
    \quad\forall\,t\in[0,t_*],
    \end{equation}

    {\bf Step 2. Graphical representation at the terminal time.}

    Set
    \[
    \Omega_*:=C_{0_{t_*}}(B_{3r}(0_{t_*})\cap M_{t_*}),\quad Q_s^*:=B_s(0_{t_*})\cap\Omega_*,\quad s\in(0,2r].
    \]
    Since $8r-2D>6r$, $M_{t_*}$ is properly immersed in $B_{6r}(0_{t_*})$.
    By Lemma \ref{lem:small curvature graph lemma} and (\ref{ineq:volume ratio by small total curvature-4}), if 
    \begin{equation}\label{ineq:volume ratio by small total curvature--1}
    2\alpha_3\delta^2\theta^{-1}\leq \varepsilon^2,
    \end{equation}
    then $\Omega_*$ is a single graph, $\mathbf x_{t_*}|_{\Omega_*}$ is an embedding, and $Q_s^*$ is connected.
    Since
    \[
    W_{t_*}\subset B_{2r+2D}(0_{t_*})\cap M_{t_*}\subset B_{3r}(0_{t_*})\cap M_{t_*},
    \]
    and $W_{t_*}$ is connected with $0_{t_*}\in W_{t_*}$, we have $W_{t_*}\subset \Omega_*$. Thus 
    \[
    E_{t_*}\subset B_{r+2D}(0_{t_*})\cap W_{t_*}\subset B_{r+2D}(0_{t_*})\cap\Omega_*=Q_{r+2D}^*.
    \]
    For $q_{t_*}\in Q_{r-2D}^*$, by the connectedness there exists a curve 
    \[
    \gamma_{t_*}:[0,1]\to Q_{r-2D}^*,\quad \gamma_{t_*}(0)=0_{t_*},\quad \gamma_{t_*}(1)=q_{t_*}.
    \]
    The graphical embedding gives the curve
    \[
    \gamma_0=\mathbf x_0\circ(\mathbf x_{t_*}|_{\Omega_*})^{-1}\circ \gamma_{t_*}:[0,1]\to M_0,\quad \gamma_0(0)=0,\quad \gamma_0(1)=q_0,
    \]
    where $q_0$ is the initial point of $q_{t_*}$ corresponding to the chosen component.
    Similarly,
    \[
    |\gamma_0(\tau)-0|\leq |\gamma_0(\tau)-\gamma_{t_*}(\tau)|+|\gamma_{t_*}(\tau)-0_{t_*}|+|0_{t_*}-0|< r,\quad\forall\,\tau\in[0,1],
    \]
    so $\gamma_0([0,1])\subset B_r\cap M_0$.
    Consequently,
    \[
    q_0\in C_0(B_r\cap M_0)\subset B_r\cap C_0(B_{2r}\cap M_0)=E,
    \]
    which implies 
    \[
    Q_{r-2D}^*\subset \Phi_{0,t_*}(C_0(B_r\cap M_0))\subset E_{t_*}.
    \]
    We conclude that
    \begin{equation}\label{ineq:volume ratio by small total curvature-5}
        Q_{r-2D}^*\subset E_{t_*}\subset Q_{r+2D}^*.
    \end{equation}

    {\bf Step 3. Volume comparison and pullback.}
    
    By Lemma \ref{lem:small curvature graph lemma} and the bound (\ref{ineq:volume ratio by small total curvature-3}), 
    \[
    \begin{aligned}
    \mu_{t_*}(Q_{r+2D}^*)
    &\leq [1+C(n,m)\kappa^2(r+2D)^2]\omega_n(r+2D)^n\\
    %&\leq [1+C\kappa^2r^2]\left(1+C\frac Dr\right)\omega_n r^n\\
    &\leq \left[1+C(n,m)\left(\kappa^2 r^2+\frac Dr\right)\right]\omega_n r^n,
    \end{aligned}
    \]
    \[
    \begin{aligned}
    \mu_{t_*}(Q_{r-2D}^*)
    &\geq [1-C(n,m)\kappa^2(r-2D)^2]\omega_n(r-2D)^n\\
    %&\geq [1-C\kappa^2r^2]\left(1-C\frac Dr\right)\omega_n r^n\\
    &\geq \left[1-C(n,m)\left(\kappa^2 r^2+\frac Dr\right)\right]\omega_n r^n.
    \end{aligned}
    \]
    By (\ref{ineq:volume ratio by small total curvature-5}) and the estimates (\ref{ineq:volume ratio by small total curvature-3}) and (\ref{ineq:volume ratio by small total curvature-4}), we have
    \begin{equation}\label{ineq:volume ratio by small total curvature-6}
        \left|\frac{\mu_{t_*}(E_{t_*})}{\omega_n r^n}-1\right|
        \leq C(n,m)(\alpha_3\delta^2\theta^{-1}+\sqrt{\alpha_3\theta}\delta)
        \leq C(n,m)(\delta^2\theta^{-1}+\delta\sqrt\theta).
    \end{equation}
    By (\ref{ineq:volume ratio by small total curvature-3}) and (\ref{ineq:volume ratio by small total curvature-2}), for any $t\in[0,t_*]$, 
    \[
    \|A\|_{L^n(E_t)}\leq \|A\|_{L^n(B_{r+D}\cap M_t)}\leq \|A\|_{L^n(B_{\frac1{128}}\cap M_t)}\leq KL\delta.
    \]
    Since
    \[
    \frac{d}{d t}\mu_t(E_t)=-\int_{E_t}|H|^2 d\mu_t\leq0,
    \]
    by the assumption on volume ratio, 
    \[
    \mu_t(E_t)\leq \mu_0(E)\leq \nu r^n.
    \]
    By H\"{o}lder inequality and (\ref{ineq:volume ratio by small total curvature-2}), 
    \[
    \int_{E_t}|H|^2 d\mu_t
    \leq n\left(\int_{E_t}|A|^nd\mu_t\right)^{\frac2n}\mu_t(E_t)^{1-\frac2n}
    \leq nK^2L^2\delta^2\nu^{1-\frac2n}r^{n-2},
    \]
    and thus 
    \begin{equation}\label{ineq:volume ratio by small total curvature-7}
    \frac{\mu_0(E)-\mu_{t_*}(E_{t_*})}{\omega_n r^n}\leq C(n)K^2L^2\nu^{1-\frac2n}\delta^2\theta\leq C(n,m,\nu)\delta^2\theta.
    \end{equation}
    Combining (\ref{ineq:volume ratio by small total curvature-6}) and (\ref{ineq:volume ratio by small total curvature-7}) yields 
    \begin{equation}\label{ineq:volume ratio by small total curvature-8}
    -C_1(\delta^2\theta^{-1}+\delta\sqrt\theta)
    \leq \frac{\mu_0(E)}{\omega_n r^n}-1
    \leq C_1(\delta^2\theta^{-1}+\delta\sqrt\theta)+C_2\delta^2\theta,
    \end{equation}
    where $C_1=C_1(n,m)$, $C_2=C_2(n,m,\nu)$.
    Choose
    \[
    \theta=C_3|\log\delta|^{-1},\quad C_3=\min\{1,\tfrac{1}{9C_1^2},T_7\}.
    \]
    For $\delta\leq \delta_8=\delta_8(n,m,\nu)$ sufficiently small, (\ref{ineq:volume ratio by small total curvature-0}) and (\ref{ineq:volume ratio by small total curvature--1}) are satisfied, and the $\delta\sqrt\theta$ term dominates such that
    \begin{gather*}
    3C_1\delta\sqrt\theta=3C_1C_3^{\frac12}\delta|\log\delta|^{-\frac12}\leq \delta|\log\delta|^{-\frac12},\\
    \frac{\delta^2\theta^{-1}}{\delta\sqrt\theta}=C_3^{-\frac32}\delta|\log\delta|^{\frac32}\leq1,\\
    \frac{C_2\delta^2\theta}{C_1\delta^2\theta^{-1}}=C_1^{-1}C_2C_3^2|\log\delta|^{-2}\leq1.
    \end{gather*}
    As a result, (\ref{ineq:volume ratio by small total curvature-8}) becomes
    \[
    \left|\frac{\mu_0(E)}{\omega_n r^n}-1\right|
    \leq \delta|\log\delta|^{-\frac12}.
    \]
\end{proof}

Furthermore, we obtain an almost Euclidean volume ratio for the local components.

\begin{corollary}\label{cor:almost Euclidean volume ratio of local component}
    Fix $\rho>0$ and $\nu>0$.
    Let $M^n$ be a smooth submanifold properly immersed in $B_\rho\subset\R^{n+m}$.
    Suppose that 
    \[
    \|A\|_{L^n(B_\rho\cap M)}\leq\delta\leq \delta_8
    \]
    and 
    \[
    \sup_{\substack{x\in B_{5\rho/6}\cap M\\0<s\leq\rho/6}} s^{-n}\mu(B_s(x)\cap M)\leq\nu,
    \]
    where $\delta_8=\delta_8(n,m,\nu)$ is from Proposition \ref{prop:volume ratio by small total curvature}.
    Then, for any $x\in B_{\rho/4}\cap M$ and any $0<r\leq \rho/512$,
    \[
    (1-\delta|\log\delta|^{-\frac12})\omega_n r^n
    \leq \mu(C_x(B_r(x)\cap M))
    \leq (1+\delta|\log\delta|^{-\frac12})\omega_n r^n.
    \]
\end{corollary}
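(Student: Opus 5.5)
The plan is to reduce the corollary to Proposition \ref{prop:volume ratio by small total curvature} by recentering at $x$ and shrinking the ball. Then observe that the proof of that proposition actually controls the smaller set $C_0(B_r\cap M_0)$, not only $B_r\cap C_0(B_{2r}\cap M_0)$.

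\textbf{Step 1: Recentering.} Fix $x\in B_{\rho/4}\cap M$ and set $\rho':=\rho/2$. Since $|x|<\rho/4$, we have $B_{\rho'}(x)\subset B_{3\rho/4}\subset B_\rho$. Hence $M-x$ is properly immersed in $B_{\rho'}$, contains $0$, and satisfies
\[
\|A\|_{L^n(B_{\rho'}(x)\cap M)}\le \|A\|_{L^n(B_\rho\cap M)}\le \delta.
\]
For the volume ratio hypothesis, note $B_{5\rho'/6}(x)=B_{5\rho/12}(x)\subset B_{2\rho/3}\subset B_{5\rho/6}$ and $\rho'/6=\rho/12\le \rho/6$. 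So the supremum over $y\in B_{5\rho'/6}(x)\cap M$ and $0<s\le\rho'/6$ is bounded by the given one, namely by $\nu$. All hypotheses of Proposition \ref{prop:volume ratio by small total curvature} therefore hold for $M-x$ at scale $\rho'$. The admissible range $0<r\le\rho'/256$ is exactly $0<r\le\rho/512$.

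\textbf{Step 2: Replacing $E$ by the smaller component.} The proposition as stated bounds $\mu(B_r\cap C_0(B_{2r}\cap M))$. That set may contain more than $C_0(B_r\cap M)$, because other parts of the $2r$-component can re-enter $B_r$. So I would rerun the proof with $E$ replaced by
\[
E':=C_0(B_r\cap M_0)\subset E,
\]
and set $E'_t:=\Phi_{0,t}(E')$. Steps 1 and 3 of that proof use only three facts about $E$:
\begin{itemize}
\item[(i)] $E\subset B_r$, so the curvature bound \eqref{ineq:volume ratio by small total curvature-2} and the volume bound $\mu_0(E)\le\nu r^n$ hold;
\item[(ii)] the monotonicity $\tfrac{d}{dt}\mu_t(E_t)\le 0$ together with the H\"older estimate \eqref{ineq:volume ratio by small total curvature-7};
\item[(iii)] the two-sided inclusion \eqref{ineq:volume ratio by small total curvature-5}.
\end{itemize}
Facts (i) and (ii) hold verbatim for the subset $E'$.

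\textbf{Step 3: Checking the inclusions.} This is the only point needing care. For the upper inclusion, $E'_{t_*}\subset E_{t_*}\subset Q^*_{r+2D}$. For the lower inclusion, Step 2 of the proof already established the stronger statement
\[
Q^*_{r-2D}\subset\Phi_{0,t_*}\bigl(C_0(B_r\cap M_0)\bigr)=E'_{t_*}.
\]
It did so by pulling back a path in the graph $\Omega_*$ to a path in $B_r\cap M_0$ starting at $0$. Hence \eqref{ineq:volume ratio by small total curvature-5} holds with $E'$ in place of $E$.

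\textbf{Step 4: Conclusion.} The estimates \eqref{ineq:volume ratio by small total curvature-6}--\eqref{ineq:volume ratio by small total curvature-8} then go through unchanged. The same choice $\theta=C_3|\log\delta|^{-1}$ yields
\[
\bigl|\mu(E')/(\omega_n r^n)-1\bigr|\le\delta|\log\delta|^{-1/2}.
\]
Translating back, $E'$ corresponds to $C_x(B_r(x)\cap M)$, which gives the claim.

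The main obstacle is the identification in Step 3: making sure the lower inclusion was genuinely proved for $C_0(B_r\cap M_0)$ and not just for $E$. I expect it goes through, since that path argument never used the $2r$-component.
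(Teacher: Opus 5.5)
Your proposal is correct and follows the paper's proof: the paper recenters at $x$ with $\rho'=\rho/2$ exactly as you do, then observes that Step 2 of Proposition \ref{prop:volume ratio by small total curvature} already gives $Q^*_{r-2D}\subset\Phi_{0,t_*}(C_x(B_r(x)\cap M))$, and uses volume monotonicity along the flow to get the lower bound. The only difference is in the upper bound: the paper reads it off directly from the inclusion $C_x(B_r(x)\cap M)\subset B_r(x)\cap C_x(B_{2r}(x)\cap M)$ together with the proposition's stated upper bound, whereas you rerun the H\"older/monotonicity estimate for the smaller set.
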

\begin{proof}
    Fix $x\in B_{\frac\rho4}\cap M$ and $\rho'=\frac\rho2$ such that
    \begin{gather*}
        B_{\rho'}(x)\subset B_{\frac{3\rho}{4}}\subset B_{\rho},\\
        B_{\frac{5\rho'}{6}}(x)\subset B_{\frac{2\rho}{3}}\subset B_{\frac{5\rho}{6}},\\
        \frac{\rho'}{6}\leq\frac{\rho}{6}.
    \end{gather*}
    Proposition \ref{prop:volume ratio by small total curvature} therefore holds for $B_{\rho'}(x)$ with $r\in(0,\frac{\rho}{512}]$.
    Following the notations as before, set
    \[
    E_{x,r}:=B_r(x)\cap C_x(B_{2r}(x)\cap M),
    \]
    \[
    Q_s^*:=B_s(x_{t_*})\cap C_{x_{t_*}}(B_{3r}(x_{t_*})\cap M_{t_*}),
    \]
    and 
    \[
    V_{x,r}:=C_x(B_r(x)\cap M).
    \]
    Since $V_{x,r}\subset E_{x,r}$, 
    \begin{equation}\label{ineq:almost Euclidean volume ratio of local component-1}
    \mu(V_{x,r})\leq \mu(E_{x,r}).
    \end{equation}
    In the Step 2 of the proof of Proposition \ref{prop:volume ratio by small total curvature}, we actually proved 
    \[
    Q_{r-2D}^*\subset \Phi_{0,t_*}(V_{x,r})\subset \Phi_{0,t_*}(E_{x,r})\subset Q_{r+2D}^*.
    \]
    By the monotonicity of volume along the flow,
    \begin{equation}\label{ineq:almost Euclidean volume ratio of local component-2}
    \mu(V_{x,r})\geq \mu_{t_*}(\Phi_{0,t_*}(V_{x,r}))\geq \mu_{t_*}(Q_{r-2D}^*).
    \end{equation}
    Combining (\ref{ineq:almost Euclidean volume ratio of local component-1}), (\ref{ineq:almost Euclidean volume ratio of local component-2}) and the volume comparison in Proposition \ref{prop:volume ratio by small total curvature} completes the proof.
\end{proof}

Finally, we prove the main result.

\begin{theorem}\label{thm:high-dimensional-partial-regularity}
Let $\{M_t^n\}_{t\in[0,T)}$ be a smooth mean curvature flow of
properly embedded hypersurfaces in $\mathbb{R}^{n+1}$. 
Assume that, there exist constants $D,E<\infty$ such that
\begin{equation}
\label{eq:local-area-bound}
    \sup_{t\in[0,T)}
    \sup_{x\in\R^{n+1}}
    \sup_{0<r\leq R}
    \frac{\mathcal{H}^n(B_r(x)\cap M_t)}{\omega_n r^n}
    \leq D,
\end{equation}
and
\begin{equation}
\label{eq:local-total-curvature}
    \sup_{t\in[0,T)}
    \int_{B_R\cap M_t}|A|^n\,d\mu_t
    \leq E.
\end{equation}
Let $\{\mu_t^{\lambda_j}\}_{t<0}$ be a blow-up sequence converging to
a self-similarly shrinking integral Brakke flow
$\{\nu_t\}_{t<0}$. Write
\begin{equation*}
     \nu=\nu_{-1},
    \qquad
    \Sigma=\operatorname{spt}\nu.
\end{equation*}
Then, $\Sigma$ is a smooth embedded hypersurface.
\end{theorem}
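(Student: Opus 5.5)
We follow the three-step scheme from the introduction, working with the rescaled surfaces $M^j:=\lambda_j(M_{T+\lambda_j^{-2}(-1)}-x_0)$ at time $-1$ (more precisely, with time slices in a small interval around $-1$, so that Brakke convergence gives varifold convergence for a.e.\ time). Assumptions \eqref{eq:local-area-bound} and \eqref{eq:local-total-curvature} are scale invariant. Hence for every $K\Subset\R^{n+1}$ and all large $j$ we get uniform area ratio bounds $\le D$ at all scales up to $\lambda_jR\to\infty$, and $\int_{K\cap M^j}|A|^n\le E$. By Theorem \ref{thm1.6} (with $p=n$, after passing to a subsequence) the $M^j$ converge as curvature varifolds. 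The measures $|A_j|^n d\mu_j$ converge weakly to a Radon measure $\kappa$ of mass $\le E$ on compacts. Define the concentration set $S:=\{x:\kappa(\{x\})\ge\epsilon^n\}$, where $\epsilon=\delta_8(n,1,D\omega_n)$. Then $S$ is locally finite, with at most $E\epsilon^{-n}$ points in any ball of the fixed radius.

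\textbf{Regularity away from $S$.} Fix $x\in\Sigma\setminus S$. Choose $\rho>0$ with $\kappa(\overline{B_\rho(x)})<\epsilon^n$, so that $\|A_j\|_{L^n(B_\rho(x)\cap M^j)}\le\delta<\epsilon$ for large $j$. By Corollary \ref{cor:almost Euclidean volume ratio of local component}, each connected component of $B_r(y)\cap M^j$ through $y\in B_{\rho/4}(x)$ has area $(1\pm\delta|\log\delta|^{-1/2})\omega_nr^n$ for $r\le\rho/512$. The number of components meeting $B_{r/2}(y)$ is bounded by $D2^n$ through the total area bound. The key point is then to apply Allard's regularity theorem (Lemma \ref{allard}) to each component separately, rather than to the whole varifold. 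A single component is a varifold whose density ratio is $\le1+\lambda$. Its mean curvature is small in $L^p$ for some $p>n$: for $p=n$ this follows directly from the small total curvature, and for $p>n$ I would use the first statement of the local flow or the $\epsilon$-regularity Lemma \ref{lem:epsilon-regularity}. This yields, uniformly in $j$, $C^{1,\alpha}$ graphs of small Lipschitz norm. Passing to the limit, $\Sigma\cap B_{\gamma r}(y)$ is a finite union of $C^{1,\alpha}$ graphs (counted with multiplicity). Each limit sheet satisfies the shrinker equation \eqref{eq:shrinker} weakly, since $\nu$ is a self-similar shrinker. Proposition \ref{prop6.3} then bootstraps it to smoothness. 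Finally, since the $M^j$ are embedded, the sheets do not cross. Two sheets that touch must coincide by the strong maximum principle for the shrinker equation, which is a quasilinear elliptic PDE for graphs over a common plane. Hence $\Sigma\setminus S$ is a smooth embedded hypersurface.

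\textbf{Removing points of $S$.} Fix $p\in S$; it is isolated. By lower semicontinuity (Theorem \ref{thm1.6}(ii)) the limit curvature varifold $V$ satisfies $\int|A_V|^n\le E$. Blow up $V$ at $p$ with scales $\sigma_k\to0$. Scale invariance of $\int|A|^n$ and absolute continuity force the curvature of the blow-ups to vanish on compacts away from the origin. The area bound gives compactness, so any tangent cone is a curvature varifold with $A=0$, i.e.\ a union of planes. Since it is also stationary, a cone, and smooth away from $0$ (by the previous paragraph applied at every scale), it is a plane $P$ with some integer multiplicity $l$. Uniqueness of $P$ is obtained from Edelen--Minter \cite{MR4830551}, using the bounded mean curvature $|H|=\tfrac12|\langle X,\nu\rangle|$ near $p$ and the multiplicity-$l$ plane structure. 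Given uniqueness, the punctured neighbourhood $\Sigma\cap B_r(p)\setminus\{p\}$ is a union of $l$ graphs over $P\setminus\{0\}$ with gradients tending to zero at $0$. Each graph extends in $C^1$ across $0$ (as $n\ge2$, a point is removable), solves the shrinker equation weakly across the point, and is therefore smooth. Embeddedness plus the maximum principle once more shows that touching sheets coincide. Hence $\Sigma$ is smooth and embedded near $p$.

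The main obstacle I expect is the Allard step. Corollary \ref{cor:almost Euclidean volume ratio of local component} only controls connected components of $M^j$, while the varifold limit mixes components. One must make sure that the component decomposition persists in the limit and gives graphical sheets of uniform size. The second delicate point is verifying the hypotheses of Edelen--Minter at isolated points in the case of higher multiplicity.
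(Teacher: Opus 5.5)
Your overall architecture is the paper's: the concentration set of the limit of $|A_j|^n\,d\mu_j$, Corollary \ref{cor:almost Euclidean volume ratio of local component} on components, Allard, bootstrapping via Proposition \ref{prop6.3}, the maximum principle, planar tangent cones, Edelen--Minter uniqueness, and $C^1$ extension. However, the Allard step as you set it up fails.

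You apply Allard's theorem to components of $M^j$ themselves. This needs $\bigl(r^{p-n}\int_{B_r}|H_j|^p\bigr)^{1/p}$ small for some $p>n$, uniformly in $j$. Smallness of $\|A_j\|_{L^n}$ gives nothing of the kind. The $L^n$ norm is scale-critical and controls neither $L^p$ norms with $p>n$ nor the tilt. For example, smoothings at scale $1/j$ of the graph of $\epsilon x_1\log\log(1/|x|)$ on $B_{1/10}$ have $\int|A|^n\le C\epsilon^n$ and area ratios close to $1$, yet slopes unbounded in $j$. So they are not graphs of uniformly small $C^{1,\alpha}$ norm on any ball of fixed size, and nothing in the hypotheses excludes such behaviour on the slices.

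The two remedies you name do not supply the missing bound:
\begin{itemize}
\item Lemma \ref{lem:epsilon-regularity} requires $\int|A|^n$ small on a backward parabolic region of the flow. Your set $S$ only records concentration in a single time slice and says nothing about nearby times.
\item The local flow of Corollary \ref{cor:scale-invariant form of local short-time existence} satisfies $|A|^2\le 2\alpha_3\delta^2t^{-1}$ only for $t>0$. This gives no information about $H$ on the initial slice.
\end{itemize}

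The paper reverses the order. It first takes varifold limits of the components; compactness only needs the uniform first-variation bound, which follows from the $L^n$ bound. It then applies Allard to each limit. There the mean curvature is $-\frac12x^\perp$, bounded by $\frac12|x|$, so the $L^p$ hypothesis holds on small balls for every $p$.

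Allard needs the density bound only at one centre and one scale. So the almost-Euclidean bound of Corollary \ref{cor:almost Euclidean volume ratio of local component} for $C_{y_j}(B_r(y_j)\cap M^j)$ passes to the limit, and its lower bound keeps $y$ in the support. This also settles the worry you raise about components mixing in the limit.

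You also need each component limit, not only their sum $\nu$, to satisfy the shrinker equation. For this the paper first uses Huisken's monotonicity to select slices $t_j\to-1$ with $\int_{\Sigma_j}\rho_{0,0}|\vec H+x^\perp/(-2t_j)|^2\,d\mu\to0$. Your choice of ``a.e.\ time near $-1$'' must be upgraded in this way, by averaging the vanishing space--time integral; otherwise you would need a locality theorem for generalized mean curvature.

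Your removal of points of $S$ is essentially the paper's. Note only that Lemma \ref{L-Sineq} needs the decay $|x-p|^{k+1}|\nabla^kA|\to0$. The paper gets this from Proposition \ref{prop6.3} on balls of radius comparable to $|q'-p|$, together with $\kappa(B_r(p)\setminus\{p\})\to0$. That is the precise content of your phrase ``the previous paragraph applied at every scale''.
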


\begin{proof}

We follow Ilmanen’s \cite{ilmanen2026singularitiesmeancurvatureflow} approach.
 Suppose $\{\mu_t^{\lambda_j}\}_{t\in [-T/\lambda_j^2,0)}$ is a blowup sequence converging to $\{\nu_t\}_{t<0}$.

{\bf Step 1. Choosing a refined sequence.}

By Huisken's monotonicity formula, 
\begin{equation}
\begin{aligned}
&\int_{-2}^{-\frac12}\int_{M_t^{\lambda_j}}
\rho_{0,0}(x,t)
\left|
\vec H^{\lambda_j}+\frac{x^\perp}{-2t}
\right|^2
\,d\mu_t^{\lambda_j}(x)\,dt \\
&\quad=
\int_{T-2\lambda_j^2}^{T-\frac12\lambda_j^2}
\int_{M_s}
\rho_{x_0,T}(y,s)
\left|
\vec H+\frac{(y-x_0)^\perp}{2(T-s)}
\right|^2
\,d\mu_s(y)\,ds \\
&\quad\leq
\Phi(T-2\lambda_j^2)
-\Phi\left(T-\tfrac12\lambda_j^2\right) \\
&\quad\leq
\Phi(T-2\lambda_j^2)-\Theta(\mu,(x_0,T))
\longrightarrow 0
\qquad\text{as }j\to\infty,
\end{aligned}
\end{equation}
where $\lambda_j\to0$ and
\[
\Phi(s):=\int_{M_s}\rho_{x_0,T}(y,s)\,d\mu_s(y),
\qquad
\Theta(\mu,(x_0,T)):=\lim_{s\uparrow T}\Phi(s).
\]
After passing to a subsequence, we
may choose times
\[
    t_j\longrightarrow -1
\]
such that, writing
\[
    \Sigma_j=M_{t_j}^{\lambda_j},
\]
we have
\begin{equation}
\label{eq:approximate-shrinker}
    \int_{\Sigma_j}
    \rho_{0,0}
    \left|
        \vec H_{\Sigma_j}
        +\frac{x^\perp}{-2t_j}
    \right|^2
    d\mu_{\Sigma_j}
    \rightarrow 0.
\end{equation}
Since the $L^n$-norm of the second fundamental form is invariant
under dilations,
\[
    \int_{\lambda^{-1}M} |A_{\lambda^{-1}M}|^n
    \,d\mu_{\lambda^{-1}M}
    =
    \int_M|A_M|^n\,d\mu_M.
\]
Therefore, for every fixed $R>0$,
\begin{equation*}
     \sup_j
    \int_{\Sigma_j\cap B_R(0)}
    |A_{\Sigma_j}|^n\,d\mu_{\Sigma_j}
    \leq
    E.
\end{equation*}
The local area-ratio bound is also
preserved by the rescaling.

Define
\begin{equation*}
    \sigma_j
    =
    |A_{\Sigma_j}|^n
    \mathcal{H}^n\lfloor\Sigma_j.
\end{equation*}
After passing to a subsequence,
\[
    \sigma_j\rightharpoonup\sigma
\]
as Radon measures. Define
\[
    Q=\left\{
        q\in N:
        \sigma(\{q\})\geq\varepsilon_0
    \right\}.
\]
For every $R>0$,
\begin{equation*}
    \varepsilon_0\#(Q\cap B_R)
    \leq
    \sigma(B_R)
    \leq
    \liminf_{j\to\infty}
    \int_{\Sigma_j\cap B_{2R}}|A_{\Sigma_j}|^n
    \leq
    E.
\end{equation*}
Hence $Q$ is finite.

{\bf Step 2. Regularity away from $Q$}

We next prove regularity away from $Q$. Fix
\[
    p\in N\setminus Q.
\]
Since $\sigma(\{p\})<\varepsilon_0$, there exists $r>0$ such that
\begin{equation*}
     \sigma(\overline{B_{2r}(p)})<\varepsilon_0.
\end{equation*}
The weak convergence of $\sigma_j$ gives
\begin{equation*}
    \int_{\Sigma_j\cap B_{r}(p)}
    |A_{\Sigma_j}|^n\,d\mu_{\Sigma_j}
    <\varepsilon_0
\end{equation*}
for all sufficiently large $j$.

The local area-ratio estimate gives
\[
    \mathcal{H}^n(\Sigma_j\cap B_{2r}(p))
    \leq
    C(D)r^n.
\]
If we take $\varepsilon_0$ small enough, applying Corollary \ref{cor:almost Euclidean volume ratio of local component}, there are $s_k\in [2^{-10}r,2^{-9}r]$ such that $\Sigma_k$ is decomposed into distinct components $D_{k,l}$, each of
which is an embedded $n$-disk satisfying
\begin{equation}\label{eq5.23}
    \mathcal{H}^n(D_{k,l}\cap B_\rho(x))\leq (1+\delta)\omega_n \rho^n\quad \forall\, x\in D_{k,l}\quad |x-p|+\rho<s_k.
\end{equation}
After taking a subsequence   we may assume
\begin{equation*}
    l_k\to l_0,\quad s_k\to s_0,\quad \mathcal{H}^n\lfloor D_{k,l}\to \nu_l\quad \forall\, l=1,\dots,l_0.
\end{equation*}
By Allard's Compactness Theorem, 
\begin{equation*}
V_{\mathcal{H}^n\lfloor D_{k,l}}\rightharpoonup V_{\nu_l}
\end{equation*}
as varifold.

By choosing $r$ small, we have $|H|=|\frac{\langle x,n\rangle}{2}|\leq \frac{R}{2}\leq \varepsilon/r$. By \eqref{eq5.23}, we also have
\begin{equation*}
    \nu_l(B_\rho(x))\leq (1+\delta)\omega_n\rho^n,\quad \forall\, x\in \mathrm{spt}\nu_l,\quad |x-p|+\rho<s.
\end{equation*}
By Allard regularity theorem, $\mathrm{spt}\nu_l\cap B_{\gamma s}(p)$ is the graph of a $C^{1,\alpha}$ function $u$ over an $n$-plane. Since $\mathrm{spt}\nu_l$ satisfies the elliptic equation, by Schauder
estimates, $u$ is $C^\infty$.
We obtain a
decomposition
\begin{equation*}
    \Sigma\cap B_{\gamma s}(p)
    =
    \bigcup_{l=1}^{l_0}\mathrm{spt}\nu_{l}.
\end{equation*}
Here, $\gamma=\gamma(n)$ is a constant.

We claim that the union on the right-hand side is embedded. Since
every $\Sigma_j$ is embedded, two limiting graphs cannot cross
transversely. If two limiting sheets touch, then they are locally
ordered and satisfy the same self-shrinker equation. The strong
maximum principle implies that they either coincide or are disjoint.
Consequently,
    $\Sigma\cap B_{\gamma s}(p)$
is a smooth embedded hypersurface. 
Thus $\Sigma\setminus Q$ is smooth and
embedded.

{\bf Step 3. Removable of singularities}

For any $q\in Q$,  there exists $r_q>0$ such that 
\begin{equation*}
    \int_{A_{2r,\frac{r}{2}}(q)}|A|^n \leq \varepsilon\quad 0<r<r_q.
\end{equation*}
For any $q'\in B_{r_q}(q)$, let $\Bar{r}=d(q',q)$. Then 
\begin{equation*}
    B_{\frac{\Bar{r}}{2}}(q')\subset A_{2\Bar{r},\frac{\Bar{r}}{2}}(q).
\end{equation*}
So, by Step 2, $B_{2^{-11} \Bar{r}}(q')$ can be written as a graph of a smooth function. Then, by Proposition \ref{prop6.3}, 
\begin{equation}\label{eq6.7}
   d^{j+1}(q,q') |\nabla^j A|\leq C_j
\end{equation}
for some constant $C_j$. Moreover, by 
\begin{equation*}
  \lim\limits_{r\to 0}  \int_{A_{2r,\frac{r}{2}}(q)}|A|^n =0,
\end{equation*}
we have
\begin{equation}\label{eq6.8}
  \lim\limits_{q'\to q}  d^{j+1}(q,q') |\nabla^j A|=0.
\end{equation}
So, by embeddedness, the support of the tangent cone
at $q$ is a hyperplane. 
We next show the tangent cone is unique.  
By Lemma \ref{L-Sineq} below, there exists a plane $P_q$ such that,
after choosing $r>0$ sufficiently
small,
\begin{equation*}
  \operatorname{spt}\nu\cap
\bigl(B_r(q)\setminus\{q\}\bigr)
=
q+\bigcup_{\alpha=1}^{m_q}
\operatorname{graph}_{P_q}u^\alpha,  
\end{equation*}
and each $u_\alpha$ is a smooth graphical function
on a punctured neighborhood of the origin in $P_q$.
 After relabeling the
sheets, they can therefore be ordered as
\[
u^1<u^2<\cdots<u^{m_q}.
\]
The graphical
functions satisfy
\[
\frac{|u^\alpha(x)|}{|x-q|}
\longrightarrow 0
\qquad\text{as }x\to q.
\]
Consequently, each $u^\alpha$ extends across $q$ as a $C^1$ function
satisfying
\[
u^\alpha(q)=0,
\qquad
Du^\alpha(q)=0.
\]
By strong maximum
principle 
\[
u^1=\cdots=u^{m_q}.
\]
Hence all the sheets coincide, and, around $q$ $\operatorname{spt}\nu$ is
represented by a single smooth graph over $P_q$.
Therefore $q$ is removable. Since $Q$ is finite, this proves
that $\Sigma$ is a smooth embedded self-shrinker.
\end{proof}

\appendix
\section{Short-time existence for mean curvature flow}
In this section, we record a short-time existence theorem for mean curvature flow of complete noncompact immersed submanifolds of arbitrary codimension in ambient manifolds of bounded geometry. The corresponding result is classical for hypersurfaces in Euclidean space, while the noncompact higher-codimensional case is less explicitly documented in the literature. As we need a version with a uniform existence time depending only on the geometric bounds, we give a self-contained proof.  The argument follows
 \cite{han2026wellposednessmeancurvatureflow}.

\subsection{Notations}

\begin{definition}
A complete Riemannian manifold $(\bar{M}^{n+m},\bar{g})$ has bounded geometry if
there is $i_0>0$ and, for every integer $j\geq0$, a constant $\Lambda_j<\infty$
such that
\[
 \inj_{\bar{g}}(\bar{M})\geq i_0,
 \qquad |\bar\nabla^j\Rbar|_{\bar{g}}\leq\Lambda_j.
\]
\end{definition}

Let $F_0:M^n\to\bar{M}^{n+m}$ be a smooth complete isometric immersion.  Its
induced metric, normal bundle, normal connection, second fundamental form, and
mean-curvature vector are denoted by $g_0$, $NM$, $\nabla^\perp$, $A_0$, and
$H_0=\operatorname{tr}_{g_0}A_0$, respectively.  

\begin{theorem}\label{thm:main}
Let $F_0:M^n\to\bar{M}^{n+m}$ be a smooth complete isometric immersion. 
Suppose that $(\bar{M}^{n+m},\bar{g})$ has bounded geometry and
\[
 \sup_M|A_0|\leq\kappa<\infty.
\]
Then there exists a constant $T>0$ and a smooth family of complete immersions
$F:M\times[0,T]\to\bar{M}$ such that
\begin{equation*}
    \partial_tF=H(F)\quad F(\cdot,0)=F_0.
\end{equation*}
  Moreover,
\[
 \sup_{M\times[0,T]}|A|<\infty,
 \qquad
 \sup_M|\nabla^kA|\leq C_k t^{-k/2},\quad 0<t\leq T,\quad k\geq0.
\]
Here, $T$ depends on
$n,m,i_0,\Lambda_0,\Lambda_1,\Lambda_2$ and $\kappa$, and $C_k$ depends on $n,m,i_0,\kappa,\Lambda_0,\dots,\Lambda_{k+1}$. 
\end{theorem}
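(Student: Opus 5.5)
The plan is the standard compact exhaustion scheme: solve on compact pieces, prove estimates that do not depend on the piece, and pass to a limit. After parabolic rescaling we may assume $\kappa\le 1$ and $\Lambda_0\le 1$, and fix a radius $r_0=r_0(n,m,i_0,\Lambda_0,\Lambda_1)$ below the injectivity radius. Then, by a bounded-geometry analogue of Lemma \ref{lem:small curvature graph lemma}, for every $p\in M$ the component of $F_0^{-1}(B^{\bar g}_{r_0}(F_0(p)))$ containing $p$ is a graph over a ball in $T_pM$, taken in normal coordinates of $\bar M$ at $F_0(p)$, with $C^{1}$ norm controlled. Exhaust $M$ by smooth compact domains $\Omega_1\Subset\Omega_2\Subset\cdots$. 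On each $\Omega_k$ solve the DeTurck-modified flow
\[
\partial_tF=H(F)+dF(W),\qquad F|_{\partial\Omega_k}=F_0,\qquad F(\cdot,0)=F_0 .
\]
Here $W$ is the DeTurck vector field, i.e. the tension of the identity map $(M,F^*\bar g)\to(M,g_0)$. This is a strictly parabolic quasilinear system for a map into $\bar M$, so classical theory gives a short-time solution on $\Omega_k$. Undoing the DeTurck diffeomorphisms (the solution of an ODE on $M$) recovers a genuine mean curvature flow, up to tangential reparametrization.

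The heart of the argument is an a priori bound that is uniform in $k$ and away from $\partial\Omega_k$. I first aim for a curvature bound $|A|\le 2\kappa$ on $\Omega_{k}'\times[0,T]$, where $\Omega_k'\subset\Omega_k$ contains all points whose $g_0$-distance to $\partial\Omega_k$ is at least a fixed constant, and $T$ depends only on $n,m,i_0,\Lambda_0,\Lambda_1,\Lambda_2$ and $\kappa$. The input is the evolution inequality in a curved ambient space,
\[
\partial_t|A|^2\le\Delta|A|^2-2|\nabla A|^2+C(n,m)\big(|A|^4+\Lambda_0|A|^2+\Lambda_1|A|\big).
\]
I localize it with the Ecker--Huisken cutoff built from $\eta=d_{\bar g}(\cdot,F_0(p))^2+C t$. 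The Hessian comparison, valid inside the injectivity radius because $|\Rbar|\le\Lambda_0$, gives $\square\eta\ge -C$ and $|\nabla\eta|^2\le C\eta$. This is exactly the structure used in the proof of Proposition \ref{prop:interior estimate by c/t}, and comparison with the ODE $y'=C(y^2+y+1)$ then yields the bound for $t\le T$. Alongside this I run a continuity argument. Since $\partial_tF$ is bounded, each point moves a distance at most $Ct$ in $\bar M$, so the ball $B_{r_0}(F_0(p))$ keeps controlling the relevant piece of the flow. Likewise, because $|\partial_t g|\le 2|H||A|\le C$, the metrics $g(t)$ stay uniformly equivalent to $g_0$ and the graph picture persists. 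I expect this step to be the main obstacle. It requires a uniform existence time without any a priori bound on the curvature near $\partial\Omega_k$, and the DeTurck term has to be controlled alongside $A$. I would first try to bound the DeTurck map by a maximum-principle estimate for harmonic-map heat flow with bounded target geometry. If that becomes delicate, the fallback is to write the flow locally as a normal graph $u$ over $F_0$ and apply interior quasilinear parabolic estimates directly, using the cutoff only on the scalar $|A|^2$.

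Given the uniform $|A|$ bound, I derive Shi/Bernstein-type estimates $\sup|\nabla^kA|\le C_k t^{-k/2}$ by induction on $k$. At each stage I apply the maximum principle to $t^k|\nabla^kA|^2+B_k t^{k-1}|\nabla^{k-1}A|^2$, localized by the same cutoff. The reaction terms involve $\bar\nabla^j\Rbar$ for $j\le k+1$, which is why $C_k$ depends on $\Lambda_0,\dots,\Lambda_{k+1}$. These bounds, together with the local graph representation, give $C^\infty_{\mathrm{loc}}$ bounds for $F_k$ on $(\Omega_{k}'\setminus\Omega_{k_0})\times(0,T]$ that are uniform in $k$. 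Near $t=0$ the bounds are uniform because the initial data are smooth, via standard interior estimates up to the initial time. A diagonal Arzel\`a--Ascoli argument then produces a limit $F:M\times[0,T]\to\bar M$ solving $\partial_tF=H$ with $F(\cdot,0)=F_0$, and it inherits the bound on $A$ and the derivative estimates. Finally, each $F(\cdot,t)$ is a complete immersion because $g(t)$ is uniformly equivalent to the complete metric $g_0$, and it is an immersion since the metrics stay nondegenerate.
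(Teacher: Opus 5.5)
There is a genuine gap at exactly the step you call the main obstacle, and the mechanism you propose for it does not close. With $\phi=\varphi(\eta)|A|^2$, absorb the cross term into $-2\varphi|\nabla A|^2$. The evolution inequality then gives
\[
\square\phi\le C\,\phi\,|A|^2+\Bigl(\square\varphi+2\frac{|\nabla\varphi|^2}{\varphi}\Bigr)|A|^2+C\varphi\bigl(\Lambda_0|A|^2+\Lambda_1|A|\bigr).
\]
Both the reaction term $C\phi|A|^2=C\phi^2/\varphi$ and the cutoff term involve $|A|^2$ where $\varphi$ is small, and $\sup\phi$ does not control that. In Proposition~\ref{prop:interior estimate by c/t}, the standing hypothesis $|A|^2\le\alpha t^{-1}$ on the whole ball, together with the point-picking, is there precisely to handle these terms. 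You have no such a priori bound. Propagating a purely local bound $|A_0|\le\kappa$ forward in time is a pseudolocality statement, not an ODE comparison. Enlarging the region to supply a bound eventually runs into $\partial\Omega_k$, where the Dirichlet solution is uncontrolled.

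For the same reason, nothing ensures that the Dirichlet solutions exist up to a time $T$ independent of $k$. Interior estimates on $\Omega_k'$ do not prevent breakdown at $\partial\Omega_k$, so nothing rules out lifespans $T_k\to0$. In Shi's scheme this is settled by an estimate that holds up to the boundary via the maximum principle, because the boundary values are fixed. Your proposal supplies no analogue. The bounds $|\partial_tF|\le C$ and $|\partial_tg|\le 2|H||A|$ used in your continuity argument also hold only in the MCF gauge. They fail for $\partial_tF=H+dF(W)$, whose tangential part is not controlled by $|A|$.

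The paper avoids boundaries and maximum principles altogether. It writes the flow on all of $M$ as a normal graph $u$ over $F_0$ via the normal exponential map on the normal bundle, so immersedness is harmless. This reduces $(\partial_tF_u)^\perp=H(F_u)$ to $\partial_tu=-H_0+Ju+\mathcal Q(u)$ (Proposition~\ref{prop:graph}). Since $(M,g_0)$ has bounded curvature and injectivity radius (Lemma~\ref{lem:gauss}), $\partial_t-J$ has Gaussian kernel bounds (Lemma~\ref{lem:kernel}). The Duhamel map $u\mapsto\mathcal S(-H_0+\mathcal Q(u))$ is then a contraction on $\{\|u\|_{\X_T}\le\delta\}$ once $C\sqrt T\|H_0\|_\infty\le\delta/2$ (Propositions~\ref{prop:linear}, \ref{prop:quadratic}, \ref{propa9}). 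A tangential ODE turns the normal flow into MCF. The uniform $C^1$-smallness of $u$ and the uniform time $T$ are outputs of the fixed point, not of an a priori estimate. Your fallback (local normal graphs plus interior quasilinear estimates) presupposes exactly that $C^1$-smallness on a uniform time interval. To repair the proposal you must either run such a global perturbative argument or prove a genuine pseudolocality or boundary estimate for the Dirichlet problems.
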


\begin{lemma}\label{lem:gauss}
There exists a constant $C=C(n,m)$ such that
\begin{equation}\label{a1}
     |\mathrm{Rm}(g_0)|\leq C(\Lambda_0+\kappa^2).
\end{equation}
Moreover, there exists a constant \(i_M=i_M(n,m,i_0,\Lambda_0,\kappa)>0\) such that
\begin{equation}\label{a2}
 \inj_{g_0}(M)\geq i_M.
\end{equation}

\end{lemma}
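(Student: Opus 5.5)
The plan is to obtain \eqref{a1} directly from the Gauss equation. Then \eqref{a2} will follow from the Klingenberg lemma, which reduces it to two things: a bound on the conjugate radius, which comes from \eqref{a1}, and the absence of short geodesic loops, which comes from a local graphical description of $F_0(M)$ at a uniform scale.

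\textbf{Step 1 (curvature).} For an isometric immersion, the Gauss equation gives
\[
R(X,Y,Z,W)=\overline{R}(X,Y,Z,W)+\langle A_0(X,W),A_0(Y,Z)\rangle-\langle A_0(X,Z),A_0(Y,W)\rangle
\]
for $X,Y,Z,W\in TM$. Taking norms yields $|\mathrm{Rm}(g_0)|\leq |\Rbar|+2|A_0|^2\leq C(n,m)(\Lambda_0+\kappa^2)$, which is \eqref{a1}. Set $K:=C(\Lambda_0+\kappa^2)$. Sectional curvatures of $g_0$ are then bounded by $K$, so by Rauch comparison the conjugate radius of $(M,g_0)$ is at least $\pi/\sqrt K$ at every point.

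\textbf{Step 2 (local graph at a uniform scale).} Fix the scale $r_0:=c(n,m)\min\{i_0,\Lambda_0^{-1/2},\kappa^{-1}\}$ with $c$ small. For $p\in M$, work in $\bar g$-normal coordinates centered at $F_0(p)$ on the ambient ball of radius $r_0$. By Rauch/Hessian comparison for the distance function, using only $|\Rbar|\leq\Lambda_0$ and $\inj_{\bar g}\geq i_0$, the metric coefficients satisfy $|\bar g_{ij}-\delta_{ij}|\leq C\Lambda_0|x|^2$ there. I would then run the continuity argument of Lemma \ref{lem:small curvature graph lemma} in these coordinates. The component $U_p$ of $F_0^{-1}$ of this ball containing $p$ is then a graph $(z,u(z))$ over the tangent plane on a ball of radius $r_0/2$, with $|Du|\leq\tfrac14$. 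The Euclidean second derivatives of $u$ are controlled by $|A_0|$ plus the Christoffel symbols, and this is the delicate point: with only $\Lambda_0$ available, pointwise Christoffel bounds in normal coordinates are not automatic. I would first try to avoid them entirely by controlling the tilt of the tangent planes along radial curves via Jacobi-field comparison. If that fails, I would accept a dependence on $\Lambda_1$, noting that the theorem's constants already allow $\Lambda_1,\Lambda_2$. I expect this step to be the main obstacle.

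\textbf{Step 3 (no short loops).} Since $F_0|_{U_p}$ is an embedded graph with $|Du|\leq\tfrac14$ and $\bar g$ is $C^0$-close to Euclidean, the induced metric on $U_p$ is bi-Lipschitz, with constant close to $1$, to the flat metric on the parameter ball. Any $g_0$-geodesic loop based at $p$ of length $\ell<r_0/4$ stays inside $U_p$, because it cannot leave the ambient ball of radius $r_0/4$. In the graph chart it is then a closed curve starting and ending at the same parameter point. Geodesics in $U_p$ have geodesic curvature in the chart bounded by $C(\kappa+\sqrt{\Lambda_0})\ll r_0^{-1}$, so they are nearly straight, and a nearly straight segment of length $\ell<r_0/4$ cannot return to its starting point. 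Hence every geodesic loop at $p$ has length at least $r_0/4$. The Klingenberg lemma then gives
\[
\inj_{g_0}(M)\geq\min\{\pi/\sqrt K,\;r_0/8\}=:i_M(n,m,i_0,\Lambda_0,\kappa)>0,
\]
which is \eqref{a2}. An alternative route is to use Step 2 to get the volume lower bound $\mu_{g_0}(B^{M}_{r_0/2}(p))\geq c\,r_0^n$ and then apply Cheeger--Gromov--Taylor together with \eqref{a1}.
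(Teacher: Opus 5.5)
Your Step 1 is the paper's proof of \eqref{a1}. For \eqref{a2}, the paper combines \eqref{a1} with the uniform-scale graph lemma of Chen--Yin (Lemma 7.1 of \cite{MR2379801}). Your Steps 2--3 try to reprove that input, and this is where there is a genuine gap.

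The bound on $D^2u$ in Step 2 needs pointwise bounds on the Christoffel symbols of $\bar g$ in $\bar g$-normal coordinates. So does the claim in Step 3 that $g_0$-geodesics have curvature at most $C(\kappa+\sqrt{\Lambda_0})$ in the chart. Rauch comparison only gives the $C^0$ estimate $|\bar g_{ij}-\delta_{ij}|\le C\Lambda_0|x|^2$. First derivatives of $\bar g$ in normal coordinates come from varying Jacobi fields in the initial direction, and the standard comparison arguments control them only through $\bar\nabla\Rbar$.

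Your ``Jacobi-field control of the tilt'' is not carried out. Your fallback produces an $i_M$ depending on $\Lambda_1$, which is weaker than the lemma as stated: harmless for Theorem \ref{thm:main}, but not the claim. The Cheeger--Gromov--Taylor alternative also rests on Step 2.

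You can close the gap without any chart and keep your Klingenberg framework. Let $x=F_0(p)$ and $r=d_{\bar g}(x,\cdot)$. The sectional curvatures of $\bar g$ are at most $\Lambda_0$ and $\inj_{\bar g}\ge i_0$. So on $\{r<r_1\}$, with $r_1:=\min\{i_0,\Lambda_0^{-1/2}\}$ and $y=\sqrt{\Lambda_0}\,r$, Hessian comparison gives $\bar\nabla^2(\tfrac12 r^2)\ge (y\cot y)\,\bar g\ge\tfrac12\bar g$.

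Let $\gamma$ be a unit-speed $g_0$-geodesic starting at $p$. Then $\sigma=F_0\circ\gamma$ satisfies $\bar\nabla_{\sigma'}\sigma'=A_0(\gamma',\gamma')$. Hence $f(s):=\tfrac12 r^2(\sigma(s))$ obeys $f''\ge\tfrac12-\kappa\, r(\sigma(s))\ge\tfrac14$ as long as $s<\ell_0:=\min\{r_1,(4\kappa)^{-1}\}$, using $r(\sigma(s))\le s$. Since $f(0)=f'(0)=0$, this gives $r(\sigma(s))\ge s/2>0$ for $0<s<\ell_0$. So no geodesic loop at $p$ is shorter than $\ell_0$.

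Klingenberg's lemma and the conjugate-radius bound from \eqref{a1} then give $\inj_{g_0}(M)\ge\min\{\pi/\sqrt K,\ell_0/2\}$. This depends only on $n,m,i_0,\Lambda_0,\kappa$. Alternatively, run Step 2 in harmonic coordinates, whose $C^{1,\alpha}$ control of $\bar g$ requires only $\Lambda_0$ and $i_0$.
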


\begin{proof}
Choose $g_0$-orthonormal tangent vectors $e_i$.  The Gauss equation is
\[
 R_{ijkl}=\bar R(e_i,e_j,e_k,e_l)
 +\ip{A_{ik}}{A_{jl}}-\ip{A_{il}}{A_{jk}}.
\]
Taking the Euclidean norm of all components and using $|A_0|\leq\kappa$ gives \eqref{a1}. \eqref{a2} follows from \eqref{a1} and Lemma 7.1 in \cite{MR2379801}.
\end{proof}

Let $\N_\rho=\{(p,v)\in NM:|v|<\rho\}$.  For sufficiently small uniform
$\rho$, define
\[
 \mathcal E(p,v)=\exp^{\bar{g}}_{F_0(p)}v.
\]
Although $F_0$ may not be embedded, the map $\mathcal E$ is well defined on each normal fiber; no global injectivity of $\mathcal E$ is required.
  For a section $u$ of $NM$ with $|u|<\rho$, set
\[
 F_u(p)=\mathcal E(p,u(p)).
\]

Parallel transport along $s\mapsto\mathcal E(p,su(p))$ identifies the ambient
tangent space at $F_u(p)$ with that at $F_0(p)$.  Let
$\Pi_u:T_{F_u(p)}\bar{M}\to N_pM$ be parallel transport followed by orthogonal
projection.  For $|u|+|\nabla^\perp u|$ small, the restriction of $\Pi_u$ to
$N_{F_u(p)}F_u(M)$ is invertible.  Denote its inverse by $B_u$.

\begin{proposition}[Graphic MCF]\label{prop:graph}
There exist a constant $\varepsilon_0>0$ and a smooth nonlinear operator $\mathcal Q$ such
that, whenever
$|u|+|\nabla^\perp u|\leq\varepsilon_0$, the mean-DeTurck flow
$(\partial_tF_u)^\perp=H(F_u)$ is equivalent to
\begin{equation}\label{a3}
    \partial_tu=-H_0+Ju+\mathcal Q(u),   
\end{equation}
where the Jacobi operator is
\begin{equation}\label{a4}
     Ju=\Delta^\perp u+\mathcal V u,     
\end{equation}
\begin{equation}\label{a5}
    \mathcal V u
 =\sum_{i,j}\ip{A_{0,ij}}{u}A_{0,ij}
  +\sum_i\bigl(\bar R(u,e_i)e_i\bigr)^\perp .  
\end{equation}
The nonlinear term satisfies
\begin{equation}\label{a6}
    |\mathcal Q(u)|
 \leq C\bigl((|u|+|\nabla u|)^2
       +( |u|+|\nabla u|)|\nabla^2u|\bigr), 
\end{equation}
and
\begin{equation}\label{a7}
    \begin{split}
        |\mathcal Q(u)-\mathcal Q(v)|
 &\leq C(a_u+a_v)
   (|u-v|+|\nabla(u-v)|+|\nabla^2(u-v)|)\\
 &\quad+C(|u-v|+|\nabla(u-v)|)
   (|\nabla^2u|+|\nabla^2v|), 
    \end{split}
\end{equation}
where $a_w=|w|+|\nabla w|$.
\end{proposition}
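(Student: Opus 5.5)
The plan is to write the mean curvature of the normal graph $F_u$ as a quasilinear second-order expression in $u$, and then Taylor expand it around the zero section. The expansion has three parts: the zeroth order term gives $H_0$ up to the sign convention fixed by \eqref{a3}, the first order term gives the Jacobi operator \eqref{a4}--\eqref{a5}, and the rest is collected into $\mathcal Q$.

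\textbf{Step 1: reduce to a statement about $\Pi_u$.} I would first rewrite the flow equation. Since $\mathcal E$ is the fibrewise normal exponential map, $\partial_tF_u=d\mathcal E_{(p,u)}(0,\partial_tu)$. Differentiating along the geodesic $s\mapsto \mathcal E(p,su)$ and comparing with parallel transport shows that this equals the transport of $\partial_tu$ plus an error of order $O(\Lambda_0|u|^2|\partial_tu|)$, using Jacobi field estimates.

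The mean-DeTurck formulation prescribes only the normal part of $\partial_tF_u$ and allows an arbitrary tangential reparametrization. The condition $(\partial_tF_u)^\perp=H(F_u)$ is therefore equivalent to
\[
\Pi_u\bigl(\partial_tF_u\bigr)=\Pi_u H(F_u),
\]
because $\Pi_u$ is invertible on $N_{F_u(p)}F_u(M)$ with inverse $B_u$. Absorbing the Jacobi-field error, the left-hand side becomes $\partial_tu$ modulo terms that can be moved into $\mathcal Q$, with bounds as in \eqref{a6}. It thus suffices to expand $\mathcal H(u):=\Pi_uH(F_u)$.

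\textbf{Step 2: structure of $\mathcal H$ and its linearization.} I would work in Fermi-type coordinates along $F_0$: choose normal coordinates on $M$ with respect to $g_0$, whose injectivity radius is bounded below by Lemma \ref{lem:gauss}, together with a $\nabla^\perp$-parallel normal frame. In these coordinates
\[
dF_u(e_i)=P_u\bigl(e_i+\nabla^\perp_iu-A_0(e_i,\cdot)^\sharp u\bigr)+O(|u|^2+|u||\nabla u|),
\]
where $P_u$ denotes parallel transport. The induced metric $g_u$ and the second fundamental form of $F_u$ are then smooth functions of $(p,u,\nabla u,\nabla^2u)$, and $\mathcal H$ has the quasilinear form
\[
\mathcal H(u)=a^{ij}(p,u,\nabla u)\nabla^2_{ij}u+b(p,u,\nabla u).
\]
The coefficients depend smoothly on their arguments, with bounds controlled by $n,m,\kappa,\Lambda_0,\Lambda_1$. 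This control holds because the second derivatives of $F_0$ involve only $A_0$, and the ambient terms involve only $\bar R$ and $\bar\nabla\bar R$ through the Jacobi fields.

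At $u=0$ we have $a^{ij}=g_0^{ij}$ and $b=H_0$. The derivative $\frac{d}{ds}\big|_{s=0}\mathcal H(su)$ is the classical first variation of the mean curvature along a normal field. In our convention it is $\Delta^\perp u+\sum_{i,j}\ip{A_{0,ij}}{u}A_{0,ij}+\sum_i(\bar R(u,e_i)e_i)^\perp$, which is exactly \eqref{a4}--\eqref{a5}. The sign in front of $H_0$ in \eqref{a3} is fixed by the orientation convention for $H$.

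\textbf{Step 3: the nonlinear term.} I would define $\mathcal Q(u)$ as $\mathcal H(u)$ minus its zeroth and first order parts, plus the error from Step 1. Taylor's formula then gives
\[
\mathcal Q(u)=\bigl(a^{ij}(p,u,\nabla u)-g_0^{ij}\bigr)\nabla^2_{ij}u+\bigl(b(p,u,\nabla u)-H_0-D b(0)\cdot(u,\nabla u)\bigr).
\]
The first bracket is $O(a_u)$ and the second is $O(a_u^2)$, provided $a_u\le\varepsilon_0$ is small; this proves \eqref{a6}.

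For \eqref{a7}, write $\mathcal Q(u)-\mathcal Q(v)$ and split the second-order part as
\[
(a(u)-g_0)\nabla^2(u-v)+(a(u)-a(v))\nabla^2v.
\]
The first piece is bounded by $Ca_u|\nabla^2(u-v)|$ and the second by $C(|u-v|+|\nabla(u-v)|)|\nabla^2v|$. For the lower-order part, the mean value theorem applied to the $C^2$ function $b$ gives $C(a_u+a_v)(|u-v|+|\nabla(u-v)|)$.

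\textbf{Main obstacle.} The hard part is keeping every constant uniform, depending only on the bounded-geometry data and on $\kappa$, even though $F_0$ need not be embedded. Two things make this work. First, all computations are fibrewise and local on $M$, so injectivity of $\mathcal E$ is never needed. Second, the Jacobi field and parallel transport comparisons along the short geodesics $s\mapsto\mathcal E(p,su)$ require only $|u|<\rho$ with $\rho\le\min\{i_0,\Lambda_0^{-1/2}\}$ small. Verifying carefully that no derivatives of $A_0$ enter the coefficients $a^{ij}$ and $b$ is the delicate bookkeeping step. Such derivatives would appear only through $\nabla^3F_0$, and that quantity is never needed at the level of $\mathcal H(u)$.
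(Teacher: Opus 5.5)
Your route is the paper's: Fermi coordinates along $F_0$, the quasilinear form $\Pi_uH(F_u)=a^{ij}_{\alpha\beta}(p,u,\nabla u)\nabla^\perp_i\nabla^\perp_ju^\beta+b_\alpha(p,u,\nabla u)$ with $a(p,0,0)=g_0^{ij}\delta_{\alpha\beta}$, the first variation formula for the linear part, a Taylor remainder for $\mathcal Q$, and invertibility of the vertical part of $\partial_tF_u$. The problem is the step you single out as the main obstacle.

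\textbf{The uniformity claim is false.} You claim that no derivatives of $A_0$ enter $a^{ij}$ and $b$, so that the constants in \eqref{a6}--\eqref{a7} depend only on $n,m,\kappa,\Lambda_0,\Lambda_1$. In fact $\nabla^3F_0$ is needed. Because $F_u=\mathcal E(p,u)$ involves the normal frame, second derivatives of $F_u$ contain $u\,\partial_i\partial_j\nu_\alpha$. Equivalently, in Fermi coordinates $\partial_k\bar g_{ij}(x,z)$ contains $-2\langle\nabla_kA_{0,ij},z\rangle$. After projecting onto the normal space of the graph, which is tilted by $\nabla^\perp u$, this produces a term $u*\nabla A_0*\nabla u$ in $b$.

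A concrete check: take a unit-speed plane curve $\gamma$ with curvature $k$, or the cylinder $\gamma\times\mathbb R^{n-1}\subset\mathbb R^{n+1}$ if you want $n\geq2$. For $\gamma_u=\gamma+uN$, the flow $(\partial_t\gamma_u)^\perp=H(\gamma_u)$ reads
\[
\partial_tu=\frac{(1-uk)^2k+(1-uk)u''+2k(u')^2+u\,u'\,k'}{(1-uk)\bigl((1-uk)^2+(u')^2\bigr)} .
\]
Its zeroth- and first-order parts are $k$ and $u''+k^2u=Ju$, but $\mathcal Q(u)$ contains $u\,u'\,k'$. Choose $u$ with $u''=0$ and $u=u'=a$ at a point. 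Then $|\mathcal Q(u)|\geq c\,a^2|k'|-C(\kappa)a^2$, while the right side of \eqref{a6} is only $4Ca^2$. Now take $|k|\leq\kappa$ with $\sup|k'|=\infty$, for instance $k(s)=\sin(s^2)$. Then no constant $C$ works, even in flat $\mathbb R^2$. Since $\mathcal Q$ is uniquely determined once the normal-graph gauge is fixed, this is not a bookkeeping issue.

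What your argument actually gives is \eqref{a3}--\eqref{a7} with $C$ depending in addition on $\sup_M|\nabla A_0|$. The operator $J$ and the quadratic structure survive, because the offending term is bilinear in $(u,\nabla u)$. Uniformity in $\kappa$ alone needs either an extra hypothesis or a different reference immersion, for instance a smoothing of $F_0$ with bounded $\nabla^kA$. The paper's ``by Taylor's expansion'' is silent on this dependence, so it does not supply the uniformity you assert either.

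\textbf{A smaller slip in Step 1.} Invertibility of $\Pi_u$ on $N_{F_u(p)}F_u(M)$ gives equivalence with $\Pi_u\bigl((\partial_tF_u)^\perp\bigr)=\Pi_uH(F_u)$, not with $\Pi_u(\partial_tF_u)=\Pi_uH(F_u)$. The map $\Pi_u$ does not annihilate the tangent plane of $F_u$; it sends $dF_u(e_i)$ to $\nabla^\perp_iu+O(a_u^2)$. So the correct left side is $M(p,u,\nabla u)\,\partial_tu$ with $M=I+O(a_u^2)$.

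Terms containing $\partial_tu$ also cannot simply be ``moved into $\mathcal Q$'', since \eqref{a6} involves only $u,\nabla u,\nabla^2u$. You must invert $M$. The resulting error $(M^{-1}-I)\Pi_uH(F_u)$ is admissible in \eqref{a6}--\eqref{a7} only because $M-I$ is quadratic; an $O(a_u)$ error multiplying $H_0$ would change $J$. This is what the paper's remark about the vertical component of $\partial_tF_u$ encodes.
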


\begin{proof}
  Fix $p$ and choose $g_0$-normal
coordinates $x^i$ at $p$ and an orthonormal normal frame $\nu_\alpha$ satisfying
$\nabla^\perp\nu_\alpha=0$ at $p$.  Write $u=u^\alpha\nu_\alpha$ and use Fermi
coordinates $(x^i,z^\alpha)$ produced by the normal exponential map.  At
$z=0$,
\begin{equation}
    \bar{g}_{ij}=g_{0,ij},\qquad \bar{g}_{i\alpha}=0,
 \qquad \bar{g}_{\alpha\beta}=\delta_{\alpha\beta},
 \qquad \partial_{z^\alpha}\bar{g}_{ij}=-2A_{ij\alpha}.  
\end{equation}
The tangent vectors of the graph are
\begin{equation*}
     E_i=\partial_i+u_i^\alpha\partial_{z^\alpha}.
\end{equation*}
The metric of  the graph has the form
\begin{equation}
     g_{ij}(u)=g_{0,ij}-2\ip{A_{0,ij}}u
 +\ip{\nabla_i^\perp u}{\nabla_j^\perp u}
 +O(|u|^2+|u||\nabla u|),  
\end{equation}
and hence
\begin{equation}
    g^{ij}(u)=g_0^{ij}+2\ip{A_0^{ij}}u
 +O(|u|^2+|u||\nabla u|+|\nabla u|^2).  
\end{equation}
The second derivative of $F_u$ is
\begin{equation}\label{a11}
    \Pi_uH(F_u)
 =a^{ij}_{\alpha\beta}(p,u,\nabla u)
   \nabla_i^\perp\nabla_j^\perp u^\beta
   +b_\alpha(p,u,\nabla u),   
\end{equation}
where
\begin{equation*}
     a^{ij}_{\alpha\beta}(p,0,0)=g_0^{ij}\delta_{\alpha\beta}.
\end{equation*}
The first variation formula for the mean-curvature vector under a normal variation $u$ is
\begin{equation}
    D\bigl(\Pi_uH(F_u)\bigr)\big|_{u=0}[u]=Ju,
\end{equation}
where $J$ given by \eqref{a4} and \eqref{a5}.  %Noting  that $\Pi_0H(F_0)=H_0$, we obtain \eqref{a3}. 
By Taylor's expansion, we have \eqref{a6} and \eqref{a7}.
Finally, $\partial_tF_u$ has vertical component equal to an invertible smooth
matrix, equal to the identity at $u=0$, applied to $\partial_tu$. This proves \eqref{a3}--\eqref{a7}.
\end{proof}

\subsection{Heat kernel estimates}$\\$

The potential $\mathcal V$ in \eqref{a5} is symmetric and
\begin{equation}
  |\mathcal V|\leq C(n,m)(\kappa^2+\Lambda_0).  
\end{equation}
Let $K(x,t;y,s):N_yM\to N_xM$ denote the heat kernel of
$\partial_t-J$. For the existence of the heat kernel, we refer the reader to \cite{MR3751359}.

\begin{lemma}[Heat kernel estimates]\label{lem:kernel}
There are $T_0>0$, $C<\infty$, and $D>0$ such that, for
$0<t-s\leq T_0$,
\begin{equation}\label{a14}
    \begin{split}
        |K(x,t;y,s)|\leq C(t-s)^{-n/2}e^{-d^2(x,y)/(D(t-s))}
    \end{split}
\end{equation}
and 
\begin{equation}\label{a15}
    |\nabla_xK(x,t;y,s)|\leq C(t-s)^{-(n+1)/2}e^{-d^2(x,y)/(D(t-s))}.
\end{equation}
If $w$ solves $(\partial_t-J)w=f$, then
\begin{equation}\label{a16}
   \|\nabla^2w\|_{L^p(P^*(x,r))}
 \leq C\bigl(\|f\|_{L^p(P(x,r))}
 +r^{-2}\|w\|_{L^p(P(x,r))}
\bigr)   
\end{equation}
for $1<p<\infty$ and $0<r<1$. Here $P(x,r)=B_r(x)\times(0,r^2),$ $
 P^*(x,r)=B_{\frac{r}{2}}(x)\times(r^2/2,r^2).$
\end{lemma}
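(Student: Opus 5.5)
The plan is to reduce everything to uniform local estimates in good coordinates, using the bounded geometry of $(M,g_0)$ from Lemma \ref{lem:gauss}, and then to globalize the Gaussian factor by a comparison argument. First I would fix the uniform local geometry. By Lemma \ref{lem:gauss}, $|\mathrm{Rm}(g_0)|\leq C(\Lambda_0+\kappa^2)$ and $\inj_{g_0}(M)\geq i_M$. By the Anderson/Jost--Karcher harmonic radius estimate, there are harmonic coordinates on balls of a uniform radius $r_0$ in which $g_0$ is $C^{1,\alpha}$-controlled. On the normal bundle, the Ricci equation bounds the normal curvature $R^\perp$ by $C(\Lambda_0+\kappa^2)$. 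Choosing a frame that is $\nabla^\perp$-parallel along radial geodesics, the connection $1$-forms are then bounded by $C r|R^\perp|$ on $B_{r_0}$. In such a chart and gauge, $\partial_t-J$ is a uniformly parabolic system. Its principal part $g_0^{ij}\partial_i\partial_j$ has $C^{\alpha}$ coefficients, and its lower order terms (connection and the potential $\mathcal V$) are bounded in $L^\infty$, with $|\mathcal V|\leq C(\kappa^2+\Lambda_0)$. Volumes of balls of radius $\leq r_0$ are comparable to $r^n$.

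\textbf{Step 1: the Gaussian bound \eqref{a14}.} The Kato inequality $|\nabla|w||\leq|\nabla^\perp w|$ shows that for a solution of $(\partial_t-J)w=0$ one has
\[
(\partial_t-\Delta_{g_0})|w|\leq C(\kappa^2+\Lambda_0)|w|
\]
in the barrier sense. By the maximum principle on the complete manifold $M$ (bounded curvature permits Karp--Li/Grigor'yan type uniqueness in the class of Gaussian-growth solutions), this gives the domination
\[
|K(x,t;y,s)|\leq e^{C(t-s)}H(x,t;y,s),
\]
where $H$ is the scalar heat kernel of $g_0$. The Li--Yau (or Cheng--Li--Yau) upper bound for $H$, using $\mathrm{Ric}\geq -C$ and the volume lower bound $\mu(B_{\sqrt{t}}(x))\geq c\,t^{n/2}$ for $t\leq T_0$ (from the injectivity radius), gives
\[
H\leq C(t-s)^{-n/2}e^{-d^2/(D(t-s))},
\]
and \eqref{a14} follows.

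\textbf{Step 2: the interior $L^p$ estimate \eqref{a16}.} This is the standard parabolic Calder\'on--Zygmund estimate for systems applied in the harmonic chart with the radial gauge, after localizing with a cutoff $\chi$ supported in $P(x,r)$. The commutator $[\partial_t-J,\chi]$ produces terms $r^{-1}|\nabla w|$ and $r^{-2}|w|$. The gradient term is absorbed by interpolation, $\|\nabla w\|\leq \epsilon r\|\nabla^2 w\|+C\epsilon^{-1}r^{-1}\|w\|$, on a slightly larger cylinder, followed by the usual iteration lemma to remove the intermediate cylinder. Only $C^{\alpha}$ continuity of the principal coefficients and bounded lower order terms are needed, so no bound on $\nabla A$ enters.

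\textbf{Step 3: the gradient bound \eqref{a15}.} Apply \eqref{a16} (and its $W^{2,1}_p$ version in $t$) to $w=K(\cdot,\cdot;y,s)$ on the cylinder of scale $r=\tfrac12\sqrt{t-s}\wedge r_0$ ending at time $t$. Take $p>n+2$ and use parabolic Sobolev embedding to get
\[
|\nabla w|(x,t)\leq Cr^{-1}\sup_{P}|w|.
\]
On that cylinder, distances to $y$ change by at most $r$, so the Gaussian \eqref{a14} is preserved up to enlarging $D$. This yields the factor $(t-s)^{-(n+1)/2}e^{-d^2/(D(t-s))}$.

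The main obstacle I expect is the uniform gauge and coordinate regularity. Since only $|A_0|$ is bounded, and not $\nabla A_0$, the coefficients of $J$ in any frame are merely bounded, not differentiable. One must therefore check carefully that all interior estimates use only $C^\alpha$ principal coefficients and $L^\infty$ lower order terms. Justifying the maximum principle for the Kato domination on a noncompact manifold is a secondary technical point.
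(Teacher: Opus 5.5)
Your route matches the paper's. You use uniform harmonic coordinates from Lemma~\ref{lem:gauss} and Anderson, then the Kato inequality $(\partial_t-\Delta)|w|\le|\mathcal V||w|$ with the maximum principle for \eqref{a14}, and interior parabolic Calder\'on--Zygmund estimates in each chart for \eqref{a16}. For \eqref{a15} the paper only cites Proposition~3.13 of Han's well-posedness paper. You instead derive it from \eqref{a16} on a cylinder of scale $\sqrt{t-s}$ together with parabolic Morrey embedding, which is a good self-contained replacement. Your Step~1 is intrinsic, so it does not depend on any gauge, and it is correct.

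The step that fails is the radial gauge in Step~2, and Step~3 inherits the problem because it rests on \eqref{a16}. In a local frame with connection form $\omega$,
\[
\Delta^\perp w=g^{ij}\partial_i\partial_j w+2g^{ij}\omega_i\partial_j w+g^{ij}\bigl(\partial_i\omega_j+\omega_i\omega_j\bigr)w-g^{ij}\Gamma^k_{ij}\bigl(\partial_k w+\omega_k w\bigr),
\]
and the covariant Hessian $\nabla^2 w$ likewise contains $(\partial\omega)w$. So you need control of $\partial\omega$, not only of $\omega$.

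In the radial gauge the bound $|\omega|\le Cr|R^\perp|$ is true, but $\partial\omega$ is a ray average of $\nabla R^\perp$. In the flat model,
\[
\partial_k\omega_j(x)=\int_0^1tF_{kj}(tx)\,dt+\int_0^1t^2x^i(\partial_kF_{ij})(tx)\,dt.
\]
Moreover, $\nabla R^\perp$ contains $A_0*\nabla A_0$, which is exactly the quantity you do not control. Already for an abelian $F$ with $|F|\le1$ and fast angular oscillation, $\operatorname{div}\omega$ can be made as large as you like in every $L^p$. So your claim that the lower-order terms are bounded is false in this gauge, and \eqref{a16} does not follow with uniform constants.

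The missing ingredient is Uhlenbeck's Coulomb gauge. On balls of a uniform small radius $r$, the quantity $r^{2-n/q}\|R^\perp\|_{L^q(B_r)}\le Cr^2(\Lambda_0+\kappa^2)$ is small. Hence there is a frame with $d^*\omega=0$ and $\|\omega\|_{W^{1,q}}\le C\|R^\perp\|_{L^q}$ for some $q>n$. Then:
\begin{itemize}
\item $\omega$ is bounded and $\partial\omega\in L^q$;
\item in harmonic coordinates $g^{ij}\partial_i\omega_j=0=g^{ij}\Gamma^k_{ij}$, so every lower-order coefficient is bounded;
\item the difference between coordinate and covariant Hessians is a lower-order term that is absorbed in the usual way.
\end{itemize}
This is what the paper's phrase ``the normal connection has uniformly controlled coefficients'' has to mean. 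With this substitution, your arguments for \eqref{a16} and \eqref{a15} go through.
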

\begin{proof}
By  Lemma~\ref{lem:gauss} and  Anderson \cite{anderson1990convergence}, there exists a 
constant $r_h>0$ and a local coordinate with radius $r_h$ such that  $g_0$ is uniformly elliptic and has uniform
$W^{2,p}$ norm for any $p>n+2$. The normal connection has uniformly controlled
coefficients. $\partial_t-J$ is a
uniformly parabolic system with bounded lower-order coefficients.

\eqref{a14} follows from 
\[
 (\partial_t-\Delta)|w|\leq |\mathcal V|\,|w|
\]
and the maximal principle.  For \eqref{a15}, see Proposition 3.13 in \cite{han2026wellposednessmeancurvatureflow}.  Finally, in each harmonic chart the usual
interior parabolic Calder\'on--Zygmund estimate yields \eqref{a16} (cf. \cite{MR3366843}). The constant is independent of $x$ and $r$.
\end{proof}

\subsection{Mean-DeTurck flow}$\\$

Fix $p=n+4$.  For $0<T\leq T_0$ and sections $u$ of $NM$ and $f$ define
\begin{equation}\label{a17}
    \begin{split}
        \|u\|_{\X_T}
 &=\sup_{0<t<T}\bigl(\|u(t)\|_\infty+\|\nabla u(t)\|_\infty\bigr)\\
 &\quad+\sup_{x\in M,0<r^2<T}r^{2/p}
       \|\nabla^2u\|_{L^p(P^*(x,r))},  
    \end{split}
\end{equation}
and
\begin{equation}\label{a18}
    \begin{split}
        \|f\|_{\Y_T}
 &=\sup_{x\in M,0<r^2<T}r^{2/p}
       \|f\|_{L^p(P^*(x,r))}.
    \end{split}
\end{equation}
With the condition $u(\cdot,0)=0$, these norms define Banach spaces.

\begin{proposition}\label{prop:linear}
For
\[
 (\mathcal Sf)(x,t)=\int_0^t\!\int_MK(x,t;y,s)f(y,s)\,d\mu_0(y)\,ds,
\]
one has
\begin{equation}\label{a19}
     \|\mathcal Sf\|_{\X_T}\leq C\|f\|_{\Y_T}.
\end{equation}
Moreover, if $f$ is independent of time, then
\begin{equation}\label{a20}
    \|\mathcal Sf\|_{\X_T}\leq C\sqrt T\,\|f\|_\infty. 
\end{equation}
\end{proposition}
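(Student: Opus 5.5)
The plan is to treat the three pieces of the $\X_T$ norm separately, using the Gaussian bounds \eqref{a14}--\eqref{a15} for the zeroth and first order parts and the interior Calder\'on--Zygmund estimate \eqref{a16} for the weighted $L^p$ norm of $\nabla^2$. Two geometric inputs are needed throughout. First, by Lemma \ref{lem:gauss}, $(M,g_0)$ has curvature bounded by $C(\Lambda_0+\kappa^2)$ and injectivity radius at least $i_M$. Hence Bishop--Gromov gives $\mu_0(B_R(y))\le Ce^{CR}R^n$, and any ball of radius $R$ is covered by at most $C(\rho/R)$-many balls of radius $\rho$ for $R\le 1$. Second, the $\Y_T$ norm controls $f$ only on the cylinders $P^*(y,r)=B_{r/2}(y)\times(r^2/2,r^2)$. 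So I would decompose $[0,t]$ into the dyadic time slabs $I_k=(2^{-k-1}t,2^{-k}t]$ and cover $M$ at time scale $I_k$ by balls of radius $r_k=\sqrt{2^{-k}t}/2$. On each slab this gives $\|f\|_{L^p(B_{r_k}(y)\times I_k)}\le C r_k^{-2/p}\|f\|_{\Y_T}$.

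For $\|\mathcal Sf(t)\|_\infty+\|\nabla\mathcal Sf(t)\|_\infty$, write $\nabla_x\mathcal Sf(x,t)=\sum_k\sum_{y}\int_{I_k}\int_{B_{r_k}(y)}\nabla_xK\,f$ and apply H\"older with the dual exponent $q=p/(p-1)$ on each piece. Using \eqref{a15}, the $L^q$ norm of $\nabla_xK(x,t;\cdot,\cdot)$ over $B_{r_k}(y)\times I_k$ is bounded by
\[
C\Big(\int_{I_k}(t-s)^{-\frac{(n+1)q}{2}+\frac{n(q-1)}{2}}e^{-c\,d^2(x,y)/(t-s)}\,ds\Big)^{1/q}\times(\text{volume factors}).
\]
The Gaussian factor makes the sum over the balls $y$ converge, by the volume growth above. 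The choice $p=n+4>n+2$ makes the time exponent integrable near $s=t$. The resulting powers of $r_k$ combine with $r_k^{-2/p}$ into a geometric series in $k$ whose sum is bounded by $C\|f\|_{\Y_T}$, uniformly in $t\le T\le T_0$. The zeroth-order term is handled identically with \eqref{a14}; it has one better power of $(t-s)$.

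For the Hessian term, fix $x$ and $r^2<T$ and set $w=\mathcal Sf$, so that $(\partial_t-J)w=f$ and $w(\cdot,0)=0$. Applying \eqref{a16} on $P(x,r)$ directly is bad. The cylinder $P(x,r)$ reaches down to $t=0$, and covering it by cylinders $P^*$ of dyadic scales costs $\sum_k 2^{k/p}$, which diverges. Instead I would use the time-translated version of \eqref{a16}, which holds with the same constant since the chart estimates are translation invariant in time. It is applied on $\tilde P=B_r(x)\times(r^2/4,r^2)$, with $P^*(x,r)$ as the inner cylinder. The point to handle is the lower-order term $r^{-2}\|w\|_{L^p(\tilde P)}$: estimated crudely by $\sup|w|$, it produces a factor $r^{-1}$ after multiplying by $r^{2/p}$. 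So I would apply \eqref{a16} to $w-c$, where $c$ is the parallel extension, in the harmonic chart, of the value $w(x,r^2)$. This function solves $(\partial_t-J)(w-c)=f+\mathcal V c+(\text{connection terms})\,c$, and the new right-hand side has $\Y$-type norm bounded by $C\|w\|_{\X}^{(0)}$, which is the already controlled zeroth-order part. A parabolic Poincar\'e/mean-value estimate then gives
\[
r^{-2}\|w-c\|_{L^p(\tilde P)}\le Cr^{-1+\frac{n+2}{p}}\big(\|\nabla w\|_\infty+r\|\partial_tw\|\big),
\]
where the time oscillation is again controlled through the Duhamel formula and the kernel bounds. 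Multiplying by $r^{2/p}$, the exponent is $(n+4)/p-1=0$, exactly because $p=n+4$. This is the step I expect to be the main obstacle, namely producing a scale-invariant bound for the lower-order term in the Calder\'on--Zygmund estimate. If the affine/constant subtraction is awkward for the system, I would fall back to a parabolic cutoff $\chi$ supported in $\tilde P$ and estimate the commutator terms $w\,\partial_t\chi$, $\nabla w\nabla\chi$ by the already proved sup bounds on $w-c$ and $\nabla w$.

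For the time-independent estimate \eqref{a20}, I would note that $\|f\|_{\Y_T}\le\sup_{r^2<T}r^{2/p}\big(C r^{n+2}\big)^{1/p}\|f\|_\infty=C\sup_{r^2<T} r\,\|f\|_\infty\le C\sqrt T\,\|f\|_\infty$. This again uses $(n+4)/p=1$ and the volume bound $\mu_0(B_{r/2})\le Cr^n$. Then \eqref{a20} follows from \eqref{a19}.
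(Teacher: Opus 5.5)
Your $C^1$ bound and \eqref{a20} follow the paper: Gaussian bounds \eqref{a14}--\eqref{a15}, H\"older on dyadic pieces, and $\|f\|_{\Y_T}\le C\sqrt T\|f\|_\infty$ from $(n+4)/p=1$. Your pieces are slabs dyadic in $s$ toward $s=0$, with balls at the matching scale $\sqrt{2^{-k}t}$. These are exactly the cylinders $P^*(y,\sqrt{2^{-k}t})$ that the $\Y_T$-norm controls. The paper's $E_{jk}$ are instead dyadic in $t-s$, and for $k=0$ they lump all of $\{s<t/2\}$ together, which is where $f$ may be singular.

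There is one slip. In your displayed $L^q$ bound the exponent should be $-\frac{(n+1)q}{2}+\frac n2$, because the spatial integral of the Gaussian contributes $(t-s)^{n/2}$. As written it equals $-\frac{n+q}{2}$, which is never integrable at $s=t$. With the correction, integrability is exactly $p>n+2$, and slab $k$ contributes $C2^{-k/2}\|f\|_{\Y_T}$.

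For the Hessian you genuinely depart from the paper, which simply applies \eqref{a16} together with the sup bounds on $w$ and $\nabla w$. Read literally, that does not close, for the two reasons you give.
\begin{itemize}
\item $\|f\|_{L^p(P(x,r))}$ is not controlled by $\|f\|_{\Y_T}$. Take $|f|=s^{-1/2}$: it has finite $\Y_T$-norm, while $\int_0^{r^2}s^{-p/2}\,ds=\infty$.
\item $r^{2/p-2}\|w\|_{L^p(P(x,r))}$ is only $O(r^{-1}\|w\|_\infty)$.
\end{itemize}
Your time-translated cylinder $\tilde P$ is the right repair of the first point, since $\tilde P$ is covered by boundedly many $P^*$-cylinders of scales $r$ and $r/\sqrt2$.

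For the second point, subtracting $c$ and using a parabolic Poincar\'e inequality works, but it is more than you need. It also costs you control of $Jc$ and $\nabla^2c$, i.e.\ of derivatives of the normal-connection coefficients in your chart. Your own zeroth-order computation already gives more than $\|w\|_\infty\le C\|f\|_{\Y_T}$: the net power of $t$ there is $1-\frac{n+4}{2p}=\frac12$, so $\sup_M|w(\cdot,t)|\le C\sqrt t\,\|f\|_{\Y_T}$. Hence $|w|\le Cr\|f\|_{\Y_T}$ on $\tilde P$, and $r^{2/p-2}\|w\|_{L^p(\tilde P)}\le C\|f\|_{\Y_T}$ directly. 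This also removes the time-oscillation estimate you deferred to the Duhamel formula.
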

\begin{proof}
Fix $(x,t)$ and decompose the integration region
into
\[
 E_0=B(x,\sqrt t)\times(t/2,t),
\]
and dyadic regions in space and backward time.  On $E_0$, H\"older inequality
and \eqref{a14} give
\begin{equation*}
     \int_{E_0}|Kf|
 \leq\|f\|_{L^p(E_0)}\|K\|_{L^{p/(p-1)}(E_0)}
 \leq C\|f\|_{\Y_T}.
\end{equation*}
Indeed, direct integration of
$(t-s)^{-n p/(2(p-1))}e^{-p d^2/(D(p-1)(t-s))}$ gives exactly the compensating
power prescribed in (5.2).  On
\[
 E_{jk}=\{2^j\sqrt t\leq d(x,y)<2^{j+1}\sqrt t\}
 \cap\{2^{-k-1}t<t-s\leq2^{-k}t\},
\]
the Gaussian contributes $e^{-c4^j2^k}$.  Cover the spatial annulus by at most
$C2^{jn}2^{kn/2}$ balls of radius $2^{-k/2}\sqrt t$ and apply the definition of
$\Y_T$ on each corresponding cylinder.  The resulting double series is bounded
by
\[
 C\sum_{j,k\geq0}2^{C(j+k)}e^{-c4^j2^k}\|f\|_{\Y_T}
 \leq C\|f\|_{\Y_T}.                              \tag{5.6}
\]
The same computation with (4.3) controls $\|\nabla\mathcal Sf\|_\infty$.

Set $w=\mathcal Sf$.  It satisfies $(\partial_t-J)w=f$.  Apply \eqref{a16} on
$P^*(x,r)$, use the two bounds just obtained for $w$ and $\nabla w$, multiply by
$r^{2/p}$, and take the supremum.  This proves \eqref{a19}. By \eqref{a19} and $\|f\|_{\Y_T}\leq C\sqrt T\|f\|_\infty$, we obtain \eqref{a20}.
\end{proof}

\begin{proposition}\label{prop:quadratic}
For $\|u\|_{\X_T},\|v\|_{\X_T}\leq\delta\leq\varepsilon_0$, we have \label{a21}
\begin{equation}
    \|\mathcal Q(u)\|_{\Y_T}\leq C\|u\|_{\X_T}^2,
\end{equation}
and
\begin{equation}\label{a22}
     \|\mathcal Q(u)-\mathcal Q(v)\|_{\Y_T}
 \leq C(\|u\|_{\X_T}+\|v\|_{\X_T})
       \|u-v\|_{\X_T}.
\end{equation}
\end{proposition}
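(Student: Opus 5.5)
The plan is to estimate the two terms of $\mathcal Q$ pointwise and then take $L^p$ norms over the parabolic cylinders $P^*(x,r)$ that define $\Y_T$. By \eqref{a6}, at each point
\[
|\mathcal Q(u)|\leq C\bigl(a_u^2+a_u|\nabla^2u|\bigr),\qquad a_u=|u|+|\nabla u|.
\]
The $\X_T$ norm controls $a_u$ in sup norm: $\sup_{0<t<T}\|a_u(t)\|_\infty\leq\|u\|_{\X_T}$. We fix $x\in M$ and $0<r^2<T$ and handle the two terms separately.

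For the quadratic term we use the volume bound of the cylinder. By Lemma~\ref{lem:gauss} and Bishop--Gromov, $\mu_0(B_{r/2}(x))\leq Cr^n$ for $r<1$ (shrinking $T_0$ if necessary so that $r<1$). Hence $|P^*(x,r)|\leq Cr^{n+2}$, and
\[
r^{2/p}\|a_u^2\|_{L^p(P^*(x,r))}\leq C r^{2/p}r^{(n+2)/p}\|u\|_{\X_T}^2\leq C\|u\|_{\X_T}^2 .
\]
For the second term we pull the sup of $a_u$ out of the integral:
\[
r^{2/p}\|a_u\nabla^2u\|_{L^p(P^*)}\leq \|u\|_{\X_T}\,r^{2/p}\|\nabla^2u\|_{L^p(P^*)}\leq\|u\|_{\X_T}^2 .
\]
Taking the supremum over $x$ and $r$ gives \eqref{a21}. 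The smallness assumption $\|u\|_{\X_T}\leq\varepsilon_0$ is needed only so that Proposition~\ref{prop:graph}, and hence \eqref{a6}, applies pointwise.

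For \eqref{a22} we use \eqref{a7} in the same way. The first term of \eqref{a7} is $(a_u+a_v)\bigl(a_{u-v}+|\nabla^2(u-v)|\bigr)$. We bound $a_u+a_v$ in sup norm by $\|u\|_{\X_T}+\|v\|_{\X_T}$, and estimate $a_{u-v}$ and $\nabla^2(u-v)$ in the weighted $L^p$ norm exactly as above, where $a_{u-v}$ picks up the factor $r^{(n+2)/p}\leq C$. The second term is $a_{u-v}\bigl(|\nabla^2u|+|\nabla^2v|\bigr)$. Here we bound $a_{u-v}$ in sup norm by $\|u-v\|_{\X_T}$ and $r^{2/p}\|\nabla^2u\|_{L^p(P^*)}$ by $\|u\|_{\X_T}$, and similarly for $v$. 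Adding the two gives \eqref{a22}.

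The only step needing care is the uniform volume bound $|P^*(x,r)|\leq Cr^{n+2}$ for $r^2<T\leq T_0$, which fixes the admissible scales. It comes from the curvature bound \eqref{a1}, through Bishop--Gromov on balls of radius below $1$. It is uniform precisely because the curvature bound does not depend on the base point. Everything else is Hölder's inequality with one factor placed in $L^\infty$.
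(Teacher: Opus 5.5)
Your proposal is correct and follows the paper's proof essentially verbatim. You apply the pointwise bounds \eqref{a6} and \eqref{a7}, put the $C^1$ factors in $L^\infty$ and the Hessian factor in $L^p(P^*(x,r))$, and absorb $r^{2/p}|P^*(x,r)|^{1/p}$ using $r^2<T\leq T_0\leq 1$; your Bishop--Gromov justification of $|P^*(x,r)|\leq Cr^{n+2}$ from \eqref{a1} spells out a step the paper leaves implicit.
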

\begin{proof}
On $P^*(x,r)$, by \eqref{a6} and H\"{o}lder inequality, we have
\begin{align*}
 r^{2/p}\|\mathcal Q(u)\|_{L^p}
 &\leq C r^{2/p}\Bigl[
  \|u\|_{C^1}^2|P^*(x,r)|^{1/p}
  +\|u\|_{C^1}\|\nabla^2u\|_{L^p}\Bigr]\\
 &\leq C(\sqrt T+1)\|u\|_{\X_T}^2.
\end{align*}
Shrink $T_0\leq1$ and take the supremum to get \eqref{a21}.  Use \eqref{a7}, put the
$u,v,u-v$ factors of order at most one in $L^\infty$, put the Hessian factor in
$L^p$, and repeat the same calculation to get \eqref{a22}.
\end{proof}
\begin{proposition}\label{propa9}
Let $F_0:M^n\to\bar{M}^{n+m}$ be a smooth complete isometric immersion. 
Suppose that $(\bar{M}^{n+m},\bar{g})$ has bounded geometry and
\[
 \sup_M|A_0|\leq\kappa<\infty.
\]
Then there exists a constant $T>0$ and a smooth family of complete immersions
$F:M\times[0,T]\to\bar{M}$ such that
\begin{equation*}
    (\partial_tF)^\perp=H(F),\quad F(\cdot,0)=F_0.
\end{equation*}
 Moreover,
\[
 \sup_{M\times[0,T]}|A|<\infty,
 \qquad
 \sup_M|\nabla^kA|\leq C_k t^{-k/2}\quad(k\geq 0, 0<t\leq T).
\]
Here, $T$ depends on $n,m,\kappa$ and $C_k$ depends on $n,m,i_0,\kappa,\Lambda_0,\dots,\Lambda_{k+1}$. 
\end{proposition}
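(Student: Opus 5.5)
The plan is to solve the mean-DeTurck equation \eqref{a3} with $u(\cdot,0)=0$ as a fixed point in $\X_T$, and then upgrade regularity by parabolic bootstrapping. By Duhamel, a solution of \eqref{a3} is the same as a fixed point of
\[
\mathcal T(u):=\mathcal S\bigl(-H_0+\mathcal Q(u)\bigr),
\]
since $\mathcal S$ inverts $\partial_t-J$ with zero initial data. Here $H_0$ is time-independent with $\|H_0\|_\infty\le\sqrt n\,\kappa$.

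\textbf{Contraction.} By \eqref{a20} we have $\|\mathcal S H_0\|_{\X_T}\le C\sqrt T\,\kappa$. By \eqref{a19} together with Proposition \ref{prop:quadratic}, for $\|u\|_{\X_T},\|v\|_{\X_T}\le\delta$,
\[
\|\mathcal T(u)\|_{\X_T}\le C\sqrt T\kappa+C\delta^2,
\qquad
\|\mathcal T(u)-\mathcal T(v)\|_{\X_T}\le 2C\delta\,\|u-v\|_{\X_T}.
\]
Choose $\delta\le\varepsilon_0$ with $2C\delta\le\frac12$ and $C\delta^2\le\frac\delta2$, then $T\le T_0$ with $C\sqrt T\kappa\le\frac\delta2$. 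Then $\mathcal T$ is a contraction on the closed $\delta$-ball of $\X_T$ and has a unique fixed point $u$. The constants $C$ come from Lemma \ref{lem:kernel} and Proposition \ref{prop:graph}, so they depend only on $n,m,i_0,\Lambda_0,\Lambda_1,\Lambda_2,\kappa$; hence so does $T$.

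\textbf{Geometric conclusions.} Since $\|u\|_\infty+\|\nabla u\|_\infty\le\delta\le\varepsilon_0$, Proposition \ref{prop:graph} applies, and $F:=F_u$ solves $(\partial_tF)^\perp=H(F)$. Each $F(\cdot,t)$ is an immersion, because $|\nabla u|$ is small. It is complete, because the induced metric is uniformly equivalent to $g_0$ by the metric expansion in the proof of Proposition \ref{prop:graph}.

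\textbf{Regularity and curvature bounds.} The fixed point $u$ has only $\nabla^2u\in L^p$ control, so this is the main obstacle. Since $p=n+4>n+2$, parabolic Sobolev embedding on each cylinder $P^*(x,r)$ gives $\nabla u$ Hölder continuous, with bounds controlled by $\|u\|_{\X_T}$. Rewriting \eqref{a3} through \eqref{a11} as a quasilinear system $\partial_tu=a^{ij}_{\alpha\beta}(p,u,\nabla u)\nabla^2_{ij}u^\beta+b_\alpha$ puts us in the setting of Schauder theory. The coefficients are Hölder, uniformly elliptic (close to $g_0^{ij}\delta_{\alpha\beta}$), and their derivatives are controlled by $\Lambda_j$ and $\kappa$ via harmonic coordinates as in Lemma \ref{lem:kernel}.

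Interior Schauder estimates on $B_{\sqrt t}(x)\times(t/2,t)$, iterated, yield $|\nabla^{k+2}u|\le C_kt^{-k/2}$. The bound at $k=0$ uses the $C^{1,1}$ level near $t=0$, which follows from the initial data being smooth with $|A_0|\le\kappa$ and the short-time $L^p$ bound on $\nabla^2u$. Since $A(F_u)$ is a smooth expression in $\nabla^2u,\nabla u,u$ and $A_0$, this gives $\sup|A|<\infty$ on $M\times[0,T]$ and $|\nabla^kA|\le C_kt^{-k/2}$. Each $C_k$ depends on $\Lambda_0,\dots,\Lambda_{k+1}$ through the differentiated coefficients. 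Smoothness for $t>0$ follows from the same bootstrap, and smoothness of the family as $t\downarrow0$ on compact sets follows from standard parabolic regularity with smooth initial data.
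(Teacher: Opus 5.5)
You follow the paper's proof. Taking Propositions \ref{prop:linear} and \ref{prop:quadratic} as given, your contraction step is identical to the paper's: the same map $\mathcal S(-H_0+\mathcal Q(u))$, the same choice of $\delta$, and then $T\le T_0$. Your regularity outline (parabolic Sobolev embedding for $p=n+4$, H\"older coefficients in \eqref{a11}, Schauder bootstrap) is also the paper's. This does give complete immersions solving $(\partial_tF)^\perp=H(F)$ that are smooth for $t>0$. Your dependence $T=T(n,m,i_0,\Lambda_0,\Lambda_1,\Lambda_2,\kappa)$ is also what the paper's argument gives, through $T_0$, and it is the correct one. Latitude spheres near the equator of a round $S^{n+1}$ of radius $\epsilon$, with $|A|\le\kappa$, become singular at a time tending to $0$ with $\epsilon$, so the stated $T(n,m,\kappa)$ cannot hold.

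The gap is in the quantitative bounds $\sup_{M\times[0,T]}|A|<\infty$ and $|\nabla^kA|\le C_kt^{-k/2}$ with $C_k=C_k(n,m,i_0,\kappa,\Lambda_0,\dots,\Lambda_{k+1})$. Both justifications you give fail.

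\emph{Non-uniform coefficients.} The coefficients in \eqref{a11} and the forcing $H_0$ depend on the base point through $A_0$, e.g.\ $g^{ij}(u)=g_0^{ij}+2\langle A_0^{ij},u\rangle+\cdots$. The hypothesis bounds only $|A_0|$. The harmonic coordinates of Lemma \ref{lem:kernel} control $g_0$ only in $W^{2,p}$, because $\mathrm{Rm}(g_0)$ is bounded but its derivatives are not. So there are no uniform H\"older norms for the coefficients or the forcing. Interior Schauder estimates then give smoothness with point-dependent constants, not uniform bounds.

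\emph{The $k=0$ step.} $\|u\|_{\X_T}$ controls $\nabla^2u$ only in scale-invariant $L^p$. For a parabolic equation with merely bounded forcing such as $H_0$, this does not upgrade to $L^\infty$. Smoothness of $F_0$ is not uniform on $M$, so your claimed $C^{1,1}$ bound near $t=0$ does not follow. Everything hinges on this step: scaled Schauder estimates starting only from $\|u\|_{C^1}\le\delta$ would give just $|\nabla^{k+2}u|\lesssim t^{-(k+1)/2}$.

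The paper's proof is equally brief at this point; it simply invokes ``interior Schauder estimate''. So neither argument establishes these bounds. The same root cause also affects an input you both take as given. For a hypersurface in $\mathbb R^{n+1}$, the quadratic part of $\Pi_uH(F_u)$ contains a term $u\langle\nabla H_0,\nabla u\rangle$. Hence the constant in \eqref{a6} is not controlled by $\kappa$ alone.

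One way to close the gap is to argue geometrically. $|\nabla^kA|$ is unchanged by the tangential reparametrization that turns $F_u$ into a genuine mean curvature flow, so it can be estimated directly. Granting the contraction, the steps would be:
\begin{enumerate}
\item With $p=n+4$, the quantity $r^{2/p}\|\nabla^2u\|_{L^p(P^*(x,r))}$ is exactly the scale-invariant $L^p$ norm. A local, bounded-geometry version of Lemma \ref{lem:epsilon-regularity}, applied one sheet at a time, then gives $|A|^2\le C(\delta^2t^{-1}+\kappa^2)$.
\item An argument like Proposition \ref{prop:interior estimate by c/t}, adapted to bounded geometry and combined with $|A_0|\le\kappa$, upgrades this to a uniform bound on $|A|$ near $t=0$.
\item Localized Shi/Ecker--Huisken-type Bernstein estimates then give $|\nabla^kA|\le C_kt^{-k/2}$.
\end{enumerate}
In the last step, the evolution of $\nabla^kA$ involves $\bar\nabla^j\overline{\mathrm{Rm}}$ for $j\le k+1$. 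That is exactly where the stated dependence of $C_k$ on $\Lambda_0,\dots,\Lambda_{k+1}$ comes from.
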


\begin{proof}
Equation \eqref{a3} is equivalent to
\begin{equation}\label{a23}
     u=\Phi(u):=\mathcal S\bigl(-H_0+\mathcal Q(u)\bigr),\quad u(0)=0.
\end{equation}
Since $|H_0|\leq\sqrt n\,\kappa$, Propositions~\ref{prop:linear} and
\ref{prop:quadratic} imply
\begin{equation*}
     \|\Phi(u)\|_{\X_T}
 \leq C\sqrt T\,\|H_0\|_\infty+C\|u\|_{\X_T}^2,
\end{equation*}
and
\begin{equation*}
    \|\Phi(u)-\Phi(v)\|_{\X_T}
 \leq C(\|u\|_{\X_T}+\|v\|_{\X_T})
       \|u-v\|_{\X_T}.  
\end{equation*}

Choose $\delta>0$ so that $2C\delta\leq1/2$ and
$C\delta^2\leq\delta/2$.  We then choose $T>0$ such that
\begin{equation}\label{a26}
    C\sqrt T\,\|H_0\|_\infty\leq\delta/2,
 \qquad T\leq T_0.
\end{equation}
Then $\Phi$ maps the closed ball
\[
 \mathbb B_\delta=\{u\in\X_T:u(0)=0, \|u\|_{\X_T}\leq\delta\}
\]
into itself and is a contracting map.  By Banach's fixed-point theorem, there exists 
a unique $u\in\mathbb B_\delta$ satisfying \eqref{a23}.

The integral equation gives $u\in W^{2,p}_{\mathrm{loc}}$ with $p=n+4>n+2$.
By parabolic Sobolev embedding, we know 
$u\in C^{1+\alpha,(1+\alpha)/2}_{\mathrm{loc}}$. Here $\alpha=1-(n+2)/p$.
The coefficients in \eqref{a11} are H\"{o}lder continuous. By interior
Schauder estimates, we know that $u\in C^{2+\alpha,1+\alpha/2}$.
Then, by differentiating and iterating, the mean-DeTurck flow is smooth on $M\times(0,T]$.  Since
$\mathcal S H_0$ uniformly converges to $0$,
$F_u(\cdot,t)$ uniformly converges to $F_0$ in $C^1$ as $t\downarrow0$.  

Finally, we prove the curvature estimates.  In each graphical chart, the
second fundamental form of \(F_u\) can be written as
\begin{equation*}
    A(F_u)
    =
    \mathcal A\bigl(
       x,u,\nabla u,\nabla^2u;
       \bar g,\bar\nabla\bar g
    \bigr).
\end{equation*}
If
\(\|u\|_{C^1}\leq\delta\), then
\[
    |A(F_u)|
    \leq
    C\bigl(
       |A(F_0)|+|\nabla^2u|+|\nabla u|+|u|
    \bigr).
\]
By interior Schauder estimate of $u$, we have
\begin{equation*}
    |\nabla^m A(F_u)(t)|
    \leq
    C_m t^{-m/2},
    \qquad
    0<t\leq T,m\geq 1.
\end{equation*}
\end{proof}

\subsection{Mean curvature flow} $\\$

We now construct the mean
curvature flow.  
\begin{proof}[Proof of Theorem \ref{thm:main}]
Let
\[
    F_u(\cdot,t):M\longrightarrow \bar{M},\qquad 0\leq t\leq T,
\]
be the mean-DeTurck flow in proposition \ref{propa9}, and let $ g(t)=F_u(t)^*\bar g$
denote the induced metric.  By proposition \ref{propa9},
\[
    \bigl(\partial_tF_u\bigr)^\perp=H(F_u).
\]
Hence,
\[
    V_t:=\partial_tF_u-H(F_u)
\]
is tangent to the immersed submanifold \(F_u(M,t)\).  Since \(F_u(\cdot, t)\) is an
immersion, the differential
\[
    dF_u(t):T_xM\longrightarrow T_{F_u(x,t)}\bar{M}
\]
is injective and maps \(T_xM\) isomorphically onto
\(T_{F_u(x,t)}F_u(M,t)\).  Hence there is a unique time-dependent vector field
\(W_t\in\Gamma(TM)\) such that
\[
    dF_u(W_t)=V_t.
\]
Consider the equation
\begin{equation*}
    \frac{d}{dt}\varphi_t(x)
    =
    -W_t(\varphi_t(x)),
    \qquad
    \varphi_0(x)=x.
\end{equation*}
For each \(t\), \(\varphi_t:M\to M\) is a
diffeomorphism. Define $
    \widetilde F(x,t)
    :=
    F_u(\varphi_t(x),t).$
Then
\[
    \partial_t\widetilde F=H(\widetilde F),
    \qquad
    \widetilde F(\cdot,0)=F_0.
\]
Thus \(\widetilde F\) is a mean curvature flow. 
Since
\(\widetilde F_t=F_u(t)\circ\varphi_t\), 
 the same pointwise curvature bounds
hold for \(\widetilde F\).  
\end{proof}
\section{{\L}ojasiewicz--Simon inequality}

In this section, we establish a Lojasiewicz--Simon inequality
for self-shrinker.
We then use the corresponding Dini estimate to obtain
uniqueness of the tangent plane and a graphical representation
near an isolated singularity, under the assumption that
\[
    |\nabla^j A|(x)
    =
    o(|x-q|^{-j-1})
    \qquad\text{as }x\to q
\]
for every integer $j\geq0$.

\begin{lemma}\label{L-Sineq}
Let $n\geq2$, and let $\nu$ be an integral $n$-varifold in
$B_{r_0}(q)\subset\mathbb R^{n+1}$ with locally finite mass.
Suppose that $\nu$ satisfies the self-shrinker equation
\[
    \mathbf H_\nu(x)=-\frac12x^{\perp_\nu}
\]
in the sense of first variation throughout $B_{r_0}(q)$.
Assume that
\[
    \Sigma
    :=
    \operatorname{spt}\nu
    \cap\bigl(B_{r_0}(q)\setminus\{q\}\bigr)
\]
is a smooth embedded hypersurface, such that 
\begin{equation}\label{eq:curvature-smallness}
    \lim_{\substack{x\to q\\x\in\Sigma}}
    |x-q|^{j+1}|\nabla^jA_\Sigma(x)|=0
    \qquad\text{for every integer }j\geq0.
\end{equation}
Suppose also that
\[
    0<\Theta^n(\|\nu\|,q)<\infty.
\]

 Then the following properties hold.
\begin{itemize}
\item [(1)] there exist a unique hyperplane $P_q$ through the origin
and an integer $  m_q=\Theta^n(\|\nu\|,q)$
such that the unique tangent cone of $\nu$ at $q$ is
$m_q|P_q|$.

\item [(2)]  There exist $R>r>0$, a unit normal $\eta_q$ to $P_q$,
and smooth functions
\[
    u^\alpha\colon
    \bigl(P_q\cap B_R(0)\bigr)\setminus\{0\}
    \longrightarrow\mathbb R,
    \qquad \alpha=1,\ldots,m_q,
\]
 such that
\begin{equation}\label{eq:fixed-plane-graph}
    \operatorname{spt}\nu
    \cap\bigl(B_r(q)\setminus\{q\}\bigr)
    =
    \left(
        q+
        \bigcup_{\alpha=1}^{m_q}
        \operatorname{graph}_{P_q}u^\alpha
    \right)
    \cap\bigl(B_r(q)\setminus\{q\}\bigr).
\end{equation}
\item [(3)]  For every integer $k\geq0$,
\begin{equation}\label{eq:graph-pointwise-decay}
    \lim_{s\downarrow0}
    \max_{1\leq\alpha\leq m_q}
    \sup_{\substack{x\in P_q\\s/2<|x|<2s}}
    s^{k-1}|D^ku^\alpha(x)|=0.
\end{equation}

\item [(4)]  The spherical area functional near the
equator $m_q|\Gamma_q|$, where
\[
    \Gamma_q=P_q\cap\mathbb S^n,
\]
satisfies the Lojasiewicz--Simon inequality
\begin{equation}\label{eq:LS-final}
    \left|
        \mathcal A_{m_q}(v)-m_q\mathcal A(0)
    \right|^{1-\theta}
    \leq
    C_{\mathrm{LS}}
    \|\mathcal M_{m_q}(v)\|_{L^2(\Gamma_q)}
\end{equation}
for sufficiently small smooth, globally decomposable
$m_q$-valued functions $v$, with constants
$C_{\mathrm{LS}}>0$ and $\theta\in(0,\frac12]$.
\end{itemize}
\end{lemma}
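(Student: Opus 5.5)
We will prove Lemma \ref{L-Sineq} by reducing the cone question to a \L{}ojasiewicz--Simon argument for the spherical area functional, in the spirit of Simon's uniqueness of tangent cones and the Edelen--Minter approach. The order is (4), then (1), then (2)--(3).

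\textbf{Step 1: blow-ups are planes.} Translate so that $q=0$ and set $\Sigma_s:=s^{-1}(\Sigma\cap B_{2s}\setminus B_{s/2})$. By \eqref{eq:curvature-smallness}, $\sup_{\Sigma_s}|\nabla^jA|\to0$ for every $j$. The rescaled mean curvature $\frac{s}{2}|x^\perp|\le s^2$ also tends to zero. Since $\Theta^n(\|\nu\|,0)<\infty$, the monotonicity formula for varifolds with bounded mean curvature gives uniform area bounds on $\Sigma_s$. Lemma \ref{lem:small curvature graph lemma} then shows that, on the annulus, $\Sigma_s$ is a finite union of graphs with $C^k$-small slopes over various planes. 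Hence every tangent cone $C$ is a stationary cone whose link is a smooth, totally geodesic embedded $(n-1)$-submanifold of $\mathbb S^n$, counted with multiplicity. Embeddedness forces the link to be a union of great spheres, and two distinct great spheres would intersect. So $C=m|P|$ for a single plane $P$, with integer $m=\Theta^n(\|\nu\|,0)$ independent of the subsequence. Consequently, for small $s$, $\Sigma_s$ is $m$ ordered disjoint graphs over some plane $P(s)$, with $C^k$ norms $o(1)$. Only uniqueness of $P$ remains.

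\textbf{Step 2: the inequality \eqref{eq:LS-final}.} For globally decomposable $m$-valued $v=(v^1,\dots,v^m)$ on $\Gamma=P\cap\mathbb S^n$, $\mathcal A_m(v)=\sum_\alpha\mathcal A(v^\alpha)$, where $\mathcal A$ is the area of the spherical graph. This functional is analytic, and its Euler--Lagrange operator $\mathcal M_m(v)=(\mathcal M(v^\alpha))_\alpha$ is an elliptic operator whose linearization at $0$ is the Jacobi operator $\Delta_\Gamma+(n-1)$. That operator is Fredholm on $\Gamma\cong\mathbb S^{n-1}$, with kernel the first eigenfunctions, which correspond to rotations. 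The decoupling reduces the problem to Simon's theorem applied to each sheet, and the inequality follows after summing. We can even take $\theta=\frac12$, since the critical manifold of rotated equators is integrable.

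\textbf{Step 3: uniqueness via Dini decay (the main obstacle).} Write $\nu$ in cylindrical coordinates $x=e^{-t}\omega$. Each sheet gives a function $v^\alpha(\cdot,t)$ on $\Gamma$, and the rescaled area solves an approximate gradient flow
\[
\partial_t^2v-(n-2)\partial_tv-\mathcal M_m(v)=E,\qquad |E|\le Ce^{-2t}.
\]
Here the error $E$ comes from the shrinker term $\frac12x^\perp$, which is of order $s^2=e^{-2t}$ after rescaling. Following Simon's argument, \eqref{eq:LS-final} together with the monotone quantity $\Theta(s)-m\,\Theta_{\mathrm{plane}}$ yields
\[
\int_t^\infty\|\partial_tv\|_{L^2}\,dt<\infty.
\]
The delicate points are two. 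First, we must absorb the exponentially small inhomogeneity into the \L{}ojasiewicz exponent. Second, we need a priori smallness of $v$ in $C^{2,\alpha}$, uniformly in $t$; Step 1 together with Proposition \ref{prop6.3} provides exactly this. The finite length of the flow gives $P(s)\to P_q$, which proves (1). Writing each sheet as a graph over the fixed $P_q$ gives \eqref{eq:fixed-plane-graph}, and the scaled $C^k$ smallness gives \eqref{eq:graph-pointwise-decay}.
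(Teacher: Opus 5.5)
Your architecture matches the paper's. You get planar tangent cones from curvature decay plus embeddedness, prove the \L{}ojasiewicz--Simon inequality by decoupling the sheets, and then run a Simon-type Dini/finite-length argument; the paper packages that last step by making the shrinker minimal for the conformal metric $g_q$ and quoting Edelen--Minter's Dini estimate. The gap is your assertion that Step~1 together with Proposition~\ref{prop6.3} gives the a priori smallness of $v$ uniformly in $t$. Simon's argument needs $v(\cdot,t)$ to be a small graph over one \emph{fixed} plane $P$ for the entire range of $t$ you integrate over, because \eqref{eq:LS-final} is applied near $m\llbracket 0\rrbracket$ on a fixed $\Gamma$. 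Step~1 only gives closeness to $m|P(s)|$ for a plane $P(s)$ that depends on the scale. A slowly rotating $P(s)$ is fully compatible with Step~1, and excluding it is precisely statement (1). Proposition~\ref{prop6.3} converts $C^1$ control into higher-derivative control and cannot prevent this drift. As written, the uniform smallness is assumed rather than proved, so the argument is circular.

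The missing ingredient is a continuation argument, which is the paper's Step~4. Choose $P$ as the limit along some blow-up sequence, so that at a small scale $R$ the surface is a graph over $P$ with scale-invariant $C^3$-norm $\eta\ll\varepsilon_*$. Let $\rho_*$ be the infimum of radii down to which the graph over this same $P$ persists with norm $<\varepsilon_*$. On $(\rho_*,R)$ the Dini estimate bounds the $L^2$-drift of $v(s)$ by $C\eta+C\bigl(J(2R)+C_q^2R^2\bigr)^{\alpha/2}$, where $J$ is the tail of the monotonicity integrand. The annular bounds from \eqref{eq:curvature-smallness}, together with interpolation, upgrade this to $C^3$-norm $<\varepsilon_*/2$. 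Hence the representation extends past $\rho_*$ unless $\rho_*=0$. Only after this does finite length give an $L^2$-limit and uniqueness of $P_q$.

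There are also some smaller points:
\begin{itemize}
\item The shrinker equation is centred at the origin, not at $q$. After translating, the rescaled mean curvature is $\tfrac s2(q+sy)^{\perp}=O(s)$, so $E=O(e^{-t})$ rather than $O(e^{-2t})$. This is harmless and matches the $C_q|z|$ error of $g_q$.
\item The cylindrical equation also carries nonlinear remainders that are small multiples of $|\partial_tv|+|\nabla\partial_tv|+|\partial_t^2v|$; Simon's framework absorbs these.
\item The support need not consist of $m$ disjoint graphs, since coinciding sheets record multiplicity.
\item Your remark that one may take $\theta=\tfrac12$ is correct by integrability of the Jacobi fields, but it is not needed.
\end{itemize}
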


\begin{proof}
We divide the proof into five steps.

\medskip
\noindent
{\bf Step 1: The structure of tangent cones.}

Write
\[
    z=x-q,
    \qquad
    \Sigma_s=s^{-1}(\Sigma-q).
\]
For every integer $j\geq0$, set
\[
    \varepsilon_j(a)
    :=
    \sup_{\substack{x\in\Sigma\\0<|x-q|\leq a}}
    |x-q|^{j+1}|\nabla^jA_\Sigma(x)|.
\]
By \eqref{eq:curvature-smallness},
$\varepsilon_j(a)\to0$ as $a\downarrow0$.

The scaling formula gives
\[
    |\nabla^jA_{\Sigma_s}|(y)
    =
    s^{j+1}|\nabla^jA_\Sigma|(q+sy).
\]
Consequently, for every $0<\rho<R<\infty$,
\begin{equation}\label{eq:rescaled-curvature-decay}
    \sup_{\Sigma_s\cap(B_R\setminus B_\rho)}
    |\nabla^jA_{\Sigma_s}|
    \leq
    \rho^{-j-1}\varepsilon_j(Rs)
    \longrightarrow0.
\end{equation}
Moreover,
\[
    \mathbf H_{\Sigma_s}(y)
    =
    s\mathbf H_\Sigma(q+sy)
    =
    -\frac{s}{2}(q+sy)^{\perp_{\Sigma_s}}.
\]

Since $\mathbf H_\nu$ is bounded near $q$, the monotonicity
formula and varifold compactness imply that every sequence
$s_i\downarrow0$ has a subsequence converging to a stationary
integral tangent cone.
The mass bounds and
\eqref{eq:rescaled-curvature-decay} give smooth convergence
with multiplicity on compact subsets of
$\mathbb R^{n+1}\setminus\{0\}$.

Every connected component of the regular part of such a limit
is flat. Thus the tangent cone is a finite sum of hyperplanes
through the origin.
The smooth embedded convergence excludes transverse
intersections away from the origin.
Since $n\geq2$, any two distinct hyperplanes through the origin
intersect away from the origin. Hence every tangent cone has
the form
\[
    m_q|P|
\]
for a single hyperplane $P$.
Its multiplicity is determined by the density
$ m_q=\Theta^n(\|\nu\|,q).$
In particular, $m$ is independent of the chosen sequence.
The same compactness argument shows that, at every sufficiently
small scale, $\Sigma_s$ is an arbitrarily small smooth
$m$-valued graph over some hyperplane on each fixed compact
annulus.

\medskip
\noindent
{\bf Step 2: The Lojasiewicz--Simon inequality.}

Fix a hyperplane $P$ and a unit normal $\eta$ to $P$, and set
\[
    \Gamma=P\cap\mathbb S^n.
\]
For a small function $v\colon\Gamma\to\mathbb R$, consider
the spherical normal graph
\[
    F_v(\omega)
    =
    \cos v(\omega)\,\omega+\sin v(\omega)\,\eta.
\]
Its area is
\[
    \mathcal A(v)
    =
    \int_\Gamma
    (\cos v)^{n-1}
    \left(
        1+\frac{|\nabla_\Gamma v|^2}{\cos^2v}
    \right)^{1/2}
    \,d\mu_\Gamma.
\]
Define the negative $L^2(\Gamma,d\mu_\Gamma)$-gradient
$\mathcal M$ by
\[
    D\mathcal A(v)[\phi]
    =
    -\int_\Gamma\mathcal M(v)\phi\,d\mu_\Gamma.
\]
Then
\[
    \mathcal M(0)=0,
    \qquad
    D\mathcal M(0)=\Delta_\Gamma+(n-1).
\]

For any fixed $\beta\in(0,1)$, the functional $\mathcal A$
is real analytic near zero in $C^{2,\beta}(\Gamma)$.
Its linearized gradient is self-adjoint and elliptic, with
finite-dimensional kernel.
The classical Lojasiewicz--Simon theorem therefore yields
$\varepsilon_0>0$, $C_0>0$, and
$\theta\in(0,\frac12]$ such that
\begin{equation}\label{eq:single-valued-LS}
    |\mathcal A(v)-\mathcal A(0)|^{1-\theta}
    \leq
    C_0\|\mathcal M(v)\|_{L^2(\Gamma)}
\end{equation}
whenever $\|v\|_{C^3(\Gamma)}<\varepsilon_0$.

For a globally decomposable $m$-valued function
\[
    v=\sum_{\alpha=1}^m\llbracket v^\alpha\rrbracket,
\]
define
\[
    \mathcal A_m(v)
    =
    \sum_{\alpha=1}^m\mathcal A(v^\alpha),
    \qquad
    \|\mathcal M_m(v)\|_{L^2(\Gamma)}^2
    =
    \sum_{\alpha=1}^m
    \|\mathcal M(v^\alpha)\|_{L^2(\Gamma)}^2.
\]
Since $1-\theta\in(0,1)$, by triangle inequality,
\begin{equation*}
    |
        \mathcal A_m(v)-m\mathcal A(0)
   |^{1-\theta}
    \leq
    \sum_{\alpha=1}^m
    |\mathcal A(v^\alpha)-\mathcal A(0)|^{1-\theta}
   \leq 
    C_0\sqrt m\,
    \|\mathcal M_m(v)\|_{L^2(\Gamma)}.
\end{equation*}
This proves the required multi-valued inequality in the
globally decomposable case.

More generally, the same inequality holds for smooth
multi-valued functions represented on finite covers of
$\Gamma$: one applies the single-valued theorem on each
cover and takes uniform constants over the finitely many
covering types of degree $m$.
This is the formulation established in
\cite[Proposition~2.5]{MR4830551}.

\medskip
\noindent
{\bf Step 3:  Dini estimate.}

In the translated coordinates $z=x-q$, define
\[
    g_q(z)
    =
    \exp\left(
        -\frac{|q+z|^2-|q|^2}{2n}
    \right)g_e.
\]
For an $n$-dimensional hypersurface,
\[
    d\mu_{g_q}
    =
    \exp\left(
        -\frac{|q+z|^2-|q|^2}{4}
    \right)d\mu_{g_e}.
\]
Therefore the self-shrinker equation is the
minimal hypersurface equation for $g_q$.
Also, $g_q(0)=g_{e}$ and, for sufficiently small
$|z|$,
\begin{equation}\label{eq:metric-error}
    \sum_{k=0}^2
    |z|^k|D^k(g_q-g_{e})|
    \leq C_q|z|.
\end{equation}

We now apply the Dini estimate of
\cite[Theorem~1.4]{MR4830551}.
There are constants $\varepsilon_*>0$, $C>0$, and
$\alpha\in(0,1]$, depending on $n$ and $m$, such that the
following holds.

Suppose that, on an annulus $A_{R,\rho}(0)$, the hypersurface is a smooth $m$-valued
graph $w$ over a fixed hyperplane $P$ and
\[
    \sum_{k=0}^3
    |x|^{k-1}|D^kw(x)|<\varepsilon_*.
\]
Write
\[
    v(s,\omega)=s^{-1}w(s\omega),
    \qquad \omega\in\Gamma=P\cap\mathbb S^n.
\]
Provided $C_qR$ is sufficiently small, the estimate gives
\begin{equation}\label{eq:Dini-estimate}
\begin{split}
    \int_\rho^R
    \left\|
        s\partial_sv(s,\cdot)
    \right\|_{L^2(\Gamma)}
    \frac{ds}{s}
    \leq C\left(
        \int_{\Sigma\cap(B_R(q)\setminus B_\rho(q))}
        \frac{|(x-q)^{\perp_\Sigma}|^2}
             {|x-q|^{n+2}}
        \,d\mu_\nu
        +C_q^2R^2
    \right)^{\alpha/2}.
\end{split}
\end{equation}
As usual, fixed changes of the inner and outer radii account
for the difference between spherical and projected annuli.

Define
\[
    J(R)
    :=
    \int_{\Sigma\cap B_R(q)}
    \frac{|(x-q)^{\perp_\Sigma}|^2}
         {|x-q|^{n+2}}
    \,d\mu_\nu.
\]
The bounded-mean-curvature monotonicity formula implies
\[
    J(R)<\infty,
    \qquad
    J(R)\longrightarrow0
    \quad\text{as }R\downarrow0.
\]
Indeed, writing
\[
    \Theta(R)
    =
    \frac{\|\nu\|(B_R(q))}{\omega_nR^n},
\]
and using the boundedness of both $\mathbf H_\nu$ and
$\Theta(R)$ near $q$, the integrated monotonicity formula
gives
\[
    J(R)
    \leq
    \omega_n\bigl(\Theta(R)-\Theta(0)\bigr)+CR.
\]

Consequently, the right-hand side of
\eqref{eq:Dini-estimate} tends to zero with $R$, uniformly
in the inner radius $\rho$.

\medskip
\noindent
{\bf Step 4: Fixing the reference plane.}

Choose a sequence $R_i\downarrow0$ such that
\[
    R_i^{-1}(\nu-q)\rightharpoonup m|P|
\]
for some hyperplane $P$.
By Step~1, the convergence is smooth with multiplicity
away from the origin.
Thus, after taking $i$ sufficiently large, $\Sigma$ is a
graph over $P$ on a fixed-width annulus at scale $R_i$,
with arbitrarily small scale-invariant $C^3$ norm.

We explain why this graph extends to every smaller scale.
Fix such a radius $R=R_i$, and continue the graphical
representation inward for as long as its scale-invariant
$C^3$ norm is less than $\varepsilon_*$.
Let $\rho_*$ be the infimum of the inner radii for which
this continuation is possible.

On every annulus contained in the continuation region,
\eqref{eq:Dini-estimate} gives
\begin{equation}\label{eq:L2-trapping}
    \sup_{\rho_*<s<R/2}
    \left\|
        \mathcal G\bigl(v(s),m\llbracket0\rrbracket\bigr)
    \right\|_{L^2(\Gamma)}
    \leq
    C\eta
    +
    C\bigl(J(2R)+C_q^2R^2\bigr)^{\alpha/2},
\end{equation}
where $\eta$ is the initial graphical norm at scale $R$.

On rescaled annuli in the continuation region, the graph
equation and \eqref{eq:rescaled-curvature-decay} give uniform
higher derivative bounds.
Interpolation between these bounds and
\eqref{eq:L2-trapping} upgrades the $L^2$ control to
scale-invariant $C^3$ control on smaller annuli.
The same estimates hold on one-sided annuli at an endpoint
of the continuation region.

Choose the initial norm $\eta$ sufficiently small and then
choose $R=R_i$ sufficiently small.
Since $J(2R)\to0$ and $\varepsilon_j(2R)\to0$, these estimates
ensure that the scale-invariant $C^3$ norm is strictly less
than $\varepsilon_*/2$ throughout the continuation region.

If $\rho_*>0$, the hypersurface is smooth near $\partial B_{\rho_*}(0)$. The strict graphical bound implies that
orthogonal projection onto $P$ remains a local
diffeomorphism there.
Thus the graphical representation extends across that
boundary while retaining norm less than $\varepsilon_*$,
contradicting the definition of $\rho_*$.
Therefore $\rho_*=0$.

We have obtained a graphical representation over the same
hyperplane $P$ at every sufficiently small scale.
Letting $\rho\downarrow0$ in \eqref{eq:Dini-estimate} yields
\[
    \int_0^{R/2}
    \left\|
        s\partial_sv(s,\cdot)
    \right\|_{L^2(\Gamma)}
    \frac{ds}{s}
    <\infty.
\]
Hence $v(s)$ has a limit in $L^2(\Gamma)$ as $s\downarrow0$.

Along the original tangent-cone sequence, the normalized
graphs converge to $m\llbracket0\rrbracket$.
The $L^2$ limit is therefore $m\llbracket0\rrbracket$.
The uniform annular derivative bounds and interpolation
then give
\[
    v(s)\longrightarrow m\llbracket0\rrbracket
\]
smoothly on rescaled compact annuli.
It follows that every blow-up sequence converges to
$m|P|$. Thus $P=P_q$ is the unique tangent plane.

\medskip
\noindent
{\bf Step 5: Pointwise estimates.}

The graphical representation obtained above initially
takes values in $\mathcal A_m(\mathbb R)$.
Since the support is an embedded hypersurface, distinct branches cannot cross: such a crossing would
contradict the smooth embeddedness of the support.
If branches coincide, the coincidence merely records
varifold multiplicity.
The local orderings therefore patch to global smooth
functions
\[
    u^1\leq\cdots\leq u^m
\]
on a sufficiently small punctured disk in $P_q$.
After restricting to a smaller ambient ball, this gives
\eqref{eq:fixed-plane-graph}.

Uniqueness of the tangent plane and smooth annular
convergence imply
\[
    \max_\alpha
    \left\|
        s^{-1}u^\alpha(s\,\cdot)
    \right\|_{C^1(P_q\cap(B_2\setminus B_{1/2}))}
    \longrightarrow0.
\]
On the other hand, the second fundamental form
is 
\[
    A_{ij}
    =
    \frac{D_{ij}u}{\sqrt{1+|Du|^2}}.
\]
Hence,
\[
    |D^2u^\alpha(x)|=o(|x|^{-1}).
\]
Differentiating this identity and using
\eqref{eq:curvature-smallness} inductively yields
\[
    |D^ku^\alpha(x)|=o(|x|^{1-k})
    \qquad\text{for every }k\geq2.
\]
Together with the height and gradient estimates, this
proves \eqref{eq:graph-pointwise-decay}.

Finally, applying Step~2 with $P=P_q$ and $m=m_q$ proves
\eqref{eq:LS-final}.
\end{proof}

\bibliographystyle{alpha} % 指定参考文献格式
\bibliography{reference} % 指定 .bib 文件名（不加 .bib 后缀）

\begin{thebibliography}{99}
\bibitem{All72}
W.~K. Allard,
\emph{On the first variation of a varifold},
Ann. of Math. (2) \textbf{95} (1972), no.~3, 417--491.

\bibitem{BK23}
R.~H. Bamler and B. Kleiner,
\emph{On the multiplicity one conjecture for mean curvature flows of surfaces},
arXiv preprint arXiv:2312.02106 (2023).

\bibitem{CY07}
B.-L. Chen and L. Yin,
\emph{Uniqueness and pseudolocality theorems of the mean curvature flow},
Comm. Anal. Geom. \textbf{15} (2007), no.~3, 435--490.

\bibitem{Din18}
Q. Ding,
\emph{A rigidity theorem on the second fundamental form for self-shrinkers},
Trans. Amer. Math. Soc. \textbf{370} (2018), no.~12, 8311--8329.

\bibitem{DX14}
Q. Ding and Y.~L. Xin,
\emph{The rigidity theorems of self-shrinkers},
Trans. Amer. Math. Soc. \textbf{366} (2014), no.~10, 5067--5085.

\bibitem{EM24}
N. Edelen and P. Minter,
\emph{Uniqueness of regular tangent cones for immersed stable hypersurfaces},
Arch. Ration. Mech. Anal. \textbf{248} (2024), Paper No.~121.

\bibitem{Gul76}
R. Gulliver,
\emph{Removability of singular points on surfaces of bounded mean curvature},
J. Differential Geom. \textbf{11} (1976), no.~3, 345--350.

\bibitem{Gun17}
B. G\"uneysu,
\emph{Covariant Schr\"odinger Semigroups on Riemannian Manifolds},
Operator Theory: Advances and Applications, Vol.~264,
Birkh\"auser, Cham, 2017.



\bibitem{GP15} B. G\"uneysu.

\bibitem{Han25}
Y. Han,
\emph{Mean curvature flow with pinched curvature integral},
Differential Geom. Appl. \textbf{99} (2025), Paper No.~102244.

\bibitem{Han26}
Y. Han,
\emph{Well-posedness of mean curvature flow},
arXiv preprint arXiv:2603.22809 (2026).

\bibitem{HP25}
S. Huang and Z. Peng,
\emph{A note on a diffeomorphism criterion via long-time Ricci flow},
arXiv preprint arXiv:2509.05802 (2025).

\bibitem{Hub57}
A. Huber,
\emph{On subharmonic functions and differential geometry in the large},
Comment. Math. Helv. \textbf{32} (1957), 13--72.

\bibitem{Hui84}
G. Huisken,
\emph{Flow by mean curvature of convex surfaces into spheres},
J. Differential Geom. \textbf{20} (1984), no.~1, 237--266.

\bibitem{Hut86}
J.~E. Hutchinson,
\emph{Second fundamental form for varifolds and the existence of surfaces
minimising curvature},
Indiana Univ. Math. J. \textbf{35} (1986), no.~1, 45--71.

\bibitem{Ilm95}
T. Ilmanen,
\emph{Singularities of mean curvature flow of surfaces},
preprint, 1995.

\bibitem{LL25}
M.-C. Lee and T.-K. Lee,
\emph{Local mollification of metrics with small curvature concentration},
arXiv preprint arXiv:2510.12673 (2025).

\bibitem{LW19}
H. Li and B. Wang,
\emph{The extension problem of the mean curvature flow (I)},
Invent. Math. \textbf{218} (2019), no.~3, 721--777.

\bibitem{Mag96}
C. Mantegazza,
\emph{Curvature varifolds with boundary},
J. Differential Geom. \textbf{43} (1996), no.~4, 807--843.

\bibitem{MS73}
J.~H. Michael and L.~M. Simon,
\emph{Sobolev and mean-value inequalities on generalized submanifolds of
$\mathbb{R}^n$},
Comm. Pure Appl. Math. \textbf{26} (1973), no.~3, 361--379.

\bibitem{MS95}
S. M\"uller and V. \v{S}ver\'ak,
\emph{On surfaces of finite total curvature},
J. Differential Geom. \textbf{42} (1995), no.~2, 229--258.

\bibitem{Nak93}
N. Nakauchi,
\emph{A concentration phenomenon in mean curvature flow},
Manuscripta Math. \textbf{78} (1993), 129--147.

\bibitem{Sim93}
L. Simon,
\emph{Existence of surfaces minimizing the Willmore functional},
Comm. Anal. Geom. \textbf{1} (1993), no.~2, 281--326.

\bibitem{Wan20}
B. Wang,
\emph{The local entropy along Ricci flow---Part B: The pseudolocality
theorems},
arXiv preprint arXiv:2010.09981 (2020).
\end{thebibliography}

\end{document}